\edef\cdrestoreat{
\noexpand\catcode\lq\noexpand\@=\the\catcode\lq\@}\catcode\lq\@=11
\renewcommand\section{\@startsection {section}{1}{\z@}%
          {-3.25ex\@plus -1ex \@minus -.2ex}%
          {1.5ex \@plus .2ex}%
          {\normalfont\large\bfseries}}
\renewcommand\subsection{\@startsection{subsection}{2}{\z@}%
          {-3.25ex\@plus -1ex \@minus -.2ex}%
          {1.5ex \@plus .2ex}%
          {\normalfont\normalsize\bfseries}}
\mathchardef\colon="303A 
\mathchardef\gt="313E  
\mathchardef\lt="313C  
\def\clap#1{\hbox to 0pt{\hss#1\hss}}
\DeclareFontFamily{OT1}{pzc}{}
\DeclareFontShape{OT1}{pzc}{m}{it}{<->s*[1.14]pzcmi7t}{}
\DeclareMathAlphabet{\mathpzc}{OT1}{pzc}{m}{it}
\newtheoremstyle{teorema}{\topsep}{\topsep}
{\slshape}{}{\bf}{{\normalfont.}}{.5em}{}
\newtheoremstyle{definizione}{\topsep}{\topsep}
{\normalfont}{}{\bf}{{\normalfont.}}{.5em}{}
\theoremstyle{teorema}
\newtheorem{theorem}{Theorem}[section]
\newtheorem{lemma}[theorem]{Lemma}
\newtheorem{prp}[theorem]{Proposition}
\newtheorem{cor}[theorem]{Corollary}
\theoremstyle{definizione}
\newtheorem{definition}[theorem]{Definition}
\newtheorem{remark}[theorem]{Remark}
\newtheorem{exm}[theorem]{Example}
\newtheorem{exms}[theorem]{Examples}
\newtheorem{Notation}[theorem]{Notation}
\def\thmitem{\def\labelenumi{{\normalfont(\roman{enumi})}}}
\def\alphitem{\def\labelenumi{{\normalfont(\alph{enumi})}}}
\def\dfn#1{{\bfseries\itshape #1\/}}
\def\ie{{\textit{i.e.}}\xspace}
\def\loccit{{\textit{loc.cit.}}\xspace}
\def\id#1{\ensuremath{\mathrm{id}_{#1}}}
\def\Id#1{\ensuremath{\mathrm{Id}_{#1}}}
\def\op{^{\textrm{\scriptsize op}}}
\def\opp{\strut^{\textrm{\tiny op}}}
\def\spsymb#1{\ensuremath{\mathsf{#1}}\xspace}
\def\NN{\spsymb{N}}
\def\retra{\lhd}
\def\surj#1{\mbox{${#1}$-surjective}\xspace}
\def\inj#1{\mbox{${#1}$-injective}\xspace}
\def\des#1{\ensuremath{\mathscr{D}\kern-.3ex\textit{es\kern.2ex}_{#1}}}
\def\ass{\ensuremath{\textsf{Asm}}\xspace}
\def\pass{\ensuremath{\textsf{Pasm}}\xspace}
\def\rwfs{right weak factorization system \xspace}
\def\tur{\ensuremath{\textbf{.}}}
\def\eff{\ct{Eff}}
\def\ct#1{\ensuremath{\mathpzc{#1}}\xspace}
\def\Ct#1{\ensuremath{{\normalfont\textsf{\bfseries #1}}}\xspace}
\def\CT#1{\ensuremath{{\normalfont\textsf{\bfseries #1}}}\xspace}
\def\Pred#1{{\ensuremath{{\mathpzc{Prd}\kern-.4ex_{{#1}}}}}}
\def\RB#1{\mathchoice
  {\rotatebox[origin=c]{180}{$#1$}}
  {\rotatebox[origin=c]{180}{$#1$}}
  {\rotatebox[origin=c]{180}{$\scriptstyle#1$}}
  {\rotatebox[origin=c]{180}{$\scriptscriptstyle#1$}}}
\def\D{\RB{E}\kern-.3ex}
\def\B{\RB{A}\kern-.6ex}
\def\imply{\Rightarrow}
\def\exl{_{\textrm{\scriptsize ex/lex}}}
\def\Gr(#1){\ensuremath{{\mathop{{}\ct{G}\kern-.5ex}_{#1}}}}
\let\Land\wedge
\def\cmp#1{\ensuremath{\{\kern-2.5pt|{#1}|\kern-2.5pt\}}}
\def\Set{\ct{Set}}
\def\EEE{\CT{EED}}
\def\EEx{\CT{EExD}}
\def\UEEx{\CT{EExD}^{RUC}}
\def\RUEEx{\CT{EExD}^{RC}}
\def\QEEx{\CT{QEExD}}
\def\UQEEx{\CT{QEExD}^{RUC}}
\def\RUQEEx{\CT{QEExD}^{RC}}
\def\EVar{\CT{EV}}
\def\EMVar{\CT{EmV}}
\def\EH{\CT{ED}}
\def\EHx{\CT{ExD}}
\def\DD{\CT{PD}}
\def\CED{\CT{CED}}
\def\ISL{\Ct{Pos}}\let\ISL\ISL
\def\HH{\ct{H}}
\def\aritm{arithmetic}
\def\TT{\textbf{Top}}
\def\PP{\mathcal{P}}
\def\QEx#1#2{\ensuremath{{#1_{_{\textrm{\scriptsize #2}}}}}}
\def\Q#1{\ensuremath{\widehat{#1}}}
\def\X#1{\QEx{#1}{x}}
\def\P#1{\QEx{#1}{c}}
\def\ec#1{\ensuremath{\left[{#1}\right]}}
\def\blank{\mathrm{-}}
\def\ple#1{\ensuremath{\langle #1\rangle}}
\def\fp#1{\ensuremath{P_{#1}}}
\def\pow#1{\ensuremath{\mathbb{P}{#1}}}
\def\rrr#1#2#3{\ensuremath{#1\otimes_{#2}#3}}
\def\pr{\mathrm{pr}}
\def\fst{{\pr_1}}\def\snd{{\pr_2}}
\def\qu(#1){\overline{#1}}
\def\eq#1{\mathrel{\mathord=_{#1}}}
\let\Implies\Rightarrow
\let\tt\top
\def\vuoto{}
\def\cct#1#2\endcct{\relax\def\prova{#2}\relax\ensuremath
{\mathcal{#1}\ifx\prova\vuoto\relax\else\!\mathit{#2}\fi}\xspace}
\def\heps{\ensuremath{\epsilon}\xspace}
\def\epso{\heps-}
\def\implicational{implicational\xspace}
\def\disjunctive{disjunctive\xspace}
\def\universal{universal\xspace}
\def\swlcc{slice-wise cartesian closed\xspace}
\def\swwlcc{slice-wise weakly cartesian closed\xspace}
\def\spexp{slice-wise  exponentiable on dependent projectives\xspace}
\def\larr{\mathrel{\xymatrix@1@=3.5ex{*=0{}\ar[];[r]&*=0{}}}}
\def\to{\mathrel{\xymatrix@1@=2.5ex{*=0{}\ar[];[r]&*=0{}}}}
\let\arr\larr
\let\ftr\larr
\def\QD{\Q{\RB{E}}\kern-.3ex}
\def\CTT{\mathrm{CoC}\xspace}
\def\TP#1{\ensuremath{\ct{T}_{#1}}}
\def\Crs#1{\ensuremath{\textsf{Crs}_{#1}}}
\def\twoup#1#2{\mathbin{\begin{array}[b]{@{}l@{}}
\kern.2ex\scriptstyle#1\\[-2.1ex]#2\end{array}}}
\def\commentmark#1{\makebox[0pt][l]
{\color{#1}\kern-.3ex\textbullet}}
\def\Sub{\mathop{{}\mathrm{Sub}}}
\def\Sb#1{\ensuremath{\Sub_{#1}}}
\def\Stg#1{\ensuremath{Stg_{#1}}}
\def\Wsb#1{\ensuremath{\Psi_{#1}}}
\def\sd{m-variational doctrine\xspace}
\def\sds{m-variational doctrines\xspace}
\def\wsds{variational doctrines\xspace}
\def\stripos{strong tripos\xspace}
\def\variational{variational\xspace}
\def\mvariational{m-variational\xspace}
\def\fod{f.o.d.\xspace}
\def\htripos{hyper-tripos\xspace}
\def\whtripos{intensional hyper-tripos\xspace}
\def\eed{elementary doctrine\xspace}
\def\Map#1{\ensuremath{\ct{EF}\kern-.4ex_{#1}}}
\def\DMap#1{\ensuremath{\ct{M}\kern-.4ex_{#1}}}
\def\ruc{{\normalfont(RUC)}\xspace}
\def\rc{{\normalfont(RC)}\xspace}
\def\bqc#1{\ensuremath{\ct{Q}\,_{#1}}}
\date{}
\begin{document}

\title{Quasi-toposes as elementary quotient completions}
\author{Maria Emilia Maietti\thanks{%
Dipartimento di Matematica ``Tullio Levi Civita'',
Universit\`a di Padova,
via Trieste 63, 35121 Padova, Italy,
email:~\texttt{maietti@math.unipd.it}}
\and
Fabio Pasquali\thanks{%
Dipartimento di Matematica ``Tullio Levi Civita'',
Universit\`a di Padova,
via Trieste 63, 35121 Padova, Italy,
email:~\texttt{pasquali@dima.unige.it}}
\and
Giuseppe Rosolini\thanks{%
DIMA, Universit\`a di Genova,
via Dodecaneso 35, 16146 Genova, Italy,\hfill\mbox{}
email:~\texttt{rosolini@unige.it}}}

\maketitle

\begin{abstract} 
The elementary quotient completion of an elementary doctrine in the sense of
Lawvere was introduced in previous work by the first and third authors. It
generalises the exact completion of a category with finite products and weak equalisers.
In this paper we characterise when an elementary quotient completion is a quasi-topos.
We obtain as a corollary a complete characterisation of when an elementary
quotient completions is an elementary topos. As a byproduct we determine also when the
elementary quotient completion of a tripos is equivalent to the doctrine obtained via the
tripos-to-topos construction.

Our results are reminiscent of other works regarding exact completions and put those under a
common scheme: in particular, Carboni and Vitale's characterisation of exact completions in
terms of their projective objects, Carboni and Rosolini's characterisation of locally
cartesian closed exact completions, also in the revision by Emmenegger, and Menni's
characterisation of the exact completions which are elementary toposes.
\end{abstract}
The paper contains results presented by the authors at several international meetings in
the past years, in particular at Logic Colloquium 2016 and Category Theory 2017,
and during the Trimester devoted to Types, Homotopy Type Theory and Verification at the
Hausdorff research Institute for Mathematics in 2018. We would like to thank the
organisers of these events who gave us the opportunity to present our results.

\tableofcontents

\section{Introduction}
The study of constructions for completing a category with quotients
is a central topic
not only in mathematics but also in computer science.
In category theory a well-known related notion is that of exact completion of a category with finite limits, and that of a regular category, 
see \cite{CarboniA:freecl,CarboniA:regec}, which has been widely studied
and applied.

In \cite{MaiettiME:quofcm} the first and third authors generalized the notion of exact completion on a category with weak finite limits to that of an {\it elementary quotient completion} of a Lawvere's elementary doctrine~\cite{LawvereF:adjif,LawvereF:equhcs}
as an universal construction
to close such a doctrine with respect to a suitable notion of quotient.

The exact completion of a category \ct{C} with finite products and weak limits is an instance
of such a construction in the sense that its subobject doctrine is the elementary quotient
completion doctrine of the doctrine of variations of \ct{C} ~\cite{GrandisM:weasec}.

In the paper we study elementary quotient completions performed on the special class of
Lawvere's elementary doctrines called {\it triposes}, introduced in \cite{HylandJ:trit},
to build elementary toposes by means of what is now known as the
tripos-to-topos construction, see \cite{Jonas}.
We then characterize those
triposes whose elementary quotient 
completion is an {\it arithmetic quasi-topos}---{\it \ie} a quasi-topos
equipped with a natural number object---as base category.

To obtain the characterization, we extend some known results
about exact completions such as Carboni and Vitale's characterization
of exact completions in terms of its projective objects in \cite{CarboniA:regec},
Menni's characterization of the exact completions which are toposes
in \cite{MenniM:chalec} and Carboni and Rosolini's characterization of the
locally cartesian closed exact completions
\cite{RosoliniG:loccce}. In particular, we show that
\begin{itemize}
\item an elementary doctrine
$P:\mathbb{C}^{op}\longrightarrow\textbf{InfSL}$ closed under
effective quotients is the elementary quotient completion of the
doctrine determined by the restriction of $P$ to the full
subcategory of $\mathbb{C}$ on its projective objects;
\item the base category of the elementary quotient
completion of $P$ turns weak universal properties of
$\mathbb{C}$ into (strong) universal properties of the base of the
elementary quotient completion. Those include binary coproducts, a
natural number object, a parametrized list object, a subobject
classifier, a cartesian closed structure, a locally cartesian
structure.
\item by using results in \cite{Maietti-Rosolini16} we characterize when an elementary
quotient completion is an elementary topos;
\item by using results in \cite{MPR} we characterize when an elementary quotient completion
is a tripos-to-topos construction.
\end{itemize}
We conclude by pointing out some relevant examples of arithmetic
quasi-toposes arising as non-exact elementary quotient completions. Most
notably they include the category of equilogical spaces of
\cite{ScottD:dattl,ScottD:newcds,BauerA:equs}, 
that of assemblies over a partial combinatory algebra (see
\cite{HylandJ:efft,OostenJ:reaait}), and
the category of total setoids, in the style of E.~Bishop, over
Coquand and Paulin's Calculus of Inductive Constructions which is the
theory at the base of the proof-assistant Coq.

\section{Preliminary definitions on doctrines and completions}\label{definizioni}
This section collects the necessary definitions to introduce the
{\it elementary quotient completion} of an elementary doctrine and related properties of a
doctrine. Recall from \cite{MaiettiME:eleqc}, see also \cite{EmmeneggerJ:eledac}, that:

\begin{definition} A \dfn{primary doctrine} is an indexed poset $P:\ct{C}\op\ftr\ISL$
where \ct{C} is a category with a terminal object $T$ and with binary
products
$$\xymatrix@1{C_1&C_1\times C_2\ar[l]_(.6){\fst}\ar[r]^(.6){\snd}&C_2}$$
and where $P$ factors through $\Ct{ISL}$, the category of inf-semilattices and inf-semilattices homomorphisms.
\end{definition}

The category $\ct{C}$ is often called \dfn{base} of the doctrine. We say that $\alpha$ is
over $A$ if $\alpha$ is an element of $P(A)$. The top element over an object $A$ of
$\ct{C}$ is denoted by $\tt_A$. Given $\alpha$ and $\beta$ over $A$, their meet is
$\alpha\Land_A\beta$ (we may drop subscripts when these are clear from the context).

\begin{exms}\label{runnings}

 \noindent(a)
An example of primary doctrine that comes directly from first order logic is the Lindenbaum-Tarski
algebras of well-formed formulas of a theory $\mathscr{T}$ over a first order language $\mathscr{L}$. The base category is the category \ct{V} of lists of
distinct variables and term substitutions, and the primary doctrine
$LT:\ct{V}\op\ftr\ISL$ on \ct{V} is given on a list of 
variables $\vec{x}$ by taking $LT(\vec{x})$ as the Lindenbaum-Tarski
algebra of well-formed formulas with free variables in $\vec{x}$. Meets in $LT(\vec{x})$ are given by conjunctions while the top element by any true formula. See
\cite{MaiettiME:eleqc} for more details.

\noindent(b) Let $\HH$ be an inf-semilattice. The functor $\PP_\HH:\Set\op\ftr\ISL$ sending a set $A$ to $\HH^A$  and a function $f:A\to B$ to $\PP_\HH(f)= -\circ f$ is a primary doctrine.

\noindent(c)
If \ct{C} has finite limits the functor $\Sb{\ct{C}}:\ct{C}\op\ftr\ISL$ is a primary doctrine.

\noindent(d) 
Another categorical example is given by  a category \ct{C} with binary products and
weak pullbacks, by defining
the doctrine of {\it variations}
$\Wsb{\ct{C}}:\ct{C}\op\ftr\ISL$
which evaluates as the poset reflection of each comma category
$\ct{C}/A$ at each object $A$ of \ct{C}, introduced in
\cite{GrandisM:weasec}.
\end{exms}

\begin{definition}[category of primary doctrines]
Primary doctrines are the objects of the 2-category \DD where 
\begin{description}
\item[the 1-cells] in \DD are pairs $(F,b)$ where $F:\ct{C}\to\ct{D}$
is a functor that preserves finite products and  $b:P\stackrel.\to R\circ F\op$ is a natural
transformation as in the diagram
$$
\xymatrix@C=4em@R=1em{
{\ct{C}\op}\ar[rd]^(.4){P}_(.4){}="P"\ar[dd]_{F\opp}&\\
 & {\ISL}\\
{\ct{D}\op}\ar[ru]_(.4){R}^(.4){}="R"&\ar"P";"R"_b^{\kern-.4ex\cdot}}
$$
such that each component preserves finite meets;
\item[the 2-cells] are natural transformations
$\theta:F\stackrel.\to G$ such that
$$\xymatrix@C=3.5em@R=2em{
{\ct{C}\op}\ar[rrd]^(.4){P}_(.4){}="P"
\ar@<-1ex>@/_/[dd]_{F\opp}^{}="F"\ar@<1ex>@/^/[dd]^{G\opp}_{}="G"&&\\
&\kern5em\strut& {\ISL}\\
{\ct{D}\op}\ar[rru]_(.4){R}^(.4){}="R"&
\ar@/_/"P";"R"_{b\kern.5ex\cdot\kern-.5ex}="b"
\ar@<1ex>@/^/"P";"R"^{\kern-.5ex\cdot\kern.5ex c}="c"
\ar"G";"F"_{.}^{\theta\opp}\ar@{}"b";"c"|{\leq}}$$
\ie  for every $A$ in \ct{C} and every $\alpha$ in $P(A)$,
one has 
$b_A(\alpha)\leq_{F(A)} R_{\theta_A}(c_A(\alpha))$.
\end{description}
\end{definition}
\begin{exm}
A set-theoretic model for a first order theory $\mathscr{T}$  is an 1-arrow from $LT:\ct{V}\op\ftr\ISL$ to 
$\Sb{\ct{Set}}:\ct{Set}\op\ftr\ISL$ in \DD. And a homomorphism between two
set-theoretic models of $\mathscr{T}$ determines a 2-arrow.
\end{exm}

Given a doctrine $P:\ct{C}\op\ftr\ISL$, a category $\ct{D}$ with finite products and a functor $F:\ct{D}\to\ct{C}$ that preserves products, the composition of $P$ with $F\op:\ct{D}\op\to\ct{C}\op$ gives a doctrine $PF\op:\ct{D}\op\ftr\ISL$ 
called \dfn{change of base of $P$ along $F$}. 

\begin{definition}
An \dfn{elementary doctrine on \ct{C}}
is a primary doctrine  $P:\ct{C}\op\ftr\ISL$ such that for every object $A$ in \ct{C}, there is an object $\delta_A$ over $A\times A$ such that for every arrow
$e$ of the form $<\pr_1,\pr_2,\pr_2>:X\times A\to X\times A\times A$
in \ct{C}, the assignment
$$\D_{e}(\alpha)\colon=
\fp{<\pr_1,\pr_2>}(\alpha)\Land_{X\times A\times A}\fp{<\pr_2,\pr_3>}(\delta_A)$$
for $\alpha$ in $P(X\times A)$ determines a left adjoint to
$$\fp{e}:P(X\times A\times A)\to P(X\times A).$$
\end{definition}

It follows that the assignment
$$\D_{<\id{A},\id{A}>}(\alpha)\colon=
\fp{\pr_1}(\alpha)\Land_{A\times A}\delta_A$$
for $\alpha$ in $P(A)$ determines a left adjoint to
$$\fp{<\id{A},\id{A}>}:P(A\times A)\to P(A)$$
This means that $\delta_A$ is determined uniquely for each
object $A$ in \ct{C} and hence we will refert to $\delta_A$ as the
\dfn{fibered equality on $A$}.

\begin{definition}\label{extensional}
An elementary doctrine $P:\ct{C}\op\ftr\ISL$ is   \dfn{extensional}  if
 for every pair of parallel arrows $f,g:X\to A$ in $\ct{C}$ it holds that $f=g$ if and only if $\tt_X=\fp{<f,g>}(\delta_A)$.
\end{definition}
The change of base of an elementary doctrine is again elementary, but the new elementary doctrine might fail to be extensional.

\begin{exms}\label{runnings-ele}

 \noindent(a)
Consider a theory $\mathscr{T}$ over a first order language $\mathscr{L}$ and the associated primary doctrine $LT:\ct{V}\op\ftr\ISL$  as in \ref{runnings}-(a). The doctrine $LT$ is elementary if and only the equality is definable in $\mathscr{T}$.

\noindent(b) Let $\HH$ be an inf-semilattice. The primary doctrine $\PP_\HH:\Set\op\ftr\ISL$ as in \ref{runnings}-(b) is elementary if and only if $\HH$ has a bottom element (see \cite{EmmeneggerJ:eledac}). In this case $\delta_A$ is the function that maps $(a,a')$ to $\top$ if $a=a'$ and to $\bot$ otherwise. In this case $\PP_\HH$ is extensional.

\noindent(c) Suppose $\ct{C}$ has finite limits. 
The doctrine $\Sb{\ct{C}}:\ct{C}\op\ftr\ISL$ as in \ref{runnings}-(c) is elementary and extensional where $\delta_A$ is represented by the diagonal on $A$. 
\noindent(d) 
Suppose $\ct{C}$ has finite limits and weak pullbacks. The doctrine 
$\Wsb{\ct{C}}:\ct{C}\op\ftr\ISL$ as in \ref{runnings}-(d) is elementary and extensional where $\delta_A$ is represented by the diagonal on $A$. 
\end{exms}

\begin{definition}[category of elementary doctrines]
Elementary doctrines are the object of \EH, the 2-full subcategory \DD whose 1-cells are those 1-cells $(F,b):P\to R$ of $\DD$ such that  for every object $A$ in \ct{C}, the functor $b_A:P(A)\to R(F(A))$ commutes with the left adjoints 

\[\xymatrix@R=2.5em@C=4em{
P(X\times A)	\ar[d]^{\D_e} \ar[r]_{b_{X\times A}}	&
R(F(X\times A))	\ar[rr]^{\sim}_{R\ple{F\pr_1,F\pr_2>}}	&&
R(FX\times FA)\ar[d]^{\D_{e'}}	\\
P(X\times A\times A)	\ar[r]_-{b_{X\times A\times A}}	&
R(F(X\times A\times A))	\ar[rr]^{\sim}_{R\ple{F\pr_1,F\pr_2,F\pr_3}}	&&
R(FX\times FA\times FA)}\]
where $e$ is $<\pr_1,\pr_2,\pr_2>:X\times A\to X\times A\times A$ and $e'$ is $<\pr_1,\pr_2,\pr_2>:FX\times FA\to FX\times FA\times FA$.
\end{definition}

We recall from  \cite{MaiettiME:eleqc} that it is possible to force extensionality to an elementary doctrine $P:\ct{C}\op\longrightarrow\ISL$  as follows.
\begin{definition}\label{freeadddiag}[extensional collapse]
 Consider the category $\ct{X}_P$, the ``{\em extensional collapse}'' of
$P$, whose objects are the objects of \ct{C} and where an arrow {$\ec{f}:A\to B$} is an equivalence class
of morphisms $f:A\to B$ in \ct{C} with
respect to the equivalence which relates $f$ and $f'$ when
$\delta_A\leq_{A\times A}P_{f\times f'}(\delta_B)$. Composition is given by that of \ct{C} on representatives, and
identities are represented by identities of \ct{C}. We then define the doctrine 
$\X{P}:\ct{X}_P\op\longrightarrow\ISL$ as the functor that maps $\ec{f}:A\to B$ in  $\ct{X}_P$  to $P_f:P(B)\to P(A)$ (this assignment does not depends on the choice of the representative of $\ec{f}$).
\end{definition}

\begin{definition}
Let $\EHx$ denote the full subcategory of \EH on extensional elementary doctrines.
\end{definition}

\begin{prp}\label{mthn}
There is left biadjoint to the inclusion of \EHx into \EH which on objects associates  the doctrine $\X{P}: :\ct{X}_P\op\longrightarrow\ISL  $ to an elementary doctrine
$P:\ct{C}\op\longrightarrow\ISL$.
\end{prp}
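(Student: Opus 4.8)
The plan is to exhibit the unit of the biadjunction and verify the required universal property fibrewise. First I would construct, for each elementary doctrine $P:\ct{C}\op\to\ISL$, a $1$-morphism $(\pi_P, \ell_P): P \to \X{P}$ in $\EH$, where $\pi_P:\ct{C}\to\ct{X}_P$ is the identity on objects and sends $f:A\to B$ to its equivalence class $\ec{f}$ — this is well-defined because the definition of $\ec{f}$ already quotients by the stated equivalence, and it preserves products because $\ple{\fst,\snd}$-type comparisons are witnessed by identities in $\ct{X}_P$. The natural transformation $\ell_P$ is taken to be (a family of) identities, since $\X{P}$ is "$P$ itself" on fibres; checking naturality amounts to checking that $P_f = \X{P}_{\ec f}$ on representatives, which is immediate, and checking that $(\pi_P,\ell_P)$ lies in $\EH$ reduces to the observation that the fibred equality $\delta_A$ is sent to $\delta_A$ and the Frobenius-style formula defining $\D_e$ is preserved on the nose. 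One should also record that $\X{P}$ genuinely has comprehensive diagonals: by construction $\ec f = \ec g$ in $\ct{X}_P$ iff $\delta_A \leq P_{f\times g}(\delta_B)$ and, symmetrising, iff $\tt_A = \X{P}_{\ple{f,g}}(\delta_B)$, which is exactly the comprehensive-diagonal condition in the quotient.

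Next I would establish the universal property. Given an elementary doctrine with comprehensive diagonals $R:\ct{D}\op\to\ISL$ and a $1$-morphism $(F,b):P\to R$ in $\EH$, I want to factor it, essentially uniquely, through $(\pi_P,\ell_P)$ by a $1$-morphism $(\tilde F, \tilde b):\X{P}\to R$. The functor $\tilde F$ must send an object $A$ to $FA$ and an arrow $\ec f$ to $Ff$; the key point is that this is \emph{well-defined on equivalence classes}: if $\delta_A \leq P_{f\times f'}(\delta_B)$ then applying $b_{A\times A}$ and using that $(F,b)\in\EH$ (so $b$ commutes with the $\D$'s and hence sends $\delta_A$ below $R_{\ple{F\fst,F\snd}}\delta_{FA}$ — more precisely $b_{A\times A}(\delta_A)\le \delta_{FA}$ up to the canonical isos) one gets $\tt_{FA}\le R_{\ple{Ff,Ff'}}(\delta_{FB})$, and since $R$ has comprehensive diagonals this forces $Ff = Ff'$. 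Similarly $\ec f$ being an arrow of $\ct{X}_P$ (i.e. $\delta_A\le P_{f\times f}\delta_B$) guarantees nothing extra is needed since $Ff$ is automatically a morphism of $\ct{D}$. One then sets $\tilde b := b$ (transported along the identity-on-objects $\pi_P$ and the fibrewise identity $\ell_P$), and checks it is natural and an $\EH$-morphism — these are inherited verbatim from $(F,b)$.

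Then I would verify the two triangle-type conditions up to the appropriate invertible $2$-cells: $(\tilde F,\tilde b)\circ(\pi_P,\ell_P) \cong (F,b)$ holds because on objects $\tilde F \pi_P = F$ strictly, on arrows $\tilde F(\ec f) = Ff$ strictly, and the natural-transformation components compose to $b$; so in fact the composite equals $(F,b)$ on the nose, giving a trivial $2$-cell. For essential uniqueness, suppose $(G,c):\X{P}\to R$ also satisfies $(G,c)\circ(\pi_P,\ell_P)\cong(F,b)$ via some iso $2$-cell $\theta$; since $\pi_P$ is the identity on objects, $\theta$ already determines an isomorphism $GA\cong FA = \tilde FA$ natural in $A$, and one checks this $2$-cell is also a $2$-morphism $G\Rightarrow\tilde F$ in $\EH$ — this is where the $2$-dimensional part of the statement (``biadjoint'') does real work, as opposed to a strict adjunction. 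Finally, $2$-functoriality of $P\mapsto\X{P}$ on $1$- and $2$-cells of $\EH$, and naturality of the unit, are routine diagram chases that I would state but not expand.

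The main obstacle, I expect, is the \emph{well-definedness of $\tilde F$ on equivalence classes}, i.e.\ extracting from ``$(F,b)$ is a morphism of \emph{elementary} doctrines'' the inequality $b_{A\times A}(\delta_A)\le \delta_{FA}$ (modulo the canonical isomorphisms $R(F(A\times A))\cong R(FA\times FA)$) and then using comprehensive diagonals of $R$ to collapse $Ff$ and $Ff'$. Getting the direction of the inequalities and the bookkeeping of the reindexing isomorphisms exactly right — and confirming that the ``only if'' half of comprehensive diagonals is the one being invoked — is the delicate step; everything else is formal. A secondary, purely $2$-categorical, subtlety is checking that the comparison $2$-cells one writes down genuinely satisfy the $\le$-condition in the definition of a $2$-morphism of $\DD$, but since all the relevant $2$-cells here are isomorphisms built from identities and canonical product isos, this reduces to an equality that holds on representatives.
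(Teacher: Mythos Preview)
The paper does not actually prove this proposition: it is stated immediately after Remark~\ref{freeadddiag}, which recalls the construction of $\ct{X}_P$ and $\X{P}$ from \cite{MaiettiME:eleqc}, and the biadjunction is simply asserted. Your argument is the standard verification of the universal property and is correct; in particular the step you flag as the main obstacle---extracting $b_{A\times A}(\delta_A)=\delta_{FA}$ (up to the product-comparison iso) from the $\EH$-morphism condition, and then using comprehensive diagonals of $R$ to force $Ff=Ff'$---is exactly the crux and goes through as you describe, since $\delta_A=\D_{\langle\id A,\id A\rangle}(\tt_A)$ and $b$ preserves both $\tt$ and the relevant left adjoints. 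Two minor remarks: the constraint $\delta_A\le P_{f\times f}(\delta_B)$ in the definition of arrows of $\ct{X}_P$ is automatic for every $f$ in $\ct{C}$, so $\pi_P$ is defined on all arrows without further checking; and the equivalence defining $\ct{X}_P$ is already symmetric (because $\delta_B$ is), so no separate ``symmetrising'' step is needed when verifying comprehensive diagonals.
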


\begin{definition}
An elementary doctrine $P:\ct{C}\op\ftr\Ct{InfSL}$ is
\dfn{existential} when, for $A_1$ and $A_2$ in \ct{C}, for a(ny)
projection $\pr_i:A_1\times A_2\to A_i$, $i=1,2$, the functor
$\fp{\pr_i}:P(A_i)\to P(A_1\times A_2)$ has a left adjoint
$\D_{\pr_i}$---we shall call such a left adjoint \dfn{existential}---and those left adjoints satisfy the
\begin{description}
\item[\dfn{Beck-Chevalley Condition}:] for any pullback diagram
$$\xymatrix{X'\ar[r]^{\pr'}\ar[d]_{f'}&A'\ar[d]^f\\
X\ar[r]^{\pr}&A}$$
with $\pr$ a projection (hence also $\pr'$ a projection), for any
$\beta$ in $P(X)$, the natural inequality
$\D_{\pr'}\fp{f'}(\beta)\leq \fp{f}\D_\pr(\beta)$ in $P(A')$ is 
an identity;
\item[\dfn{Frobenius Reciprocity}:] for $\pr:X\to A$ a projection,
$\alpha$ in $P(A)$, $\beta$ in $P(X)$, the natural inequality
$\D_\pr(\fp\pr(\alpha)\Land\beta)\leq\alpha\Land\D_\pr(\beta)$ in
$P(A)$ is an identity.
\end{description}
\end{definition}

\begin{exms}\label{runnings-ex}

 \noindent(a)
Consider a theory $\mathscr{T}$ over a first order language $\mathscr{L}$ and the associated primary doctrine $LT:\ct{V}\op\ftr\ISL$  as in \ref{runnings}-(a). The doctrine $LT$ is existential where left adjoints along projections are given by the existential quantification.

\noindent(b) Let $\HH$ be an inf-semilattice. The primary doctrine $\PP_\HH:\Set\op\ftr\ISL$ is existential if and only if $\HH$ is a frame (see \cite{EmmeneggerJ:eledac}).

\noindent(c) Suppose $\ct{C}$ has finite limits. 
The doctrine $\Sb{\ct{C}}:\ct{C}\op\ftr\ISL$ as in \ref{runnings}-(c) is existential if and only if $\ct{C}$ is regular (see \cite{JacobsB:catltt}).
 
\noindent(d) 
Suppose $\ct{C}$ has finite limits and weak pullbacks. The doctrine 
$\Wsb{\ct{C}}:\ct{C}\op\ftr\ISL$ as in \ref{runnings}-(d) is existential where left adjoints are given by composition. 
\end{exms}

\begin{remark}\label{aggiuntigenerali} As shown by Lawvere, in an elementary existential doctrine $P:\ct{C}\op\ftr\ISL$ for every arrow $f:A\to B$ the map $\fp{f}$ has a left adjoint $\D_f:P(A)\to P(B)$ defined as
\[
\D_f(\alpha):= \D_{\pr_2}[\fp{f\times \id{B}}(\delta_B)\Land \fp{\pr_1}(\alpha)]
\]
see also \cite{PittsA:triir}. Moreover,  these left adjoints satisfy the Frobenius Reciprocity in the sense that for every $\beta$ in $P(B)$ it holds $\D_f(\alpha)\wedge \beta = \D_f(\alpha\wedge \fp{f}(\beta))$. But they not necessarily satisfy the Beck-Chevalley condition on all pullbacks (see \cite{maiettitrottal23} for a contraexample).
\end{remark}
This allows to give a quick description of the 2-category \EEE, which is the 2-full subcategory of \EH on those extensional elementary doctrines that are existential and whose 1-cells are those $(F,b):P\to R$ such that for every $f:X\to A$ in $\ct{C}$ the following commute

\[\xymatrix@R=2.5em@C=4em{
P(X)\ar[r]^-{b_X}\ar[d]_-{\D_f}	& R(FX)\ar[d]^-{\D_{Ff}}\\
P(A)\ar[r]_-{b_A} & R(FA)}\]
The full subcategory of \EEE on those extensional elementary existential doctrines is \EEx.

A primary doctrine
$P:\ct{C}\op\longrightarrow\Ct{InfSL}$
is \dfn{\implicational} if every inf-semilattice $P(A)$ is cartesian closed (\ie  for every $\alpha$ in $P(A)$, the functor
$\alpha\Land\blank:P(A)\to P(A)$ 
has a right adjoint $\alpha\Implies\blank:P(A)\to P(A)$) and every $\fp{f}$ preserves the exponentials.
A primary doctrine 
$P:\ct{C}\op\longrightarrow\Ct{InfSL}$
is \dfn{\disjunctive
}
if every $P(A)$ has finite distributive joins and every $\fp{f}$ preserves them.
A primary doctrine 
$P:\ct{C}\op\longrightarrow\Ct{InfSL}$
is \dfn{\universal}
if, for $A_1$ and $A_2$ in \ct{C}, for a(ny) projection
$\pr_i:A_1\times A_2\to A_i$, $i=1,2$,
the functor $P_{\pr_i}:P(A_i)\to P(A_1\times A_2)$ has a right adjoint
$\B_{\pr_i}$, and these satisfy the Beck-Chevalley condition:\newline
for any pullback diagram
$$\xymatrix{X'\ar[r]^{\pr'}\ar[d]_{f'}&A'\ar[d]^f\\X\ar[r]^{\pr}&A}$$
with $\pr$ a projection (hence also $\pr'$ a projection), for any
$\beta$ in $P(X)$, the natural inequality 
$ P_f\B_\pr(\beta)\leq \B_{\pr'}P_{f'}(\beta)$ in $P(A')$ is an equality.

\begin{definition}\label{fod}
A \dfn{first order doctrine} (\fod) on \ct{C} is an existential elementary doctrine $P:\ct{C}\op\ftr\ISL$  which is also \implicational, \disjunctive and \universal. A first order doctrine $P:\ct{C}\op\ftr\ISL$ is \dfn{boolean} if for every $A$ and $\alpha$ in $P(A)$ it holds $\tt_A=\alpha\lor\neg\alpha$ where $\neg\alpha$
 is short for $\alpha\rightarrow\bot$.
\end{definition}


\begin{exms}\label{runnings-fod}

 \noindent(a)
Consider a theory $\mathscr{T}$ over a first order language $\mathscr{L}$ with equality and the associated primary doctrine $LT:\ct{V}\op\ftr\ISL$  as in \ref{runnings}-(a). The doctrine $LT$ is first order. In each fibre joins are given by disjunctions while the cartesian closure is provided by the implication. Right adjoints are given by universal quantification. 

\noindent(b) Let $\HH$ be a frame. Then it is a complete Heyting algebra. Hence the doctrine $\PP_\HH:\Set\op\ftr\ISL$ is first order where in each fibre the Heyting algebra operations are computed point-wise and right adjoints are given by arbitrary infima (see \cite{PittsA:triir} for details).

\noindent(c) Suppose $\ct{C}$ has finite limits. 
The doctrine $\Sb{\ct{C}}:\ct{C}\op\ftr\ISL$ is first order if and only if $\ct{C}$ is an Heyting category (or a logos) in the sense of \cite{freyd1990categories}.
 
\noindent(d) 
Suppose $\ct{C}$ has finite limits. If $\ct{C}$ has finite coproducts and is weakly locally cartesian closed, then 
$\Wsb{\ct{C}}:\ct{C}\op\ftr\ISL$ as in \ref{runnings}-(d) is first order. We shall se later how to generalise the description to the case in which $\ct{C}$ is assumed to have weak pullbacks.
\end{exms}

We will often deal with different doctrines on the same base and in several situations one of these is $\Wsb{\ct{C}}$, thus we find it convenient to adopt a specific notation to distinguish operations  between doctrines and in particular operations in $\Wsb{\ct{C}}$. We will use the following.

\begin{Notation}\label{notation}
We write $\Wsb{\ct{C}}(f)$ as  $f^*$.
The left adjoint to $f^*$ will be denoted by $\Sigma_f$. If $f^*$ has a right adjoint this will be denoted by $\Pi_f$.  The equality predicated over $A$ will be denoted by $[\Delta_A]$. Binary meets in $\Wsb{\ct{C}}(A)$ will be denoted by $[f]\times_A[g]$ while the top element over $A$ will be denoted by $[\id{A}]$. Joins will be denoted by $[f]+_A[g]$. The bottom element will be $0_A$. We will freely confuse a class with any of its representatives.
\end{Notation}

\begin{definition}\label{predclass}
Let $P:\ct{C}\op\ftr\ISL$ a primary doctrine. An object $\Omega$ in $\ct{C}$ is a \dfn{weak predicate classifier} if there is  an element $\in_1$ over $\Omega$ such that for every $\phi$ in $P(A)$ there is a (not necessarily unique) morphism $\chi_\phi: A\to \Omega $ such that $\fp{\chi_\phi}(\in_1)=\phi$.

 $P$ has a \dfn{strong predicate classifier}, or simply a \dfn{predicate classifier}  if it has a weak predicate classifier and arrows of the form $\chi_\phi$ are unique.

$P$ has \dfn{weak power objects} if for every object $A$ in $\ct{C}$ there exists an object $\pow{A}$ in $\ct{C}$ and an object $\in_A$ in $P(A\times \pow{A})$ such that for every $Y$ and $\phi$ in $P(A\times Y)$ there is a (not necessarily unique) morphism $\chi_\phi:Y\to \pow{A}$ satisfying $\fp{\id{A}\times\chi_\phi}(\in_A)=\phi$. $P$ has \dfn{strong power objects} if it has a weak power objects and arrows of the form $\chi_\phi$ are unique.
\end{definition}

Observe the following relation between power objects and predicate classifiers. This is well known when $P$ is $\Sb{\ct{C}}$ for a finite limit category $\ct{C}$ and it can be proved analogously for a generic $P$:
\begin{prp}\label{power-pred}
If $P$ has (weak) power objects, then it has also a (weak) predicate classifier.
Moreover,  if the base $\ct{C}$ is weakly cartesian closed, then $P$ has a (weak) predicate classifier if and only if it has (weak) power objects.
\end{prp}

\begin{proof} It suffices to choose $\pow{1}$ as $\Omega$. If $\ct{C}$ is weakly cartesian closed, then $\pow{A}$ can be taken as any weak exponential of $\Omega$ to the $A$.
\end{proof}

\begin{exms}\label{runnings-predicatecl}

 \noindent(a)
The doctrine $LT:\ct{V}\op\ftr\ISL$ built out of a theory $\mathscr{T}$ over a first order language $\mathscr{L}$ has no predicate classifiers.

\noindent(b) Let $\HH$ be an inf-semilattice. The underlying set of $\HH$ is denoted $|\HH|$ and is a strong predicate classifier of  the functor $\PP_\HH$ where $\in_1$ is the identity $\id{|\HH|}$.

\noindent(c) Suppose $\ct{C}$ has finite limits. 
The doctrine $\Sb{\ct{C}}:\ct{C}\op\ftr\ISL$ as a predicate classifier if and only if it has a subobject classifier.
 
\noindent(d) 
Suppose $\ct{C}$ has finite limits, then $\Wsb{\ct{C}}$ has a weak predicate classifier if and only if $\ct{C}$ has a weak proof classifier in the sense of \cite{MenniM:chalec}.
\end{exms}

\begin{definition}\label{tripos}
A primary doctrine $P:\ct{C}\op\ftr\ISL$ is a tripos if $P$ is a first order doctrine with weak power objects. $P$ is a \dfn{\stripos} if it is a tripos with strong power objects.
\end{definition}

\begin{exms}\label{runnings-tripos}

 \noindent(a)
The doctrine $LT:\ct{V}\op\ftr\ISL$ built out of a theory $\mathscr{T}$ over a first order language $\mathscr{L}$ is not a tripos as it has no predicate classifiers.

\noindent(b) Let $\HH$ be an inf-semilattice. The doctrine $\PP_\HH$ as in \ref{runnings} is a tripos if and only if $\HH$ is a frame.

\noindent(c) Suppose $\ct{C}$ has finite limits. 
The doctrine $\Sb{\ct{C}}:\ct{C}\op\ftr\ISL$ is a strong tripos if and only if $\ct{C}$ is an elementary topos.
 
\noindent(d) 
Suppose $\ct{C}$ has finite limits, is weakly cartesian closed and has a weak proof classifier in the sense of \cite{MenniM:chalec}, then $\Wsb{\ct{C}}$ is a tripos.
\end{exms}

A proof of the following proposition is in \cite{TTT}. 
\begin{prp}\label{cdtrip}
If the doctrine $P$ is a tripos then $\X{P}$ is a tripos, too.
\end{prp}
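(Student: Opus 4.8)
The plan is to exploit the fact that passing from $P$ to $\X{P}$ changes only the base category while leaving the fibres and the reindexing essentially untouched: by Remark~\ref{freeadddiag} one has $\X{P}(A)=P(A)$ for every object $A$, and the reindexing of $\X{P}$ along $\ec{f}$ is the map $\fp{f}$ of $P$ (well defined on $\sim$-classes, as recorded there). Hence every piece of \emph{fibrewise} structure of $P$ is literally available in $\X{P}$, and the only genuine work is to compare the base $\ct{X}_P$ with $\ct{C}$. First I would record that the canonical functor $J\colon\ct{C}\to\ct{X}_P$, which is the identity on objects with $J(f)=\ec{f}$, is full --- the defining inequality $\delta_A\le\fp{f\times f}(\delta_B)$ is automatic, as one checks using the $\D_{\langle\id{B},\id{B}\rangle}\dashv\fp{\langle\id{B},\id{B}\rangle}$ adjunction --- and preserves finite products: the terminal object and the products of $\ct{C}$, with their projections, remain terminal and product in $\ct{X}_P$, uniqueness of mediating arrows up to the congruence being the computation already underlying Proposition~\ref{mthn}. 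In particular every projection of $\ct{X}_P$ is $\ec{\pr_i}$ for a projection $\pr_i$ of $\ct{C}$, and $\ec{f}\times\ec{g}=\ec{f\times g}$.

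I would then transfer the first order structure of Definition~\ref{fod}. Being implicational is purely fibrewise, so the implication $\alpha\Implies\beta$ computed in $P(A)$ serves in $\X{P}(A)$; being disjunctive is fibrewise finite joins together with their stability under reindexing, and both transfer verbatim since the fibres and the reindexing maps of $\X{P}$ coincide with those of $P$. For the existential and universal structure I would put $\D_{\ec{\pr_i}}:=\D_{\pr_i}$ and $\B_{\ec{\pr_i}}:=\B_{\pr_i}$; these are left, resp.\ right, adjoint to the reindexing $\fp{\pr_i}$ of $\X{P}$ because adjointness is a fibrewise statement. The delicate clause is the Beck--Chevalley condition: one must see that a pullback of a projection along an arbitrary arrow of $\ct{X}_P$ is the $J$-image of a pullback of a projection in $\ct{C}$ --- concretely, the square with vertices $X\times A'$, $X\times A$, $A'$, $A$ and sides $\ec{\pr'}$, $\ec{\pr}$, $\ec{\id{X}\times f}$, $\ec{f}$ is a pullback in $\ct{X}_P$ because $J$ preserves products (a competing cone is filled by the mediating arrow of $\ct{C}$, unique up to the congruence by the product universal property). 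Beck--Chevalley, and likewise Frobenius reciprocity, for $\X{P}$ along these squares are then exactly Beck--Chevalley and Frobenius for $P$. Together with Proposition~\ref{mthn}, which already gives that $\X{P}$ is elementary with comprehensive diagonals, this shows $\X{P}$ is a first order doctrine.

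It remains to produce the weak power objects of Definition~\ref{tripos}, and here the same data works: for an object $A$ take the object $\pow{A}$ of $\ct{C}$ and the element $\in_A\in P(A\times\pow{A})=\X{P}(A\times\pow{A})$ supplied by the tripos structure of $P$. Given $Y$ and $\phi\in\X{P}(A\times Y)=P(A\times Y)$, choose a classifying arrow $\chi_\phi\colon Y\to\pow{A}$ in $\ct{C}$; then $\ec{\chi_\phi}\colon Y\to\pow{A}$ satisfies, in $\X{P}$, the identity $\fp{\id{A}\times\chi_\phi}(\in_A)=\phi$, using $\ec{\id{A}}\times\ec{\chi_\phi}=\ec{\id{A}\times\chi_\phi}$ from the product-preservation of $J$. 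As weak power objects do not require uniqueness of the classifying arrow, nothing further is needed and $\X{P}$ is a tripos.

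The step I expect to be the main obstacle is the one just flagged: pinning down how the categorical structure of $\ct{X}_P$ relates to that of $\ct{C}$ through $J$ --- that $J$ is full, identity on objects and product-preserving, and hence that projections and the relevant pullbacks of projections in $\ct{X}_P$ are $J$-images of the corresponding data in $\ct{C}$. Once this is secured --- it is essentially the content of the construction behind Proposition~\ref{mthn} --- everything else is transport of fibrewise data across the equality of fibres $\X{P}(A)=P(A)$.
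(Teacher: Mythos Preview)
Your proposal is correct. The paper does not give its own proof of this proposition but defers to \cite{TTT}, so there is no in-paper argument to compare against; your direct verification---transporting the fibrewise first-order structure and weak power objects along the full, product-preserving, identity-on-objects functor $J\colon\ct{C}\to\ct{X}_P$, and checking that pullbacks of projections in $\ct{X}_P$ are $J$-images of the corresponding pullbacks in $\ct{C}$---is the natural and standard route.
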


Elementary doctrines are the cloven Eq-fibrations of
\cite{JacobsB:catltt} and, as explained in \loccit\ and \cite{Maietti-Rosolini16}, there is a
deductive calculus associated to them which is that of the
$\mathord\wedge\mathord=$-fragment over type theory with just a unit
type and a binary product type constructor. To fix notation, we will use the following.

\begin{Notation}\label{notation-logic} Let $P:\ct{C}\op\ftr\ISL$ be elementary. Write
$$a_1:A_1,\ldots,a_k:A_k\mid \phi_1,\ldots,
\phi_n\vdash \psi$$
in place of
$$\phi_1\Land\ldots\Land\phi_n\leq \psi\quad\mbox{ in } P(A_1\times\ldots\times A_k)$$
and call such an expression sequent. Note that, in line with \loccit,
$\delta_A$ in $P(A\times A)$ will be written as $a:A,a':A\mid a\eq{A}a'$. Also 
write $a:A\mid\alpha\dashv\vdash\beta$ to abbreviate 
$a:A\mid\alpha\vdash\beta$ and $a:A\mid\beta\vdash\alpha$. Say that $\alpha$ in $P(A)$ is true over $A$ if $\tt_A\le \alpha$. An element of $P(1)$ will be called sentence. An arrow $r:1\to A$ will be called constant (of type $A$) and for $\alpha$ in $P(A)$ we write $\alpha(r)$ in place of $\fp{r}(\alpha)$. If $P$ is existential and $a:A,x:X\mid \phi$,\ie $\phi$ is in $P(A\times X)$,  write $a:A\mid \exists_{x:X}\phi$ in place of $\D_{\pr_1}\phi$ in $P(A)$. Similarly when $P$ is first order, for $\alpha,\beta$ in $P(A)$ and $\phi$ in $P(A\times X)$ write $a:A\mid \alpha\lor\beta$ and  $a:A\mid \alpha\imply \beta$ and $a:A\mid \forall_{x:X}\phi$ in place of $\alpha\lor\beta$ and $\alpha\rightarrow\beta$ and $\B_{\pr_1}\phi$ in $P(A)$. If $P$ is a tripos  write $a:A,U:\pow{A}\mid a\in_AU$ in place of $\in_A$ in $P(A\times \pow{A})$.
\end{Notation}

From now on we feel free to employ this logical language in our proofs or definitions whenever we feel that readability is improved.

\subsection{\bf Comprehensions and strong monomorphisms}\label{strongy}

\begin{definition} A primary doctrine $P:\ct{C}\op\ftr\ISL$ is said to have \dfn{weak comprehensions} if for every $A$ in $\ct{C}$ and every $\alpha$ in $P(A)$ there is an arrow $\cmp{\alpha} :X \to A$ with $\tt_X=\fp{\cmp{\alpha}}(\alpha)$ such that for every arrow $f:Y\to A$ with $\tt_Y\le \fp{f}(\alpha)$ there is a (not necessarily unique) arrow $k:Y\to X$ with $\cmp{\alpha} k=f$.
We shall say that comprehensions are \dfn{strong} if the mediating arrow $k$ is the unique such arrow. We shall say that comprehensions are \dfn{full} if for every $A$ and every $\alpha,\beta$ over $A$ it holds that $\alpha\le \beta$ if and only if $\tt_X=\fp{\cmp{\alpha}}(\beta)$.

\end{definition}

An arrow of the form $\cmp{\alpha}$ will be often called comprehension arrow of $\alpha$.

\begin{exms}\label{runnings-predicatecl2}

 \noindent(a)
The doctrine $LT:\ct{V}\op\ftr\ISL$ built out of a theory $\mathscr{T}$ over a first order language $\mathscr{L}$ has no comprehensions.

\noindent(b) When $\HH$ is an inf-semilattice, the doctrine $\PP_\HH$ has strong comprehensions. Given $\alpha$ in $\PP_\HH(A)$ the arrow $\cmp{\alpha}$ is the inclusion of $\{a\in A\mid \alpha(a)=\top\}$ into $A$. This comprehension is not full as one can see taking $\alpha':A\to \HH$ that agrees with $\alpha$ only on those elements $a$ of $A$ such that $\alpha(a)=\top$. Then $\alpha$ and $\alpha'$ have the same comprehension arrow, even if they are not necessarily equivalent in the fibre.

\noindent(c) Suppose $\ct{C}$ has finite limits. 
The doctrine $\Sb{\ct{C}}:\ct{C}\op\ftr\ISL$ as full strong comprehensions. The comprehension $\alpha$ in $\Sb{\ct{C}}(A)$ is any representative of $\alpha$. 
 
\noindent(d) 
Suppose $\ct{C}$ has finite products and weak pullbacks. The doctrine $\Wsb{\ct{C}}:\ct{C}\op\ftr\ISL$ has full weak comprehensions. The comprehension $\alpha$ in $\Sb{\ct{C}}(A)$ is any representative of $\alpha$ (that need not be monic). \end{exms}

\begin{prp}\label{fullfull} If $P$ is elementary existential with weak comprehensions. Weak comprehensions are full if and only if for every $\cmp{\alpha}:X\to A$ it holds $\D_{\cmp{\alpha}}(\tt_X)=\alpha$.
\end{prp}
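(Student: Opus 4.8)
The plan is to prove both implications by a direct computation with the adjunction $\D_{\cmp{\alpha}}\dashv \fp{\cmp{\alpha}}$ and the defining universal property of weak comprehension. Throughout I will use that, since $P$ is existential, $\D_{\cmp{\alpha}}$ exists for the comprehension arrow $\cmp{\alpha}:X\to A$ by Remark~\ref{aggiuntigenerali}, and that $\tt_X=\fp{\cmp{\alpha}}(\alpha)$ by definition of weak comprehension.

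For the implication ``$(\Rightarrow)$'', assume weak comprehension is full. First note that from $\tt_X=\fp{\cmp{\alpha}}(\alpha)$ and the adjunction we get $\D_{\cmp{\alpha}}(\tt_X)\le\alpha$ directly, by transposing. For the reverse inequality $\alpha\le\D_{\cmp{\alpha}}(\tt_X)$, I would apply fullness with the pair $\alpha$ and $\beta:=\D_{\cmp{\alpha}}(\tt_X)$ over $A$: it suffices to check $\tt_X=\fp{\cmp{\alpha}}(\D_{\cmp{\alpha}}(\tt_X))$. The inequality $\tt_X\le\fp{\cmp{\alpha}}(\D_{\cmp{\alpha}}(\tt_X))$ is the unit of the adjunction, and the reverse holds because $\fp{\cmp{\alpha}}(\D_{\cmp{\alpha}}(\tt_X))\le\fp{\cmp{\alpha}}(\alpha)=\tt_X$ using the already-established $\D_{\cmp{\alpha}}(\tt_X)\le\alpha$ and monotonicity of $\fp{\cmp{\alpha}}$. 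Hence $\tt_X=\fp{\cmp{\alpha}}(\beta)$, so fullness yields $\alpha\le\beta=\D_{\cmp{\alpha}}(\tt_X)$, and we conclude $\D_{\cmp{\alpha}}(\tt_X)=\alpha$.

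For the implication ``$(\Leftarrow)$'', assume $\D_{\cmp{\alpha}}(\tt_X)=\alpha$ for every comprehension arrow, and fix $\alpha,\beta$ over $A$; we must show $\alpha\le\beta$ iff $\tt_X=\fp{\cmp{\alpha}}(\beta)$ where $\cmp{\alpha}:X\to A$. If $\alpha\le\beta$ then $\tt_X=\fp{\cmp{\alpha}}(\alpha)\le\fp{\cmp{\alpha}}(\beta)$, giving equality. Conversely, if $\tt_X=\fp{\cmp{\alpha}}(\beta)$, transpose along the adjunction to get $\D_{\cmp{\alpha}}(\tt_X)\le\beta$, and by hypothesis the left-hand side is $\alpha$, so $\alpha\le\beta$. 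This establishes fullness.

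The computations are entirely routine once one has $\D_{\cmp{\alpha}}$ available; the only point requiring slight care is the existence of $\D_{\cmp{\alpha}}$, i.e. that the existential hypothesis on $P$ genuinely supplies left adjoints along arbitrary arrows and not merely along projections — but this is exactly Remark~\ref{aggiuntigenerali}. I do not anticipate a serious obstacle; the main thing to get right is the symmetric use of the two triangle identities (unit and counit) in the two directions of the biconditional.
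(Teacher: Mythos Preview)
Your argument is correct and follows essentially the same route as the paper's proof: both directions use the adjunction $\D_{\cmp{\alpha}}\dashv\fp{\cmp{\alpha}}$ together with the defining equation $\tt_X=\fp{\cmp{\alpha}}(\alpha)$, applying the unit to get $\alpha\le\D_{\cmp{\alpha}}(\tt_X)$ via fullness and the counit (equivalently, transposition) for the converse. The only difference is that you spell out explicitly why $\tt_X\le\fp{\cmp{\alpha}}(\D_{\cmp{\alpha}}(\tt_X))$ is an equality, whereas the paper leaves this implicit since $\tt_X$ is top.
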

\begin{proof} From $\tt_X=\fp{\cmp{\alpha}}(\alpha)$ it follows $\D_{\cmp{\alpha}}(\tt_X)\le \alpha$. Instead $\alpha\le\D_{\cmp{\alpha}}(\tt_X)$ follows by fullness from the adjunction unit
  $\tt_X\le\fp{\cmp{\alpha}}(\D_{\cmp{\alpha}}(\tt_X))$.
 
  Conversely, if $\D_{\cmp{\alpha}}(\tt_X)=\alpha$ from  $\tt_X=\fp{\cmp{\alpha}}(\beta)$
  it follows $\alpha=\D_{\cmp{\alpha}}(\tt_X)=\D_{\cmp{\alpha}}\fp{\cmp{\alpha}}(\beta)\le \beta$ by the adjunction counit.
\end{proof}

We recall from   \cite{MaiettiME:eleqc}  the following definition:
\begin{definition}\label{originidelnome}
An elementary doctrine $P:\ct{C}\op\ftr\ISL$ has \dfn{ comprehensive diagonals} if and only if diagonals in $\ct{C}$ are the strong comprehension arrows of the corresponding fibered equalities, i.e. $<id_A,id_A>=\cmp{\delta_A}$.
\end{definition}
 Comprehensive diagonals were  introduced originally  in   \cite{MaiettiME:quofcm}  with the name of ``comprehensive equalizers"  since the following holds:
 \begin{prp}
 Let $P$ be an  elementary doctrine $P:\ct{C}\op\ftr\ISL$. The following are equivalent:
 \begin{enumerate}
 \item $P$  has  comprehensive diagonals
 \item $P$ is extensional 
\end{enumerate}
\end{prp}
\begin{proof} See prop 4.6 of \cite{MaiettiME:quofcm}.
\end{proof}

 In \cite{MPR}, taking the terminology from \cite{GrandisM:weasec}, we called \dfn{\variational} the doctrines with comprehensive diagonals and full weak comprehensions. While we called \dfn{\sds} those \wsds in which comprehension is strong (this is motivated by the fact a comprehension arrow is monic if and only if it is a strong comprehension arrow).

Existential \variational doctrines form the class of doctrines which we will be mainly concerned with throughout this paper. They form the category \EVar which is the full subcategory of \EEx on those doctrines that are also existential.  We denote by $\EMVar$ the subcategory of \EVar on \mvariational doctrines and on those morphisms of doctrine that  preserves strong comprehension. 

We aim at giving a characterisation of existential \variational doctrines in \ref{abcde}. To do this we first need some instrumental definitions and propositions.

\begin{prp}\label{brescello} If $P:\ct{C}\op\ftr\ISL$ is \variational (resp. \mvariational) then $\ct{C}$ has weak finite limits (resp. finite limits).
\end{prp}
\begin{proof} The equalizer of $f,g:X\to A$ is $\cmp{\fp{<f,g>}(\delta_A)}$ which is only weak if $P$ has only weak comprehensions.
\end{proof}

\begin{prp}\label{bcbello} If $P$ is existential \variational, then for every weak pullback $fg=hk$ one has $\fp{f}\D_h=\D_g\fp{k}$.
\end{prp}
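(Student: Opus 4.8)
The plan is to establish the two inequalities $\D_{g}\fp{k}\le\fp{f}\D_{h}$ and $\fp{f}\D_{h}\le\D_{g}\fp{k}$ separately; the first holds for any commuting square, and only the second uses the weak pullback property together with fullness. Throughout I use that, $P$ being elementary existential, every left adjoint $\D_{(-)}$ exists and satisfies Frobenius reciprocity by Remark~\ref{aggiuntigenerali}. For the first inequality, by the adjunction $\D_{g}\dashv\fp{g}$ it suffices to show $\fp{k}(\beta)\le\fp{g}\fp{f}\D_{h}(\beta)$ for $\beta$ in $P(B)$; since $fg=hk$ we have $\fp{g}\fp{f}=\fp{k}\fp{h}$, so this is just $\fp{k}$ applied to the unit $\beta\le\fp{h}\D_{h}(\beta)$ of $\D_{h}\dashv\fp{h}$.

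For the converse I would first put $\fp{f}\D_{h}(\beta)$ into workable form. By Remark~\ref{aggiuntigenerali}, $\D_{h}(\beta)=\D_{p_{2}}\bigl(\fp{h\times\id{C}}(\delta_{C})\wedge\fp{p_{1}}(\beta)\bigr)$, where $p_{1}\colon B\times C\to B$ and $p_{2}\colon B\times C\to C$ are the projections. Now $A\times B$ is a genuine pullback in $\ct{C}$ of the projection $p_{2}$ along $f\colon A\to C$, with legs the projection $\pi\colon A\times B\to A$ and the map $\phi:=\ple{\pi',\,f\pi}\colon A\times B\to B\times C$, where $\pi'\colon A\times B\to B$ is the other projection; since $p_{2}$ and $\pi$ are projections, the Beck--Chevalley condition of the existential structure gives $\fp{f}\D_{p_{2}}=\D_{\pi}\fp{\phi}$. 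Pushing $\fp{\phi}$ through the meet and simplifying, using $\fp{\phi}\fp{h\times\id{C}}(\delta_{C})=\fp{\ple{h\pi',\,f\pi}}(\delta_{C})=:\theta$ in $P(A\times B)$ and $\fp{\phi}\fp{p_{1}}(\beta)=\fp{\pi'}(\beta)$, one obtains
$$\fp{f}\D_{h}(\beta)=\D_{\pi}\bigl(\theta\wedge\fp{\pi'}(\beta)\bigr).$$
By the adjunction $\D_{\pi}\dashv\fp{\pi}$ it therefore remains to prove $\theta\wedge\fp{\pi'}(\beta)\le\fp{\pi}\D_{g}\fp{k}(\beta)$ in $P(A\times B)$.

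This is the step where the weak pullback enters. Take a comprehension arrow $\cmp{\theta}\colon E\to A\times B$ of the ``compatibility predicate'' $\theta$ and set $u:=\pi\cmp{\theta}$ and $v:=\pi'\cmp{\theta}$. By construction $\tt_{E}=\fp{\cmp{\theta}}(\theta)=\fp{\ple{hv,\,fu}}(\delta_{C})$, so $hv=fu$ by comprehensive diagonals; hence $(E,u,v)$ is a cone over $f\colon A\to C$ and $h\colon B\to C$ and, since $D$ is a weak pullback of this pair, there is some (not necessarily unique) $w\colon E\to D$ with $gw=u$ and $kw=v$. Because comprehension is full, $\D_{\cmp{\theta}}(\tt_{E})=\theta$ by Proposition~\ref{fullfull}, so Frobenius reciprocity yields
$$\theta\wedge\fp{\pi'}(\beta)=\D_{\cmp{\theta}}(\tt_{E})\wedge\fp{\pi'}(\beta)=\D_{\cmp{\theta}}\bigl(\fp{\cmp{\theta}}\fp{\pi'}(\beta)\bigr)=\D_{\cmp{\theta}}\bigl(\fp{v}(\beta)\bigr);$$
so, by the adjunction $\D_{\cmp{\theta}}\dashv\fp{\cmp{\theta}}$, the inequality to be shown reduces to $\fp{v}(\beta)\le\fp{u}\D_{g}\fp{k}(\beta)$ in $P(E)$. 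Finally $\fp{v}(\beta)=\fp{w}\fp{k}(\beta)\le\fp{w}\fp{g}\D_{g}\fp{k}(\beta)=\fp{u}\D_{g}\fp{k}(\beta)$, using $kw=v$, $gw=u$ and the unit of $\D_{g}\dashv\fp{g}$. Combined with the first inequality, this gives $\fp{f}\D_{h}=\D_{g}\fp{k}$.

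The only real obstacle is the middle step: one has to notice that the non-uniqueness of the factorisation $w$ through the weak pullback is harmless here, because after comprehending $\theta$ what is required is only the \emph{existence} of a factorisation of the cone $(E,u,v)$ through $D$. Everything else is bookkeeping: the explicit formula for $\D_{h}$ from Remark~\ref{aggiuntigenerali}, the realisation of $A\times B$ as a genuine pullback of the projection $p_{2}$, and the Beck--Chevalley and Frobenius laws already present in the doctrine.
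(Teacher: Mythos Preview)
Your proof is correct. The paper does not actually prove this proposition but simply cites it as theorem~2.19 of \cite{MPR}, so there is no in-paper argument to compare against; your argument is a clean self-contained version of the standard one, using the explicit description of $\D_h$ from Remark~\ref{aggiuntigenerali}, a genuine pullback to invoke the projection Beck--Chevalley condition, and then full comprehension plus comprehensive diagonals to reduce to the existence of a factorisation through the weak pullback. One small point worth making explicit: you tacitly assume that $P$ is elementary and existential (so that $\D_h$, $\D_g$, $\D_{\cmp{\theta}}$ exist and satisfy Frobenius via Remark~\ref{aggiuntigenerali}); the proposition as written does not state this, but it is clearly the ambient hypothesis throughout the paper and is required even to parse the conclusion.
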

\begin{proof}This is theorem 2.19 in \cite{MPR}.
\end{proof}

Suppose that $P:\ct{C}\op\ftr\ISL$ has weak comprehensions. For every $f:Y\to B$ in $\ct{C}$ and every $\alpha$ in $P(B)$ there is a commutative square 
\[
\xymatrix{
X\ar[d]_-{\cmp{\fp{f}(\alpha)}}\ar[r]^-{m}&A\ar[d]^-{\cmp{\alpha}}\\
Y\ar[r]_-{f}&B
}
\]
where $m$ is the mediating arrow coming from the universal property of $\cmp{\alpha}$ as $\fp{f\cmp{\fp{f}(\alpha)}}(\alpha)=\fp{\cmp{\fp{f}(\alpha)}}(\fp{f}(\alpha))=  \tt_X$.

\begin{prp}\label{ok!}If $k:Z\to A$ and $h:Z\to Y$ are such that $\cmp{\alpha}k=fh$, then $h$ factors through $\cmp{\fp{f}(\alpha)}$. If $\cmp{\alpha}$ is a strong comprehension, the square above is a weak pullback. If both $\cmp{\alpha}$ and $\cmp{\fp{f}(\alpha)}$ are strong comprehensions the square is a pullback.\end{prp}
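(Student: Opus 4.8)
The plan is to verify the three assertions of Proposition~\ref{ok!} in order, using only the universal properties of weak comprehension and the observation (already recorded in the excerpt) that a weak comprehension is strong precisely when its comprehension arrow is monic. Throughout, recall that the mediating arrow $m$ making the square commute is exactly the one supplied by the universal property of $\cmp{\alpha}$ applied to $f\cmp{\fp{f}(\alpha)}:X\to B$, legitimate because $\fp{f\cmp{\fp{f}(\alpha)}}(\alpha)=\fp{\cmp{\fp{f}(\alpha)}}(\fp{f}(\alpha))=\tt_X$.

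First I would prove the factorisation claim. Given $k:Z\to A$ and $h:Z\to Y$ with $\cmp{\alpha}k=fh$, compute $\fp{h}(\fp{f}(\alpha))=\fp{fh}(\alpha)=\fp{\cmp{\alpha}k}(\alpha)=\fp{k}(\fp{\cmp{\alpha}}(\alpha))=\fp{k}(\tt_A)=\tt_Z$. Hence $\tt_Z\le\fp{h}(\fp{f}(\alpha))$, so the universal property of $\cmp{\fp{f}(\alpha)}:X\to Y$ yields an arrow $u:Z\to X$ with $\cmp{\fp{f}(\alpha)}u=h$; this is the desired factorisation. Next, for the weak-pullback claim, assume $\cmp{\alpha}$ is a strong (hence monic) comprehension. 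Starting from the same data $(k,h)$, take the arrow $u:Z\to X$ just produced; then $mu$ and $k$ are two arrows $Z\to A$ with $\cmp{\alpha}(mu)=f(\cmp{\fp{f}(\alpha)}u)=fh=\cmp{\alpha}k$, so monicity of $\cmp{\alpha}$ gives $mu=k$. Together with $\cmp{\fp{f}(\alpha)}u=h$ this shows $u$ is a (not necessarily unique) filler, i.e.\ the square is a weak pullback. Finally, if both $\cmp{\alpha}$ and $\cmp{\fp{f}(\alpha)}$ are strong, then $\cmp{\fp{f}(\alpha)}$ is monic too, which forces the filler $u$ to be unique: any other filler $u'$ satisfies $\cmp{\fp{f}(\alpha)}u'=h=\cmp{\fp{f}(\alpha)}u$, hence $u'=u$. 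So the square is an honest pullback.

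I expect no serious obstacle here — the whole argument is a diagram chase through the universal properties, with monicity doing the work of turning "weak" into "strong". The one point that needs a little care is making sure the mediating arrow $m$ in the displayed square is genuinely the one coming from $\cmp{\alpha}$'s universal property (so that $\cmp{\alpha}m=f\cmp{\fp{f}(\alpha)}$ holds on the nose), rather than some other arrow; once that is fixed, the equalities $mu=k$ and uniqueness of $u$ fall out immediately from the relevant monicity. It is also worth noting explicitly that strong comprehension arrows are monic, which is the "easy check" remarked on just after the definition of weak comprehension, and which is the only extra fact invoked beyond the universal property itself.
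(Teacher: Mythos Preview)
Your proof is correct and follows essentially the same route as the paper's: compute $\fp{h}(\fp{f}(\alpha))=\tt_Z$ to obtain the factoring arrow $u$ via weak universality of $\cmp{\fp{f}(\alpha)}$, then use monicity of $\cmp{\alpha}$ to force $mu=k$, and finally monicity of $\cmp{\fp{f}(\alpha)}$ for uniqueness. The only cosmetic difference is that you spell out the intermediate equalities in the chain $\fp{h}(\fp{f}(\alpha))=\tt_Z$ a little more explicitly than the paper does.
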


\begin{proof}Consider $k$ and $h$  such that $\cmp{\alpha}k=fh$. Then $\fp{h}(\fp{f}(\alpha))=\fp{k}(\fp{\cmp{\alpha}}(\alpha))=\tt_Z$. Weak universality of $\cmp{\fp{f}(\alpha)}$ guarantees the existence of $u:Z\to X$ with $\cmp{\fp{f}(\alpha)}u=h$. If $\cmp{\alpha}$ is strong, then it is also monic. Hence $mu=k$ if and only if $\cmp{\alpha}mu=\cmp{\alpha}k$, which is true as  $\cmp{\alpha}mu= f\cmp{\fp{f}(\alpha)}u=fh=\cmp{\alpha}k$.  Finally if $\cmp{\fp{f}(\alpha)}$ is monic the mediating arrow $u$ is necessarily unique.
\end{proof}

If $P$ is elementary, we say that an arrow $f:X\to A$ of $\ct{C}$ is  \dfn{\inj{P}} if $\fp{f\times f}(\delta_A)=\delta_X$. While if $P$ is also existential we say that  $f$ is \dfn{\surj{P}} if
$\D_f(\tt_X)=\tt_A$.

It is immediate to show that if $P$ has comprehensive diagonals, if an arrow is \inj{P}, then it is also monic and if an arrow is \surj{P}, then it is also epic. 
Observe the following about monics:

\begin{prp}\label{mono} Suppose $P$ is \variational  on $\ct{C}$. An arrow $m:X\ftr A$  is \inj{P} if and only if it is monic.
\end{prp}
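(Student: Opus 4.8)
The statement is a biconditional whose two halves are of quite different difficulty. The implication from \inj{P} to monic needs nothing new: it is recorded in the sentence just before the Proposition that, since a variational doctrine has comprehensive diagonals, every \inj{P} arrow is monic. So I would devote the whole argument to the converse, namely that every monomorphism $m:X\to A$ is \inj{P}.

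For that, one inequality comes for free: $\delta_X\le\fp{m\times m}(\delta_A)$ holds for \emph{any} arrow $m$. Indeed, from $(m\times m)\ple{\id{X},\id{X}}=\ple{\id{A},\id{A}}\,m$ one gets $\fp{\ple{\id{X},\id{X}}}\fp{m\times m}(\delta_A)=\fp{m}\fp{\ple{\id{A},\id{A}}}(\delta_A)=\fp{m}(\tt_A)=\tt_X$, and transposing along $\D_{\ple{\id{X},\id{X}}}\dashv\fp{\ple{\id{X},\id{X}}}$, using the explicit formula $\D_{\ple{\id{X},\id{X}}}(\tt_X)=\fp{\pr_1}(\tt_X)\Land\delta_X=\delta_X$, yields $\delta_X\le\fp{m\times m}(\delta_A)$. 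So everything reduces to the reverse inequality $\fp{m\times m}(\delta_A)\le\delta_X$, and this is the only place monicity is used.

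To obtain it I would view $\fp{m\times m}(\delta_A)$ through its comprehension arrow and then push that arrow into a diagonal. Feed the square of Proposition~\ref{ok!} with the arrow $m\times m:X\times X\to A\times A$ and the predicate $\delta_A\in P(A\times A)$; comprehensive diagonals let us take $\cmp{\delta_A}=\ple{\id{A},\id{A}}$, so the square becomes
\[
\xymatrix@C=4.5em@R=2em{
X'\ar[d]_-{\cmp{\fp{m\times m}(\delta_A)}}\ar[r]^-{\mu}&A\ar[d]^-{\ple{\id{A},\id{A}}}\\
X\times X\ar[r]_-{m\times m}&A\times A
}
\]
and commutes, with $X'$ the domain of $\cmp{\fp{m\times m}(\delta_A)}$. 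Composing both sides with the two projections $A\times A\to A$ gives $m\circ\bigl(\pr_i\,\cmp{\fp{m\times m}(\delta_A)}\bigr)=\mu$ for $i=1,2$, so monicity of $m$ forces the two components of $\cmp{\fp{m\times m}(\delta_A)}$ to coincide, say both equal $c:X'\to X$, whence $\cmp{\fp{m\times m}(\delta_A)}=\ple{c,c}=\ple{\id{X},\id{X}}\circ c$. Hence
\[
\fp{\cmp{\fp{m\times m}(\delta_A)}}(\delta_X)=\fp{c}\,\fp{\ple{\id{X},\id{X}}}(\delta_X)=\fp{c}(\tt_X)=\tt_{X'},
\]
the middle equality being the defining property $\tt=\fp{\cmp{\delta_X}}(\delta_X)$ of the comprehension arrow $\cmp{\delta_X}=\ple{\id{X},\id{X}}$. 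Finally, fullness of weak comprehension applied to the predicate $\delta_X$ turns this into $\fp{m\times m}(\delta_A)\le\delta_X$; combined with the free inequality above, this gives $\fp{m\times m}(\delta_A)=\delta_X$, \ie $m$ is \inj{P}.

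The one point I would watch is the use made of Proposition~\ref{ok!}: it produces only a \emph{weak} pullback, which would not be enough were one trying to identify $X'$ with $X$, but here the square is used purely for its commutativity and the whole force comes from $m$ being monic. Everything else is routine calculus with the elementary and comprehension structure, the facts worth isolating being $\fp{\ple{\id{A},\id{A}}}(\delta_A)=\tt_A$ and the reading $\cmp{\delta_A}=\ple{\id{A},\id{A}}$ of comprehensive diagonals recorded in Remark~\ref{originidelnome}.
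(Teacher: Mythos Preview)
Your proof is correct and takes a genuinely different route from the paper's. The paper observes that when $m$ is monic the commuting square $\ple{\id{A},\id{A}}\,m=(m\times m)\ple{\id{X},\id{X}}$ is a \emph{pullback}, and then applies the Beck--Chevalley property of Proposition~\ref{bcbello} to this (weak) pullback to compute directly
\[
\delta_X=\D_{\ple{\id{X},\id{X}}}(\tt_X)=\D_{\ple{\id{X},\id{X}}}\fp{m}(\tt_A)=\fp{m\times m}\D_{\ple{\id{A},\id{A}}}(\tt_A)=\fp{m\times m}(\delta_A).
\]
You instead split the equality into its two inequalities: the free one via the adjunction $\D_{\ple{\id{X},\id{X}}}\dashv\fp{\ple{\id{X},\id{X}}}$, and the substantive one by taking a weak comprehension of $\fp{m\times m}(\delta_A)$, using monicity of $m$ to force its two components to coincide so that it factors through $\ple{\id{X},\id{X}}$, and then invoking \emph{fullness} to conclude $\fp{m\times m}(\delta_A)\le\delta_X$. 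The paper's argument is slicker but leans on the cited Proposition~\ref{bcbello}; your argument is more self-contained and makes the role of fullness of comprehension explicit. A small remark: the commuting square you invoke is the one displayed just \emph{before} Proposition~\ref{ok!}, and you use only its commutativity, not any pullback property---which you correctly flag in your final paragraph.
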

\begin{proof}
If $m: X\rightarrow A $ is \inj{P} then it is clearly monic thanks to comprehensive diagonals.  Conversely, if $m$ is monic the square $<\id{A},\id{A}>m=(m\times m)<\id{X},\id{X}>$ is a pullback. Since $P$ has comprehensive diagonals,   and the equalizer of $m$ with itself is
 $\cmp{\fp{<m,m>}(\delta_A)}$ by prop.~\ref{brescello}, it follows that $<\id{X},\id{X}>= \cmp{\fp{<m,m>}(\delta_A)}$  and since $<id_X,id_X>=\cmp{\delta_X}$ 
 by fullness of comprehensions we conclude $\fp{<m,m>}(\delta_A)=\delta_X$, i.e. $m$ is \inj{P}.
\end{proof}

Let $P:\ct{C}\op\ftr\ISL$ be an elementary existential doctrine. Using the language in \ref{notation-logic}, a $F$ in $P(A\times B)$  is
\begin{description}
\item[total (or entire)] if $\exists_{b:B} F(a,b)$ is true over $A$
\item[single-valued (or functional)] if $a:A,b:B,b':B\mid  F(a,b)\wedge F(a,b')\vdash b\eq{B}b'$
\end{description}

\begin{definition}
We say that $P$ satisfies 
  the \dfn{rule of unique choice} \ruc if for every total and single-valued $F$ in $P(A\times B)$ there is $f:A\to B$ such that $a:A,b:B\mid F(a,b)\dashv\vdash f(a)\eq{B}b$.
  \end{definition}
  \begin{definition}
   We say that $P$ satisfies 
  the \dfn{rule of choice} \rc if for every total  $F$ in $P(A\times B)$ there is $f:A\to B$ such that $a:A\mid \exists_{b:B}F(a,b)\dashv\vdash F(a,f(a))$.
  \end{definition}

\begin{prp}\label{RUCiso} Suppose $P$ is an existential \mvariational doctrine on $\ct{C}$. The following are equivalent:
\begin{enumerate}
\item Every arrow which is both \inj{P} and \surj{P} is an isomorphism;
 \item $P$ satisfies \ruc;
\item $P$ is equivalent to  the subobject doctrine $\Sb{\ct{C}}$.
\end{enumerate}
\end{prp}
\begin{proof}
To prove the equivalence between $(1)$ and $(2)$
we use the notation in \ref{notation-logic}. 
 If $(1)$ holds,  take $F$ in $P(A\times B)$ and consider $\cmp{F}:X\to A\times B$. If $F$ is total and single-valued. Then $\pr_1\cmp{F}:X\to A$ is \inj{P} and \surj{P} so it has an inverse $h$. The arrow $\pr_1h:A\to B$ is the desired arrow. 
Conversely, suppose \ruc holds and take a \inj{P} and \surj{P} arrow $f:A\to B$. The formula $b:B,a:A\mid b\eq{B}f(a)$ is entire and single-valued. By \ruc there is $k:B\to A$ which is the inverse of $f$. 
The equivalence  between $(2)$ and $(3)$ follows from point $(i)$ of theorem 2.7 
in \cite{LMCS}.
\end{proof}

Recall also from theorem 5.9 in \cite{MPR} the following
\begin{prp}\label{RC} Suppose $P$ is an existential \variational doctrine on $\ct{C}$. 
  $P$ satisfies \rc if and only if $P$ is $\Wsb{\ct{C}}$.
\end{prp}

\begin{prp}\label{aggiuntigenerali2}
Let $P:\ct{C}\op\ftr\ISL$ be an existential \variational doctrine, 
if every $\fp{f}$ has a right
adjoint, then $P$ is \implicational.  
\end{prp}

\begin{proof}
See \cite[Lemma~4.9]{MaiettiME:quofcm}.
\end{proof}

\begin{prp}\label{inherit}
Suppose $P:\ct{C}\op\ftr\ISL$ is existential  \variational doctrine.
If $\Wsb{\ct{C}}$ is \universal and \implicational,
then $P$ is a first order doctrine.
\end{prp}

\begin{proof} Proposition~\ref{brescello}~(ii) ensures that $\ct{C}$ has weak
pullbacks. Hence, from \cite[Proposition~2.3]{LMCS}, see also
\cite[Remark~2.10]{MPR} (and using the notation introduced in \ref{notation}), it follows immediately that
the universal quantifier of $\alpha$ in $P(A)$ along $f: A\to B $ is
$\exists_{\Pi_f(\cmp{\alpha})}\tt_X$.
The fact that $P$ is \implicational follows from
Proposition~\ref{aggiuntigenerali2}.
\end{proof}

Primary doctrines with full strong comprehensions form the category $\CED$, the 2-full sub category of the category of elementary doctrines \EH  whose arrows are those arrow $(F,b):P\to R$ in \EH that preserve comprehensions, \ie for every $A$ and $\alpha$ in $P(A)$ the arrow   
$F\cmp{\alpha}$ is isomorphic to $\cmp{b_A\alpha}$.

We recall from  \cite{MaiettiME:quofcm} \cite{MaiettiME:eleqc} that
there is a left biadjoint to the forgetful 2-functor from \EH to \CED. which
associates to an elementary doctrine 
 $P:\ct{C} \op\ftr\ISL$ the elementary doctrine with full strong comprehensions  the doctrine $\P{P}: \P{\ct{C}}\op\ftr\ISL$ whose description is as follows.

 \begin{definition}\label{compre}[comprehensions completion] Let $P$ be an elementary doctrines on $\ct{C}$. The doctrine $\P{P}: \P{\ct{C}}\op\ftr\ISL$
  obtained by freely adding full comprehensions to $P$ has a base $\P{\ct{C}}$, whose objects
 are pairs $(A,\alpha)$ where $\alpha$ is in $P(A)$ and an arrow $f:(A,\alpha)\to (B,\beta)$ is an arrow $f:A\to B$ in $\ct{C}$ such that $\alpha\le\fp{f}(\beta)$. $\P{P}$ maps each $(A,\alpha)$ to $\P{P}(A,\alpha)=\{\phi\in P(A)\mid \phi\le\alpha\}$ and each $f:(A,\alpha)\to(B,\beta)$ to the function $\P{P}(f):\P{P}(B,\beta)\to \P{P}(A,\alpha)$ determined by the assignment $\psi\mapsto \fp{f}(\psi)\wedge\alpha$. For $\phi$ in $\P{P}(A,\alpha)$ it is $\cmp{\phi}=\id{A}:(A,\phi)\to (A,\alpha)$.   
\end{definition}

The adjoint situation described in \ref{compre} and in \ref{freeadddiag} compose to give the following 

\begin{equation}\label{diacomp}
\xymatrix{
\EEE\ar@<+1.2ex>[rr]\ar@{{}{}{}}[rr]|-\bot&&\EMVar\ar@<+1.2ex>@{_(->}[ll]
}
\end{equation}

\noindent
\subsubsection{\bf Factorization systems from doctrines} 
We say that a pair  $(\ct{E},\ct{R})$ of  classes of arrows of the
category $\ct{C}$ is a \dfn{\rwfs} when
\begin{itemize}
\item[(i)] for every $f$ in $\ct{C}$ there is $e$ in $\ct{E}$ and $r$ in $\ct{R}$ with $f=re$
\item[(ii)] for every commutative square
\[\xymatrix{A\ar[r]^-{f}\ar[d]_-{e}&X\ar[d]^-{r}\\
B\ar[r]^-{g}&Y}\]
with $e$ in $\ct{E}$ and $r$ in $\ct{R}$ there is an arrow $k$ filling
in the commutative diagram
\[\xymatrix{A\ar[r]^-{f}\ar[d]_-{e}&X\ar[d]^-{r}\ar[dl]_-{k}\\
B\ar[r]^-{g}&Y}\]
\end{itemize}
A \rwfs is \dfn{stable} if any weak pullback of an arrow of $\ct{E}$ is
in $\ct{E}$. 
A proper factorisation system $(\ct{E},\ct{R})$ in the sense of \cite{FreydP:catcf} is a
\rwfs such that every arrow in $\ct{R}$ is monic and every arrow in $\ct{E}$ is epic.
Every proper stable factorisation system $(\ct{E},\ct{R})$ on a category $\ct{C}$ with
pullbacks gives rise to an existential \mvariational doctrine
$\ct{R}_\ct{C}:\ct{C}\op\to\ISL$ where $\ct{R}_\ct{C}$ is the sub-infsemilattice of
$\Sub{\ct{C}}(A)$ on those arrows represented by arrows in $\ct{R}$ \cite{HughesJ:facsft}. Analogously to what shown in  \loccit and observed in \cite{Maietti-Rosolini16}, we can show that
category of existential \mvariational doctrines is equivalent
to the category of proper stable factorisation systems.
More precisely,

\begin{prp}\label{facsis}
In every \sd comprehension arrows and \surj{P} arrows form a factorization system. Moreover, the category of existential \mvariational doctrines is equivalent to the category of proper stable factorisation systems where diagonals are in the right class.
\end{prp}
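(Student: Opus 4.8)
The plan is to prove the statement in two stages. First I would establish that in any \sd $P:\ct{C}\op\ftr\ISL$ the pair $(\ges,\ges')$, where $\ges$ is the class of \surj{P} arrows and $\ges'$ the class of (strong) comprehension arrows, is a \rwfs---in fact a proper stable factorisation system with diagonals in the right class. The factorisation of an arbitrary $f:A\to B$ is obtained by comprehending the direct image: set $\cmp{\D_f(\tt_A)}:C\to B$ and use the weak comprehension property together with $\tt_C=\fp{\cmp{\D_f(\tt_A)}}(\D_f(\tt_A))$ (Proposition~\ref{fullfull}, since comprehension is full in a \sd) to factor $f$ through $C$ as $f=\cmp{\D_f(\tt_A)}\,g$ with $g:A\to C$; then $g$ is \surj{P} by a Frobenius/Beck--Chevalley computation showing $\D_g(\tt_A)=\tt_C$, exactly as in the proof of Proposition~\ref{descent}. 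For the lifting property, given a commutative square $\cmp{\alpha}\,u=v\,e$ with $e$ \surj{P} and $\cmp{\alpha}:X\to A$ a comprehension, one has $\tt_{\mathrm{dom}\,e}\le\fp{v e}(\alpha)=\fp{e}\fp{v}(\alpha)$; applying $\D_e$ and using that $e$ is \surj{P} yields $\tt\le\fp{v}(\alpha)$, so $v$ factors through $\cmp{\alpha}$ by its (weak, but here strong) universal property, and the resulting arrow is the required diagonal filler; uniqueness of the filler over the bottom arrow is automatic since $\cmp{\alpha}$ is monic and \surj{P} arrows are epic (using comprehensive diagonals, Proposition~\ref{mono} and the remark after it). Properness is then clear: right-class arrows are monic because comprehension is strong, and left-class arrows are epic because they are \surj{P}; diagonals $\ple{\id A,\id A}$ lie in the right class because they are the strong comprehensions of the fibered equalities (Remark~\ref{originidelnome}). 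Stability---a weak pullback of a \surj{P} arrow is \surj{P}---follows from Proposition~\ref{bcbello} (the Beck--Chevalley equality $\fp f\D_h=\D_g\fp k$ for weak pullbacks, valid because $P$ has full weak comprehension and comprehensive diagonals), applied to $\D_h(\tt)=\tt$.

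For the second stage, the equivalence of categories, I would exhibit the two functors and the natural isomorphisms. In one direction, Proposition~\ref{abcde} (and its strong variant, Proposition~\ref{abcdef}) already tells us that an existential \sd $P$ on $\ct{C}$ with weak pullbacks is a doctrine of $j$-closed elements inside $\Wsb{\ct{C}}$ with comprehensive diagonals, and the \rwfs just constructed on $\ct{C}$ has $\ges'$ proper; restricting attention to \mvariational doctrines forces the base to have genuine pullbacks (via Proposition~\ref{brescello}) and comprehension arrows to be monic, giving a bona fide proper stable factorisation system with diagonals in the right class. In the other direction, one sends such a factorisation system $(\ct{E},\ct{R})$ on a finitely complete $\ct{C}$ to the doctrine $\ct{R}_\ct{C}$ of $\ct{R}$-subobjects, which by the cited result of \cite{HughesJ:facsft} is an existential \mvariational doctrine whose comprehension arrows are exactly the $\ct{R}$-maps and whose \surj{P} arrows are exactly the $\ct{E}$-maps. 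Then I would check that applying the first functor to $\ct{R}_\ct{C}$ recovers $(\ct{E},\ct{R})$ up to the obvious iso (immediate, since the factorisation is essentially unique), and that applying the second to $P$ recovers $P$ up to iso of doctrines---this uses Proposition~\ref{fullfull} to identify $P(A)$ with the poset of comprehension arrows into $A$, i.e.\ with $\ct{R}$-subobjects of $A$---and that both assignments are functorial with respect to comprehension-preserving doctrine morphisms on one side and functors preserving both classes on the other.

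I expect the main obstacle to be the bookkeeping in the second stage: pinning down precisely which 1-morphisms of \sds correspond to which functors between factorisation systems, and verifying that the correspondence is a genuine 2-equivalence (or equivalence of 1-categories, as the statement asks) rather than merely a bijection on objects. The factorisation-system construction itself, and stability, are routine given Propositions~\ref{bcbello}, \ref{fullfull}, \ref{mono} and the argument already spelled out in Proposition~\ref{descent}; the delicate point is that a functor between the bases that preserves $\ct{R}$-maps and $\ct{E}$-maps need not a priori preserve the chosen comprehension structure strictly, so one must phrase the morphisms up to isomorphism and check that the essential image and faithfulness/fullness claims still go through. Since the statement is explicitly attributed in spirit to \cite{HughesJ:facsft} ("It is proved in \loccit that the category of existential elementary doctrines with full strong comprehension is equivalent to the category of proper stable factorisation systems"), I would lean on that reference for the core equivalence and restrict the novel content here to the factorisation-system assertion and the identification of the two classes as \surj{P} arrows and comprehension arrows.
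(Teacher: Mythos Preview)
Your proposal is correct and matches the paper's stance: the paper does not supply a proof of this proposition but states it immediately after citing \cite{HughesJ:facsft} for the equivalence between existential elementary doctrines with full strong comprehension and proper stable factorisation systems, treating the factorisation-system assertion as part of that package. Your first stage spells out explicitly what the paper leaves to the reference---the factorisation $f=\cmp{\D_f(\tt_A)}\,g$ (which the paper invokes without justification in the proof of Proposition~\ref{descent}), the orthogonality argument via $\D_e\dashv\fp{e}$, and stability via Proposition~\ref{bcbello}---and your second stage rightly defers the categorical equivalence to \cite{HughesJ:facsft}, exactly as the paper does.
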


\subsection{\bf The elementary quotient completion}\label{quot}

We now recall the definition of {\it $P$-equivalence relation} for any elementary
doctrine $P:\ct{C}\op\ftr\ISL$ and the related notion of {\it quotients}
from \cite{MaiettiME:eleqc}:

\begin{definition}\label{eqrel}
A $P$-equivalence relation $\rho$ over an object $A$ of $\ct{C}$ is an element in $P(A\times A)$ such that
\begin{enumerate}
\thmitem
\item $\delta_A\le \rho$
\item $\rho = \fp{<\snd,\fst>}(\rho)$
\item $\fp{<\pr_1,\pr_2>}(\rho)\wedge \fp{<\pr_2,\pr_3>}(\rho) \le \fp{<\pr_1,\pr_3>}(\rho)$
\end{enumerate}
Where in (ii) $\fst, \snd: A\times A\to A$ are the first and the second projection, while in (iii)
$\pr_i$, with $i=1,2,3$, are projections from $A\times A\times A$ to each of the
factors.

When no confusion arises, we shall refer at $P$-equivalence relations simply as
equivalence relations, without specifying the doctrine $P$. Note that in every elementary doctrine, fibered equalities are equivalence relations.
\end{definition}

\begin{exms}\label{runnings-eqrel}

 \noindent(a)
Recall from \ref{runnings}-(a) the syntactic doctrine $LT:\ct{V}\op\ftr\ISL$ built out of a theory $\mathscr{T}$ over a first order language $\mathscr{L}$. A $LT$-equivalence relation over $x$ is a formula  $\phi$ of $\mathscr{L}$ such that $\mathscr{T}\vdash \forall x\,\phi(x,x)$ and $\mathscr{T}\vdash \forall xy\, (\,  \phi(x,y)\rightarrow \phi(y,x)\, )$ and $\mathscr{T}\vdash \forall xyz\, (\phi(x,y)\,\&\,\phi(y,z) \rightarrow \phi(x,z)\, )$.

\noindent(b) When $\HH$ is an inf-semilattice, a $\PP_\HH$-equivalence relation over a set $A$ is an $\HH$-valued ultra-pseudodistance on $A$ after inverted the order, \ie a function $\rho:A\times A\to\HH$ such that for all $a,a',a''$ in $A$ it is $\rho(a,a)=\top$ and $\rho(a,a')=\rho(a',a'')$ and $\rho(a,a')\wedge\rho(a',a'')\le\rho(a,a'')$.
%

\noindent(c) Suppose $\ct{C}$ has finite limits, 
then $\rho$ is a $\Sb{\ct{C}}$-equivalence relation over $A$ if and only if $\rho$ is an equivalence relation of $\ct{C}$ in the usual categorical sense (see for example \cite{MaclaneS:sheigl}).
 
\noindent(d) 
Suppose $\ct{C}$ has finite products and weak pullbacks, then $\rho$ is a
$\Wsb{\ct{C}}$-equivalence relation over $A$ if and only if it is a pseudo-equivalence
relation of $\ct{C}$ in the sense of \cite{CarboniA:somfcr}.
\end{exms}

\begin{definition}
An elementary doctrine $P:\ct{C}\op\ftr\ISL$ is said to have \dfn{quotients} if for every $A$ in $\ct{C}$ and every
equivalence relation $\rho$ over $A$ there exists a morphism $q:A\to A/\rho$ such
that $\rho \le \fp{q\times q}(\delta_{A/\rho})$ and for every morphism $f:A\to Y$
such that $\rho \le \fp{f\times f}(\delta_Y)$ there exists a unique $h:A/\rho\to
Y$ with $h q = f$. Maps of the form $q:A\to A/\rho$ will be called
\dfn{quotient arrow} of $\rho$. The quotient $q:A\to A/\rho$ is \dfn{effective} if $\rho = \fp{q\times q}(\delta_{A/\rho})$.
\end{definition}

\begin{definition}
An elementary doctrine $P$ is said to have \dfn{stable quotients} if
for every pullback $qp=fh$, if $q$ is a quotient arrow of $\rho$, then $h$ is a quotient arrow of $\fp{p\times p}(\rho)$.
\end{definition}

\begin{definition}\label{descent}
Given an elementary doctrine
$P:\ct{C}\op\longrightarrow\Ct{InfSL}$ and 
a $P$-equivalence relation $\rho$ over $A$ in \ct{C},
The inf-semilattice of \dfn{descent data} \des{\rho} is  the sub-inf-semilattice of
$P(A)$ on those $\alpha$ such that
$$P_{\pr_1}(\alpha)\Land\rho\leq P_{\pr_2}(\alpha),$$
where $\pr_1,\pr_2:A\times A\to A$ are the projections.

For $f:A\to B$ in \ct{C} the 
map $\fp{f}:P(B)\to P(A)$ takes values in 
$\des{\fp{f\times f}(\delta_B)}$. We shall say that $f$  is \dfn{of effective descent} if 
$P_f:P(B)\to\des{\fp{f\times f}(\delta_B)}$ is an isomorphism.
In particular this means that the functor $P_f$ is of {\em effective descent type} as defined in \cite{BarrM:toptt}.
\end{definition}

\begin{definition}
An elementary doctrine $P$ is said to have \dfn{descent effective quotients} if $P$ has stable
effective quotients and the quotient arrows are of effective descent.
\end{definition}

\begin{exms}\label{runnings-effquo}

 \noindent(a)
In general doctrines of the form $LT:\ct{V}\op\ftr\ISL$ do not have quotients.

\noindent(b) For $\HH$ an inf-semilattice, the doctrine $\PP_\HH$ has quotient. Let $\rho:A\times A\to\HH$ be a $\HH$-valued ultra-pseudodistance on $A$ as in \ref{runnings-eqrel}-(b) and define on $A$ the equivalence relation $a\sim a'$ generated by $\rho(a,a')=\top$. Then canonical surjection $q:A\to A/\sim$ is a quotient arrow. These quotients are not effective (unless $\rho$ is the  boolean equality on $A$) and hence not of effective descent.

\noindent(c) If $\ct{C}$ has finite limits, 
then $\Sb{\ct{C}}$ has effective quotient if and only if $\ct{C}$ is exact. In this case quotients are of effective descent (this follows also from the more general situation of \ref{descent} in the following).
 
\noindent(d) 
In general doctrines of the form $\Wsb{\ct{C}}$ do not have quotients.
\end{exms}

\begin{lemma}\label{qudes}
Let $P:\ct{C}\op\ftr\ISL$ an elementary and existential doctrine and   $q:A\rightarrow A/\rho$   an  effective descent arrow.  Then,  the inverse of   $P_q:P(B)\to\des{\rho}$ is the restriction of $\D_q$ to $\des{\rho}$ and, hence,  any $\phi$ of $P(A)$ is a descent data
if and only if $\phi= P_q \D_{q} \phi$. 
  \end{lemma}
  \begin{remark}
 For elementary and existential doctrines $ P:\ct{C}\op\ftr\ISL$ we could have simply define an effective quotient arrow  $q:A\rightarrow A/\rho$ of effective descent
if and only if  $P_q$ restricts to an isomorphisms toward its image in $P(A)$. Then,  trivially an object $\phi$ is in the image of $P_q$ if and only if $\phi= P_q \D_{q} \phi$,
 which holds if and only if
 $\phi$ is a descent data in the sense of definition~\ref{descent}  by Beck-Chevalley conditions applied to the description of $D_{q}$ in remark 2.13
p.381 in \cite{MaiettiME:quofcm}.
  \end{remark}

\begin{lemma}\label{qusu}
If $P:\ct{C}\op\ftr\ISL$ is elementary and existential, an effective quotient arrow $q:A\rightarrow A/\rho$   is of effective descent if and only if  the arrow
$q$  is \surj{P}, i.e. $\D_q\tt_A=\tt_{A/\rho}$.
  \end{lemma}

\begin{proof}
   If $q$ is of effective descent, then $\fp{q}:P(A/\rho)\to \des{\rho}$ is an isomorphism  with inverse the restriction of $\D_q$ to $\des{\rho}$ by lemma \ref{qudes}. Hence,   for all descent data $\phi$, we have that
   $\phi=\D_{q} P_q \phi$  from which  $\tt_{A/\rho}=\D_{q} \tt_A$.
 Conversely, if $q$ is \surj{P}, observe, that  for every $\psi$ in $P(A/\rho)$, by Frobenius condition 
$\D_q P_{q}\psi= \psi\ \wedge \D_q\tt_B =\psi$  and hence $P_q$ is an isomorphism towards  $\des{\rho}$ with inverse $\D_q$ since
   by lemma~\ref{qudes} we  also know that every descent data satisfies  $\phi= P_q \D_{q} \phi$. 
  \end{proof}

\begin{prp}\label{descentdoct} The quotient arrows of an existential \mvariational doctrine $P:\ct{C}\op\ftr\ISL$ with
 stable effective quotients are of effective descent.
\end{prp}
\begin{proof}
By \ref{facsis} each  quotient arrow $q: A\to A/\rho$ can be factored as $q=\cmp{\D_q(\tt_A)}g$ where $g: A \to C $ is \surj{P}.
Since $\cmp{\D_q(\tt_A)}$ is monic, it is also \inj{P} by \ref{mono}, whence $\rho=\fp{q\times q}(\delta_A)=\fp{g\times g}\fp{\cmp{\D_q(\tt_A)}\times \cmp{\D_q(\tt_A)}}(\delta_A)=\fp{g\times g}(\delta_C) $. The universal property of quotients implies the existence of an arrow $s: A/\rho\to C$
with $g=s q$ and hence $q=\cmp{\D_q(\tt_A)}sq$, so $\cmp{\D_q(\tt_A)}s=\id{B}$ as $q$ is
epic. Since $\cmp{\D_q(\tt_A)}$ is a monomorphism with a section it is an
isomorphism. Therefore $q$ is isomorphic to $g$ and hence \surj{P}. An application of
Lemma~\ref{qusu} concludes the proof.
 \end{proof}

We recall from  \cite{MaiettiME:quofcm} the construction called \dfn{elementary quotient completion} that freely adds descent effective quotients to any elementary doctrine.

\begin{definition}[elementary quotient completion of $P$]
Given  an elementary doctrine $P:\ct{C}\op\ftr\ISL$ we call
  $\ct{Q}_P$ the category whose objects are pairs $(A,\rho)$ in which $A$ is in $\ct{C}$ and $\rho$ is an equivalence relation over $A$. An arrow $[f]: (A,\rho)\to(B,\sigma)$ is an equivalence class of arrows $f:A\to B$ in $\ct{C}$ such that $\rho \le \fp{f\times f}(\sigma)$, with respect to the equivalence $f\sim g$ if and only if
$$\tt_A = \fp{<f,g>}(\sigma)$$
The category $\ct{Q}_P$ has finite products: if $(A,\rho)$ and $(B,\sigma)$ are objects of $\ct{Q}_P$, their product is 
\[
\xymatrix{
(A,\rho)&(A\times B, \rho\boxtimes \sigma )\ar[l]_-{[\fst]}\ar[r]^-{[\snd]}&(B,\sigma)
}
\]
where $\rho\boxtimes \sigma = \fp{<\pr_1,\pr_2>}(\rho)\wedge \fp{<\pr_3,\pr_4>}(\sigma)$.
The elementary quotient completion of $P$ is the doctrine $\Q{P}:\ct{Q}_P\op\ftr\ISL$ where 
$$\Q{P}(A,\rho) = \des{\rho}\qquad\Q{P}_{[f]}=\fp{f}$$
It is proved in  \cite{MaiettiME:quofcm} that the assignment on arrows does not depend on the choice of representatives. The doctrine $\Q{P}$ is elementary with $\delta_{(A,\rho)}=\rho$. It is immediate to see that $\Q{P}$ as stable descent effective quotient: if $\sigma$ is an equivalence relation over $(A,\rho)$, then its quotient arrow is $[\id{A}]:(A,\rho)\arr(A,\sigma)$.
Moreover these quotients are stable.
\end{definition}

\begin{exms}\label{runnings-eqc}

 \noindent(a)
The elementary quotient completion of doctrines of the form $LT:\ct{V}\op\ftr\ISL$ is connected to elimination of imaginaries as analysed in \cite{EmmeneggerJ:eledac}.

\noindent(b) Assuming choice, in the sense that epimorphisms in $\Set$ split, the base of $\Q{\PP_\HH}:\Set_{\PP_\HH}\op\ftr\ISL$ is equivalent to $\CT{UM}_\HH$, the category of $\HH$-valued ultrametric spaces (see \cite{EmmeneggerJ:eledac} or \cite{DagninoPApal} for a more general treatment including standard metric speces).

 
\noindent(d) 
One the motivating examples for the study of the elementary quotient completion is given by doctrines of the form $\Wsb{\ct{C}}$ where $\ct{C}$ has finite products and weak pullbacks. As proved in  \cite{MaiettiME:quofcm} the doctrine $\Q{\Wsb{\ct{C}}}$ is $\Sb{\ct{C}\exl}$, i.e. the subobject doctrine of the exact completion $\ct{C}\exl$ of $\ct{C}$.
\end{exms}

\begin{remark}\label{nabla}
It is quite evident that the elementary structure plays no role in the
construction of $\Q{P}$, see \cite{PasqualiF:cofced,EmmeneggerJ:eledac} for
a detailed analysis of the situation, where it is shown also that the
elementary structure is necessary to embed $\ct{C}$ into \bqc{P}. The embedding is given by the functor $\nabla_P:\ct{C}\to\bqc{P}$ that
assigns to each $f:X\to Y$ the arrow $[f]:(X,\delta_X)\to(Y,\delta_Y)$. This
functor preserves binary products, and it is full; it is faithful exactly
when $P$ has comprehensive diagonals. In this case $P$ is the change of base of $\Q{P}$ along $\nabla_P$.
\end{remark}

Denote by $\QEEx$ the subcategory of \EEx on those doctrines with effective descent quotients and on those arrows that preserves quotients, \ie on those $(F,b)$ from $P:\ct{C}\op\ftr\ISL$ to $R:\ct{D}\op\ftr\ISL$ such that the action of $F$ on $q:A\to A/\rho$ is $Fq:FA\to FA/(b_{A\times A}\rho)$.

 The following theorem is proved in \cite{MaiettiME:quofcm} \cite{MaiettiME:eleqc}.
\begin{prp}\label{elqucomp}
There is a left biadjoint to the inclusion of \QEEx into \EEx 
which  associates the doctrine  $\Q{P}:\ct{Q}_P\op\ftr\textbf{ISL}$ to  $P$.
\end{prp}


\begin{prp}\label{eqc-impl} An elementary existential doctrine:
\begin{enumerate}
\item  $P$ is \implicational (\universal) if and only if $\Q{P}$ is so;
\item  $P$ is existential  if and only if $\Q{P}$ is existential;
\item  $P$ has weak full comprehension if and only if $\Q{P}$ has full comprehension.
\end{enumerate}
\end{prp}
\begin{proof}
For the sufficient conditions, see  prop. 6.7 and lemma 5.3 of \cite{MaiettiME:quofcm}. The necessary conditions are immediate since  $\Q{P}$ restricts to $P$ on objects $(A, \delta_A)$ where
$\delta_A$ is the  equality predicate relative to $P$.
\end{proof}

\begin{cor}\label{viadana} If $P:\ct{C}\op\ftr\ISL$ is \variational, then $\ct{Q}_P$ has finite limits.
\end{cor}
\begin{proof} Immediate corollary of \ref{eqc-impl}-(iii) and  \ref{brescello}.
\end{proof}

\begin{cor}\label{comp}$P$ is \variational if and only if $\Q{P}$ is \mvariational.
\end{cor}
\begin{proof} Immediate after \ref{eqc-impl} and \ref{viadana}.
\end{proof}

We also recall from \cite{Maietti-Rosolini16}
\begin{prp}
An elementary and existential doctrine $P$ satisfies \rc if and only if its completion
$\Q{P}$ satisfies \ruc.
\end{prp}

\begin{remark}\label{freeruc}  
Let $\UEEx$ and $\RUEEx$ be the full subcategories of $\EEx$ on doctrines satisfying \ruc and \rc respectively and similarly 
 $\UQEEx$ and $\RUQEEx$ are the full subcategories of $\QEEx$ on doctrines satisfying \ruc and \rc respectively. These categories fit in the following diagram of inclusions.

\begin{equation}\label{diaon}
\vcenter{\xymatrix@C=4em@R=3em{
\RUEEx\ \ar@{^(->}[r]^{}\ar@<-1ex>@/_/[d]^{\dashv}&\EEx\ar@<-1ex>@/_/[d]^{\dashv}\\
\UQEEx\ \ar@{^(->}[u]^{}\ar@{^(->}[r]_-{}&
\QEEx\strut\ar@{^(->}[u]^{}\ar@<-1ex>@/_8pt/[l]^{\bot}}}
\end{equation}
 The vertical left adjoint in diagram (\ref{diaon}) are given by the elementary quotient completion. The biadjoint to the inclusion of $\UQEEx$ into \QEEx is the functor that maps $P:\ct{C}\op\ftr\ISL$ to $P_F:\ct{EF}_P\op\ftr\ISL$ where $\ct{C}_F$ has the objects of $\ct{C}$ and an arrow $F:A\to B$ is a total and single-valued relation in $P(A\times B)$. The functor $P_F$ maps $A$ to $P_F(A)=P(A)$ and given $\beta$ in $P(B)$ it is $P_F(\beta)$ is the formula $a:A\mid \exists_{b:B}[F(a,b)\wedge \beta(b)]$ over $A$ (see \cite{TTT}). 
\end{remark}

It is worth noting that  if we start with an elementary doctrine $P$ without comprehensive diagonals and just with weak full comprehensions we can get
anyway a \mvariational doctrine closed under effective quotients. Moreover, adding comprehensive diagonals to $P$ before completing it with quotients
does produce the same doctrine as that obtained by completing $P$ itself.
\begin{theorem}\label{elqupractice}
Let $P$ \variational.
Then $\Q{P}$ is a \mvariational doctrine with stable effective quotients. Moreover,  the doctrine $\Q{\X{P} }$ is equivalent to $\Q{P}$.
\end{theorem}

\section{Topology on a doctrine}

\begin{definition}
Let $P$ be a primary doctrine. 
An  endomorphism of primary doctrines of the form 

\[
\xymatrix@C=4em@R=1em{
{\ct{C}\op}\ar[rd]^(.4){P}_(.4){}="P"\ar[dd]_{\Id{\ct{C}}}&\\
 & {\ISL}\\
{\ct{C}\op}\ar[ru]_(.4){P}^(.4){}="R"&\ar"P";"R"_{j}^{\kern-.4ex\cdot}}
\]
is called  \dfn{topology on $P$} 
if $j$ is extensive and idempotent, \ie for every $A$ in $\ct{C}$ and every $\alpha$ in $P(A)$ it holds $\alpha\le j_A\alpha$ and $j_Aj_A\alpha=j_A\alpha$. An element $\alpha$ in $P(A)$ is \dfn{$j$-closed} if $\alpha=j_A(\alpha)$. 
\end{definition}

\begin{definition}
If $j$ is a topology on the primary doctrine $P$,  we call   \dfn{doctrine of $j$-closed element of $P$} the doctrine $P_j:\ct{C}\op\ftr\ISL$
 where $P_j(A)$ is the sub-infsemilattice of $P(A)$ on those $\alpha$ such that $j_A\alpha=\alpha$.
 \end{definition}
 
 \begin{prp}\label{datop} Suppose $P:\ct{C}\op\ftr\ISL$ is a primary doctrine and $j$ is a topology on $P$. We have the following:  
\begin{enumerate}
\item If $P$ is elementary, then so is $P_j$.
\item If $P$ is existential, then so is $P_j$.
\item If $P$ is \disjunctive, then so is $P_j$.
\item If $P$ is \implicational, then so is $P_j$.
\item If $P$ is \universal, then so is $P_j$.
\item If $P$ has a weak predicate classifier, then so does $P_j$.
\end{enumerate}
\end{prp}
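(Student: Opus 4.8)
The plan is to exploit that each $j_A\colon P(A)\to P(A)$ is a closure operator whose fixed points form the sub-inf-semilattice $P_j(A)$: being a component of a morphism of doctrines it preserves finite meets (hence $\tt$) and is monotone, and it is extensive and idempotent, so $j_A(\alpha)$ is the \emph{least} $j$-closed element above $\alpha$; equivalently $j_A$ corestricts to a left adjoint of the inclusion $P_j(A)\hookrightarrow P(A)$ (this is the fibrewise content behind \ref{top-sse-adj}). The other structural input is naturality of $j$: for $g\colon D\to E$ in $\ct C$ one has $\fp g\circ j_E=j_D\circ\fp g$, so $\fp g$ sends $j$-closed elements to $j$-closed elements. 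These two facts are exactly what makes $P_j$ a primary doctrine, and each of the six clauses will be proved by transporting the relevant structure of $P$ through $j$.

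For clause~(1) I would set $\delta^j_A:=j_{A\times A}(\delta_A)$, which is $j$-closed by idempotency. Given $e=\ple{\pr_1,\pr_2,\pr_2}\colon X\times A\to X\times A\times A$ and a $j$-closed $\alpha$, applying $j_{X\times A\times A}$ to the defining formula for $\D_e(\alpha)$ and using that $j$ preserves meets and commutes with reindexing yields $j_{X\times A\times A}\bigl(\D_e(\alpha)\bigr)=\fp{\ple{\pr_1,\pr_2}}(\alpha)\Land\fp{\ple{\pr_2,\pr_3}}(\delta^j_A)$; call this element $\mu(\alpha)$. It is $j$-closed, and it is left adjoint to $\fp e\colon P_j(X\times A\times A)\to P_j(X\times A)$: indeed $\fp e$ preserves $j$-closedness, $\D_e\dashv\fp e$ holds in $P$, and for a $j$-closed $\gamma$ one has $\mu(\alpha)\le\gamma\iff\D_e(\alpha)\le\gamma\iff\alpha\le\fp e(\gamma)$, the first equivalence because $\mu(\alpha)=j(\D_e(\alpha))$ with $j$ a closure operator and $\gamma$ $j$-closed. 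Hence $P_j$ is elementary with fibred equality $\delta^j_A$, and the special case $\D_{\ple{\id A,\id A}}$ is handled by the same computation.

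Clauses~(2)--(6) follow the same template. For left adjoints along projections, the left adjoint of $\fp{\pr}$ in $P_j$ is $\alpha\mapsto j\bigl(\D_{\pr}(\alpha)\bigr)$, by the argument just displayed; the right adjoints of $P$ instead already take $j$-closed arguments to $j$-closed values and are then merely restricted, since $\alpha\Implies_P\beta$ is $j$-closed whenever $\alpha,\beta$ are ($\alpha\Land j(\alpha\Implies\beta)=j\bigl(\alpha\Land(\alpha\Implies\beta)\bigr)\le j(\beta)=\beta$), and $\B_{\pr}(\beta)$ is $j$-closed whenever $\beta$ is ($\fp{\pr}\bigl(j(\B_{\pr}\beta)\bigr)=j\bigl(\fp{\pr}\B_{\pr}\beta\bigr)\le j(\beta)=\beta$). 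For~(3), finite joins in $P_j$ are $j$ applied to the joins of $P$ and the bottom element is $j(\bot)$, and distributivity together with preservation of joins by reindexing is obtained by applying $j$ to the corresponding identities in $P$, again using that $j$ preserves meets and commutes with each $\fp g$. The Beck--Chevalley condition of~(5) is inherited directly, since its universal quantifiers are plain restrictions; the Beck--Chevalley condition and Frobenius reciprocity of~(2) follow by applying the appropriate $j_A$ to the corresponding identity in $P$, naturality turning $j\circ\fp f\circ\D_{\pr}$ into $\fp f\circ(j\circ\D_{\pr})$ and $j\circ\D_{\pr'}\circ\fp{f'}$ into $(j\circ\D_{\pr'})\circ\fp{f'}$, while $j\bigl(\alpha\Land\D_{\pr}\beta\bigr)=\alpha\Land j(\D_{\pr}\beta)$ gives Frobenius. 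Finally, for~(6) keep $\Omega$ as the classifying object and take $j_\Omega(\in)$ in $P_j(\Omega)$: for $\phi\in P_j(A)$, any classifying arrow $\chi_\phi$ of $\phi$ in $P$ still classifies $\phi$ in $P_j$, because $\fp{\chi_\phi}\bigl(j_\Omega(\in)\bigr)=j_A\bigl(\fp{\chi_\phi}(\in)\bigr)=j_A(\phi)=\phi$.

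The parts needing genuine care rather than routine bookkeeping are the three closure-under-$j$ checks just mentioned --- that $j(\D_e(\alpha))$, $\alpha\Implies_P\beta$ and $\B_{\pr}(\beta)$ lie in $P_j$ --- and the verification that the Beck--Chevalley identity and Frobenius reciprocity of~(2) survive the passage through $j$; all of these reduce to the twin facts that $j$ preserves finite meets and is natural. I expect the main obstacle to be organisational rather than conceptual: in the existential clause one must keep careful track of exactly where $j$ has to be inserted so that the transported formulas still define honest left adjoints taking values in $P_j$.
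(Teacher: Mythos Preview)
Your proposal is correct and follows essentially the same approach as the paper's proof: right adjoints ($\Implies$ and $\B_{\pr}$) restrict directly because their values on $j$-closed inputs are already $j$-closed, while left adjoints and joins are obtained by post-composing with $j$, and the classifier is $(\Omega,\,j_\Omega(\in))$. The paper records this in one sentence as a ``standard argument'' listing the formulas $\B_f(\alpha)$, $\alpha\rightarrow\beta$, $j_B(\D_f(\alpha))$, $j_A(\alpha\lor\beta)$, and $j(\in_A)$; you have simply unpacked the verifications (closure of $\alpha\Implies\beta$ and $\B_{\pr}\beta$ under $j$, and the transport of Beck--Chevalley and Frobenius through $j$) that the paper leaves implicit.
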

\begin{proof} Standard argument: for $f:A\to B$ and $\alpha, \beta$ in $P_j(A)$ note that $\B_f(\alpha)$ and  $\alpha \rightarrow \beta$ determines the \universal and the \implicational structure in $P_j$ (if they already exists in $P$) while for the left adjoints and the \disjunctive structure one takes the closure of the corresponding ones of $P$ so 
 $j_B(\D_f(\alpha))$ and $j_A(\alpha\lor\beta)$. The weak power object of $A$ is $\pow{A}$ with membership predicate $j_{A\times \pow{A}}(\in_A)$. \end{proof}

\begin{exm}\label{dn}
Given a first order doctrine $P$ a major example of topology is the {\it double negated topology} associating  $\neg\neg \alpha$ to $\alpha$ of $P$.
Then $P_{\neg\neg}$ is a boolean first order doctrine by prop.~\ref{datop}  which is an algebraic rendering of  G{\"o}del-Gentzen double negation translation extended to first order equality.
\end{exm}

\begin{definition} Let $P$ and $R$ be  primary doctrines. 
We say that $P$ and  $R$ form an \dfn{adjoint-retraction pair} and we write $P\retra R$ if there are morphism of primary doctrines 

\[
\xymatrix@C=5em@R=1em{
\mathcal{C}^{op}\ar@/^/[rrd]^(.35){P}_(.35){}="P"
\ar@/_/@<-1ex>[dd]_{\id{\mathcal{C}}^{op}}="F"&&\\
&& {\ISL}\\
\mathcal{C}^{op}\ar@/_/[rru]_(.35){R}^(.35){}="R"&&
\ar@/^/"R";"P"^-{l{}\kern.5ex\cdot\kern-.5ex
}="b"
\ar@<1ex>@/^/"P";"R"^{
\kern-.5ex\cdot\kern.5ex 
r}="c"
\ar@{}"b";"c"|(.55){}
}\]
such that for every $A$ in $\ct{C}$ and every $\alpha$ in $P(A)$ and every $\beta$ in $R(A)$ the inequalities $l_Ar_A(\alpha)\le \alpha$ and $\beta\le r_Al_A (\beta)$ both hold and moreover $lr=\id{P}$. 
\end{definition}

\begin{prp}\label{top-sse-adj} Suppose $P:\ct{C}\op\ftr\ISL$  and $R:\ct{C}\op\ftr\ISL$ are functors 
 based on $\ct{C}$. Then $P\retra R$ if and only if   $P$ is isomorphic to $R_j$ for some  topology $j$ on $R$.
\end{prp}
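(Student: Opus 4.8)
The plan is to prove the two implications of \ref{top-sse-adj} separately, using the defining data of an adjoint-retraction pair.

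\textbf{From a topology to an adjoint-retraction pair.} Suppose $j$ is a topology on $R$ and $P\cong R_j$; it suffices to treat the case $P=R_j$. I would define $l:R\to P$ fibrewise by $l_A(\beta):=j_A(\beta)$ (which indeed lands in $R_j(A)$ since $j$ is idempotent) and $r:P\to R$ by the inclusion $r_A:R_j(A)\hookrightarrow R(A)$. Naturality of $l$ and $r$ as $1$-morphisms $(\id{\ct{C}},l)$, $(\id{\ct{C}},r)$ follows from naturality of $j$ and from $P_j(f)=R(f)$. The inequalities required of an adjoint-retraction pair are then exactly the extensivity $\beta\le j_A(\beta)$ (giving $\beta\le r_Al_A(\beta)$) and, for $\alpha$ already $j$-closed, $l_Ar_A(\alpha)=j_A(\alpha)=\alpha$, so in fact $lr=\id{P}$ on the nose. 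Hence $P\retra R$.

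\textbf{From an adjoint-retraction pair to a topology.} Conversely, suppose $P\retra R$ with data $l:R\to P$, $r:P\to R$ satisfying $lr\alpha\le\alpha$, $\beta\le rl\beta$, and $lr=\id{P}$. Define $j_A:=r_Al_A:R(A)\to R(A)$. I would check: (i) $(\id{\ct{C}},j):R\to R$ is a $1$-morphism in \PD, which is immediate since it is the composite of the $1$-morphisms $r$ and $l$; (ii) $j$ is extensive, which is the inequality $\beta\le r_Al_A\beta=j_A\beta$; (iii) $j$ is idempotent, since $j_Aj_A=r_A(l_Ar_A)l_A=r_A\,\id{P}\,l_A=r_Al_A=j_A$, using $l r=\id P$. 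So $j$ is a topology on $R$. It remains to produce an isomorphism $P\cong R_j$ over $\ct{C}$. The candidate is, fibrewise, $l_A$ restricted to have codomain $R_j(A)$: note $j_A(l_A r_A \alpha)$-type manipulations show $r_A(\alpha)$ is $j$-closed because $j_A r_A\alpha=r_A l_A r_A\alpha=r_A\alpha$ (again using $lr=\id P$), so $r_A$ corestricts to a map $R_j(A)$-valued... more precisely $r_A:P(A)\to R_j(A)$, and $l_A:R_j(A)\to P(A)$ restricts back. These are mutually inverse: $l_Ar_A=\id{P(A)}$ by hypothesis, and $r_Al_A=j_A$ which is the identity on $R_j(A)$. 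Naturality of these fibrewise isos assembles them into an isomorphism of doctrines $P\cong R_j$ commuting with the identity on $\ct{C}$, as required.

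\textbf{Main obstacle.} The routine parts are the inequality-chasing and idempotency; the one place that needs care is verifying that the corestriction $r_A:P(A)\to R_j(A)$ is well-defined, i.e.\ that every element of the form $r_A(\alpha)$ is genuinely $j$-closed, and dually that $l_A$ sends $j$-closed elements into $P(A)$ faithfully — this is exactly where the strengthened hypothesis $lr=\id{P}$ (rather than merely $lr\alpha\le\alpha$) is used, and it is worth isolating as the key computation $j_A r_A = r_A$. I also expect a small amount of bookkeeping to confirm that $(\id{\ct{C}},l)$ and $(\id{\ct{C}},r)$ being $1$-morphisms of \PD (hence landing in the right $2$-category, not just giving fibrewise monotone maps) is preserved under the corestriction to $R_j$, which follows since $R_j$ is a sub-doctrine of $R$ with $P_j(f)=R(f)$.
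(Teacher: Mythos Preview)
Your proof is correct and follows essentially the same approach as the paper's: in both directions you use the same data (inclusion and $j$ for one direction, $j:=rl$ for the other) and the same key computation $jr=rlr=r$ to verify that $r$ corestricts to $R_j$ and gives the inverse of $l$. Your write-up is simply more detailed than the paper's terse version.
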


\begin{proof} Suppose $P\retra R$ as in the diagram above. The composition $rl$ is clearly extensive and also idempotent as $lr=\id{P}$. The morphism $(\Id{C},r):P\to R_{rl}$ is then the inverse of $(\id{C},l):R_{rl}\to P$, whence $j=rl$ is the desired topology on $R$. Conversely let $j$ be a topology on $R$. There is an morphism $(\Id{C},\iota):R_j\to R$ where $\iota$ is a family of inclusions and a morphism $(\Id{C},j):R\to R_j$. It is immediate to see that $R_j\retra R$.
\end{proof}

\begin{prp}\label{abcde}Suppose $P:\ct{C}\op\ftr\ISL$ is elementary existential and $\ct{C}$ has weak pullbacks. The following are equivalent 
\begin{enumerate}
\item $P$ is isomorphic to $\Wsb{\ct{C}j}$ for some topology $j$ over $\Wsb{\ct{C}}$
\item $P\retra \Wsb{\ct{C}}$
\item $P$ has full weak comprehensions.
\end{enumerate}
\end{prp}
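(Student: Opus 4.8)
The plan is to run the cycle $(1)\Leftrightarrow(2)$, $(1)\Rightarrow(3)$, $(3)\Rightarrow(2)$. The equivalence $(1)\Leftrightarrow(2)$ needs no argument: it is exactly Proposition~\ref{top-sse-adj} applied to $R:=\Wsb{\ct{C}}$ — both $P$ and $\Wsb{\ct{C}}$ are based on $\ct{C}$, so the hypotheses there are met — which identifies the doctrines with $P\retra\Wsb{\ct{C}}$ with the doctrines isomorphic to some $(\Wsb{\ct{C}})_j$.

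For $(1)\Rightarrow(3)$ I would simply observe that the full weak comprehension of $\Wsb{\ct{C}}$ restricts to $(\Wsb{\ct{C}})_j$. Recall that in $\Wsb{\ct{C}}$ a comprehension arrow of $\alpha\in\Wsb{\ct{C}}(A)$ is any representative $f\colon X\to A$ of $\alpha$, and that this comprehension is full. Given $\alpha$ in $(\Wsb{\ct{C}})_j(A)$ and such an $f$, I claim $\cmp{\alpha}:=f$ is again a full weak comprehension of $\alpha$ in $(\Wsb{\ct{C}})_j$: the fibre orders and the reindexing maps of $(\Wsb{\ct{C}})_j$ are the restrictions of those of $\Wsb{\ct{C}}$, and $\tt$ is always $j$-closed (by extensivity of $j$), so $\fp{f}(\alpha)=\tt_X$ still holds in $(\Wsb{\ct{C}})_j(X)$ and the weak universal property and the fullness clause carry over verbatim, since all the mediating arrows and all the inequalities involved already live in $\ct{C}$ and in $\Wsb{\ct{C}}$. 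No extra hypothesis on $j$ beyond its being a topology is used here.

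The substance is in $(3)\Rightarrow(2)$. Assuming $P$ has full weak comprehension, I would build the adjoint–retraction explicitly by setting, for each $A$,
$$r_A\colon P(A)\longrightarrow\Wsb{\ct{C}}(A),\quad r_A(\alpha):=[\cmp{\alpha}],\qquad l_A\colon\Wsb{\ct{C}}(A)\longrightarrow P(A),\quad l_A([g\colon G\to A]):=\D_g(\tt_G).$$
Most of the verification is routine: $r_A$ is well defined because any two comprehension arrows of the same $\alpha$ factor through one another; $l_A$ is well defined on the poset reflection because $[g]\le[g']$ forces $\D_g(\tt_G)\le\D_{g'}(\tt_{G'})$; both are monotone; $r_A$ preserves finite meets and top (using the weak universal properties of comprehensions); and $r$ is natural by the commutative square preceding Proposition~\ref{ok!} together with those same weak universal properties. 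The fibrewise adjunction $l_A\dashv r_A$ and the identity $l_A\,r_A=\id{P(A)}$ then follow from Proposition~\ref{fullfull}: on the one hand $l_A r_A(\alpha)=\D_{\cmp{\alpha}}(\tt)=\alpha$; on the other $l_A([g])\le\alpha\iff\tt_G\le\fp{g}(\alpha)\iff g$ factors through $\cmp{\alpha}\iff[g]\le r_A(\alpha)$, the last two equivalences using respectively the weak universal property of $\cmp{\alpha}$ and, backwards, $\D_{\cmp{\alpha}}(\tt)=\alpha$. So, granting that $l$ is a morphism of doctrines, $P$ and $\Wsb{\ct{C}}$ fit into the diagram defining an adjoint–retraction pair with $l\,r=\id{P}$, that is $P\retra\Wsb{\ct{C}}$.

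The one point still to be checked — and the main obstacle — is that $l$ is natural (equivalently, that $l$ preserves finite meets): for $h\colon B\to A$ and $[g\colon G\to A]$, choosing a weak pullback of $g$ along $h$ with legs $g'\colon G'\to B$ and $g''\colon G'\to G$, one must have $\fp{h}\D_g(\tt_G)=\D_{g'}(\tt_{G'})$. This is the Beck–Chevalley condition along weak pullbacks for $P$; its inequality $\ge$ is formal, so the content is $\fp{h}\D_g(\tt_G)\le\D_{g'}(\tt_{G'})$, and this is precisely where full weak comprehension must be exploited in an essential way — through Proposition~\ref{fullfull}, Frobenius reciprocity (Remark~\ref{aggiuntigenerali}), which gives $\D_{\cmp{\beta}}(\fp{\cmp{\beta}}(\gamma))=\beta\wedge\gamma$ and hence $=\gamma$ for $\gamma$ below the image $\D_{\cmp{\beta}}(\tt)=\beta$, and the usual reduction of weak pullbacks to products and comprehensions of fibered equalities — along the lines of Proposition~\ref{bcbello} and of Theorem~2.19 of \cite{MPR}. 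Once this Beck–Chevalley condition is established, $l$ is a doctrine morphism, the construction above assembles, and $(3)\Rightarrow(2)$ is complete, closing the cycle.
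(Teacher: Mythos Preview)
Your cycle $1\Leftrightarrow 2$, $1\Rightarrow 3$, $3\Rightarrow 2$ differs from the paper's, which runs $1\Leftrightarrow 2$ (same, via Proposition~\ref{top-sse-adj}), then $2\Rightarrow 3$ directly, and for $3\Rightarrow 2$ simply cites Proposition~2.3 of \cite{LMCS}. For $2\Rightarrow 3$ the paper defines $\cmp{\alpha}$ as any representative of $r_A(\alpha)$ and checks the comprehension axioms using that $l$ commutes with the left adjoints $\Sigma$---this commutation being a formal consequence of the naturality of $r$, which is part of the data in condition~2. Your $1\Rightarrow 3$ argument (that the full weak comprehension of $\Wsb{\ct{C}}$ restricts to $j$-closed elements) is correct and pleasantly direct; it is a genuine alternative to the paper's $2\Rightarrow 3$.

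The gap is in your sketch of $3\Rightarrow 2$. Your explicit $r_A(\alpha)=[\cmp{\alpha}]$ and $l_A([g])=\D_g(\tt)$ are the right maps, and the fibrewise adjunction, the identity $lr=\id{}$, and naturality of $r$ all go through as you say. But for naturality of $l$---your acknowledged ``main obstacle''---you appeal to Proposition~\ref{bcbello}, and that result is stated under the extra hypothesis of \emph{comprehensive diagonals}, which is absent from condition~3. The specific move that fails is the ``usual reduction of weak pullbacks to products and comprehensions of fibred equalities'': without comprehensive diagonals, the arrow $\cmp{\fp{g\times h}(\delta_A)}$ need not be a weak pullback of $g$ and $h$, since from $\fp{\cmp{\fp{g\times h}(\delta_A)}}\fp{g\times h}(\delta_A)=\tt$ one cannot conclude that the two composites to $A$ agree. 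Consequently your route to $\fp{h}\D_g(\tt)\le\D_{g'}(\tt)$ does not close as written. What you have actually established is $3'\Rightarrow 2$ where $3'$ adds comprehensive diagonals---that is, the implication belonging to Proposition~\ref{abcde+} rather than Proposition~\ref{abcde}. To recover the full statement you must give an argument for this Beck--Chevalley identity that uses only full weak comprehension; this is presumably what the cited result in \cite{LMCS} supplies, and it is not covered by Proposition~\ref{bcbello}.
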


\begin{proof} The equivalence between items 1 and 2 is a special case of \ref{top-sse-adj}, while 
3$\Rightarrow$2 is proposition 2.3 of \cite{LMCS} 
It remains 2$\Rightarrow$3. Take $\alpha$ in $P(A)$ and let $\cmp{\alpha}:X\to A$  be any representative of $\CT{r}_A(\alpha)$. We first need prove that $\tt_X\le \fp{\cmp{\alpha}}(\alpha)$.
Use notation as in \ref{notation} and observe that 
\[\alpha= \CT{l}_A\CT{r}_A(\alpha)= \CT{l}_A (\Sigma_{\CT{r}_A(\alpha)} \id{X})=\exists_{\CT{r}_A(\alpha)}\CT{l}_A(id_X)=\exists_{\CT{r}_A(\alpha)}\CT{l}_A(\CT{r}_A(\tt_X))=\exists_{\CT{r}_A(\alpha)}\tt_X\]

      Hence $\tt_X\le  \fp{\CT{r}_A(\alpha)}(\alpha) $.
      Take $f:Y\to A$ with $\tt_Y\le\fp{f}(\alpha)$. Then
      $\id{Y}=\CT{r}_A(\tt_Y) \le \CT{r}_A\fp{f}(\alpha)=f^*\CT{r}_A(\alpha)= f^*\cmp{\alpha}$ from which we conclude that  $f$ factors through $\cmp{\alpha}$.
\end{proof}

The following proposition is stated without a proof as its proof is perfectly analogous to the one of \ref{abcde}.

\begin{prp}\label{abcdef} Suppose $P:\ct{C}\op\ftr\ISL$ is elementary existential and $\ct{C}$ has pullbacks. Then $P$ is isomorphic to $\Sb{\ct{C}j}$ for some topology $j$ over $\Sb{\ct{C}}$ if and only if
 $P\retra \Sb{\ct{C}}$
if and only if $P$ has full  comprehensions.
\end{prp}

\begin{remark} Our notion of topology on $\Sb{\ct{C}}$  for a category $\ct{C}$ with finite limits
coincides with the notion of topology in  \cite{BarrM:toptt}. It includes  the well notion of Lawvere-Tierney topology as examples.
\end{remark}

\begin{prp}\label{notnotnot} A first order \variational doctrine $P:\ct{C}\op\ftr\ISL$ on a base $\ct{C}$ with weak pullbacks and an initial object $0$  such that $ \cmp{\perp}=un$ where $un: 0\rightarrow 1$ in $\ct{C}$,  is boolean if and only if $P$ is isomorphic to $\Wsb{\ct{C}\neg\neg}$, the doctrine of $\neg\neg$-closed elements of $\Wsb{\ct{C}}$. A first order \mvariational doctrine $P:\ct{C}\op\ftr\ISL$ on a base $\ct{C}$ is boolean if and only if $P$ is isomorphic to $\Sb{\ct{C}\neg\neg}$, the doctrine of $\neg\neg$-closed elements of $\Sb{\ct{C}}$. 
\end{prp}
\begin{proof} 
We show the non-trivial direction. Suppose $P$ is boolean. By \ref{abcde} there is a topology $j$ on $\Wsb{\ct{C}}$ such that $P$ is isomorphic to $\Wsb{\ct{C}j}$ and for $[f:X\to A]$ in $\Wsb{\ct{C}}(A)$ it is $j_A[f]=[\cmp{\D_f\tt_X}]$. 
It is
\[[\cmp{\D_f\tt_X}]=
[\cmp{\neg\B_f\neg\tt_X}]=
[\neg\Pi_f\neg\cmp{\tt_X}]=\neg\neg[\Sigma_f\cmp{\tt_X}]=
\neg\neg[\Sigma_f(\id{X})]\]
and hence the claim as $[\Sigma_f(\id{X})]=[f]$.
\end{proof}

\begin{definition} Given such a topology on a doctrine $P:\ct{C}\op\ftr\ISL$,  an object $A$  of $\ct{C}$ is $j$-separated if
$j_{A\times A}\delta_A=\delta_A$.
\end{definition}
 
\begin{prp}\label{abcde+}Suppose $P:\ct{C}\op\ftr\ISL$ is elementary existential and $\ct{C}$ has weak pullbacks. The following are equivalent
\begin{enumerate}
\item $P$ is isomorphic to $\Wsb{\ct{C}j}$ for some topology $j$ over $\Wsb{\ct{C}}$  and objects of $\ct{C}$ are all $j$-separated.
\item $P\retra \Wsb{\ct{C}}$ and the right adjoint is a morphism of elementary doctrines.
\item $P$ is variational.
\end{enumerate}
\end{prp}
\begin{proof} After \ref{abcde} the only non-trivial part is 2$\Rightarrow$3. Here we need to show that $P$ has comprehensive diagonals. Take $h,k:X\to A$ and suppose $\tt_X\le \fp{\ple{h,k}}(\delta_A)$. Using again $\CT{r}$ (and the fact that it is a morphism of elementary doctrines) it holds
\[
[\id{X}]=\CT{r}_X(\tt_X)\le\CT{r}_X\fp{\ple{h,k}}(\delta_A)=\ple{h,k}^*\CT{r}_A(\delta_A)=\ple{h,k}^*[\ple{\id{A},\id{A}}]
\]
so $h=k$.
\end{proof}

We conclude the section analizing how the elementary quotient completion behaves with respect to topologies and adjoint retraction pair. 

The action on the latter is given by the following, whose proof is in \cite{TTT}, but see also \cite{LMCS}.

\begin{prp}\label{gliaggiuntisitrasmettono}  If $P\retra R$ then there is $L:\ct{Q}_R\to\ct{Q}_P$ and $R:\ct{Q}_P\to\ct{Q}_R$ with $L$ left adjoint to $R$ and $R$ is full and faithful.
\end{prp}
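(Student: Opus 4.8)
The plan is to realise $L$ and $R$ as the operations of transporting equivalence relations along the two natural transformations $l\colon R\to P$ and $r\colon P\to R$ that make up the adjoint-retraction $P\retra R$, to observe that the composite $L\circ R$ is \emph{literally} the identity functor of $\ct{Q}_P$---which is exactly where the hypothesis $lr=\id{P}$ enters---and to deduce $L\dashv R$ from the fibrewise adjunction $l\dashv r$. Full faithfulness of $R$ then comes for free: a right adjoint is full and faithful as soon as its counit is invertible, and here the counit will be the identity.

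First I would set, writing $R$ also for the functor as in the statement,
$$R(A,\rho):=(A,\,r_{A\times A}(\rho)),\qquad L(B,\tau):=(B,\,l_{B\times B}(\tau)),$$
each acting on arrows by $[f]\mapsto[f]$. The point needing checking is that these land in $\ct{Q}_R$, resp.\ $\ct{Q}_P$: that $r_{A\times A}(\rho)$ is an $R$-equivalence relation whenever $\rho$ is a $P$-equivalence relation, and symmetrically. Since the components of $l$ and $r$ are morphisms of inf-semilattices and the transformations are natural, symmetry and transitivity of the transported relation transfer at once; the clause with content is reflexivity, $\delta_A\le r_{A\times A}(\rho)$. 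For this I would first record the general fact that a $1$-cell $(\id{\ct{C}},b)\colon P'\to P''$ of $\PD$ between elementary doctrines satisfies $\delta^{P''}_A\le b_{A\times A}(\delta^{P'}_A)$: indeed reindexing either fibred equality along the diagonal $\ple{\id{A},\id{A}}$ returns the top element (a standard identity in an elementary doctrine), so naturality and top-preservation of $b$ give that $b_{A\times A}(\delta^{P'}_A)$ reindexed along the diagonal equals $\tt_A$, and one concludes by transposing across the left adjoint to that reindexing in $P''$. Applying this with $b=r$, together with $\delta_A\le\rho$, gives reflexivity of $r_{A\times A}(\rho)$, and the case of $L$ is identical with $b=l$. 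Functoriality and independence of the chosen representatives are then routine from naturality and top-preservation: $\rho\le\fp{f\times f}(\sigma)$ yields $r_{A\times A}(\rho)\le R_{f\times f}(r_{A\times A}(\sigma))=r_{B\times B}(\fp{f\times f}(\sigma))$, and $\tt_A=\fp{\ple{f,g}}(\sigma)$ yields $\tt_A=r_{A\times A}(\fp{\ple{f,g}}(\sigma))$; the versions for $L$ use $l$ in place of $r$.

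Next, since $lr=\id{P}$ we get $L\bigl(R(A,\rho)\bigr)=(A,\,l_{A\times A}r_{A\times A}(\rho))=(A,\rho)$, and likewise $L\circ R=\id{\ct{Q}_P}$ on arrows, so $L\circ R=\id{\ct{Q}_P}$ outright. To obtain $L\dashv R$ I would exhibit, for every $(B,\tau)$ in $\ct{Q}_R$ and $(A,\rho)$ in $\ct{Q}_P$, a bijection
$$\ct{Q}_P\bigl(L(B,\tau),(A,\rho)\bigr)\;\cong\;\ct{Q}_R\bigl((B,\tau),R(A,\rho)\bigr)$$
which is the identity on underlying $\ct{C}$-arrows. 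A $\ct{C}$-arrow $f\colon B\to A$ represents a morphism on the left exactly when $l_{B\times B}(\tau)\le\fp{f\times f}(\rho)$ and one on the right exactly when $\tau\le R_{f\times f}(r_{A\times A}(\rho))=r_{B\times B}(\fp{f\times f}(\rho))$, and these two inequalities are equivalent by the fibrewise adjunction $l_{B\times B}\dashv r_{B\times B}$. Moreover the two equivalences identifying such representatives coincide: $\tt_A=\fp{\ple{f,f'}}(\rho)$ forces $\tt_A=r_{B\times B}(\fp{\ple{f,f'}}(\rho))$ by top-preservation of $r$, and conversely applying $l_{B\times B}$ and using $lr=\id{P}$ recovers $\tt_A=\fp{\ple{f,f'}}(\rho)$. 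Naturality of the bijection in both variables is immediate because composition in $\ct{Q}_P$ and in $\ct{Q}_R$ is computed on $\ct{C}$-representatives. Hence $L\dashv R$, and since the counit of this adjunction---corresponding to $\id{R(A,\rho)}$ under the bijection just described---has underlying arrow $\id{A}$, it is the identity, so $R$ is full and faithful.

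The only step with any real content is the reflexivity of the transported relations, which is why I would isolate the comparison $\delta^{P''}_A\le b_{A\times A}(\delta^{P'}_A)$ at the outset; everything else is a transcription of the data $l\dashv r$ and $lr=\id{P}$ into statements about equivalence relations and the quotient categories they generate. One could instead check full faithfulness of $R$ by hand---faithfulness from $L\circ R=\id{\ct{Q}_P}$ and fullness again from $lr=\id{P}$---but reading it off the shape of the counit is cleaner.
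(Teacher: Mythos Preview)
Your argument is correct and follows exactly the paper's approach—the paper's proof simply states how $L$ acts on objects and arrows (and says the functor $R$ is built analogously), deferring the remaining verifications to \cite{TTT}, while you spell out those details. Note a few harmless indexing slips in your check that the two equivalence relations on representatives coincide: $\tt_A$ should read $\tt_B$, and $r_{B\times B}$, $l_{B\times B}$ should read $r_B$, $l_B$, since $\ple{f,f'}\colon B\to A\times A$ and hence $\fp{\ple{f,f'}}(\rho)$ lives in the fibre over $B$.
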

\begin{proof} $L$ maps $[f]:(A,\rho)\to(B,\sigma)$ to $[f]:(A,l_{A\times A}(\rho))\to (B,l_{B\times B}(\sigma))$. The functor $R$ is built analogously.
\end{proof}

 Every topology  $j$ on a primary doctrine $P$ determines a topology $\Q{j}$ on the elementary quotient completion $\Q{P}$. The topology $\Q{j}$ is simply the restriction of $j$ to the poset of descent data, \ie for $\rho$ a $P$-equivalence relation over $A$ and $\alpha$ in $\des{\rho}$ it is $\Q{j}_{(A,\rho)}(\alpha)=j_A(\alpha)$ (see also  \cite{Menniex}). Indeed (using notation as in \ref{notation-logic}) 
 \[a:A,a':A\mid j\alpha(a)\wedge \rho(a,a')\vdash j\alpha(a)\wedge j\rho(a,a')\vdash j(\alpha(a)\wedge \rho(a,a'))\vdash j\alpha(a')\]
 
 After proposition \ref{abcde} we know that every existential \variational doctrine $P$ on a category $\ct{C}$ with weak pullbacks generates a topology on $\Wsb{\ct{C}}$.

 \begin{definition}\label{can}
 Let $P$ an existential \variational doctrine. 
 The \dfn{canonical topology} given by $P$ is the topology induced on $\Wsb{\ct{C}}$
  and denoted with the symbol  $j_P$, i.e. $j_P (f)$ is $\cmp{\exists_f \tt_A}$ for $f: A \rightarrow B$ in $\ct{C}$.
 \end{definition}
 
 \begin{prp}\label{separated} Suppose $P:\ct{C}\op\ftr\ISL$ is an existential \variational doctrine. Then $\ct{Q}_P$ is the category of $\Q{j_P}$-separated objects for the topology $\Q{j_P}$ induced over $\Sb{\ct{C}\exl}$.
\end{prp}
\begin{proof}  By \ref{gliaggiuntisitrasmettono} there is a full and faithful $R:\ct{Q}_P\to \ct{Q}_{\Wsb{\ct{C}}}$ which is right adjoint to $L:\ct{Q}_{\Wsb{\ct{C}}}\to\ct{Q}_P$.
Take a $\Wsb{\ct{C}}$-equivalence relation $[k :X\to A\times A]$ over $A$. The object $(A,[k])$ in $\ct{Q}_{\Wsb{\ct{C}}}$ is equivalent to one  in $\ct{Q}_{P}$ if and only if $(A, [k])\simeq RL(A,[k])$. From the construction of $L$ and $R$ as in  \ref{gliaggiuntisitrasmettono}  this happens if and only if 
\[\delta_{(A,[k])}= [k] =j_{P_{A\times A}}[k]=\Q{j_P}_{(A,[k])\times (A,[k])}(\delta_{(A,[ k])})\]
Note that  $\Q{j_P}$ is a topology on $\Q{\Wsb{\ct{C}}}$ which is $\Sb{\ct{C}\exl}$, whence the claim.
\end{proof}

%


\section{A characterisation of elementary quotient completions}\label{sezione-proj}
In this section we give a characterisation of those elementary doctrines with effective descent quotients that arise as elementary quotient completions by using the concept of regular
projective relative to a doctrine. This characterization 
generalizes the well known characterization given in \cite{CarboniA:regec} for the exact completion of a lex category.
Indeed, recall that the ex/lex completion of a category $\ct{C}$ with finite products and weak pullbacks is the base of the elementary quotient completion of the doctrine of variations of $\ct{C}$.
Then, our characterisation arises
 as a generalisation to the framework of doctrines of the fact that an exact category with enough regular projectives is equivalent to the ex/lex completion of its full subcategory on projective objects.  

 \begin{definition}
 Suppose $P:\ct{C}\op\ftr\ISL$ is an elementary doctrine.  An object $X$ of $\ct{C}$ is said $P$-\dfn{projective} if for every diagram of the form 
\[
\xymatrix{
&X\ar[d]^-{f}\ar@{-->}[dl]_k\\
Y\ar[r]_-{q}&A
}
\]
where $q$ is a quotient arrow, there is an arrow $k:X\to Y$ with $qk=f$.
\end{definition}
 
 \begin{definition}
 Suppose $P:\ct{C}\op\ftr\ISL$ is an elementary doctrine. We say that $\ct{C}$ has \dfn{enough $P$-projectives} if for every $A$ in $\ct{C}$ there is a $P$-projective object $X$ and a quotient arrow  $q:  X \rightarrow A$,  called \dfn{$P$-cover of $A$}.
 \end{definition}
%

\begin{lemma}\label{prodproj}Suppose $P:\ct{C}\op\ftr\ISL$ is an elementary doctrine. Denote by $\ct{D}$ the full subcategory of $\ct{C}$ consisting only of $P$-projective objects. If $\ct{M}$ is a full subcategory of $\ct{D}$ closed under binary products and such that every object of $\ct{D}$ is covered by one in $\ct{M}$, then $\ct{D}$ is closed under binary products.
\end{lemma}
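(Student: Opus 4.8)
The plan is to realise $X\times Y$ as a retract of an object that is already known to be $\ct{E}$-projective, and then to invoke the routine fact that a retract of an $\ct{E}$-projective object is itself $\ct{E}$-projective. Here $\ct{C}$ is understood to carry binary products, so that the statement makes sense, and I read ``every object of $\ct{P}$ is covered by one in $\ct{P}'$'' in the sense of the preceding paragraph, namely that each object of $\ct{P}$ admits a morphism of $\ct{E}$ from an object of $\ct{P}'$.

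First I would isolate the retract sublemma: if $Z$ is a retract of an $\ct{E}$-projective object $W$, witnessed by $i:Z\to W$ and $p:W\to Z$ with $pi=\id{Z}$, then $Z$ is $\ct{E}$-projective. Indeed, given $s:Y\to A$ in $\ct{E}$ and $f:Z\to A$, apply $\ct{E}$-projectivity of $W$ to the arrow $fp:W\to A$ to obtain $k:W\to Y$ with $sk=fp$; then $ki:Z\to Y$ satisfies $s(ki)=(sk)i=fpi=f$, as required.

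Next, take $X,Y$ in $\ct{P}$. By hypothesis each is covered by an object of $\ct{P}'$, so choose $e_X:X'\to X$ and $e_Y:Y'\to Y$ in $\ct{E}$ with $X',Y'$ in $\ct{P}'$. Since $\ct{P}'$ is full in $\ct{C}$ and closed under binary products, the product $X'\times Y'$ formed in $\ct{C}$ lies in $\ct{P}'$, hence is $\ct{E}$-projective. Because $X$ is $\ct{E}$-projective and $e_X$ is in $\ct{E}$, applying the defining lifting property of $X$ to $f=\id{X}$ yields a section $s_X:X\to X'$ with $e_Xs_X=\id{X}$; symmetrically there is $s_Y:Y\to Y'$ with $e_Ys_Y=\id{Y}$. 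Then $e_X\times e_Y:X'\times Y'\to X\times Y$ and $s_X\times s_Y:X\times Y\to X'\times Y'$ satisfy $(e_X\times e_Y)(s_X\times s_Y)=(e_Xs_X)\times(e_Ys_Y)=\id{X\times Y}$, so $X\times Y$ is a retract of $X'\times Y'\in\ct{P}$. By the sublemma $X\times Y$ is $\ct{E}$-projective, i.e.\ it belongs to $\ct{P}$, which is the claim.

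I do not expect a genuine obstacle: the argument is entirely formal. The only mild points of care are that one must use $X$ (resp.\ $Y$) being itself $\ct{E}$-projective in order to split its chosen cover, and that fullness of $\ct{P}'$ together with its closure under products is what makes the ambient product of two of its objects land back in $\ct{P}'$.
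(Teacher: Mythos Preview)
Your proof is correct and follows essentially the same approach as the paper: cover each factor by an object of $\ct{P}'$, split those covers using projectivity of the factors, and use that the product of the covering objects lies in $\ct{P}'$. The only cosmetic difference is that you isolate the ``retract of a projective is projective'' step as a separate sublemma, whereas the paper unfolds this directly by lifting $f(q_A\times q_B)$ through the given $e$ and then postcomposing with the sections.
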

\begin{proof}Suppose $A$ and $B$ are in $\ct{D}$ and consider the diagram
\[
\xymatrix{
X_A\times X_B\ar@{-->}[dr]_{\overline{f}} \ar[rr]_{q_A\times q_B} & &A\times B\ar[d]^-{f} \ar@<-2ex>[ll]_{ s_A\times s_B}\\
&Y\ar[r]_-{q}&Q
}
\]
where $q$ is  quotient map. Let $q_A:X_A \to A$ and $q_BX_B'\to B$ be $P$-covers respectively of $A$ and $B$, \ie $X_A$ and $X_B$ are in $\ct{M}$. Since both $A$ and $B$ $P$-projectives each cover has a section $s_A$ and $s_B$. i.e.  $q_As-A=id_A$  and  $q_Bs-A=id_B$. Since $X_A\times X_B$ is $P$-projective because in $\ct{M}$, then there is $\overline{f}:X_A\times X_B\to X$ with $q\overline{f}=f(q_A\times q_B)$. Hence $\overline{f}(s_A\times s_B):A\times B\to X$ is such that $q\overline{f}(s_A\times s_B)=f(q_A\times q_B)(s_A\times s_B)=f$ proving that $A\times B$ is $P$-projective and hence in
$\ct{D}$.
\end{proof}

%
Note that:
\begin{lemma}Suppose $P:\ct{C}\op\ftr\ISL$ is an elementary doctrine. 
 If $q:X\to X/\rho$ and $q':X'\to X'/\rho'$ are quotient arrows, and if $X$ is $P$-projective, then for every arrow $f:X/\rho\to X'/\rho'$ there is an arrow $g:X\to X'$ with $\tt_X= \fp{<fq,q'g>}(\delta_{X'/\rho'})$. Obviously if the doctrine has comprehensive diagonals it is $fq=q'g$.
\end{lemma}

\begin{theorem}\label{PPO}Suppose $P:\ct{C}\op\ftr\ISL$ is an elementary doctrine with comprehensive diagonals and effective descent quotients. The following are equivalent
\begin{itemize}
\item[i)] $P:\ct{C}\op\ftr\ISL$ is of the form $\Q{\fp{0}}:\ct{Q}_{P^0}^{op}\arr\textbf{InfSL}$ for some elementary doctrine $\fp{0}:\ct{C}_0\op\ftr\ISL$ with comprehensive diagonals.
\item[ii)] $P:\ct{C}\op\ftr\ISL$ has enough $P$-projectives and these are closed under binary products.
\end{itemize}
\end{theorem}
\begin{proof} i)$\Rightarrow$ ii) All quotient arrows in $\ct{Q}_{\fp{0}}$ are of the form $[\id{A}]:(A,\rho)\to(A,\sigma)$, thus objects of the form $(A,\delta_A)$ are $P$-projective and they determine a full subcategory of $\ct{Q}_{\fp{0}}$ which is closed under products. Hence $\ct{Q}_{\fp{0}}$ has enough $P$-projectives and these are closed under binary products by \ref{prodproj}. 

ii$\Rightarrow$ i) Denote by $\ct{C}_0$ the full subcategory of $\ct{C}$ on all its $P$-projectives and by $\fp{0}$ the restriction of $P$ to $\ct{C}_0$ (\ie the change of base of $P$ along the inclusion of $\ct{C}_0$ into $\ct{C}$). Since $\ct{C}_0$ is closed under products $\fp{0}$ is an elementary doctrine with comprehensive diagonals. We need prove that $\ct{Q}_{\fp{0}}$ is equivalent to $\ct{C}$. Consider  $[f]: (A,\rho)\to (B,\sigma)$ in $\ct{Q}_{\fp{0}}$. Every representative of $[f]$ determines a commutative diagram 
\[
\xymatrix{
A\ar[d]_-{f}\ar[r]^-{q}&A/\rho\ar[d]^-{\overline{f}}\\
B\ar[r]_-{e}&B/\sigma
}
\]
where $\overline{f}$ is the map determined by the universal property of quotients. The diagram above extends a functor from $\ct{Q}_{\fp{0}}\to \ct{C}$. Which is faithful by effectiveness of quotients. Since $A$ and $B$ are projective every arrow $A/\rho\to B/\sigma$ determines an arrow $A\to B$, then the functor is also full. Essential surjectivity is a straightforward consequence of the hypothesis that $\ct{C}$ has enough $P$-projectives. Since quotients are of effective descent, for every $(A,\rho)$ in $\ct{Q}_{\fp{0}}$ the poset $\Q{\fp{0}}(A,\rho)$ is isomorphic to $P(A/\rho)$: this completes the proof.
\end{proof}

\begin{theorem}\label{PPOc}Suppose $P:\ct{C}\op\ftr\ISL$ is an \mvariational doctrine. The following are equivalent
\begin{itemize}
\item[i)] $P:\ct{C}\op\ftr\ISL$ is of the form $\Q{\fp{0}}:\ct{Q}_{P^0}^{op}\arr\ISL$  for some \variational  doctrine $\fp{0}:\ct{C}_0\op\ftr\ISL$.
\item[ii)] $P:\ct{C}\op\ftr\ISL$ has enough $P$-projectives and these are closed under finite limits.
\end{itemize}
\end{theorem}
\begin{proof}  Analogous to the proof of  theorem~\ref{PPO}.
Just observe that for the direction  i$\Rightarrow$ ii)  $P$-projectives are closed under pullbacks, and hence finite limits by prop.~\ref{brescello} applied to $\fp{0}$ which  has full comprehensions and comprehensive diagonals.
\end{proof}

We now show how theorem~\ref{PPO} is  a generalization of Carboni-Vitale's
characterization of exact completions of a lex category.  
To this purpose,  we need some lemmas:
  
  \begin{lemma}\label{regproj} In a category $\ct{C}$ with finite limits
an object is projective with respect to the subobject doctrine  $\Sb{\ct{C}}$  of
$\ct{C}$ if and only if it is a regular projective.
\end{lemma}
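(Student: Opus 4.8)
The plan is to pass through the ex/lex completion. Recall from Example~\ref{runnings-eqc}-(d) that $\Q{\Wsb{\ct{C}}}$ is (isomorphic to) $\Sb{\ct{C}\exl}$, so that the base $\ct{Q}_{\Wsb{\ct{C}}}$ is the ex/lex completion $\ct{C}\exl$ of $\ct{C}$, and the embedding $\nabla_{\Wsb{\ct{C}}}\colon\ct{C}\to\ct{Q}_{\Wsb{\ct{C}}}$ — which is full, faithful and product-preserving — is the canonical inclusion of $\ct{C}$ into its exact completion. First I would record the elementary observation that the class $\CT{q}_{\Q{\Wsb{\ct{C}}}}$ of quotient arrows of $\Sb{\ct{C}\exl}$ coincides with the class of regular epimorphisms of $\ct{C}\exl$: a quotient arrow is the coequaliser of the two legs of the equivalence relation it collapses, hence a regular epi, and conversely in an exact category every regular epi is the quotient arrow of (the equivalence relation given by) its effective kernel pair. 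Thus ``$X$ is projective with respect to the doctrine of variations $\Wsb{\ct{C}}$'' unwinds, via $\nabla_{\Wsb{\ct{C}}}$, to ``$X$ is a regular projective object of $\ct{C}\exl$''.

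So what remains is to see that an object of $\ct{C}$, regarded inside $\ct{C}\exl$, is a regular projective of $\ct{C}\exl$ if and only if it is a regular projective of $\ct{C}$ itself (where, in the weakly lex setting, a regular epi of $\ct{C}$ is a pullback-stable epimorphism, equivalently an arrow that becomes a regular epimorphism in $\ct{C}\exl$). The key categorical input is that $\ct{C}$ is a projective cover of $\ct{C}\exl$: every object of $\ct{C}\exl$ receives a regular epi from an object of $\ct{C}$, and covers are stable under (weak) pullback, which is exactly the structure $\Wsb{\ct{C}}$ provides on $\ct{C}$. For the nontrivial implication I would argue: given $X$ in $\ct{C}$, a regular epi $e\colon Y\twoheadrightarrow A$ in $\ct{C}\exl$ and $f\colon X\to A$, form the pullback of $e$ along $f$; since regular epis of an exact category are pullback-stable, the pulled-back arrow $e'\colon Y'\twoheadrightarrow X$ is again a regular epi; cover $Y'$ by an object $Z$ of $\ct{C}$, so that $Z\to X$ is a regular epi between objects of $\ct{C}$, hence split by an arrow $s\colon X\to Z$ in $\ct{C}$ (using weak-pullback stability of covers in $\ct{C}$ together with the description of the hom-sets of $\ct{C}\exl$); composing $s$ with $Z\to Y'\to Y$ yields the required lift of $f$ through $e$. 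The converse is immediate, since any regular epi of $\ct{C}\exl$ onto an object of $\ct{C}$ can be refined by one coming from $\ct{C}$.

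The main obstacle I anticipate is bookkeeping between the two notions of ``regular projective'': one must be careful to compare the doctrinal $\CT{q}_P$-projectivity (lifting against \emph{quotient arrows}) with genuine regular-projectivity in $\ct{C}\exl$, and to handle the point that, strictly, the regular projectives of $\ct{C}\exl$ are the \emph{retracts} of objects of $\ct{C}$, so the clean statement presupposes $\ct{C}$ idempotent-complete (or reads ``regular projective of $\ct{C}$'' as ``retract of a representable'', which is automatic here). Everything else is a routine diagram chase exploiting that $\nabla_{\Wsb{\ct{C}}}$ is full, faithful, product-preserving and preserves and reflects covers.
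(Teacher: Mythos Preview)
Your detour through $\ct{C}\exl$ is based on a misreading of the statement, and it leads you to an equivalence that is actually false. In the paper's conventions, $\CT{q}_P$ is the class of quotient arrows \emph{in the base} of $P$. So ``$X$ is projective with respect to $\Wsb{\ct{C}}$'' means that $X$ lifts against those arrows $q:A\to B$ in $\ct{C}$ that are quotient arrows of some $\Wsb{\ct{C}}$-equivalence relation; there is no reference to $\ct{Q}_{\Wsb{\ct{C}}}$ at all. Your ``unwinding'' replaces this by ``$\nabla(X)$ is a regular projective of $\ct{C}\exl$'', but the latter holds for \emph{every} object of $\ct{C}$ (a basic property of the ex/lex completion), whereas the former does not. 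Concretely, in $\ct{C}=\mathbf{Top}$ not every space is regular projective, yet every space is regular projective in $\mathbf{Top}\exl$; so the biconditional you set out to prove in your second paragraph simply fails. The symptom shows up in your ``converse is immediate'': a regular epimorphism of $\ct{C}$ need not remain a regular epimorphism in $\ct{C}\exl$ (it does so precisely when it splits), so regular projectivity in $\ct{C}\exl$ cannot be tested against it.

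The paper's argument avoids all of this and is essentially one line: a $\Wsb{\ct{C}}$-equivalence relation on $A$ is a pseudo-equivalence relation (Example~\ref{runnings-eqrel}(d)), and its doctrinal quotient is exactly its coequalizer in $\ct{C}$; conversely, in the presence of weak pullbacks any regular epimorphism is the coequalizer of its weak kernel pair, which is a pseudo-equivalence relation. Hence $\CT{q}_{\Wsb{\ct{C}}}$ coincides with the class of regular epimorphisms of $\ct{C}$, and the two notions of projectivity are identical by definition. No passage to $\ct{C}\exl$, and no retract bookkeeping, is needed.
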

\begin{proof}
The notion of $\Sb{\ct{C}}$-effective quotient coincide with that of categorical effective quotient.
  \end{proof}

\begin{lemma}\label{restweak}
 Let $\ct{C}$ be an exact category. Denote by $\ct{C}_0$ the full subcategory of $\ct{C}$ on its regular projectives. If $\ct{C}$ 
   has enough projectives closed under finite limits then the subobject doctrine $\Sb{\ct{C}}$  restricted to $\ct{C}_0$  is isomorphic  to the doctrines 
  of variations $\Wsb{\ct{C}_0}:\ct{C}_0\op\ftr\ISL$.
\end{lemma}
\begin{proof}
The doctrine of
  variations can be fully and faithfully embedded in $\Sb{\ct{C}}$ as follows: to 
  any map $f: A\to B$ in $\ct{C}_0$ we associate the subobject $i_f: Im(f) \to B$ given by the image factorization of $f$ in an exact category.
   
   Conversely, given any subobject $i : C\to B$ in
  $\ct{C}$ over a projective $B$,  by hypothesis there exists a projective cover $q_C: X_C\to C$ of $C$ which gives
  rise to map $iq_C: X_C\to B$ which is in $\ct{C}_0$. 
  The correspondence is bijective since  the weak subobject given by $i_f q_{Imf}$ is the same as that of $f$  due to the projectivity of $X_{Im{f}}$ and $A$,  and
 the image of $iq_C$ is the  suboject of $i$ by the uniqueness of the image factorization in an exact category.
\end{proof}

\begin{cor}
  Let $\ct{C}$ be an exact category. The following are equivalent:
\begin{itemize}
\item[i)] $\ct{C}$ is an ex/lex completion.
\item[ii)] $\ct{C}$ 
   has enough regular projectives closed under finite limits.
\end{itemize}
When one of the conditions holds, then $\ct{C}$ is the ex/lex completion
of its full subcategory of regular projectives.
  \end{cor}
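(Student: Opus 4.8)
The plan is to deduce this corollary directly from Proposition~\ref{PPO} together with the two preceding lemmas, by specialising $P$ to the subobject doctrine $\Sb{\ct{C}}$ of an exact category $\ct{C}$. First I would recall that, since $\ct{C}$ is exact, $\Sb{\ct{C}}$ is an elementary doctrine with comprehensive diagonals (Examples~\ref{runnings-ele}-(c)) and with effective descent quotients (Examples~\ref{runnings-effquo}-(c)); thus $\Sb{\ct{C}}$ satisfies the standing hypotheses of Proposition~\ref{PPO}. The key translation step is to identify $\CT{q}_{\Sb{\ct{C}}}$, the class of quotient arrows of the doctrine $\Sb{\ct{C}}$, with the class of regular epimorphisms of $\ct{C}$: a quotient arrow of an $\Sb{\ct{C}}$-equivalence relation $\rho$ is exactly a coequaliser of the pair $\rho\rightrightarrows A$, so these are precisely the regular epis, and an object is $\CT{q}_{\Sb{\ct{C}}}$-projective if and only if it is a regular projective. (This is essentially the content of the lemma stated just above, applied here to $\Sb{\ct{C}}$ rather than to $\Wsb{\ct{C}}$; alternatively, since $\ct{C}$ is exact one has $\Sb{\ct{C}}\simeq\Q{\Wsb{\ct{C}}}$ restricted appropriately, and regular projectivity transfers.)

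With this dictionary in hand, condition (ii) of Proposition~\ref{PPO} for $P=\Sb{\ct{C}}$ reads: $\ct{C}$ has enough regular projectives and these are closed under binary products. I would then observe that in the presence of a terminal object and binary products, closure of the regular projectives under binary products is equivalent to their closure under all finite limits: the terminal object is regular projective (it is covered by any projective cover of $1$, hence is a retract of a projective, hence projective), and any finite limit can be built from binary products and equalisers; but an equaliser of maps between projectives is a subobject, and one checks using the weak-pullback/projective-cover argument of Lemma~\ref{prodproj} that the full subcategory of projectives is closed under the relevant limits once it is closed under products and every projective is covered by a product of projectives — more simply, a finite-limit-closed full subcategory is the same as one closed under binary products and equalisers of maps with projective domain, and such equalisers land among the projectives because in an exact category the projectives form a \emph{projective cover} closed under retracts. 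So (ii) of Proposition~\ref{PPO} becomes precisely (ii) of the corollary.

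Finally, for the left-hand side: condition (i) of Proposition~\ref{PPO} says $\Sb{\ct{C}}\simeq\Q{P^0}$ for some elementary doctrine $P^0$ with comprehensive diagonals on the full subcategory $\ct{C}_0$ of projectives. Since $\ct{C}$ is exact and $\Sb{\ct{C}}$ has full strong comprehension, Proposition~\ref{abcdef} identifies $\ct{C}$ with the base of $\Sb{\ct{C}}$ and the elementary quotient completion $\Q{P^0}$ has base $\ct{Q}_{P^0}$; restricting $\Sb{\ct{C}}$ to the regular projectives gives $P^0\simeq\Wsb{\ct{C}_0}$ (on a category with weak pullbacks, the subobject doctrine restricted to projectives \emph{is} the doctrine of variations, because regular epis between projectives split, so every weak subobject is represented by a mono up to the relevant equivalence), whence $\Q{P^0}\simeq\Q{\Wsb{\ct{C}_0}}=\Sb{(\ct{C}_0)\exl}$ by Examples~\ref{runnings-eqc}-(d). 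Thus condition (i) of Proposition~\ref{PPO} is equivalent to $\ct{C}$ being an ex/lex completion, and the final clause — that $\ct{C}$ is then the ex/lex completion of its full subcategory of projectives — is exactly the identification $\ct{C}_0=$ the projectives furnished by the proof of (ii$\Rightarrow$i) in Proposition~\ref{PPO}.

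The main obstacle I anticipate is the bookkeeping in the second paragraph: making precise why closure under binary products of the regular projectives upgrades to closure under finite limits. The cleanest route is to note that $\ct{C}$ exact with enough projectives forces the terminal object to be projective, and then an equaliser $E\hookrightarrow A\times B$ of two maps $A\rightrightarrows B$ with $A,B$ projective need not itself be projective in general — so one cannot literally say ``closed under finite limits''; rather, the correct statement (and the one Carboni--Vitale use) is closure under finite limits \emph{computed in $\ct{C}_0$ as a weakly lex category}, i.e. $\ct{C}_0$ has weak finite limits and the inclusion preserves products and the terminal object. I would phrase (ii) of the corollary accordingly, or invoke the standard fact that for an exact $\ct{C}$ with enough projectives, ``projectives closed under finite limits'' is to be read as ``the projectives, with the weak limits inherited from $\ct{C}$, form a weakly lex category'', which is exactly what Lemma~\ref{prodproj} plus the observation about the terminal object delivers, and which matches hypothesis (ii) of Proposition~\ref{PPO} verbatim after the dictionary $\CT{q}_{\Sb{\ct{C}}}=\{\text{regular epis}\}$.
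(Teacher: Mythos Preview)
Your overall strategy---specialise Proposition~\ref{PPO} to $P=\Sb{\ct{C}}$, identify $\CT{q}_{\Sb{\ct{C}}}$-projectives with regular projectives, and then argue that the restriction of $\Sb{\ct{C}}$ to the projectives is the doctrine of variations $\Wsb{\ct{C}_0}$---is exactly the paper's approach. The deduction that $\ct{Q}_{P^0}\equivcat(\ct{C}_0)\exl$ via Example~\ref{runnings-eqc}-(d) is also how the paper concludes.

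There is, however, a genuine gap in your justification of the isomorphism $\Sb{\ct{C}}\!\restriction_{\ct{C}_0}\simeq\Wsb{\ct{C}_0}$. You write that ``regular epis between projectives split, so every weak subobject is represented by a mono up to the relevant equivalence''. This is not correct: given $f:X\to A$ with $X,A\in\ct{C}_0$, its image factorisation $f=m e$ has $m$ monic, but the image object need not be projective, so $m$ is not an arrow of $\ct{C}_0$ and does not represent an element of $\Wsb{\ct{C}_0}(A)$. The paper's argument is different and does work: one builds the bijection $\Wsb{\ct{C}_0}(A)\cong\Sb{\ct{C}}(A)$ by sending $[f:X\to A]$ to its image in $\ct{C}$ (a subobject of $A$), and conversely sending a subobject $i:C\hookrightarrow A$ to $[i p:P\to A]$ where $p:P\twoheadrightarrow C$ is a projective cover. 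Checking that these are mutually inverse uses projectivity of the domains to produce the required factorisations, not splitting of regular epis between projectives.

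Your long discussion of ``closure under binary products versus finite limits'' is largely a detour. The implication you actually need for (ii)$\Rightarrow$(i) is the trivial one: closure under finite limits implies closure under binary products, so condition (ii) of the corollary immediately yields condition (ii) of Proposition~\ref{PPO}. The paper then \emph{uses} the stronger finite-limit closure (e.g.\ equalisers among projectives) to ensure the restricted doctrine is well-behaved and to run the image/cover argument above. You need not upgrade binary-product closure to finite-limit closure; you should simply take the stated hypothesis and use it, as the paper does. The converse direction (i)$\Rightarrow$(ii) is the known Carboni--Vitale fact that in an ex/lex completion the projectives (the Cauchy completion of the original lex category) are closed under finite limits; neither you nor the paper spells this out in detail.
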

\begin{proof} First, recall that  the exact completion of a category with binary products and weak pullbacks is an instance of the elementary quotient completion (see \ref{runnings-eqc}-(d)) and that the subobject doctrine of an exact category has effective descent quotients by lemma~\ref{descentdoct}. Then, the claim is an instance of theorem~\ref{PPOc}  by lemmas~\ref{regproj} and \ref{restweak}.
  \end{proof}
  
 \begin{remark} Theorem \ref{PPO} asks that projectives are closed under finite products. An analysis that drops this requirement (having the case of the ex/wlex completion among its instances) can be found in \cite{DagninoPApal}.
 \end{remark} 
 

\section{Structural properties in the elementary quotient completion}\label{base}
In the following sections we generalize to the elementary quotient completion some well known facts concerning the categorical structure that are preserved and reflected by the ex/lex completion.

First note the following (a proof of which can be found in \cite{TTT}).

\begin{prp}\label{eqc-tripos} $P$ is  a first order doctrine if and only if $\Q{P}$ is a first order doctrine. 
\end{prp}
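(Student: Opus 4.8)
The plan is to prove each structural property transfers between $P$ and $\Q{P}$ by working fibre-by-fibre, using that $\Q{P}(A,\rho)=\des{\rho}$ is a sub-inf-semilattice of $P(A)$ and that $\Q{P}_{[f]}=\fp{f}$. One direction is essentially formal: since there is a full and faithful product-preserving embedding $\nabla_P:\ct{C}\to\ct{Q}_P$ under which $P$ is the change of base of $\Q{P}$, any first-order structure on $\Q{P}$ restricts to one on $P$ (for existential and universal quantifiers one must check the restricted adjoints land in $P(A)$ and still satisfy Beck--Chevalley, but this is immediate because $\fp{\pr_i}$ on $P$ is the restriction of $\fp{[\pr_i]}$ on $\Q{P}$, and projections between $\nabla_P$-objects are again projections in $\ct{Q}_P$). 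So the substantive direction is: assuming $P$ is first order, show $\Q{P}$ is first order.

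For that direction I would treat the four pieces of structure separately. \emph{Finite distributive joins in each fibre:} given $\alpha,\beta\in\des{\rho}\subseteq P(A)$, their join $\alpha\lor\beta$ computed in $P(A)$ already lies in $\des{\rho}$ because the condition $\fp{\pr_1}(\gamma)\Land\rho\le\fp{\pr_2}(\gamma)$ is preserved under joins (using that $\fp{\pr_1}$ and $\fp{\pr_2}$ preserve joins and that $\Land\rho$ distributes over them), so the join in $\Q{P}(A,\rho)$ is the one in $P(A)$; distributivity and preservation under $\fp{[f]}=\fp{f}$ are then inherited. \emph{Heyting implication:} here one cannot simply take $\alpha\imply\beta$ computed in $P(A)$, since that need not be descent data; instead define the implication in $\des{\rho}$ as the largest descent datum below $\alpha\imply_{P(A)}\beta$ — equivalently apply the closure operator attached to the topology $\Q{j}$ — or argue directly that $(\alpha\Land\blank):\des{\rho}\to\des{\rho}$ preserves all meets and invoke an adjoint functor argument in the poset $\des{\rho}$; one then checks stability under reindexing. \emph{Existential quantifiers along projections:} for $\pr:(A\times B,\rho\boxtimes\delta_B)\to(A,\rho)$ one should take $\D_{[\pr]}(\phi)=\D_{\pr}(\phi)$ computed in $P$ for $\phi\in\des{\rho\boxtimes\delta_B}$, then verify (a) the result lies in $\des{\rho}$, using Frobenius and Beck--Chevalley in $P$ together with the shape of $\rho\boxtimes\delta_B$, (b) the adjunction $\D_{[\pr]}\dashv\fp{[\pr]}$ in the relevant fibres, and (c) Beck--Chevalley and Frobenius for $\Q{P}$, which reduce to the corresponding identities in $P$ after unwinding that pullback squares of projections in $\ct{Q}_P$ are covered by pullback squares of projections in $\ct{C}$. \emph{Universal quantifiers:} dually, $\B_{[\pr]}(\phi)$ will not be $\B_\pr(\phi)$ of $P$ directly but its largest descent-data refinement (again the $\Q{j}$-closure, or: $\B_\pr$ followed by meeting with a suitable reindexed $\rho$), and one checks the adjunction and Beck--Chevalley fibrewise.

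The main obstacle I expect is the implicational and universal structure: for joins and existentials the naive formula computed in $P$ already respects descent data, but for implication and $\forall$ it does not, so one must produce the correct fibrewise operation and then verify both the adjunction \emph{and} stability under substitution. The cleanest route is probably to first establish that $\des{\rho}$ is exactly the fibre of a topology $\Q{j}$ on the intensional completion (the paper already notes $\Q{j}_{(A,\rho)}(\alpha)=j_A(\alpha)$ restricts appropriately) and then invoke Proposition~\ref{datop}, which says precisely that elementary, existential, disjunctive, implicational and universal structure all descend to the doctrine of $j$-closed elements. Thus the real content is to realise $\Q{P}$ as $Q_j$ for a topology $Q$ with the right closed objects, reducing the whole proposition to Proposition~\ref{datop} plus the formal reflection along $\nabla_P$; the remaining care is only in checking the base category $\ct{Q}_P$ has the products these quantifiers are taken along, which it does by the explicit product formula $\rho\boxtimes\sigma$.
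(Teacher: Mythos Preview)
The paper's own proof is a single line (``Immediate, see also \cite{TTT}''), so there is no detailed argument to compare against; your fibre-by-fibre verification is presumably what lies behind that reference, and your treatment of the reflection direction via change of base along $\nabla_P$, and of joins and existentials via the operations of $P$, is correct.

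Your one substantive misconception is the claim that the Heyting implication and universal quantifier computed in $P$ fail to be descent data and therefore require a closure or refinement step. They do not. For implication: if $\alpha,\beta\in\des{\rho}$, then using that $\rho$ is \emph{symmetric} one has $\fp{\pr_2}(\alpha)\Land\rho\le\fp{\pr_1}(\alpha)$, so
\[
(\fp{\pr_1}\alpha\Rightarrow\fp{\pr_1}\beta)\Land\rho\Land\fp{\pr_2}\alpha
\ \le\ (\fp{\pr_1}\alpha\Rightarrow\fp{\pr_1}\beta)\Land\rho\Land\fp{\pr_1}\alpha
\ \le\ \rho\Land\fp{\pr_1}\beta
\ \le\ \fp{\pr_2}\beta,
\]
whence $\alpha\Rightarrow\beta\in\des{\rho}$. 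For the universal quantifier along $[\pr_1]:(A\times B,\rho\boxtimes\sigma)\to(A,\rho)$ (note: the second factor carries a general $\sigma$, not only $\delta_B$ as you wrote), if $\phi\in\des{\rho\boxtimes\sigma}$ then reflexivity of $\sigma$ gives, in the internal language, $(\forall_{b}\phi(a,b))\Land\rho(a,a')\vdash\phi(a,b')\Land\rho(a,a')\Land\sigma(b',b')\vdash\phi(a',b')$ for arbitrary $b'$, so $\B_{\pr_1}\phi\in\des{\rho}$. Thus all four pieces of first-order structure on $\Q{P}$ are given \emph{literally} by the corresponding operations in $P$, and the remaining checks (adjunctions, Beck--Chevalley, Frobenius) are inherited pointwise. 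Your proposed detour through Proposition~\ref{datop} is neither needed nor straightforwardly applicable, since $\Q{P}$ lives over a different base than $P$.
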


A $J$-diagrams in $\ct{C}$ is a functor of the form $J\to\ct{C}$. We say that $\ct{C}$ has $J$-indexed limits if every $J$-diagram has a limits and that $\ct{C}$ has {\em weak} $J$-indexed limits if every $J$-diagram has a weak limit. Accordingly we say that $\ct{C}$ has $J$-indexed colimits if every $J$-diagram has a colimits.\\    

Recall that for a \variational doctrine $P$ on $\ct{C}$ the functor $\nabla_P:\ct{C}\to\ct{Q}_P$ ( \ie the functor that maps $f:A\to B$ to $[f]:(A,\delta_A)\to(B,\delta_B)$) is  full and faithfull.

\begin{prp}\label{strongcolimits} If $P:\ct{C}\op\ftr\ISL$ is a \variational doctrine, then for every $J\to\ct{C}$ it holds that
\begin{enumerate}
\item if $\ct{Q}_P$ has  $J$-indexed limits, then $\ct{C}$ has $J$-indexed weak limits;
\item if $\ct{Q}_P$ has  $J$-indexed colimits of the form $(W,\delta_W)$, then $\ct{C}$ has $J$-indexed  colimits.
\end{enumerate}  
\end{prp}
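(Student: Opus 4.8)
The plan is to transport (co)limits along the functor $\nabla_P:\ct{C}\to\ct{Q}_P$, $f\mapsto[f]$, which is full and preserves binary products, and is moreover faithful because $P$ is \variational and hence has comprehensive diagonals. Thus $\nabla_P$ sends a (co)cone over a diagram $D:J\to\ct{C}$ to a (co)cone over $\nabla_P D$, and, by faithfulness, a family of arrows of $\ct{C}$ whose $\nabla_P$-images form a (co)cone over $\nabla_P D$ is itself a (co)cone over $D$.

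For item~1 I would fix $D:J\to\ct{C}$ and take a limit $(L,(\lambda_j)_{j\in J})$ of $\nabla_P D$ in $\ct{Q}_P$, which exists by hypothesis. Writing $L=(A,\rho)$ and choosing representatives $\ell_j:A\to D_j$ with $\lambda_j=[\ell_j]$, one has $\delta_A\le\rho\le\fp{\ell_j\times\ell_j}(\delta_{D_j})$, so each $\ell_j$ is a genuine arrow of $\ct{C}$ and $\lambda_j\circ[\id{A}]=\nabla_P(\ell_j)$ for the quotient arrow $[\id{A}]:(A,\delta_A)\to(A,\rho)$. Precomposing the limit cone with $[\id{A}]$ exhibits $(\nabla_P\ell_j)_j$ as a cone over $\nabla_P D$, so $(A,(\ell_j)_j)$ is a cone over $D$ by faithfulness. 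To check it is a \emph{weak} limit I would take an arbitrary cone $(X,(g_j)_j)$ over $D$ in $\ct{C}$; then $(\nabla_P g_j)_j$ is a cone over $\nabla_P D$, so the universal property of $L$ gives $u:\nabla_P X\to L$ with $\lambda_j u=\nabla_P g_j$ for all $j$, and writing $u=[h]$ with $h:X\to A$ one finds $\nabla_P(\ell_j\circ h)=\lambda_j u=\nabla_P(g_j)$, whence $\ell_j\circ h=g_j$ by faithfulness. So $h$ mediates, though not uniquely, since $L$ is in general not in the image of $\nabla_P$.

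For item~2 I would fix $D:J\to\ct{C}$ and use the hypothesis to get a colimit of $\nabla_P D$ in $\ct{Q}_P$ that is isomorphic to $\nabla_P(W)=(W,\delta_W)$ for some object $W$ of $\ct{C}$, with colimiting cocone $\mu_j:\nabla_P D_j\to\nabla_P(W)$. Full faithfulness gives unique $m_j:D_j\to W$ with $\mu_j=\nabla_P(m_j)$, and then $(W,(m_j)_j)$ is a cocone over $D$. For any cocone $(X,(g_j)_j)$ over $D$ in $\ct{C}$, the colimit property of $\nabla_P(W)$ yields a unique $v:\nabla_P(W)\to\nabla_P X$ with $v\mu_j=\nabla_P g_j$, hence a unique $h:W\to X$ with $\nabla_P h=v$; then $h m_j=g_j$ for all $j$, and $h$ is the unique such arrow by faithfulness. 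So $(W,(m_j)_j)$ is a colimit of $D$ in $\ct{C}$.

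I do not expect a serious obstacle here: this is the usual reflection of (co)limits along a full and faithful functor, the only point to keep straight being that equality of arrows in $\ct{Q}_P$ (equality of equivalence classes) matches equality of arrows in $\ct{C}$ precisely because comprehensive diagonals make $\nabla_P$ faithful. The structurally interesting point is the asymmetry between the two items: a colimit computed in $\ct{Q}_P$ lies in the essential image of $\nabla_P$ exactly when it has the form $(W,\delta_W)$, and only then does it pull back to a genuine colimit of $D$; a limit $(A,\rho)$ is in general visible in $\ct{C}$ only as a weak limit because $\rho$ need not be the fibered equality, which is why item~2 carries a hypothesis on the shape of the colimit and item~1 does not.
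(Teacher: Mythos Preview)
Your proof is correct and follows essentially the same approach as the paper's: transport (co)limits along the full and faithful embedding $\nabla_P$, using comprehensive diagonals to pass from equalities of classes in $\ct{Q}_P$ to equalities of arrows in $\ct{C}$. The paper's argument is considerably terser (for item~1 it simply asserts that the underlying object of the limit is a weak limit; for item~2 it argues uniqueness directly via $\tt_W\le\fp{<q,p>}(\delta_X)$ and comprehensive diagonals rather than packaging this as faithfulness of $\nabla_P$), but the content is the same.
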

\begin{proof} 1. Let $F$ be a $J$-diagram in $\ct{C}$. If $(W,\omega)$ is the limit of $\nabla_P F$ in $\ct{Q}_P$, then
$(W,\delta_W)$ is a weak limit of $\nabla_P F$ in  the image of $\nabla_P$ within $\ct{Q}_P$,  and hence
 $W$ is a weak limit of $F$ in $\ct{C}$. 2. Consider a diagram $F:J\to \ct{C}$ and suppose $(W,\delta_W)$ is the colimit for $\nabla_PF$. $W$ is easily seen to be a weak colimit for $F$ in $\ct{C}$. Suppose $X$ is a cocone and let arrows $q,p:W\to X$ be such that they make commute all the relevant triangles. So do $[q],[p]:(W,\delta_W)\to(X,\delta_X)$ in $\ct{Q}_P$. Universality of $(W,\delta_W)$ ensures that $[q]=[p]$, \ie $\tt_W\le \fp{<q,p>}(\delta_X)$, whence $q=p$ in $\ct{C}$ as diagonals are comprehensive.
\end{proof}

\subsection{\bf Local cartesian closure}

\def\ev{\ensuremath{\mathrm{ev}}\xspace}
\def\xp#1{\ensuremath{\widehat{#1}}}
\def\bqc#1{\ensuremath{\ct{Q}\,_{#1}}}
\def\Pos{\Ct{Pos}}
\def\isar{\ar|>{}|-*=0[@]{\widetilde{\kern1ex}}}
\def\isarb{\ar@{<-}|>{}|-*=0[@]{\widetilde{\kern1ex}}}
\def\isto{\xymatrix@1@C=1.3em{{}\isar[r]&{}}}

We introduce some technical notions which will be used in
the proof of the characterisation Theorem~\ref{lccmain}.

\begin{definition}\label{rwc}
Let \ct{C} be  
a category with finite products and let $J:\ct{D}\to\ct{C}$
be an inclusion of a subcategory in it. We say that an object $X$ is
\dfn{weakly exponentiable relative to} \ct{D} if the functor
$$X\times(-):\ct{D}\ftr\ct{C}$$
is a weak left adjoint, in the sense of \cite{KainenP:weaaf}: for every
object $Y$ in \ct{C} there are an object $W$ in \ct{D} and an arrow 
$$\xymatrix{X\times J(W)\ar[r]^-{\ev}& Y}$$
in \ct{C} such that for every $D$ in \ct{D} and every arrow
$f:X\times J(D)\to Y$ there is a commutative diagram
$$\xymatrix{X\times J(D)
\ar[d]_-{\id{X}\times J\left(\xp{f}\right)}\ar[rd]^-{f}&
&D\ar@{..>}[d]_-{\xp{f}}\\
X\times J(W)\ar[r]^-{\ev}& Y&W}$$
where the dotted arrow indicates that the condition need not determine it
uniquely.
\end{definition}

\begin{remark}
The condition of weak left adjoint in Definition~\ref{rwc} provides a
family of surjective functions
$$\xymatrix{\ct{D}(D,W)\ar@{->>}[r]&\ct{C}(X\times J(D),Y)}$$
natural in $D$.
\end{remark}

We shall be interested in weak relative exponentiability in slice categories
of the form $\bqc{P}/(A,\delta_A)$. They shall involve a specific kind of
objects which will be introduced in Definition~\ref{wcwc}.

\begin{remark}
Consider a category \ct{C} with finite products. An object $Y$ is weakly
exponentiable in the usual sense if (and only if) it is weakly exponentiable
relative to \ct{C}. So \ct{C} is weakly cartesian closed if and only if every
object is weakly exponentiable relative to \ct{C}.
\end{remark}

\begin{definition}\label{wcwc}
Let $P:\ct{C}\op\ftr\Pos$ be an elementary doctrine, and let \bqc{P} be
its elementary quotient completion. An arrow in \bqc{P} of the form
$[f]:(X,\delta_X)\to(A,\delta_A)$ is called a 
\dfn{dependent $P$-projective}.
We write as $\ct{D}_A$ the full sucategory of $\bqc{P}/(A,\delta_A)$ on the
dependent $P$-projectives in it.
\end{definition}

In case $\ct{C}$ has (strong) pullbacks, local cartesian closure suffices to
show that the doctrine $\Wsb{\ct{C}}$ is \universal. In the weak case this
need not happen, and motivates the following definition.

\begin{definition}\label{wlccp}
Let $P:\ct{C}\op\ftr\Pos$ be a variational, then its base \ct{C} has weak pullbacks and  \bqc{P} has
pullbacks by Proposition~\ref{brescello}.
We say that $P$ is \dfn{\swwlcc} when the following conditions are
satisfied:
\begin{enumerate}\thmitem
\item the doctrine $P$ is \implicational and \universal;
\item for every object $A$ in \ct{C}, each dependent $P$-projective is weakly
exponentiable in $\bqc{P}/(A,\delta_A)$ relative to $\ct{D}_A$.
\end{enumerate}
\end{definition}
\begin{remark}\label{condexprid}
It may be useful to expand condition~(ii) taking advantage of the full
embedding $J:\ct{C}\to\bqc{P}$ introduced in Remark~\ref{nabla}---so, in
particular, $JA=(A,\delta_A)$. Given objects $J f:J X\to J A$ and
$[g]:(Y,\rho)\to J A$ in the slice category $\bqc{P}/J A$, there is a
diagram of arrows in \ct{C}
$$\xymatrix@=3.5em{S\ar@/^18pt/[rr]^(.7){\ev}
\ar[r]_-{p_2}\ar[d]_-{p_1}&W\ar[d]^-{w}&Y\ar[ld]^-{g}\\
X\ar[r]_-{f}&A}$$
where the inner square is a weak pullback. The arrow $\ev:S\to A$ is the
representative of an arrow $[\ev]:J f\times_{J A}J w\to [g]$ in 
$\bqc{P}/J A$ such that, for any arrow $u:U\to A$ in \ct{C} and any arrow
$[k]:J f\times_{J A}J u\to[g]$ in
$\bqc{P}/J A$, there exists $\xp{k}:U\to W$ in \ct{C} such that the
diagram 
$$\xymatrix@C=5em@R=3.5em{
J f\times_{J A}J u\ar[rd]^-{[k]}
\ar[d]_-{J \id{X}\times_{J A}J \xp{k}}\\
J f\times_{J A}J w\ar[r]^(.59){[\ev]}&[g]}$$
commutes in $\bqc{P}/J A$.
\end{remark}

\begin{remark}
In \cite{cioffo2023biased}, it is shown that a category
$\bqc{P}/(A,\delta_A)$ is an example of elementary quotient completion of a
suitable biased elementary doctrine for which dependent $P$-projectives
$[f]:(X,\delta_X)\to(A,\delta_A)$ are covering projectives.
\end{remark}

\begin{definition}\label{wpexp}
Let $P:\ct{C}\op\ftr\Pos$ be an elementary doctrine with comprehensive
diagonals whose base \ct{C} has weak pullbacks. 
We say that \bqc{P} is \dfn{\spexp} if the following conditions are
satisfied.
\begin{enumerate}\thmitem
\item The doctrine $P$ is \implicational and \universal;
\item For every  object $A$ in \ct{C}, each dependent $P$-projective is exponentiable in
$\bqc{P}/(A,\delta_A)$.
\end{enumerate}
\end{definition}
 
\begin{remark}
In his Ph.D.~thesis \cite{cioffothesis}, Cipriano Jr.~Cioffo introduced the
notion of \emph{extensional exponential}, which is equivalent to the universal
property in remark~\ref{condexprid}. In the case of a variational doctrine it
coincides with that of extensional exponential given by Jacopo
Emmenegger in \cite{EmmeneggerJ:ontlc}.
\end{remark}


Next we note a connection between the fibres $\Wsb{\ct{C}}(A)$ of the
variational doctrine on \ct{C} and $\Wsb{\bqc{P}}(A,\delta_A)$ of 
that on the elementary quotient completion \bqc{P}.
 
\begin{prp}\label{coref}
Let $P:\ct{C}\op\ftr\Pos$ be an elementary doctrine with comprehensive
diagonals, and let $A$ be an object in \ct{C}. The functor
$$\nabla_A:\Wsb{\ct{C}}(A)\ftr\Wsb{\bqc{P}}(A,\delta_A)$$
induced by the functor $\nabla_P:\ct{C}\ftr\bqc{P}$ of
Remark~\ref{nabla} is a coreflective embedding (\ie it is full and has a right adjoint).
\end{prp}
\proof
The functor $\nabla_A$ is full thanks to comprehensive diagonals.
The right adjoint is
$$\vcenter{\xymatrix@C=2em@R=.1ex{
\Wsb{\bqc{P}}(A,\delta_A)\ar[r]^{R_A}&\Wsb{\ct{C}}(A)\\
[f:(B,\sigma)\rightarrow(A,\delta_A)]\ar@{|->}[r]&[f:B\rightarrow A]}}$$

\begin{remark}\label{natnab}
Note that, without an assumption about existence of weak pullbacks in \ct{C},
the assignment $A\mapsto\Wsb{\ct{C}}(A)$ does not extend to a
doctrine. Proposition~\ref{coref} does not have that hypothesis among the
assumptions because one is hard put to prove that the family of functors
$\nabla_A$ be natural in $A$.

But it is easy to prove that, if an arrow $f:A'\to A$ in
\ct{C} is such that pullback of any arrow $g:X\to A$ along $f$ exists in
\ct{C}, then the reindexing functors commute with $\nabla$ as in the
following diagram
$$\xymatrix{\Wsb{\ct{C}}(A)\ar[r]^-{\nabla_A}
\ar[d]_-{\Wsb{\ct{C}}(f)}&
\Wsb{\bqc{P}}(A,\delta_A)\ar[d]^-{\Wsb{\bqc{P}}([f])}\\
\Wsb{\ct{C}}(A')\ar[r]^-{\nabla_{A'}}&
\Wsb{\bqc{P}}(A',\delta_{A'}).}$$
\end{remark}
\begin{lemma}\label{forall}
Suppose that $P:\ct{C}\op\ftr\Pos$ is elementary with comprehensive
diagonals, and $\ct{C}$ has weak pullbacks. If \bqc{P} is locally
cartesian closed, then
\begin{enumerate}\thmitem
\item the variational doctrine $\Wsb{\ct{C}}:\ct{C}\op\ftr\Pos$
is \universal and \implicational;
\item if moreover $P$ is existential, then $P$ is universal and implicational.
\end{enumerate}
\end{lemma}
\begin{proof}
Note first of all that, since \bqc{P} is locally cartesian closed,
the variational doctrine $\Wsb{{\bqc{P}}}$ is \universal and
\implicational. Write as $\forall^{\bqc{P}}_{\pr{}}$ the right
adjoint to reindexing along a projection $\pr{}:D\to A$.

\noindent(i) Since pullbacks along a projection always exist in \ct{C},
thanks to Proposition~\ref{coref}, the 
reindexing functor $\Wsb{\ct{C}}(f)$ along any 
projection $f :D\to A$ has a right adjoint computed coreflecting
the restiction of $\forall^{\bqc{P}}_{f}$ along $\nabla_A$. The
Beck--Chevalley condition for the universal quantification is satisfied since
any reindexing of the doctrine $\Wsb{\ct{C}}$ has a left adjoint.
 
\noindent(ii) By Proposition~\ref{eqc-impl}, $\Q{P}$ is existential since
$P$ is existential. Since $\Q{P}$ admits full strong comprehension, by the
previous point (i)
and Proposition~\ref{inherit} it follows that $\Q{P}$ is \universal and
\implicational. Applying again Proposition~\ref{eqc-impl} yields that
$P$ is \universal and \implicational.
\end{proof}

\begin{lemma}\label{lcccuniv}
Suppose that $P:\ct{C}\op\ftr\Pos$ is elementary with
comprehensive diagonals, and that $\ct{C}$ has weak pullbacks.
If \bqc{P} is locally cartesian closed, then
for every object $A$ in \ct{C}, a dependent $P$-projective over $A$ is
weakly exponentiable relative to $\ct{D}_A$.
\end{lemma}

\begin{proof}
Let $[f]:(X, \delta_X)\to(A,\delta_A)$ be a dependent $P$-projective and
$g:(Y,\tau)\to (A,\delta_A)$ any object in \bqc{P}.
Consider the following diagram in \bqc{P}
$$
\xymatrix@=3.5em{
(S',\theta')\ar[d]_-{[q_2]}\ar[r]^-{[q_1]}&(W,\delta_W)\ar[d]_(.4){[\id{W}]}&\\
(S, \theta)\ar@/^18pt/[rr]^(.7){[\ev]}|!{[ur];[r]}\hole
\ar[r]_-{[p_2]}\ar[d]_-{[p_1]}&(W, \xi)
\ar[d]^-{[g]^{[f]}}&(Y,\tau)\ar[ld]^-{[g]}\\
(X,\delta_X)\ar[r]_-{[f]}&(A,\delta_A)&
}
$$
where the two squares are pullbacks and $[\ev]:(S,\theta)\to(Y,\tau)$
is the universal arrow of the exponential. Fix a representative $w$ of the
equivalence class $[g]^{[f]}$. Then $w:(W,\delta_W)\to(A,\delta)$ together with
$[\ev q_2]:(S',\theta')\to(Y,\tau)$ is clearly a weak exponential of
$[f]$ over $[g]$ relative to the full subcategory $\ct{D}_A$ on the
$P$-dependent projectives. 
\end{proof}

\begin{prp}\label{lccc0}
Suppose $P:\ct{C}\op\ftr\Pos$ is elementary with comprehensive diagonals, and
$\ct{C}$ has weak pullbacks. Suppose also that \bqc{P} is locally
cartesian closed. If $P$ is either universal or existential, then $P$ is
\swwlcc.
\end{prp}

\begin{proof}
After Lemma~\ref{lcccuniv}, one needs only to invoke Lemma~\ref{forall} to
get that, in case $P$ is existential, $P$ is also \universal and \implicational.
\end{proof}

We now aim at proving a partial converse to Proposition~\ref{lccc0}, as we
shall replace the assumption of weak pullbacks in \ct{C} with that of the
doctrine $P$ admitting full weak comprehension---we
shall consider only the case when the elementary doctrine $P$ is \universal
and \implicational because of \ref{forall}~(ii). 
To that purpose we produce an equivalent presentation of objects of
$\ct{C}/A$, giving an algebraic presentation in line with the
characterisation in \cite[Proposition~4.12]{m09}.
For sake of semplicity we introduce some explicit notation for certain arrows
related to constructions in the base \bqc{P} of the elementary quotient
completion.
\begin{remark}\label{notr}
Let $P:\ct{C}\op\ftr\Pos$ be an elementary doctrine which admits full weak
comprehension. Let 
$f:B\to A$ be a representative of an arrow $[f]:(B,\sigma)\to(A,\rho)$ in
\bqc{P}, and write $c_{\rho,f}:X\to B\times A$ a
comprehending arrow for $\fp{f\times\id{A}}(\rho)$ in $P(B\times A)$.
We use the following notation for the compositions in the diagram
$$\xymatrix@C=5em@R=2.5em{&B\\
X\ar[ru]^-{f^\sigma}\ar[rd]_-{f_\rho}\ar[r]_-{c_{\rho,f}}&
B\times A\ar[u]_-{\pr1}\ar[d]^-{\pr2}\\
&A}$$ 
Write $\rrr{\rho}{f}{\sigma}$ for the $P$-equivalence relation on $X$
determined by the conjunction
$\fp{f_\rho\times f_\rho}(\rho)\Land\fp{f^\sigma\times f^\sigma}(\sigma)$ so
that in the internal logic $\rrr{\rho}{f}{\sigma}(x,x')$ abbreviates the
formula $\rho(f_\rho(x), f_\rho(x'))\Land\sigma(f^\sigma(x), f^\sigma(x'))$.
\end{remark}
\begin{remark}\label{lemminocc2}
It is immediate to see from the definition of the relation
$\rrr{\rho}{f}{\sigma}$ that the arrow $f^\sigma:X\to B$ determines an arrow
$[f^\sigma]:(X,\rrr{\rho}{f}{\sigma})\to (B,\sigma)$
in \bqc{P} as well as the arrow $f_\rho:X\to A$ gives an arrow
$[f_\rho]:(X,\rrr{\rho}{f}{\sigma})\to (A,\rho)$.

Also, there is a commutative diagram in \ct{C}
\begin{equation}\label{thcd}
\vcenter{\xymatrix@C=5em@R=2.5em{&&B\ar[r]^-{f}&A\\
B\ar@{..>}[r]_-{k}
\ar@/^8pt/[rr]^-{<\id{B},f>}|!{[r];[rru]}\hole
\ar@/^10pt/[rru]^-{\id{B}}\ar@/_10pt/[rrd]_-{f}&
X\ar[ru]_(.6){f^\sigma}\ar[rd]^(.6){f_\rho}\ar[r]_-{c_{\rho,f}}&
B\times A\ar[u]_-{\pr1}\ar[d]^-{\pr2}\ar[r]^-{f\times\id{A}}&
A\times A\ar[u]_-{\pr1'}\ar[d]^-{\pr2'}\\
&&A\ar[r]_-{\id{A}}&A}}
\end{equation}
where the arrow $k$ exists by weak universality of 
$c_{\rho,f}:X\to B\times A$, since $\rho$ is a
$P$-equivalence relation and
$$\fp{<\id{B},f>}\fp{f\times \id{A}}(\rho)= \fp{<f,f>}(\rho)=\top_B.$$
In particular, it gives a retraction pair 
$$\xymatrix@=4em{
B\ar@/_/@<-.3ex>[r]_-{k}\ar@(ld,lu)^{\id{B}}&
X.\ar@/_/@<-.3ex>[l]_-{f^\sigma}}$$
Moreover, $\top_X= \fp{<ff^\sigma, f_\rho>}(\rho)$ since
$<ff^\sigma,f_\rho>=(f\times\id{A})c_{\rho,f}$, \ie
\begin{equation}\label{lemminocc}
x:X\vdash\rho(ff^\sigma(x),f_\rho(x))
\end{equation}
in the internal logic of the doctrine $P$.
\end{remark}

\begin{prp}\label{lemminocc1}
In the notations of Remark~\ref{lemminocc2},
the following diagram
$$\xymatrix@C=2em@R=3em{
(B,\sigma)\ar[rd]_-{[f]}\ar@<-.2ex>@/_/[rr]_-{[k]}\ar@(d,l)^{[\id{B}]}&&
(X,\rrr{\rho}{f}{\sigma})\ar@(d,r)_{[\id{X}]}
\ar[ld]^-{[f_\rho]}\ar@<-.2ex>@/_/[ll]_-{[f^\sigma]}\\
&(A,\rho)}$$
commutes in \bqc{P}.  
\end{prp}
\begin{proof} 
To complete the proof after Remark~\ref{lemminocc2}, one must show that
$[k]:(B,\sigma)\to(X,\rrr{\rho}{f}{\sigma})$, and
$[k][f^\sigma]=[\id{X}]:(X,\rrr{\rho}{f}{\sigma})\to(X,\rrr{\rho}{f}{\sigma})$. 

The simple argument to see that $[k]:(B,\sigma)\to(X,\rrr{\rho}{f}{\sigma})$
performed in the internal logic is as follows:
$$x:B,x':B\mid \sigma(x,x')\vdash\rho(f_\rho(k(x)), f_\rho(k(x')))$$
because $f_\rho k=f$ and  $[f]:(B,\sigma)\to(A,\rho)$, and
$$x:B,x':B\mid \sigma(x,x')\vdash\sigma(f^\sigma(k(x)), f^\sigma(k(x')))$$
because $f^\sigma k=\id{B}$.
To prove that $[k][f^\sigma]=[\id{X}]$, again in the internal logic, the
simple checks are as follows: recalling (\ref{lemminocc}) and (\ref{thcd}),
one sees that 
$$\begin{array}{r@{{}\vdash{}}l}
x:X&\rho(ff^\sigma(x),f_\rho(x))\\[1ex]
&\rho(f_\rho(kf^\sigma(x)),f_\rho(x))\end{array}$$
and
$$\begin{array}{r@{{}\vdash{}}l}
x:X&\sigma(f^\sigma(x),f^\sigma(x))\\[1ex]
&\sigma(f^\sigma(kf^\sigma(x)),f^\sigma(x))\end{array}$$
So conjoining the two conclusions gives the statement.
\end{proof}

\begin{remark}
Proposition~\ref{lemminocc1} shows that
$[f^\sigma]:[f_\rho]\isto[f]$ in the slice category $\bqc{P}/(A,\rho)$. 
Note, though, that $[f]$ and $[f_\rho]$ need
not factor through each other in $\ct{C}$. Indeed, Remark~\ref{lemminocc2}
shows that $f$ factors through $f_\rho$ in $\ct{C}$, but nothing guarantees
the other factorisation may occur. 
Since $\bqc{P}$ is a category with finite limits, one can see
that $[f_\rho]$ is isomorphic to $\Sigma_{[\id{A}]}[\id{A}]^\ast ([f])$, where
$\Sigma_{[\id{A}]}$ denotes the left adjoint to the pullback functor 
$[\id{A}]^\ast:\bqc{P}/(A,\rho)\ftr\bqc{P}/(A,\delta_A)$ along the map
$[\id{A}]:(A,\delta_A)\to(A,\rho)$.
\end{remark}

The following is the fundamental step toward the proof of the main result. It
takes advantage of the iso $[f^\sigma]:[f_\rho]\isto[f]$ in the slice
category $\bqc{P}/(A,\rho)$ to compute explicitly any product of $[f]$ in
the slice category starting from a product in the slice category
$\ct{C}/A$.
Like before, we employ the notation introduced in \ref{notr}.
\begin{lemma}\label{gigante}
Let $P:\ct{C}\op\ftr\Pos$ be an elementary existential doctrine with
comprehensive diagonals and admitting weak comprehension.
Consider two arrows
$$\vcenter{\xymatrix{&(W,\theta)\ar[d]^-{[q]}\\
(B,\sigma)\ar[r]^-{[f]}&(A,\rho)}}$$
in \bqc{P}, and consider the following diagrams
$$\xymatrix{
Z\ar@{}[rd]|{\fbox{\rm3}}\ar[d]_-{q'}\ar[r]^-{f_\rho'}&W\ar[d]^-{q}\\
X\ar[r]_-{f_\rho}&A
}\qquad
\xymatrix{
(Z,\fp{q'\times q'}(\rrr{\rho}{f}{\sigma})\Land 
\fp{f_\rho'\times f_\rho'}(\theta))\ar@{}[rrd]|{\fbox{\rm4}}
\ar[d]_-{[q']}\ar[rr]^-{[f_\rho']}&&(W,\theta)\ar[d]^-{[q]}\\
(X,\rrr{\rho}{f}{\sigma})\ar[rr]_-{[f_\rho]}&&(A,\rho).}$$
If \fbox{\rm3} is a weak pullback in $\ct{C}$, then \fbox{\rm4} is a
pullback in \bqc{P}.
\end{lemma}
\begin{proof}
We write $\zeta$ for the $P$-equivalence relation
$\fp{q'\times q'}(\rrr{\rho}{f}{\sigma})\Land 
\fp{f_\rho'\times f_\rho'}(\theta)$ on $Z$.
Clearly, if diagram \fbox{\rm3} commutes in $\ct{C}$, then so does
\fbox{\rm4} in \bqc{P}.
Consider a commutative diagram
\begin{equation}\label{bdia}
\vcenter{\xymatrix@C=3em@R=2ex{
(C,\gamma)\ar[rddd]_{[h]}\ar@<.5ex>[rrrd]^-{[\ell]}\\
&(Z,\zeta)\ar[dd]^-{[q']}\ar[rr]_-{[f_\rho']}&&(W,\theta)\ar[dd]^-{[q]}\\ \\
&(X,\rrr{\rho}{f}{\sigma})\ar[rr]_-{[f_\rho]}&&(A,\rho)}}
\end{equation}
in \bqc{P}. So, in the internal logic of $P$, we have that
\begin{enumerate}\alphitem
\item $x,x':C\mid\gamma(x,x')\vdash\rho(f_\rho(h(x)),f_\rho(h(x')))$;
\item $x,x':C\mid\gamma(x,x')\vdash\sigma(f^\sigma(h(x)),f^\sigma(h(x')))$;
\item $x,x':C\mid\gamma(x,x')\vdash\theta(\ell(x),\ell(x'))$;
\item $x:C\vdash\rho(f_\rho h(x),q\ell(x))$.
\end{enumerate}
Recall that $[f_\rho]=[ff^\sigma]$ by Proposition~\ref{lemminocc1}. So
$$x:C\vdash\rho(ff^\sigma h(x),q\ell(x)).$$
Hence weak universality of $c_{\rho,f}:X\to B\times A$ produces a filler in 
$$\vcenter{\xymatrix@C=6em{
C\ar[d]^-{<h,\ell>}\ar@{..>}[r]_-{j}\ar@/_18pt/[dd]_(.7){h}
&X\ar[d]^-{c_{\rho,f}}\ar[rd]^-{<ff^\sigma,f_\rho>}
\ar@/_18pt/[dd]_(.7){f^\sigma}|(.5){\quad\strut}\\
X\times W\ar[r]^-{f^\sigma\times q}\ar[d]^-{\pr1'}&
B\times A\ar[r]^-{f\times\id{A}}\ar[d]^-{\pr1}&A\times A\\
X\ar[r]^-{f^\sigma}&B}}$$
which shows that $[j]:(C,\gamma)\to(X,\rrr{\rho}{f}{\sigma})$ and it is equal
to $[h]$. But also that the diagram
$$\xymatrix{
C\ar[d]_-{j}\ar[r]^-{\ell}&W\ar[d]^-{q}\\
X\ar[r]_-{f_\rho}&A}$$
commutes in \ct{C}. Therefore, since \fbox{\rm3} is a weak pullback, there is
an arrow
$$\xymatrix@C=3em@R=2ex{
C\ar[rddd]_{j}\ar@{..>}[rd]_(.7){m}\ar@<.5ex>[rrrd]^-{\ell}\\
&Z\ar[dd]^-{q'}\ar[rr]_-{f_\rho'}&&W\ar[dd]^-{q}\\ \\
&X\ar[rr]_-{f_\rho}&&A}$$
Thanks to the definition of the $P$-equivalence relation $\zeta$, it is
immediate to prove that that gives a unique arrow filling in  the diagram
(\ref{bdia}).
\end{proof}

\begin{remark}
It is possible to derive a moral from Lemma~\ref{gigante}. Even though there
are only weak pullbacks in \ct{C}, each object in a slice category of
\bqc{P} may be replaced by an isomorphic copy on which (strong) pullbacks
can be computed \emph{as if} pullbacks were strong also in \ct{C}.

As obscure as that moral may be, it is going to be employed in the
construction of exponentials in each slice category of \bqc{P}.
\end{remark}

We approach the main theorem of the section by first introducing the explicit
construction of an exponential in the slice $\bqc{P}/(A,\rho)$; the
following remark presents the first steps of that construction by producing
the relevant $P$-equivalence relation to be used then in the proof of
Theorem~\ref{lccmain}.

\begin{remark}\label{mr}
Let $P:\ct{C}\op\ftr\Pos$ be a fixed \swwlcc doctrine
which admit weak comprehension. 
Let $[f]:(B,\sigma)\to(A,\rho)$ and $[g]:(C,\tau)\to(A,\rho)$ be two objects
in the slice category $\bqc{P}/(A,\rho)$. Consider then arrows 
$f_\rho:X\to A$ and $g_\rho:Y\to A$ in \ct{C}, as given in \ref{notr}, as
well as the corresponding $P$-equivalence relations \rrr{\rho}{f}{\sigma} on
$X$ and \rrr{\rho}{g}{\tau} on $Y$. Then, in the slice category
$\bqc{P}/(A,\delta_A)$ take the $P$-dependent projective
$[f_\rho]:(X,\delta_X)\to(A,\delta_A)$ and the arrow 
$[g_\rho]:(Y,\fp{g_\rho\times g_\rho}(\delta_A)\Land
\fp{g^\tau\times g^\tau}(\tau))\to(A,\delta_A)$
obtained by pulling back $[g_\rho]: (Y,\rrr{\rho}{g}{\tau})\to (A,\rho)$ along 
$[\id{A}]: (A,\delta_A)\to (A,\rho)$ as in the following commutative diagram
$$\xymatrix{
(Y,\fp{g_\rho\times g_\rho}(\delta_A)\Land \fp{g^\tau\times g^\tau}(\tau))
\ar[d]_-{[g_\rho]}\ar[rr]^-{[\id{Y}]}
&&(Y,\rrr{\rho}{g}{\tau})\ar[d]^-{[g_\rho]}\\
(A,\delta_A)\ar[rr]_-{[\id{A}]}&&(A,\rho).}$$
which is a pullback thanks to Lemma~\ref{gigante} because
$$\fp{g_\rho\times g_\rho}(\delta_A)\wedge \rrr{\rho}{g}{\tau}=
\fp{g_\rho\times g_\rho}(\delta_A)\wedge
(\fp{g_\rho\times g_\rho}(\rho)\wedge\fp{g^\tau\times g^\tau}(\tau))=
\fp{g_\rho\times g_\rho}(\delta_A)\wedge \fp{g^\tau\times g^\tau}(\tau).$$
Consider a weak exponential $[p]:(V,\delta_V)\to(A,\delta_A)$ of
$[f_\rho]:(X,\delta_X)\to(A,\delta_A)$ and
$[g_\rho]:
(Y,\fp{g_\rho\times g_\rho}(\delta_A)\Land\fp{g^\tau\times g^\tau}(\tau))
\to(A,\delta_A)$
which gives, in $\ct{C}$, the following arrows
\begin{equation}\label{previous}
\vcenter{\xymatrix@=3.5em{S\ar@/^18pt/[rr]^(.7){\ev'}
\ar[r]_-{q_2}\ar[d]_-{q_1}&V
\ar[d]^-{q}&Y\ar[ld]^-{g_\rho}\\
X\ar[r]_-{f_\rho}&A&}}
\end{equation}
where the inner square is a weak pullback. For a variable $v:V$, write
$\xi(v)$ for the formula 
$$\forall_{s,s':S}\left[\left[\left(q_2(s)=_{V}v\Land q_2(s')=_{V}v\right)\Land
\rrr{\rho}{f}{\sigma}(q_1(s),q_1(s'))\right]\Implies
\rrr{\rho}{g}{\tau}(\ev'(s),\ev'(s))\right]$$
---note that the antecedent of the implication yields that the pair
$<s,s'>$ is in the $P$-equivalence relation imposed on the upper left vertex
in the diagram \fbox{\rm4} of Lemma~\ref{gigante}. 

Consider the comprehending arrow $\cmp{\xi}:W\to V$. Take the weak pullback
of $\cmp{\xi}$ along $q_2$ and paste it with that in diagram (\ref{previous})
to obtain another weak pullback and the composition
$\ev=\ev'u:V\to Y$, which will eventually be part of the
evaluation arrow: 
\begin{equation}\label{evaluation}
\vcenter{\xymatrix@=3.5em{Z\ar[d]^-{u}
\ar@/_8pt/[dd]_-{p_1}\ar[r]_-{p_2}\ar@/^35pt/[rrd]^(.6){\ev}&
W\ar[d]_(.4){\cmp{\xi}}\ar@/^8pt/[dd]^-{p}\\
S\ar@/^18pt/[rr]^(.7){\ev'}|(.535){\strut\quad}
\ar[r]_-{q_2}\ar[d]^-{q_1}&V
\ar[d]_-{q}&Y\ar[ld]^-{g_\rho}\\
X\ar[r]_-{f_\rho}&A&}}
\end{equation}
The necessary final piece of data is the appropriate $P$-equivalence relation
on $W$: consider variables $w,w':W$ and write
$\theta(w,w')$ for the formula
\begin{equation}\label{larelazione}
\begin{array}{@{}lr@{}}
\multicolumn2l{\rho(p(w),p(w'))\Land}\\
\mbox{\qquad\quad}&\Land\forall_{z,z':Z}
\left[\left[\mbox{$\begin{array}{lr@{}}
\multicolumn2{@{}l@{}}{\left(p_2(z)=_Ww\Land p_2(z')=_Ww'\right)\Land\ }\\
&\Land\rrr{\rho}{f}{\sigma}(q_1(u(z)),q_1(u(z')))
\end{array}$}\right]
\Implies\rrr{\rho}{g}{\tau}(\ev(z),\ev(z'))\right]
\end{array}
\end{equation}
so that $\theta$ is in $P(W\times W)$---the same comment as for the formula
$\xi(v)$ above, applies here with the pair $<z,z'>$.

It is easy to show that $\theta$ is a $P$-equivalence relation over
$W$ such that  
\begin{equation}\label{larelazione2}
z:Z,z':Z\mid\theta(p_2(z), p_2(z'))\wedge\sigma_f(p_1(z), p_1(z'))\vdash
\rrr{\rho}{g}{\tau}(\ev(z),\ev(z'))
\end{equation}
Write $\eta(z,z')$ for the equivalence relation
$\theta(p_2(z),p_2(z'))\wedge\sigma_f(p_1(z),p_1(z'))$.
\end{remark}

\begin{lemma}\label{lemminolcc} Suppose $P$ on $\ct{C}$ is \swwlcc, then for every $A$ the slice $\ct{Q}_P/(A,\delta_A)$ is cartesian closed. 
\end{lemma}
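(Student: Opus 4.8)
The plan is to construct exponentials in $\ct{Q}_P/(A,\delta_A)$ directly, by showing that the arrow $[t]\colon(W,\theta)\to(A,\delta_A)$ built just above is the exponential of $[g]$ by $[f]$, with evaluation $[e]$. Since $P$ is \variational, $\ct{Q}_P$ has finite limits by~\ref{finitelim}, so the slice already has a terminal object $[\id{A}]$ and binary products, computed as pullbacks over $(A,\delta_A)$ in $\ct{Q}_P$; hence it suffices to produce these exponentials (and along the way to confirm that $\theta$ is indeed a $P$-equivalence relation, part of the bookkeeping below). First I would identify the product $[t]\times_{(A,\delta_A)}[f]$: since $\theta\le\fp{t\times t}(\delta_A)$ and $\sigma\le\fp{f\times f}(\delta_A)$, and $Z$ is a weak pullback of $t$ along $f$ in $\ct{C}$, Proposition~\ref{lemmino0} identifies it with $(Z,\fp{\pi_B\times\pi_B}(\sigma)\wedge\fp{\pi_W\times\pi_W}(\theta))$ equipped with $[\pi_W]$ and $[\pi_B]$. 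From $ge'=t'p_{W'}$ one gets $ge=t\pi_W$, and together with the compatibility $\fp{\pi_B\times\pi_B}(\sigma)\wedge\fp{\pi_W\times\pi_W}(\theta)\le\fp{e\times e}(\sigma')$ recorded before the statement, $[e]$ is a well-defined morphism $[t]\times_{(A,\delta_A)}[f]\to[g]$ over $(A,\delta_A)$.

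For the universal property, take an object $[u]\colon(C,\gamma)\to(A,\delta_A)$ and a morphism $[v]\colon[u]\times_{(A,\delta_A)}[f]\to[g]$ over $(A,\delta_A)$; using~\ref{lemmino0} once more, represent the domain of $[v]$ by a weak pullback of $u$ along $f$ in $\ct{C}$, so that $v$ becomes a morphism over $A$ from the corresponding weak product of $f$ and $u$ in $\ct{C}/A$ to $g$. The weak exponential $t'\colon W'\to A$ used in the construction then provides a weak transpose $\bar v_0\colon C\to W'$ with $t'\bar v_0=u$ and $e'(\id{f}\times_A\bar v_0)=v$. The crucial point is to show $\tt_C\le\fp{\bar v_0}(\xi)$, so that $\bar v_0$ factors through $\cmp{\xi}$ as some $\bar v\colon C\to W$, and then that $\gamma\le\fp{\bar v\times\bar v}(\theta)$; the first follows from the definition of $\xi$ together with the fact that $[v]$ respects the equivalence relations, the second from $u=t\bar v$ for the $\fp{t\times t}(\delta_A)$-component of $\theta$ and from the compatibility inequality for the $\omega$-component. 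Granting this, $[\bar v]\colon(C,\gamma)\to(W,\theta)$ is an arrow of $\ct{Q}_P$, it lies over $(A,\delta_A)$ since $t\bar v=u$, and $[e]$ precomposed with the induced map on products recovers $[v]$ because $e=e'(\id{f}\times_A\cmp{\xi})$. For uniqueness, if $[\bar v']$ also transposes $[v]$ then $t\bar v=t\bar v'=u$ forces $\tt_C=\fp{\ple{t\bar v,t\bar v'}}(\delta_A)$ by comprehensiveness of diagonals, while equality of the two evaluations forces $\tt_C\le\fp{\bar v\times\bar v'}(\omega)$ from the shape of $\omega$; hence $\tt_C\le\fp{\bar v\times\bar v'}(\theta)$, i.e.\ $[\bar v]=[\bar v']$.

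The main obstacle is precisely the formula bookkeeping packed into the second paragraph: the inequalities $\tt_C\le\fp{\bar v_0}(\xi)$, $\gamma\le\fp{\bar v\times\bar v}(\theta)$, $\fp{\pi_B\times\pi_B}(\sigma)\wedge\fp{\pi_W\times\pi_W}(\theta)\le\fp{e\times e}(\sigma')$, and the uniqueness inequality. Each is a manipulation of the universally quantified implications that define $\xi$ and $\omega$, and each goes through using that $P$ is a \variational \fod{}---so one has the adjunctions $\fp{\pr}\dashv\B_{\pr}$ and $\alpha\wedge(-)\dashv(\alpha\imply-)$, the Beck--Chevalley and Frobenius conditions, and comprehensive diagonals---which is exactly the computation that the construction preceding the statement leaves as an exercise. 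Everything else is routine slice manipulation and the identification, via~\ref{lemmino0}, of pullbacks in $\ct{Q}_P$ with weak pullbacks in $\ct{C}$.
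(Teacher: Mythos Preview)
Your proposal is correct and follows essentially the same approach as the paper's proof: both identify the product $[t]\times_{(A,\delta_A)}[f]$ via \ref{lemmino0}, use the weak exponential $t'$ in $\ct{C}/A$ to obtain a transpose, factor it through $\cmp{\xi}$, and verify that the result defines a morphism into $(W,\theta)$, with uniqueness coming from the shape of $\theta$. You are somewhat more explicit than the paper in listing the inequalities that must be checked (the paper packages ``$m'=\cmp{\xi}\qu(m)$'' and the uniqueness step each into a single line), but the architecture of the argument is identical.
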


\begin{proof} 
Suppose that $P$ is \swwlcc.
Let $[f]:(B,\sigma)\to (A,\delta_A)$
and $[g]:(C,\tau)\to (A,\delta_A)$ be objects in $\bqc{P}/(A,\delta_A)$. Here, we
align to the notation used in Remark~\ref{mr}. Consider $\ev:Z\to Y$
defined as in (\ref{evaluation}) and $\eta$ as in (\ref{larelazione2}). By
definition of $\eta$, the arrow $\ev$ determines an arrow 
$$[\ev]:(Z,\eta)\to(Y,\rrr{\delta_A}{g}{\tau})$$
in $\bqc{P}/(A,\delta_A)$ from $[f_{\delta_A} p_1]$ to $[g_{\delta_A}]$. Moreover,
Lemma~\ref{gigante} ensures that $(Z,\eta)$ 
is the pullback of $[p]$ along $[f_{\delta_A}]$.

Thanks to Proposition~\ref{lemminocc1}, it suffices to show that
$[p]$ is the exponential in $\bqc{P}/(A,\delta_A)$ of $[g_{\delta_A}]$ and
$[f_\rho]$ with evaluation
$[\ev]:[f_{\delta_A}]\times_{(A,\delta_A)}[p]\to[g_{\delta_A}]$.
Consider an arbitrary object $[h]:(D,\nu)\to(A,\delta_A)$ in $\bqc{P}/(A,\delta_A)$,
and let $[m]:[f_{\delta_A}]\times_{(A,\delta_A)}[h]\to[g_{\delta_A}]$. By
Lemma~\ref{gigante} we can assume
$$\xymatrix@C=1em@R=2em{
(Q,\fp{d_1\times d_1}(\rrr{\delta_A}{f}{\sigma})\Land\fp{p_2\times p_2}(\nu))
\ar[rd]_-{[f_{\delta_A} d_1]}
\ar[rr]^-{[m]}&&(Y,\rrr{\delta_A}{g}{\tau})\ar[ld]^-{[g_{\delta_A}]}\\
&(A,\delta_A)}$$
depicting an arrow in $\bqc{P}/(A,\delta_A)$
for an appropriate weak pullback in \ct{C}
$$\xymatrix{Q\ar[d]_-{d_1}\ar[r]^-{p_2}&D\ar[d]^-{h}\\
X\ar[r]_-{f_{\delta_A}}&A.}$$
Consider the commutative diagram
$$\xymatrix@C=8ex@R=2em{
&(Q,\fp{d_1\times d_1}(\rrr{\delta_A}{f}{\sigma})\Land\fp{p_2\times p_2}(\nu))
\ar[rddd]_(.4){[f_{\delta_A} d_1]}|!{[ddd];[rd]}\hole\ar[rd]^(.65){[m]}\\ 
&&(Y,\rrr{\delta_A}{g}{\tau})\ar[dd]^-{[g_{\delta_A}]}\\
(Q,\fp{d_1\times d_1}(\delta_X)\Land\fp{p_2\times p_2}(\delta_D))
\ar[ruu]^{[\id{Q}]}\ar[rddd]_-{[f_{\delta_A} d_1]\quad}\ar[rd]^-{[m]}&&\\
&(Y,\fp{g_{\delta_A}\times g_{\delta_A}}(\delta_A)\Land\fp{g^\tau\times g^\tau}(\tau))
\ar[ruu]^(.4){[\id{Y}]}\ar[dd]^-{[g_{\delta_A}]}&(A,\delta_A)\\ \\
&(A,\delta_A)\ar[ruu]_{[\id{A}]}}$$
where the square on the right face is a pullback.
Since $P$ is \swwlcc, 
$$[m]:(Q,\fp{d_1\times d_1}(\delta_X)\Land\fp{p_2\times p_2}(\delta_D))
\to(Y,\fp{g_{\delta_A}\times g_{\delta_A}}(\delta_A)\Land\fp{g^\tau\times g^\tau}(\tau))$$
determines a commutative triangle
$$\xymatrix@C=3em@R=2ex{D\ar[rd]_-{h}\ar[rr]^-{\xp{m}}&&V\ar[ld]^-{q}\\
&A}$$
in \ct{C} where $V$ is a weak exponential, and a commutative diagram
$$\xymatrix@C=5em@R=3.5em{J f_{\delta_A}\times_{(A,\delta_A)}J h\ar[rd]^-{[m]}
\ar[d]_-{J \id{X}\times_{(A,\delta_A)}J \xp{m}}\\
J f_{\delta_A}\times_{(A,\delta_A)}J q\ar[r]^(.59){[\ev']}&[g_{\delta_A}]}$$
in $\bqc{P}/(A,\delta_A)$ where 
$[g_{\delta_A}]:
(Y,\fp{g_{\delta_A}\times g_{\delta_A}}(\delta_A)\Land\fp{g^\tau\times g^\tau}(\tau))
\to(A,\delta_A)$.  
By the weak universal property of comprehension, 
there is an arrow $\mu:X\to W$ such that $\xp{m}= \mu\cmp{\xi}$.
Thus the arrow $\mu$ determines the required arrow
$[\mu]:(D,\pi)\to(W,\theta)$ in $\bqc{P}/(A,\delta_A)$. Uniqueness is a direct
consequence of the definition of $\theta$.
\end{proof}

After \ref{lemminolcc} it remains to prove that every slice of $\ct{Q}_P$ is cartesian closed. Proposition \ref{lemminolcc} says that for every $(A,\delta_A)$ and every $[f]$ in $\ct{Q}_P/(A,\delta_A)$ there is a right adjoint to the functor $-\times_{(A,\delta_A)}[f]:\ct{Q}_P/(A,\delta_A)\to \ct{Q}_P/(A,\delta_A)$. We aim at proving that these right adjoints exist for all $(A,\rho)$.

To this purpose we follow the line of the proof in \cite{EmmeneggerJ:ontlc}  by relying on the existence of right adjoints as a consequence of one of Barr's tripability theorems  in \cite{BarrM:toptt}. 

We first need some instrumental propositions.

\begin{prp}\label{regularepi}In a \mvariational doctrine $P:\ct{C}\op\ftr\ISL$ with stable effective quotients each quotient arrow is the coequalizer of its kernel pairs
  and hence it is a regular epimorphism. Moreover $\ct{C}$ is regular.
\end{prp}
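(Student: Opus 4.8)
The plan is to show first that every quotient arrow is the coequalizer of its kernel pair, and then to deduce regularity from the standard recognition criterion: a finitely complete category in which kernel pairs have coequalizers and regular epimorphisms are stable under pullback is regular.

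First I would observe that, by Proposition~\ref{brescello}, the base $\ct{C}$ has all finite limits, so kernel pairs exist. Fix a quotient arrow $q\colon A\to A/\rho$; effectiveness gives $\rho=\fp{q\times q}(\delta_{A/\rho})$. I would identify the kernel pair of $q$ with the (strong, hence monic) comprehension $\cmp{\rho}\colon R\to A\times A$ together with its two legs $r_i=\pr_i\cmp{\rho}$: for any $a,b\colon Z\to A$ one has $\fp{\ple{a,b}}(\rho)=\fp{\ple{qa,qb}}(\delta_{A/\rho})$, which by comprehensive diagonals is $\tt_Z$ exactly when $qa=qb$, and in that case $\ple{a,b}$ factors through the monomorphism $\cmp{\rho}$ uniquely. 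This is the one point at which \emph{strong}, rather than merely weak, comprehension is essential, since otherwise the factorisation would fail to be unique and $(r_1,r_2)$ would not be a genuine kernel pair.

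Next I would prove that $q$ is the coequalizer of $(r_1,r_2)$. That $qr_1=qr_2$ follows from $\tt_R=\fp{\cmp{\rho}}(\rho)=\fp{\ple{qr_1,qr_2}}(\delta_{A/\rho})$ together with comprehensive diagonals. Given $f\colon A\to Y$ with $fr_1=fr_2$, comprehensive diagonals give $\tt_R=\fp{\ple{fr_1,fr_2}}(\delta_Y)=\fp{\cmp{\rho}}\bigl(\fp{f\times f}(\delta_Y)\bigr)$, whence $\rho\le\fp{f\times f}(\delta_Y)$ by fullness of comprehension, so the universal property of the quotient $q$ yields $h$ with $hq=f$. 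Uniqueness of $h$ is just that $q$ is epic, which follows from the identity $\delta_X\le\fp{g\times g}(\delta_Y)$ --- valid for every arrow $g\colon X\to Y$ of an elementary doctrine --- since then any $g_1,g_2$ with $g_1q=g_2q=:f$ both satisfy $\rho\le\fp{q\times q}(\delta_{A/\rho})\le\fp{f\times f}(\delta_Y)$ and so both factor $f$ through $q$. This already shows every quotient arrow is a regular epimorphism.

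Finally, for the regularity of $\ct{C}$ I would check the two remaining conditions. Given $f\colon A\to B$, set $\sigma=\fp{f\times f}(\delta_B)$; this is a $P$-equivalence relation whose comprehension $\cmp{\sigma}\colon R\to A\times A$ provides, through its two legs, the kernel pair of $f$, and the quotient $q\colon A\to A/\sigma$ exists. By the previous paragraph $q$ is the coequalizer of its own kernel pair, which by effectiveness of $q$ coincides with the kernel pair of $f$; so coequalizers of kernel pairs exist. And since every regular epimorphism is the coequalizer of its kernel pair, it is (up to isomorphism) a quotient arrow; a pullback of a quotient arrow along any morphism is, by stability of quotients, again a quotient arrow, hence again a regular epimorphism, so regular epimorphisms are stable under pullback. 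I expect the routine bookkeeping to be unremarkable; the one genuinely delicate move is this last one, where an arbitrary regular epimorphism has to be re-presented as a quotient arrow before stability of quotients can be applied.
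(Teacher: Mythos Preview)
Your proof is correct and follows the same route the paper takes: the paper's own argument simply cites \cite{MaiettiME:quofcm} for the fact that kernel pairs of arbitrary maps have coequalizers given by effective quotient arrows, whence regularity, and you have written out exactly those details. One small remark: your separate argument that $q$ is epic is unnecessary, since uniqueness of the mediating arrow is already part of the definition of quotient; but this does no harm.
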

\begin{proof} In \cite{MaiettiME:quofcm} it is proved that the kernel pairs of each map in $\ct{C}$  has a coequalizer which is an effective quotient arrow and hence $\ct{C}$ is regular.
  \end{proof}

\begin{cor}\label{coqsta}In a \mvariational doctrine $P:\ct{C}\op\ftr\ISL$ with stable effective quotients each coequalizer is a quotient arrow and it is stable under pullback.\end{cor}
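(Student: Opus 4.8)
The plan is to reduce everything to Proposition~\ref{regularepi}, which already supplies three facts: $\ct{C}$ is regular, every quotient arrow is the coequalizer of its kernel pair, and the kernel pair of \emph{any} arrow of $\ct{C}$ has a coequalizer which is an effective quotient arrow. The only genuinely new content of the corollary is that the class of coequalizers coincides with the class of quotient arrows; once this is in place, pullback-stability follows formally from the hypothesis that $P$ has stable quotients.

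First I would prove that every coequalizer is a quotient arrow. Let $q: B\to C$ be the coequalizer of a pair $f,g: A\to B$, and let $(R,r_1,r_2)$ be the kernel pair of $q$, which exists since $\ct{C}$ has finite limits. From $qf=qg$ one obtains $w: A\to R$ with $r_1w=f$ and $r_2w=g$. By Proposition~\ref{regularepi} the coequalizer $e: B\to E$ of $r_1,r_2$ exists and is an effective quotient arrow. Now $q$ coequalizes $r_1,r_2$, so $q=ue$ for a unique $u$; conversely $ef=er_1w=er_2w=eg$, so $e=vq$ for a unique $v$. Since $q$ and $e$ are epic, $uv=\id{C}$ and $vu=\id{E}$, so $q$ is isomorphic to $e$ and is therefore itself an effective quotient arrow.

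For the stability statement, let $q: B\to C$ be a coequalizer and $h: X\to C$ an arbitrary arrow; form the pullback with projections $p: Z\to B$ and $h': Z\to X$, so that $qp=hh'$. By the first part $q$ is the quotient arrow of some $P$-equivalence relation $\rho$ over $B$, hence the hypothesis that $P$ has stable quotients gives that $h'$ is the quotient arrow of $\fp{p\times p}(\rho)$. In particular $h'$ is a quotient arrow, so by Proposition~\ref{regularepi} it is the coequalizer of its kernel pair; thus the pullback of $q$ is again a coequalizer, as required.

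The one point deserving care is that Proposition~\ref{regularepi} must be invoked for the kernel pair of the \emph{arbitrary} coequalizer $q$, not only for kernel pairs of quotient arrows; but that proposition is already stated for kernel pairs of all arrows of $\ct{C}$, so there is no real obstacle. In effect the argument is just the classical observation that in a regular category every coequalizer is the coequalizer of its own kernel pair, transported to the doctrinal setting, and I expect the paper's proof to amount to little more than a pointer back to Proposition~\ref{regularepi}.
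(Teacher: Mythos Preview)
Your argument is correct and is essentially the same as the paper's: the paper's one-line proof simply names the quotient explicitly, asserting that a coequalizer $e:A\to B$ is isomorphic to $q:A\to A/\fp{e\times e}(\delta_B)$, which is exactly the quotient arrow produced (via \cite{MaiettiME:quofcm}) as the coequalizer of the kernel pair of $e$ in your invocation of Proposition~\ref{regularepi}. Your version just spells out the two universal-property comparisons that the paper leaves implicit, and your treatment of stability is the intended one.
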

\begin{proof} Every coequalizer $e:A\to B$ is isomorphic to  $q:A\to A/\fp{e\times e}(\delta_B)$.
  \end{proof}

  \begin{prp}\label{descentdoc}Let $P:\ct{C}\op\ftr\ISL$ be an existential \variational doctrine and 
  $\Q{P}: \ct{Q}_P\op\ftr\ISL$ be its elementary quotient completion.
  Then,
   for every quotient arrow $q:(A,\delta_A)\to (A,\rho)$  in $\ct{Q}_P$
  the functor $q^\ast :  \ct{Q}_P/(A/\rho)\to \ct{Q}_P/(A,\delta_A)$ is monadic.
  \end{prp}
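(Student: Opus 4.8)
The plan is to verify that the quotient arrow $q=[\id{A}]:(A,\delta_A)\to(A,\rho)$ satisfies the hypotheses of Beck's (precise) monadicity theorem for the adjunction $\Sigma_q\dashv q^\ast$, where $\Sigma_q:\ct{Q}_P/(A,\delta_A)\to\ct{Q}_P/(A,\rho)$ is composition with $q$. Since $\ct{Q}_P$ has finite limits by \ref{finitelim}, the codomain fibration over $\ct{Q}_P$ is a bifibration whose cartesian lifts are pullbacks and whose opcartesian lifts are composites, so $\Sigma_q$ is genuinely left adjoint to $q^\ast$, and only two conditions remain to be checked: that $q^\ast$ reflect isomorphisms, and that $\ct{Q}_P/(A,\rho)$ have, and $q^\ast$ preserve, coequalisers of $q^\ast$-split parallel pairs.

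First I would handle the reflection of isomorphisms. By \ref{comp} the completion $\Q{P}$ is \mvariational with stable effective quotients, so by \ref{regularepi} and \ref{coqsta} the arrow $q$ is the coequaliser of its kernel pair, hence a regular epimorphism, and $\ct{Q}_P$ is a regular category in which regular epimorphisms are pullback-stable. Then pullback along a regular epimorphism of a regular category is faithful — for a parallel pair of maps over $(A,\rho)$ that agree after applying $q^\ast$, precomposing with the relevant pullback of $q$, which is a regular epimorphism hence epic, shows they already agree — and reflects isomorphisms: if $q^\ast\phi$ is invertible, then $\phi$ composed with a pullback of $q$ is a regular epimorphism, so $\phi$ is one, while the kernel pair of $\phi$ is sent by $q^\ast$ to that of $q^\ast\phi$, namely a diagonal, whence by faithfulness $\phi$ is monic; a monic regular epimorphism is invertible.

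For the remaining, essential condition the plan is to invoke monadic descent. The codomain fibration over $\ct{Q}_P$ is a Beck--Chevalley bifibration, because composites of pullback squares are pullback squares, so by the theorem of B\'enabou--Roubaud the canonical comparison functor from $\ct{Q}_P/(A,\rho)$ to the category of Eilenberg--Moore algebras of the monad $q^\ast\Sigma_q$ is an equivalence precisely when $q$ is an effective descent morphism in the classical categorical sense; equivalently, $q^\ast$ is monadic iff $q$ is of effective descent. Now $q$ is a regular epimorphism of $\ct{Q}_P$, which one checks is not merely regular but exact (regularity being \ref{regularepi}, while effective quotients of equivalence relations supply effectiveness), and regular epimorphisms of an exact category are effective descent morphisms (see \cite{BarrM:toptt}); hence $q$ is of effective descent and $q^\ast$ is monadic. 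Unwound through Beck's theorem, this amounts to: a $q^\ast$-split pair over $(A,\rho)$, which $q^\ast$ sends to a split — hence absolute — coequaliser over $(A,\delta_A)$, descends along $q$ to a coequaliser in $\ct{Q}_P/(A,\rho)$ that $q^\ast$ carries back to that split coequaliser.

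I expect the main obstacle to be exactly this last step — the descent of slice-objects and slice-morphisms along $q$ — since general reflexive pairs need not possess coequalisers in $\ct{Q}_P$, so one genuinely has to use effectiveness of descent for $q$; the cleanest way to secure that is to first record that $\ct{Q}_P$ is Barr-exact, effectiveness of equivalence relations holding because $\Q{P}$-equivalence relations over an object $(A,\rho)$ present the categorical equivalence relations on it while $\Q{P}$ has effective quotients. After that the proposition is a direct appeal to the classical fact that regular epimorphisms of an exact category are of effective descent, together with the B\'enabou--Roubaud correspondence for the codomain fibration.
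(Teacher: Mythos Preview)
Your argument has a genuine gap at the crucial step: you assert that $\ct{Q}_P$ is Barr-exact, justifying effectiveness by saying that ``$\Q{P}$-equivalence relations over an object $(A,\rho)$ present the categorical equivalence relations on it while $\Q{P}$ has effective quotients.'' This identification fails. The doctrine $\Q{P}$ is \mvariational by \ref{comp}, so by \ref{abcdef} it sits as a retract $\Q{P}\retra\Sb{\ct{Q}_P}$; the $\Q{P}$-equivalence relations correspond only to those categorical equivalence relations whose inclusion into the product is a \emph{strong} monomorphism (a comprehension arrow), not to arbitrary monic equivalence relations. Thus $\Q{P}$ having effective quotients gives you effectiveness only for strong equivalence relations, not for all of them, and $\ct{Q}_P$ is in general \emph{not} exact. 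Indeed the whole thrust of the paper is that $\ct{Q}_P$ is a quasi-topos which need not be a topos (\eg assemblies, equilogical spaces); were it always exact your appeal would collapse the distinction. So the invocation of ``regular epimorphisms of an exact category are effective descent'' is unavailable here.

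The paper's proof avoids exactly this trap. It does not claim exactness; instead it uses regularity of $\ct{Q}_P$ and \ref{regularepi} only to conclude that $q^\ast$ is of \emph{descent type}, \ie the comparison functor to algebras is fully faithful. For essential surjectivity it argues directly: every object $f:(B,\tau)\to(A,\rho)$ of the slice is shown to be isomorphic over $(A,\rho)$ to an object arising from an explicit descent datum over $(A,\delta_A)$, namely $\pr_1\cmp{\fp{\ple{\id{},f}}(\rho)}$ equipped with the evident action. This hands-on replacement is what stands in for the missing exactness, and is precisely the step your proposal lacks.
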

  \begin{proof}
  The proof   is the same as that for exact categories given in \cite{JoyalA:algst}.
  Alternatively, just observe that  $\ct{Q}_P$ is regular and that any quotient map is a regular epimorphism by lemma~\ref{regularepi}. Therefore it is well known that $q^\ast$ is of descent type in the sense of \cite{BarrM:toptt}.
  
  Moreover, any map $f: (B,\tau)\to (A,\rho)$ is isomorphic over $(A,\rho)$ 
  to  $\pr_1\cdot \cmp{ P_{<id,f>}(\rho)} :  (X, \, P_{<\pr_1,\pr_3>}( \rho) \wedge P_{<\pr_1,\pr_3>}( \tau)) \to (A,\rho)$
  which comes from a descent datum represented by $\pr_1\cdot \cmp{ P_{<id,f>}(\rho)} :  (X, \delta_X) \to (A,\delta_A)$
  with the obvious action.
  \end{proof}
  Theorem \ref{descentdoc} actually holds even in the more general situation of a generic existential \variational doctrine with effective stable quotients with the same proof.

We recall from theorem 3.7.2 in \cite{BarrM:toptt}.
\begin{prp}\label{adjbarr}
In  a situation like the following 
\[
\xymatrix{
\ct{B}\ar[rr]^-{W}\ar@<.5ex>[rd]^-{U}&&\ct{B'}\ar@<-.5ex>[ld]_-{U'}\\
&\ar@<.5ex>[ul]^-{F}\ct{C}\ar@<-.5ex>[ru]_-{F'}&
}
\]
where $F\dashv U$ and $F'\dashv U'$, if furthermore
  
\begin{enumerate}
\item $WF$ is naturally isomorphic to $F'$;
\item $U$ is monadic; 
\item $W$ preserves co-equalizers of $U$-contractible pairs;
\end{enumerate} then $W$ has a right adjoint.
\end{prp}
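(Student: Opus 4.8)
The plan is to use monadicity of $U$ to turn the statement into a representability question over an Eilenberg--Moore category, and then to construct the right adjoint by exhibiting, for each object $B'$ of $\ct{B'}$, a representing object for the presheaf $B\mapsto\ct{B'}(WB,B')$ on $\ct{B}$. Since $U$ is monadic, I would identify $\ct{B}$ with the category $\ct{C}^{T}$ of algebras for the monad $T=UF$ on $\ct{C}$ (with unit $\eta$ and multiplication $\mu$), so that $U$ becomes the forgetful functor and $F$ the free functor; write $T'=U'F'$, $\eta'$, $\mu'$ for the corresponding data on the $\ct{B'}$ side.

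First I would record the canonical comparison natural transformation $\lambda: U\Rightarrow U'W$, namely the mate under the two adjunctions of the given isomorphism $\theta: WF\xrightarrow{\ \sim\ }F'$; equivalently $\lambda_B$ is read off by Yoneda from $\ct{C}(X,UB)=\ct{B}(FX,B)\xrightarrow{\ W\ }\ct{B'}(WFX,WB)\xrightarrow[\ \sim\ ]{\ \theta\ }\ct{B'}(F'X,WB)=\ct{C}(X,U'WB)$. Restricting $\lambda$ to free algebras and composing with the isomorphism $U'\theta$ gives a natural transformation $\bar\lambda: T\Rightarrow T'$. I would then propose the functor $V: \ct{B'}\to\ct{B}$ sending $B'$ to the pair $(U'B',c_{B'})$, where $c_{B'}$ is the composite $TU'B'\xrightarrow{\bar\lambda_{U'B'}}T'U'B'\xrightarrow{U'\epsilon'_{B'}}U'B'$ and $\epsilon'$ is the counit of $F'\dashv U'$, and claim that $V$ is right adjoint to $W$.

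For the core computation, recall that every algebra $B=(A,a)$ has its canonical presentation $FUFUB\rightrightarrows FUB\xrightarrow{\epsilon_B}B$, a coequaliser in $\ct{B}$ whose parallel pair becomes a split coequaliser (split by $\eta$) once $U$ is applied; that pair is thus $U$-contractible, so by hypothesis~(3) $W$ sends the presentation to a coequaliser in $\ct{B'}$. Applying $\ct{B'}(-,B')$ and using $\ct{B'}(WF(-),B')\cong\ct{B'}(F'(-),B')\cong\ct{C}(-,U'B')$ — the first isomorphism by~(1), the second by $F'\dashv U'$ — I obtain a natural identification of $\ct{B'}(WB,B')$ with the equaliser, in the category of sets, of a pair of maps $\ct{C}(A,U'B')\rightrightarrows\ct{C}(TA,U'B')$. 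One leg turns out to be precomposition with the structure map $a: TA\to A$; the other, after transporting along $\theta$ and using naturality of $\bar\lambda$ and of $\eta'$, works out to $f\mapsto c_{B'}\circ Tf$. Hence the equaliser is precisely the set of $T$-algebra homomorphisms $(A,a)\to VB'$, i.e.\ $\ct{B}(B,VB')$, and the identification is natural in $B$; it is also functorial in $B'$ by construction, so $V$ is right adjoint to $W$.

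The step I expect to be the genuine obstacle is the bookkeeping around $\bar\lambda$: pinning down that second leg of the equaliser and, which is really the \emph{same} obstruction in another guise, checking that $c_{B'}$ is indeed a $T$-algebra structure, i.e.\ that $c_{B'}\circ\eta_{U'B'}=\id{U'B'}$ and $c_{B'}\circ\mu_{U'B'}=c_{B'}\circ Tc_{B'}$. Both reduce to verifying that $\bar\lambda: T\Rightarrow T'$ is a morphism of monads — a diagram chase using the triangle identities of both adjunctions together with the naturality of $\theta$ (the unit law, for instance, comes down to the triangle identity $\epsilon_{FX}\circ F\eta_X=\id{FX}$, and the associativity law combines the monad-morphism identities with naturality of $\bar\lambda$). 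Once $\bar\lambda$ is known to respect units and multiplications, the remaining verifications are routine.
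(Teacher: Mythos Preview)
The paper does not prove this proposition: it is quoted verbatim as theorem~3.7.2 of Barr--Wells \cite{BarrM:toptt}, with no argument given. Your proposal is essentially the standard proof of that theorem, and it is correct.

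A couple of remarks on the details. Your identification of the two legs of the equaliser is right: writing $h=\theta_A\circ W\epsilon_{FA}\circ\theta_{TA}^{-1}:F'TA\to F'A$, one checks directly that the adjoint transpose of $h$ under $F'\dashv U'$ is exactly $\bar\lambda_A$, so that after unwinding the naturality of $\epsilon'$ and of $\bar\lambda$ the second leg sends $f$ to $U'\epsilon'_{B'}\circ\bar\lambda_{U'B'}\circ Tf=c_{B'}\circ Tf$ as you claim. Your diagnosis of the remaining obstruction is also accurate: the representability argument only produces an object of $\ct{B}\simeq\ct{C}^T$ once $(U'B',c_{B'})$ is known to be a $T$-algebra, and this is precisely the statement that $\bar\lambda$ is a monad morphism. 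The unit law indeed reduces, via naturality of $\eta'$ and of $\theta^{-1}$, to the triangle identity $\epsilon_{FX}\circ F\eta_X=\id{FX}$ exactly as you indicate; the multiplication law is the analogous (longer) chase using the other triangle identity together with naturality of $\theta$. None of this is delicate, but it is where the honest work lies, and you have located it correctly.
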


%
%
We apply this theorem to our context as follows:

\begin{prp}\label{Barr} If an \mvariational \eed $P:\ct{D}\op\ftr\ISL$ has stable effective quotients, then for every quotient arrow $q:A\to A/\rho$, if the slice $\ct{D}/A$ is cartesian closed also the slice $\ct{D}/(A/\rho)$ is cartesian closed.
\end{prp}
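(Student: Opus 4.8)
The plan is to apply the transfer theorem recalled above from \cite{BarrM:toptt} (theorem~3.7.2 in \loccit), taking the common base of its diagram to be the slice $\ct{C}/A$ — cartesian closed by hypothesis — and both $\ct{B}$ and $\ct{B'}$ to be $\ct{C}/(A/\rho)$. Throughout I use that $\ct{C}$ is regular, hence has finite limits, by Proposition~\ref{regularepi}, so that the slices and pullbacks below exist and every slice has finite products, and that by Corollary~\ref{coqsta} every coequalizer in $\ct{C}$, hence in any slice of it, is stable under pullback.

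Fix an object $g\colon B\to A/\rho$ of $\ct{C}/(A/\rho)$; it suffices to produce a right adjoint to the functor $W:=-\times_{(A/\rho)}g$ on $\ct{C}/(A/\rho)$, since cartesian closure of $\ct{C}/(A/\rho)$ amounts exactly to the existence of such a right adjoint for every $g$. Note that $W=\Sigma_g\circ g^\ast$, where $g^\ast$ is pullback along $g$ and $\Sigma_g$ is post-composition with $g$. In Barr's diagram set $U:=q^\ast\colon\ct{C}/(A/\rho)\to\ct{C}/A$ with left adjoint $F:=\Sigma_q$ (post-composition with $q$), and $F':=W\circ F$. A routine pullback-pasting computation identifies $F'=(-\times_{(A/\rho)}g)\circ\Sigma_q$ with the composite $\Sigma_q\circ(-\times_A q^\ast g)$, where $-\times_A q^\ast g$ is the product functor with the object $q^\ast g$ in the slice $\ct{C}/A$. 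Since $\ct{C}/A$ is cartesian closed, $-\times_A q^\ast g$ has a right adjoint, and $\Sigma_q\dashv q^\ast$; hence $F'$ has a right adjoint $U'$, and hypothesis~(1) — $WF\cong F'$ with $F'\dashv U'$ — holds by construction.

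Hypothesis~(2), that $U=q^\ast\colon\ct{C}/(A/\rho)\to\ct{C}/A$ is monadic, is Proposition~\ref{descentdoc}, in the form valid for a generic existential \variational doctrine with stable effective quotients noted in the remark following it, which applies since $q$ is a quotient arrow. For hypothesis~(3), recall $W=\Sigma_g\circ g^\ast$: the functor $\Sigma_g$ is a left adjoint, so preserves all colimits, and $g^\ast$, being pullback along $g$, preserves coequalizers by Corollary~\ref{coqsta}; hence $W$ preserves coequalizers of $U$-contractible pairs — which exist in $\ct{C}/(A/\rho)$ because $q^\ast$ is monadic. Barr's theorem therefore yields a right adjoint to $W=-\times_{(A/\rho)}g$; as $g$ was arbitrary, $\ct{C}/(A/\rho)$ is cartesian closed.

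The step needing the most care is the identification $WF\cong F'$ together with the verification that $F'$ has a right adjoint, which is where the cartesian closure of $\ct{C}/A$ actually enters; conditions~(2) and~(3) reduce directly to Proposition~\ref{descentdoc} and to the pullback-stability of coequalizers of Corollary~\ref{coqsta}, respectively.
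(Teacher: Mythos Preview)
Your proof is correct and follows essentially the same route as the paper: both apply Barr's adjoint-lifting theorem 3.7.2 with $U=q^\ast$, $F=\Sigma_q$, $W=-\times_{(A/\rho)}g$, and $F'=\Sigma_q\circ(-\times_A q^\ast g)$, verifying (2) via Proposition~\ref{descentdoc} and (3) via the pullback-stability of coequalizers from Corollary~\ref{coqsta}. The only cosmetic differences are that the paper names the identification $WF\cong F'$ as Frobenius reciprocity rather than pullback-pasting (these are the same fact here), and for (3) the paper invokes stability directly on $W$ while you factor $W=\Sigma_g\circ g^\ast$ and argue each factor preserves coequalizers.
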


\begin{proof} The cartesian closure of $\ct{D}/A$ ensures that for every $\alpha$ in $\ct{D}/(A/\rho)$ the functor $-\times_Aq^*\alpha:\ct{D}/A\to \ct{D}/A$ has a right adjoint $R_\alpha$. Consider the diagram
\[
\xymatrix{
\ct{D}/(A/\rho)
\ar[rr]^-{-\times_{(A/\rho)}\alpha}\ar@<.5ex>[rd]^-{q^*}&&\ct{D}/(A/\rho)\ar@<-.5ex>[ld]_-{R_\alpha q^*}\\
&\ct{D}/A\ar@<.5ex>[ul]^-{\Sigma_q}\ar@<-.5ex>[ru]_(.55){\Sigma_q(-\times_A q^*\alpha)}&
}
\]
Since $\Sigma_q\dashv q^*$ and $\Sigma_q(-\times_A q^*\alpha)\dashv R_\alpha q^*$ (by composition of adjoints), the claim is proved if we show that this situation meets the three conditions in proposition~\ref{adjbarr},\\
Condition 1 holds since, for  any $\beta$ in $\ct{C}/A$,  Frobenius reciprocity precisely says that $\Sigma_q(\beta)\times_{(A/\rho)}\alpha$ is isomorphic to $\Sigma_q(\beta\times_Aq^*\alpha)$.\\
Condition 2 holds since every quotient arrow $q:A\to A/\rho$ induces a monadic functor
 $q^\ast$  by \ref{descentdoc}.\\
Condition 3 is fullfiled as well, since  in $\ct{C}/(A/\rho)$ coequalizers  of  $q^\ast$-contractible pairs  are preserved by $-\times_Aq^*\alpha$ thanks to stability of coequalizers in corollary~\ref{coqsta}.
\end{proof}

Now we are ready to prove the main theorem of the section.

\begin{theorem}\label{lccmain} Suppose $P:\ct{C}\op\ftr\ISL$ is an existential \variational doctrine. 
The following are equivalent:
\begin{enumerate}
\item the base $\ct{C}$ is  $P$-\swwlcc;
\item $\ct{Q}_P$ is \spexp;
\item $\ct{Q}_P$ is locally cartesian closed.
\end{enumerate}
\end{theorem}
\begin{proof} 
3. $\Rightarrow$ 2. Condition (ii)  of the definition of \spexp follows immediately, while condition (i)  follows  from prop.\ref{forall}.

3. $\Rightarrow$ 1. follows from lemma~\ref{lccc0} and lemma~\ref{brescello}. 

To show 1. $\Rightarrow$ 2. first note that $P$ is \universal and \implicational 
by lemma~\ref{inherit}. Therefore, we can apply  lemma~\ref{lemminolcc} to deduce that  every slice $\ct{Q}_P/(A,\delta_A)$ is cartesian closed.

 2. $\Rightarrow$ 3. It follows from the fact that, for every  object $(A,\rho)$ in $\ct{Q}_P$ the arrow $[\id{A}]:(A,\delta_A)\to (A,\rho)$ is a quotient arrow. The claim follows by  prop~\ref{Barr} when $\ct{D}$ there is  $\ct{Q}_P$.
\end{proof}

Recall that the exact completion of a category with binary products and weak pullbacks is an instance of the elementary quotient completion. More specifically $\Q{\Wsb{\ct{C}}}\simeq\Sb{\ct{C}\exl}$, which has as corollary $\ct{Q}_{\Wsb{\ct{C}}}\equiv \ct{C}\exl$.

An important instance of  Theorem~\ref{lccmain} is when the doctrine $P$ is the variational doctrine $\Wsb{\ct{C}}$.
In this case,  we deduce  that $\ct{C}$ is \swwlcc if and only if
$\ct{C}\exl$ is locally cartesian. Hence we recover the characterization in
\cite{RosoliniG:loccce} and \cite{EmmeneggerJ:ontlc} supposing $\ct{C}$ with finite products and weak equalizers.

\begin{cor}\label{lccccor} Suppose  $\ct{C}$ a category with finite
products and weak equalizers. Then $\ct{C}$ is \swwlcc with respect to the weak subobject doctrine $\Wsb{\ct{C}}$ if and only if $\ct{C}\exl$ is
locally cartesian closed.
\end{cor}
\begin{proof} It follows from \ref{lccmain} when applied to 
  the doctrine functor of {\it variations}
$\Wsb{\ct{C}}:\ct{C}\op\ftr\ISL$
knowing that $\ct{C}\exl$ is equivalent to
\bqc{\Wsb{\ct{C}}}, that $\Wsb{\ct{C}}$ is elementary existential, and
it admits full weak comprehension and has comprehensive diagonals, since
$\ct{C}$ has finite products and weak pullbacks.
\end{proof}

\begin{remark}
  The proof of theorem~\ref{lccmain} can be considered a generalization
  of Carboni-Rosolini characterization of locally cartesian closed exact completions $\ct{C}\exl$ in \cite{RosoliniG:loccce}  {\it only} in the case the category has finite products. It can be generalized to the case of weak finite products in \cite{cioffothesis}.
  Related proofs of locally cartesian closure for  the elementary quotient completion
  of a syntactic category out of specific type theories are in \cite{eriksetoid} and in \cite{m09}.
  \end{remark}

\subsection{\bf Finite disjoint coproducts}
In this section we establish the necessary and sufficient conditions under which an elementary quotient completion has stable finite coproducts.

We are interested in studying those elementary doctrines with comprehensive diagonals whose elementary quotient completion has coproducts.
 After \ref{strongcolimits} we know that if  $\ct{Q}_P$ has coproducts then  $P:\ct{C}\op\ftr\ISL$ must have coproducts.

For the rest of the section, unless specified otherwise, $P$ is a \variational first order doctrine on a category $\ct{C}$ with  binary distributive  coproducts ( recall that coproducts are said \dfn{distributive} if the canonical arrow
$(A\times B) + (A\times C)\to A\times (B+C)$ is an isomorphism).

\begin{prp}\label{injecivemono}Canonical injections are \inj{P}, i.e. $\fp{i_A\times i_A}(\delta_{A+B})=\delta_A$ and $\fp{i_B\times i_B}(\delta_{A+B})=\delta_B$.

\end{prp}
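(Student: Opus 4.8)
The plan is to reduce the statement to the fact that the coproduct injections are monomorphisms, and then to invoke Proposition~\ref{mono}. Recall that, by definition, an arrow $m:X\to A$ is \inj{P} precisely when $\fp{m\times m}(\delta_A)=\delta_X$; so what has to be proved is exactly that the canonical injections $i_A:A\to A+B$ and $i_B:B\to A+B$ are \inj{P} arrows.

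First I would use that, since the binary coproducts of $\ct{C}$ are distributive, they are disjoint; in particular $i_A$ and $i_B$ are monomorphisms of $\ct{C}$. (If one prefers to keep the argument self-contained, the monicity of $i_A$ can be extracted from the distributivity isomorphism $X\times(A+B)\cong(X\times A)+(X\times B)$ together with strictness of the initial object, the latter following from $Y\times 0\cong 0$.)

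Then I would apply Proposition~\ref{mono}: since $P$ is a \variational \fod it is in particular a \variational doctrine, so that proposition says that an arrow of $\ct{C}$ is monic if and only if it is \inj{P}. Hence $i_A$ and $i_B$, being monomorphisms, are \inj{P}, and unwinding the definition this is exactly $\fp{i_A\times i_A}(\delta_{A+B})=\delta_A$ and $\fp{i_B\times i_B}(\delta_{A+B})=\delta_B$, as required.

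The only genuinely non-formal ingredient is thus the monicity of the coproduct injections; the remainder is a direct appeal to Proposition~\ref{mono}, whose proof already carries out the relevant Beck--Chevalley computation through Proposition~\ref{bcbello}. The main obstacle, should the authors prefer not to cite the theory of distributive categories, is to spell out a direct proof that the canonical injections are monic from the distributivity isomorphisms together with the isomorphism $Y\times 0\cong 0$.
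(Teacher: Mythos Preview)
Your approach is correct and takes a genuinely different route from the paper's. The paper argues by a direct computation inside the doctrine: it uses the distributivity isomorphism $j$ to form the map $[\snd,\fst]\circ j:A\times(A+B)\to A$, builds the arrow $e=\langle\fst,i_A\fst,\fst,i_A\snd\rangle:A\times A\to A\times(A+B)\times A\times(A+B)$, and evaluates $\fp{e}(\delta_Z)$ in two ways to obtain $\fp{i_A\times i_A}(\delta_{A+B})\leq\delta_A$. It never isolates the statement that $i_A$ is monic, nor does it invoke Proposition~\ref{mono}. You instead factor the argument as (monicity of $i_A$) $+$ (Proposition~\ref{mono}), which is shorter and more modular; the paper's version is more explicit but unwinds by hand the very Beck--Chevalley step that Proposition~\ref{mono} already packages via Proposition~\ref{bcbello}. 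Both proofs ultimately rest on the same construction, the ``retraction with parameter'' $A\times(A+B)\to A$ supplied by distributivity.

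One correction to your write-up: you do not need disjointness, an initial object, or strictness to get monicity of $i_A$. Binary distributivity alone suffices. With $r=[\snd,\fst]\circ j:A\times(A+B)\to A$ one has $r\circ(\id{A}\times i_A)=\snd$, so if $i_Af=i_Ag$ then $g=r\langle f,i_Ag\rangle=r\langle f,i_Af\rangle=f$. Since the ambient hypothesis in this section is only that $\ct{C}$ has binary distributive coproducts, this is the argument you want; the detour through disjointness and strict initial objects is unnecessary and, as stated, would require hypotheses not in force.
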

\begin{proof}
The idea of the proof is simple  when formulate  in the internal language of doctrines. Given an  element $a:A$ then we can define a projection $\overline{p}: A+B\rightarrow A$ as follows
 $$\overline{p_a}(z)\equiv\begin{cases}
w  \mbox{ if } z=i_A(w)\\
a  \mbox{ if } z=i_B(w)\\
\end{cases}$$
Hence if $i_A(x)=_{A+B} i_A(y)$ then $ x=_A\overline{p_x} (i_A(x))=_A\overline{p_x} (i_A(y))=_Ay$.

We, now, give its algebraic version. Suppose $i_A:A\to A+B$ is a canonical injection and consider the commutative diagram
\[
\xymatrix{
&A&\\
A\times A\ar@/_/[rd]_-{\id{A}\times i_A}\ar[r]_-{i_{A\times A}}\ar@/^/[ru]^-{\snd}&(A\times A) + (A\times B)\ar[u]_-{[\snd,\fst]}&A\times B\ar@/^/[dl]^-{\id{A}\times i_B}\ar[l]^-{i_{A\times B}}\ar@/_/[lu]_-{\fst}\\
&A\times(A+B)\ar[u]_-{j}&
}
\]
where $j$ is the isomorphism that comes from  distributivity of coproducts. Denote by $\pr_i$ ($i=1,2,3,4,$) the projections from $Z=A\times (A+B)\times A\times (A+B)$. By Beck-Chevalley conditions on equality (see \cite{MaiettiME:quofcm})  one has both
$$\delta_Z=\fp{<\pr_1,\pr_3>}(\delta_A)\wedge \fp{<\pr_2,\pr_4>}(\delta_{A+B})$$
$$\delta_Z\le \fp{j\times j}\fp{[\snd,\fst]\times[\snd,\fst]}(\delta_A)$$
Evaluating both sides on $P_{e}$ for $$e=  <\fst, i_A\fst,\fst,i_A\snd>: A\times A\to Z$$ where $\fst,\snd:A\times A\to A$ are projections, we obtain
$$\fp{e}(\delta_Z)= \fp{<\fst,\fst>}(\delta_A)\wedge\fp{i_A\times i_A}(\delta_{A+B})=\fp{i_A\times i_A}(\delta_{A+B})$$
$$\fp{e}(\delta_Z)\le \fp{e}\fp{j\times j}\fp{[\snd,\fst]\times[\snd,\fst]}(\delta_A)$$
Note that $e$ can be obtained by the following composition
\[
\xymatrix{
A\times (A\times A)\ar[rrrr]^-{<\id{A},i_A>\times(\id{A}\times i_A)}&&&&(A\times (A+B))\times (A\times (A+B))\\
(A\times A)\times A\ar@{=}[u]&&&&A\times A\ar[u]_-{e}\ar[llll]^-{\Delta_A\times\id{A}}
}
\]
Therefore
$([\snd,\fst]\times[\snd,\fst])(j\times j)e$ is equal to $$([\snd,\fst]j<\id{A},i_A> \times[\snd,\fst]j(\id{A}\times i_A))(\Delta_A\times\id{A})$$
since $[\snd,\fst]j(\id{A}\times i_A)=\snd:A\times A\to A$ one has 
$$([\snd,\fst]j<\id{A},i_A> \times[\snd,\fst]j(\id{A}\times i_A))(\Delta_A\times\id{A})=\id{A\times A}$$
whence
$\fp{i_A\times i_A}(\delta_{A+B})=\fp{e}(\delta_Z)\le\delta_A$.
\end{proof}

We now aim at proving that the equality on a  coproduct is given by the formula
$$\delta_{A+B}=\D_{i_A\times i_A}(\delta_A)\lor\D_{i_B\times i_B}(\delta_B)$$
To this purpose we define the following:
\begin{definition} Let $P$ is a \variational first order doctrine on a category $\ct{C}$ with  binary distributive  coproducts.
Two arrows $h:A\ftr Y$ and $k:X\ftr Y$ are \dfn{jointly \surj{P}} if $$\tt_{Y} = \D_{h}(\tt_A) \lor \D_{k}(\tt_X)$$
\end{definition}

\begin{prp}\label{co-mono2}Canonical injections are jointly \surj{P}.
\end{prp}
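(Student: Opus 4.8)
The plan is to establish the non-trivial inequality $\tt_{A+B}\le\D_{i_A}(\tt_A)\lor\D_{i_B}(\tt_B)$, since the reverse one holds automatically, both $\D_{i_A}(\tt_A)$ and $\D_{i_B}(\tt_B)$ lying below $\tt_{A+B}$. Write $\phi:=\D_{i_A}(\tt_A)\lor\D_{i_B}(\tt_B)$ in $P(A+B)$ and choose a weak comprehension $\cmp{\phi}:X\to A+B$, which exists because $P$ is \variational.

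First I would show that both injections factor through $\cmp{\phi}$. Since $P$ is \disjunctive, $\fp{i_A}$ preserves binary joins, so $\fp{i_A}(\phi)=\fp{i_A}\D_{i_A}(\tt_A)\lor\fp{i_A}\D_{i_B}(\tt_B)$; the unit of $\D_{i_A}\dashv\fp{i_A}$ gives $\tt_A\le\fp{i_A}\D_{i_A}(\tt_A)$, hence $\fp{i_A}(\phi)=\tt_A$, and symmetrically $\fp{i_B}(\phi)=\tt_B$. The weak universal property of $\cmp{\phi}$ then produces $k_A:A\to X$ with $\cmp{\phi}k_A=i_A$ and $k_B:B\to X$ with $\cmp{\phi}k_B=i_B$. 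Copairing along the coproduct gives $s:=[k_A,k_B]:A+B\to X$, and since $\cmp{\phi}s$ has the same composite with each injection as $\id{A+B}$, the uniqueness clause in the universal property of the coproduct forces $\cmp{\phi}s=\id{A+B}$; so $\cmp{\phi}$ admits a section.

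To conclude I would invoke fullness of the comprehension via Proposition~\ref{fullfull}, which gives $\D_{\cmp{\phi}}(\tt_X)=\phi$. Composing left adjoints, $\D_{\cmp{\phi}}\D_s=\D_{\cmp{\phi}s}=\D_{\id{A+B}}=\id{P(A+B)}$, so from $\D_s(\tt_{A+B})\le\tt_X$ and monotonicity of $\D_{\cmp{\phi}}$ we get
$$\tt_{A+B}=\D_{\cmp{\phi}}\D_s(\tt_{A+B})\le\D_{\cmp{\phi}}(\tt_X)=\phi,$$
which is the required inequality. There is no genuine obstacle here; the care needed is in the coproduct bookkeeping (verifying that $\cmp{\phi}s$ and $\id{A+B}$ really agree after precomposing with $i_A$ and $i_B$) and in noting which hypotheses are actually used --- only that $P$ is existential, \disjunctive and \variational, the standing distributivity of coproducts playing no role in this particular statement. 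Fullness of comprehension is the crucial ingredient in the last step: without it one would only obtain $\D_{\cmp{\phi}}(\tt_X)\le\phi$, which is too weak.
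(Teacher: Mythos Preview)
Your proof is correct and follows essentially the same route as the paper: take the weak comprehension of $\D_{i_A}(\tt_A)\lor\D_{i_B}(\tt_B)$, factor both injections through it via the unit inequality, copair to get a section, and conclude using fullness. The only cosmetic differences are that the paper uses mere monotonicity of $\fp{i_A}$ rather than preservation of joins in your step~2, and invokes fullness directly rather than through Proposition~\ref{fullfull} in the last step.
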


\begin{proof}Suppose $A$ and $B$ are objects of $\ct{C}$ and consider their coproduct with canonical injections $i_A:A\to A+B$ and $i_B:B\to A+B$. Abbreviate by $k:X\to A+B$ the weak comprehension $$\cmp{\D_{i_A}(\tt_A)\lor\D_{i_B}(\tt_B)}:X\to A+B$$
Clearly $$\tt_A\le\fp{i_A}(\D_{i_A}(\tt_A))\le \fp{i_A}(\D_{i_A}(\tt_A)\lor\D_{i_B}(\tt_B))$$ and analogously for $i_B$. By the universal property of comprehensions there are arrows $t_A:A\to X$ and $t_B:B\to X$ with $kt_A=i_A$ and $kt_B=i_B$. These induce an arrow $[t_A,t_B]:A+B\to X$ which is a section of $k$. Thus $\cmp{\tt_{A+B}}=\id{A+B}$ factors through $\cmp{\D_{i_A}(\tt_A)\lor\D_{i_B}(\tt_B)}$. Fullness of comprehensions completes the proof.
\end{proof}

\begin{prp}\label{co-mono} For every pair of \inj{P} arrows $h:A\ftr Y$ and $k:X\ftr Y$, for every reflexive relation $\rho$ over $A$ and every reflexive relation $\theta$ over $X$, the following relation over $Y$ $$\D_{h\times h}(\rho) \lor \D_{k\times k}(\theta)$$ is reflexive if and only if $h$ and $k$ are jointly surjective.
\end{prp}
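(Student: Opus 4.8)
The plan is to derive both directions from the identity
\[
\delta_Y \;=\; \D_{h\times h}(\delta_A)\lor\D_{k\times k}(\delta_X),
\]
which holds precisely when $h$ and $k$ are jointly \surj{P}, together with a few elementary observations about the diagonals $\Delta_Y=\ple{\id{Y},\id{Y}}$, $\Delta_A$, $\Delta_X$: from the paper's defining formula one has $\D_{\Delta_Y}(\tt_Y)=\delta_Y$, hence $\fp{\Delta_Y}(\delta_Y)=\tt_Y$ (by the unit of $\D_{\Delta_Y}\dashv\fp{\Delta_Y}$, the reverse inequality being trivial), and moreover $\Delta_Y h=(h\times h)\Delta_A$ and $\Delta_Y k=(k\times k)\Delta_X$. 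Throughout I use that $\D_f$ preserves binary joins (being a left adjoint), that $\fp{f}$ preserves binary joins (as $P$ is \disjunctive), and that $\D_g\D_f=\D_{gf}$.

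First, assume $h$ and $k$ are jointly \surj{P}, \ie $\tt_Y=\D_h(\tt_A)\lor\D_k(\tt_X)$. Applying $\D_{\Delta_Y}$, using that it preserves joins, that $\D_{\Delta_Y}\D_h=\D_{\Delta_Y h}=\D_{(h\times h)\Delta_A}=\D_{h\times h}\D_{\Delta_A}$ (and symmetrically for $k$), and that $\D_{\Delta_A}(\tt_A)=\delta_A$, $\D_{\Delta_X}(\tt_X)=\delta_X$, one obtains the displayed identity. Since $\delta_A\le\rho$ and $\delta_X\le\theta$, monotonicity of $\D_{h\times h}$ and $\D_{k\times k}$ then yields $\delta_Y\le\D_{h\times h}(\rho)\lor\D_{k\times k}(\theta)$, \ie that relation is reflexive. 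Note this direction uses nothing beyond the existential elementary structure and distributivity; in particular it does not use that $h,k$ are \inj{P}.

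Conversely, assume $\delta_Y\le\D_{h\times h}(\rho)\lor\D_{k\times k}(\theta)$. Applying $\fp{\Delta_Y}$, which preserves joins, and using $\fp{\Delta_Y}(\delta_Y)=\tt_Y$, gives $\tt_Y\le\fp{\Delta_Y}\D_{h\times h}(\rho)\lor\fp{\Delta_Y}\D_{k\times k}(\theta)$. The crucial step is the estimate $\fp{\Delta_Y}\D_{h\times h}(\rho)\le\D_h(\tt_A)$, and this is where the hypothesis is used: since $h$ is \inj{P} it is monic (comprehensive diagonals), so the square $\Delta_Y h=(h\times h)\Delta_A$ is a weak pullback --- a cone $\ple{v_1,v_2}$ into $A\times A$ over it has $hv_1=hv_2$, whence $v_1=v_2$ mediates --- and Proposition~\ref{bcbello} (Beck-Chevalley on weak pullbacks, applicable because $P$ is \variational) gives $\fp{\Delta_Y}\D_{h\times h}=\D_h\fp{\Delta_A}$, so $\fp{\Delta_Y}\D_{h\times h}(\rho)=\D_h\fp{\Delta_A}(\rho)\le\D_h(\tt_A)$; symmetrically $\fp{\Delta_Y}\D_{k\times k}(\theta)\le\D_k(\tt_X)$. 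Hence $\tt_Y\le\D_h(\tt_A)\lor\D_k(\tt_X)$, and since the reverse inequality is automatic, $h$ and $k$ are jointly \surj{P}. The only delicate point is recognising that ``equality square'' as a weak pullback so that Proposition~\ref{bcbello} can be invoked, and this is precisely where injectivity of $h$ and $k$ enters; everything else is a formal manipulation of adjoints and of the distributive joins in the fibres.
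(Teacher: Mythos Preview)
Your proof is correct and essentially follows the paper's route: the converse direction is the same Beck--Chevalley computation on the naturality square $\Delta_Y h=(h\times h)\Delta_A$ (the paper just asserts ``by Beck--Chevalley conditions'' without checking that this square is a weak pullback, whereas you verify it, which is more careful). Your forward direction differs slightly: the paper applies $\fp{\Delta_Y}$ and uses Beck--Chevalley to obtain the single identity $\fp{\Delta_Y}(\D_{h\times h}(\rho)\lor\D_{k\times k}(\theta))=\D_h(\tt_A)\lor\D_k(\tt_X)$, which yields both directions at once, while you instead apply $\D_{\Delta_Y}$ and use only functoriality $\D_{\Delta_Y}\D_h=\D_{h\times h}\D_{\Delta_A}$ --- this lets you avoid Beck--Chevalley (and hence the \inj{P} hypothesis) for that implication, as you correctly observe.
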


\begin{proof}
Suppose $h$ and $k$ are jointly surjective and consider the relation $\D_{h\times h}(\rho)$. By Beck-Chevalley conditions it is $$\fp{\Delta_Y}(\D_{h\times h}(\rho)) = \D_h(\fp{\Delta_A}(\rho)) = \D_h(\tt_A)$$
Analogously $\fp{\Delta_Y}(\D_{k\times k}(\theta))=\D_k(\tt_X))$ whence 
$$\fp{\Delta_Y}(\D_{h\times h}(\rho) \lor \D_{k\times k}(\theta))  = \D_h(\tt_A)\lor\D_k(\tt_X)$$
Therefore,  reflexitivity holds, i.e. 
$\tt_Y=\fp{\Delta_Y}(\D_{h\times h}(\rho) \lor \D_{k\times k}(\theta))$
if and only if $h$ and $k$ are jointly surjective, i.e.
$\tt_Y=\D_h(\tt_A)\lor\D_k(\tt_X)$.
\end{proof}

\begin{prp}\label{corcoprod} For every $A$ and $B$ in $\ct{C}$ it is $$\delta_{A+B}= \D_{i_A\times i_A}\delta_A \lor \D_{i_B\times i_B}\delta_B$$
where $i_A:A\to A+B$ and $i_B:B\to A+B$ are canonical injections.
\end{prp}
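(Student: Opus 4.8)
The plan is to establish the two inequalities $\delta_{A+B}\le \D_{i_A\times i_A}\delta_A \lor \D_{i_B\times i_B}\delta_B$ and its reverse, using throughout that a relation $\rho$ over $A+B$ is \emph{reflexive} exactly when $\delta_{A+B}\le\rho$, and that the left adjoints $\D_{i_A\times i_A},\D_{i_B\times i_B}$ together with the binary joins in $P((A+B)\times(A+B))$ all exist because $P$ is a first order doctrine, hence existential and \disjunctive, cf.\ Remark~\ref{aggiuntigenerali}.

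For the inequality $\delta_{A+B}\le \D_{i_A\times i_A}\delta_A \lor \D_{i_B\times i_B}\delta_B$ I would invoke Proposition~\ref{co-mono} with $h:=i_A$, $k:=i_B$, $\rho:=\delta_A$ and $\theta:=\delta_B$. The injections $i_A,i_B$ are \inj{P} by Proposition~\ref{injecivemono}, and $\delta_A,\delta_B$ are trivially reflexive; by Proposition~\ref{co-mono2} the injections are jointly \surj{P}. Hence Proposition~\ref{co-mono} gives that $\D_{i_A\times i_A}\delta_A \lor \D_{i_B\times i_B}\delta_B$ is reflexive over $A+B$, which is precisely the wanted inequality.

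For the reverse inequality I would use Proposition~\ref{injecivemono} once more, which yields $\fp{i_A\times i_A}(\delta_{A+B})=\delta_A$. The counit of the adjunction $\D_{i_A\times i_A}\dashv\fp{i_A\times i_A}$ evaluated at $\delta_{A+B}$ then gives $\D_{i_A\times i_A}(\delta_A)=\D_{i_A\times i_A}\fp{i_A\times i_A}(\delta_{A+B})\le\delta_{A+B}$, and symmetrically $\D_{i_B\times i_B}(\delta_B)\le\delta_{A+B}$; taking the join produces $\D_{i_A\times i_A}\delta_A \lor \D_{i_B\times i_B}\delta_B\le\delta_{A+B}$, and combining the two inequalities gives the equality. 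There is no real obstacle remaining: the substance has already been packed into Propositions~\ref{injecivemono}, \ref{co-mono2} and \ref{co-mono}, and the only thing to notice is that ``reflexive over $A+B$'' unpacks to ``$\ge\delta_{A+B}$'', so that Proposition~\ref{co-mono} delivers one half directly while the other half is a one-line adjunction argument from $P$-injectivity of the injections.
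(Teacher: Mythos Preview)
Your proof is correct and follows essentially the same route as the paper: both directions rely on Propositions~\ref{injecivemono}, \ref{co-mono2} and \ref{co-mono} in exactly the way you describe. The only cosmetic difference is that for the inequality $\D_{i_A\times i_A}\delta_A\lor\D_{i_B\times i_B}\delta_B\le\delta_{A+B}$ the paper uses the weaker fact $\delta_A\le\fp{i_A\times i_A}(\delta_{A+B})$ (which follows directly from reflexivity of $\delta_{A+B}$ and needs no appeal to \ref{injecivemono}) and then transposes, whereas you invoke the full equality from \ref{injecivemono} before applying the counit; both arguments are of course valid.
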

\begin{proof} From $\delta_A\le \fp{i_A\times i_A}(\delta_{A+B})$ and  $\delta_B\le \fp{i_B\times i_B}(\delta_{A+B})$ one obtains a canonical inequality $$\D_{i_A\times i_A}\delta_A \lor \D_{i_B\times i_B}\delta_B \le \delta_{A+B}$$ This is actually an equality, since injections are \inj{P} (\ref{injecivemono}) and jointly surjective (\ref{co-mono2}) hence $\D_{i_A\times i_A}\delta_A \lor \D_{i_B\times i_B}\delta_B$ is reflexive  by \ref{co-mono}.
\end{proof}

We leave to the readers the proof of the following instrumental lemma before the main statement of the section.

\begin{prp}\label{ilbrusco}If $\rho$ in $P(A\times A)$ is transitive and if $h:A\to B$ is \inj{P}, then $\D_{h\times h}(\rho)$ is transitive.
\end{prp}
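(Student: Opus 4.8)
The plan is to run the whole argument inside the internal language of Notation~\ref{notation-logic}, so that ``transitive'' becomes a routine piece of equational logic. First I would unfold $\D_{h\times h}(\rho)$ by the formula of Remark~\ref{aggiuntigenerali}: over $B\times B$ it is the predicate
$b,b':B\mid \exists_{a:A}\exists_{a':A}\bigl(h(a)\eq{B}b\wedge h(a')\eq{B}b'\wedge\rho(a,a')\bigr)$. Transitivity of $\D_{h\times h}(\rho)$ is then the sequent $b_1,b_2,b_3:B\mid \D_{h\times h}(\rho)(b_1,b_2)\wedge\D_{h\times h}(\rho)(b_2,b_3)\vdash\D_{h\times h}(\rho)(b_1,b_3)$ in $P(B\times B\times B)$, and this is what I would aim to verify.

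Next I would use Frobenius reciprocity to pull the four existentials out of the meet on the left-hand side and then the adjunction $\D_{\pr}\dashv\fp{\pr}$ to discharge them, reducing the goal to the sequent $h(a_1)\eq{B}b_1\wedge h(a_2)\eq{B}b_2\wedge\rho(a_1,a_2)\wedge h(a_2')\eq{B}b_2\wedge h(a_3)\eq{B}b_3\wedge\rho(a_2',a_3)\vdash\D_{h\times h}(\rho)(b_1,b_3)$ with all seven variables free. This is where $\inj{P}$-ness enters: from $h(a_2)\eq{B}b_2$ and $h(a_2')\eq{B}b_2$ one gets $h(a_2)\eq{B}h(a_2')$ by symmetry and transitivity of fibered equality, and $\fp{h\times h}(\delta_B)=\delta_A$ turns this into $a_2\eq{A}a_2'$. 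Substituting along that equality in $\rho(a_2',a_3)$ gives $\rho(a_2,a_3)$ (the usual Leibniz rule valid in any elementary doctrine), and transitivity of $\rho$ applied to $\rho(a_1,a_2)\wedge\rho(a_2,a_3)$ gives $\rho(a_1,a_3)$. Together with $h(a_1)\eq{B}b_1$ and $h(a_3)\eq{B}b_3$, re-introducing the existentials over $a_1$ and $a_3$ yields exactly $\D_{h\times h}(\rho)(b_1,b_3)$, which closes the argument.

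The only delicate point — and the one I expect to be the real work if one insists on a diagram-level proof rather than the internal-language one — is justifying that $\D_{h\times h}(\rho)$ is computed by the displayed existential formula and that the existentials genuinely commute with the meets appearing above; this is precisely the Beck--Chevalley condition for the existential structure together with Frobenius reciprocity, both available since $P$ is an elementary existential doctrine. Everything else is standard manipulation of fibered equality. I note that reflexivity (or symmetry) of $\rho$ is used nowhere, in keeping with the hypotheses of the statement.
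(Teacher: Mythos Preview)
Your proposal is correct and is exactly the routine internal-language argument the authors have in mind; the paper's own proof consists of the single word ``Immediate.'' Your write-up supplies the details the paper omits, and nothing more is needed.
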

\begin{proof}
Immediate.
\end{proof}

We can finally focus on the elementary quotient completion of a doctrine whose base has  distributive  binary coproducts.

\begin{theorem}\label{coprod0}Suppose $P:\ct{C}\op\ftr\ISL$ is a \variational first order doctrine.
$ \ct{C}$ has distributive binary coproducts if and only if  $\ct{Q}_P$ has distributive binary coproducts.
\end{theorem}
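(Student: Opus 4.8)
The plan is to prove the two implications separately, putting the weight on the passage from $\ct{C}$ to $\ct{Q}_P$ and treating the reverse as a transfer along $\nabla_P$.

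For ``$\ct{C}$ has distributive binary coproducts $\Rightarrow$ $\ct{Q}_P$ has distributive binary coproducts'', I would show that the coproduct of $(A,\rho)$ and $(B,\sigma)$ in $\ct{Q}_P$ is $(A+B,\theta)$ with injections $[i_A]$ and $[i_B]$, where $i_A,i_B$ are the coproduct injections of $\ct{C}$ and $\theta:=\D_{i_A\times i_A}(\rho)\lor\D_{i_B\times i_B}(\sigma)$. The first point is that $\theta$ is a $P$-equivalence relation: reflexivity is \ref{co-mono} (the injections are \inj{P} by \ref{injecivemono}, jointly \surj{P} by \ref{co-mono2}, and $\rho,\sigma$ are reflexive), symmetry follows since $\D_{i_A\times i_A}$ commutes with the transposition of $A\times A$ by Beck--Chevalley, and transitivity combines \ref{ilbrusco} with the vanishing of the ``mixed'' meets in the transitivity inequality, which holds because the images of $i_A$ and $i_B$ meet trivially (a consequence of distributivity). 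For the universal property, given $[f]\colon(A,\rho)\to(Y,\tau)$ and $[g]\colon(B,\sigma)\to(Y,\tau)$, the copairing $[f,g]$ is a morphism $(A+B,\theta)\to(Y,\tau)$ since $\D_{i_A\times i_A}(\rho)\le\fp{[f,g]\times[f,g]}(\tau)$ is equivalent by adjunction to $\rho\le\fp{f\times f}(\tau)$, and symmetrically for $\sigma$; it restricts to $[f]$ and $[g]$ along the injections by construction; and it is the unique such arrow because any competitor $[h]$ yields $\mu:=\fp{\langle h,[f,g]\rangle}(\tau)$ with $\fp{i_A}(\mu)=\tt_A$ and $\fp{i_B}(\mu)=\tt_B$, whence $\tt_{A+B}=\D_{i_A}(\tt_A)\lor\D_{i_B}(\tt_B)\le\mu$ by the counit and \ref{co-mono2}. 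Distributivity is then obtained by checking that the distributivity isomorphism $j$ of $\ct{C}$ carries the equivalence relation on $(A,\rho)\times\big((B,\sigma)+(C,\gamma)\big)$ to the one on $\big((A,\rho)\times(B,\sigma)\big)+\big((A,\rho)\times(C,\gamma)\big)$, a routine calculation with Frobenius reciprocity, Beck--Chevalley for $\D$, and distributivity in $\ct{C}$.

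For the converse, assume $\ct{Q}_P$ has distributive binary coproducts. Since $\nabla_P\colon\ct{C}\to\ct{Q}_P$ is full, faithful and product preserving with essential image the objects isomorphic to some $(A,\delta_A)$, by the colimit half of \ref{strongcolimits} it is enough to show that the coproduct $(W,\omega):=(A,\delta_A)+(B,\delta_B)$ formed in $\ct{Q}_P$ is isomorphic to one of the form $(W,\delta_W)$, and that the distributivity comparison is inherited. To pin down $\omega$ I would use that $\ct{Q}_P$ is \variational and, by \ref{eqc-tripos}, first order, so \ref{corcoprod} applies inside $\ct{Q}_P$ and gives $\omega=\D_{[j_A]\times[j_A]}(\delta_A)\lor\D_{[j_B]\times[j_B]}(\delta_B)$; the injections $[j_A],[j_B]$ are monic (coproduct injections in a distributive category), hence \inj{\Q{P}}, so $\fp{j_A\times j_A}(\omega)=\delta_A$ and $\fp{j_B\times j_B}(\omega)=\delta_B$, and together with the fact that the images of $[j_A]$ and $[j_B]$ cover $(W,\omega)$ --- since their copairing is the identity and $\ct{Q}_P$ is regular by \ref{regularepi} --- this forces $\omega=\delta_W$. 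Then $(W,\delta_W)$ is the coproduct in $\ct{Q}_P$, the colimit half of \ref{strongcolimits} gives the coproduct $W$ with injections $j_A,j_B$ in $\ct{C}$, and distributivity transfers because $\nabla_P$ sends the comparison map of $\ct{C}$ to that of $\ct{Q}_P$ and reflects isomorphisms.

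The step I expect to be the main obstacle is the identification $\omega=\delta_W$ in the converse. Because a coproduct of the projective objects $(A,\delta_A),(B,\delta_B)$ in the regular category $\ct{Q}_P$ is again projective, a naive argument only exhibits $(W,\omega)$ as a retract of $(W,\delta_W)$, which is not enough when $\ct{C}$ is not idempotent complete; one genuinely needs the first-order structure (via \ref{corcoprod} computed inside $\ct{Q}_P$) together with the covering property of the injections. Some care is also required in checking that the existential quantifiers used inside $\ct{Q}_P$ restrict to those of $P$ when one reads the resulting identities back along $\nabla_P$, and in verifying the ``mixed meets vanish'' step in the first half at the level of the doctrine rather than the category.
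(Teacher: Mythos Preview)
Your forward direction is essentially the paper's: the same candidate $(A+B,\rho\boxplus\sigma)$ with $\rho\boxplus\sigma=\D_{i_A\times i_A}(\rho)\lor\D_{i_B\times i_B}(\sigma)$, reflexivity via \ref{co-mono} and \ref{co-mono2}, transitivity via \ref{ilbrusco} and \ref{injecivemono}. Your uniqueness argument (reindex $\fp{\langle h,[f,g]\rangle}(\tau)$ along the injections and use joint $P$-surjectivity) is a mild variant of the paper's direct computation $\rho\boxplus\sigma\le\fp{k\times l}(\tau)$, and you add the distributivity check in $\ct{Q}_P$, which the paper leaves implicit. One caveat: your justification for transitivity---that the cross terms vanish ``because the images of $i_A$ and $i_B$ meet trivially (a consequence of distributivity)''---is not right as stated. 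Distributivity of coproducts does not give $P$-disjointness, and without it $\D_{i_A\times i_A}(\rho)$ composed with $\D_{i_B\times i_B}(\sigma)$ need not be below $\rho\boxplus\sigma$. The paper is equally terse here (it only cites \ref{ilbrusco} and \ref{injecivemono}), so this is a shared soft spot rather than a divergence.

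For the converse you go well beyond the paper's two lines, which simply assert that the coproduct of $(A,\delta_A)$ and $(B,\delta_B)$ in $\ct{Q}_P$ is $(A+B,\delta_A\boxplus\delta_B)$, identify this with $(A+B,\delta_{A+B})$ via \ref{corcoprod}, and invoke \ref{strongcolimits}. Your more honest attempt to show $\omega=\delta_W$ for an abstract coproduct $(W,\omega)$ has a genuine gap, and it is exactly where you flag it. Applying \ref{corcoprod} \emph{inside} $\Q{P}$ expresses $\omega$ as a $\Q{P}$-join of $\Q{P}$-images along $[j_A]\times[j_A]$ and $[j_B]\times[j_B]$; but those operations are the $\omega$-saturations of the corresponding $P$-operations, so the identity is self-referential and does not by itself force $\omega\le\delta_W$. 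Similarly, ``the images of $[j_A],[j_B]$ cover $(W,\omega)$'' is a statement in $\Q{P}$ and does not yield joint $P$-surjectivity of $j_A,j_B$ over $W$, which is what you would need to conclude $\omega=\delta_W$ from the reindexed equalities $\fp{j_A\times j_A}(\omega)=\delta_A$, $\fp{j_B\times j_B}(\omega)=\delta_B$. The retract argument you mention gives only $(W,\omega)\retra(W,\delta_W)$, and the ingredients you list (first-order structure plus covering in $\Q{P}$) do not close the gap without an additional idea.
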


\begin{proof}
Suppose $\ct{C}$ has binary distributive coproducts. Consider $(A,\rho)$ and $(B,\sigma)$ in $\ct{Q}_P$ and the coproduct $A+B$ with canonical injections $i_A$ and $i_B$. Define $$\rho\boxplus \sigma = \D_{i_A\times i_A}\rho \lor \D_{i_B\times i_B}\sigma$$by lemmas \ref{co-mono} and \ref{co-mono2} the relation $\rho\boxplus \sigma$ is reflexive. It is trivially symmetric. Transitivity follows from \ref{ilbrusco} and \ref{injecivemono}. Thus 
\[
\xymatrix{
(A,\rho)\ar[r]^-{[i_A]}&(A+B, \rho\boxplus \sigma)&(B,\sigma)\ar[l]_-{[i_B]}
}
\]
is a diagram in $\ct{Q}_P$. We claim that it is a coproduct diagram of $(A,\rho)$ and $(B,\sigma)$. It is immediate to see that it is a weak coproduct. We now prove that it is a strong coproduct.  Suppose that $f:A\to T$ and $g:B\to T$ represent two arrows $[f]:(A,\rho)\to (T,\theta)$ and $[g]:(B,\sigma)\to(T,\theta)$. If $k$ and $l$ represents two arrows $[k],[l]: (A+B,\rho\boxplus \sigma)\to (T,\theta)$ with
$$\begin{array}{c@{\quad\mbox{and}\quad}c}
[k][i_A]=[f]&[k][i_B]=[g]\\[1ex]
[l][i_A]=[f]&[l][i_B]=[g]
\end{array}$$
\ie $k$ and $l$ are such that
$$\begin{array}{c@{\quad\mbox{and}\quad}c}
\tt_{A}\le \fp{<ki_A, f>}(\theta)&\tt_B\le \fp{<ki_B, g>}(\theta)\\[1ex]
\tt_{A}\le \fp{<li_A, f>}(\theta)&\tt_B\le \fp{<li_B, g>}(\theta)
\end{array}$$
then we have also
$\rho\le \fp{ki_A\times f}(\theta)\wedge \fp{li_A\times f}(\theta)\le 
\fp{i_A \times i_A}\fp{k\times l}(\theta)$ and similarly
$\sigma\le \fp{i_B \times i_B}\fp{k\times l}(\theta)$. Therefore
$$\D_{i_A\times i_A}\rho \le \fp{k\times l}(\theta)\quad\mbox{and}\quad
\D_{i_B\times i_B}\sigma\le \fp{k\times l}(\theta)$$ 
from which $\rho\boxplus \sigma\le \fp{k\times l}(\theta)$, \ie $[k]=[l]$.

For the converse, consider $A$ and $B$ in $\ct{C}$. The coproduct in $\ct{Q}_P$ of
$(A,\delta_A)$ and $(B,\delta_B)$ is $(A+B, \delta_A\boxplus\delta_B)$. By
\ref{coprod0} this is $(A+B, \delta_{A+B})$. The claim follows by
\ref{strongcolimits}-2.  
\end{proof}

\begin{definition}Suppose $P:\ct{C}\op\ftr\ISL$ is a \variational first order doctrine  whose base has finite coprroducts.
For objects $A$ and $B$ in the base,
we say that  $A+B$ is $P$-disjoint if $$\fp{i_A\times i_B}(\delta_{A+B})=\bot_{A\times B}$$ 
\end{definition}

\begin{prp}\label{coprod}
$ \ct{C}$ has $P$-disjoint distributive binary coproducts if and only if $\ct{Q}_P$ has $\Q{P}$-disjoint distributive binary coproducts.
\end{prp}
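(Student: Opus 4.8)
By Theorem~\ref{coprod0}, $\ct{C}$ has distributive binary coproducts if and only if $\ct{Q}_P$ does, and when these exist the coproduct of $(A,\rho)$ and $(B,\sigma)$ in $\ct{Q}_P$ is $(A+B,\rho\boxplus\sigma)$ with injections $[i_A],[i_B]$, where $\rho\boxplus\sigma=\D_{i_A\times i_A}\rho\lor\D_{i_B\times i_B}\sigma$ and where the fibred equality of $\Q{P}$ on this object is $\rho\boxplus\sigma$ itself. Moreover $\Q{P}_{[i_A]\times[i_B]}=\fp{i_A\times i_B}$ and the bottom of the fibre $\Q{P}\big((A,\rho)\times(B,\sigma)\big)=\des{\rho\boxtimes\sigma}$ is $\bot_{A\times B}$ (which lies in $\des{\rho\boxtimes\sigma}$, being a descent datum). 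Hence it suffices to prove that the coproducts of $\ct{C}$ are $P$-disjoint exactly when those of $\ct{Q}_P$ are $\Q{P}$-disjoint, and after the above this amounts to: the equality $\fp{i_A\times i_B}(\rho\boxplus\sigma)=\bot_{A\times B}$ in $P(A\times B)$ holds if and only if $A+B$ is $P$-disjoint.

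For the direct implication, assume $A+B$ is $P$-disjoint and set $\iota_A:=\D_{i_A}(\tt_A)\in P(A+B)$. The crucial point is that $\fp{i_B}(\iota_A)=\bot_B$. Indeed, by Remark~\ref{aggiuntigenerali} we have $\iota_A=\D_{\pr}\big(\fp{i_A\times\id{A+B}}(\delta_{A+B})\big)$ with $\pr\colon A\times(A+B)\to A+B$ the projection; reindexing along $i_B$ and applying the Beck--Chevalley condition to the pullback of $\pr$ along $i_B$ gives $\fp{i_B}(\iota_A)=\D_{\pr'}\big(\fp{i_A\times i_B}(\delta_{A+B})\big)=\D_{\pr'}(\bot_{A\times B})=\bot_B$, using $P$-disjointness and that a left adjoint preserves $\bot$; symmetrically $\fp{i_A}(\iota_B)=\bot_A$, the hypothesis being symmetric because $\delta_{A+B}$ is. Next, from $\pr_2\circ(i_A\times i_A)=i_A\circ q_2$ for the second projections $\pr_2\colon(A+B)\times(A+B)\to A+B$ and $q_2\colon A\times A\to A$, together with $\D_{q_2}(\tt_{A\times A})=\tt_A$ (since $q_2$ has a section), functoriality of $f\mapsto\D_f$ gives $\D_{\pr_2}\D_{i_A\times i_A}(\tt_{A\times A})=\iota_A$, whence $\D_{i_A\times i_A}(\rho)\le\D_{i_A\times i_A}(\tt_{A\times A})\le\fp{\pr_2}(\iota_A)$ by the unit of $\D_{\pr_2}\dashv\fp{\pr_2}$. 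Composing $\fp{i_A\times i_B}$ with $\fp{\pr_2}$ and using $\pr_2\circ(i_A\times i_B)=i_B\circ\pr'$ and $\fp{i_B}(\iota_A)=\bot_B$ yields $\fp{i_A\times i_B}\D_{i_A\times i_A}(\rho)=\bot_{A\times B}$, and symmetrically $\fp{i_A\times i_B}\D_{i_B\times i_B}(\sigma)=\bot_{A\times B}$. Since $\fp{i_A\times i_B}$ preserves joins ($P$ is disjunctive), $\fp{i_A\times i_B}(\rho\boxplus\sigma)=\bot_{A\times B}$, so $(A+B,\rho\boxplus\sigma)$ is $\Q{P}$-disjoint.

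For the converse, assume $\ct{Q}_P$ has $\Q{P}$-disjoint distributive binary coproducts and fix $A,B$ in $\ct{C}$. By Theorem~\ref{coprod0} and Proposition~\ref{corcoprod}, the coproduct of $(A,\delta_A)$ and $(B,\delta_B)$ in $\ct{Q}_P$ is $(A+B,\delta_A\boxplus\delta_B)=(A+B,\delta_{A+B})=\nabla_P(A+B)$ with injections $[i_A],[i_B]$, and its $\Q{P}$-disjointness reads $\fp{i_A\times i_B}(\delta_{A+B})=\bot_{A\times B}$, \ie $A+B$ is $P$-disjoint. Together with Theorem~\ref{coprod0}, this completes the proof.

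The step I expect to be the main obstacle is the identity $\fp{i_B}\D_{i_A}(\tt_A)=\bot_B$: it is the formal expression of the fact that $P$-disjointness of $A+B$ forces the image of $i_A$ to be disjoint from that of $i_B$, and it requires unfolding $\D_{i_A}$ through Remark~\ref{aggiuntigenerali} and orienting the Beck--Chevalley condition the right way. The remaining work is bookkeeping with projections --- recognising composites such as $\pr_2\circ(i_A\times i_A)=i_A\circ q_2$ and $\pr_2\circ(i_A\times i_B)=i_B\circ\pr'$ --- and, at each invocation of Beck--Chevalley, deciding between the version for projections that is built into the existential structure and the one for weak pullbacks of Proposition~\ref{bcbello}; arranging the computations around $\D_{i_A}$ so that only projections occur keeps everything within the former.
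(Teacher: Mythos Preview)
Your proof is correct. Both the converse direction and the reduction via Theorem~\ref{coprod0} match the paper exactly. For the forward direction the paper takes a slightly more direct route: it expands $\fp{i_A\times i_B}\D_{i_A\times i_A}(\rho)$ in one go using the formula of Remark~\ref{aggiuntigenerali} for $\D_{i_A\times i_A}$ and Beck--Chevalley along $i_A\times i_B$, obtaining an existential over $A\times B\times A\times A$ whose body contains the conjunct $\fp{\langle\pr_2,\pr_4\rangle}\fp{i_A\times i_B}(\delta_{A+B})$, which is $\bot$ by hypothesis. You instead isolate the intermediate fact $\fp{i_B}\D_{i_A}(\tt_A)=\bot_B$ and use it to bound $\D_{i_A\times i_A}(\rho)\le\fp{\pr_2}(\iota_A)$ before reindexing. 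The two arguments use the same ingredients (the explicit description of $\D_f$, Beck--Chevalley for projections, and preservation of $\bot$); the paper's is a single computation, while yours makes the ``image of $i_A$ is $P$-disjoint from image of $i_B$'' step visible as a standalone lemma, which is a modest conceptual gain at the cost of one extra inequality.
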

\begin{proof}After \ref{coprod0} we only have to prove that coproducts in $ \ct{C}$ are $P$-disjoint if and only if coproduct in $\ct{Q}_P$ are $\Q{P}$-disjoint. The necessary condition is immediate after \ref{corcoprod}. Consider the coproduct diagram in $\ct{Q}_P$
\[
\xymatrix{
(A,\rho)\ar[r]^-{[i_A]}&(A+B, \rho\boxplus\sigma)&(B,\sigma)\ar[l]_-{[i_B]}
}
\] 
it holds
$$\fp{i_A\times i_B}( \rho\boxplus\sigma)=\fp{i_A\times i_B}(\D_{i_A\times i_A}\rho \lor \D_{i_B\times i_B}\sigma)$$
but, denoting by $\pr_i$ the projections from $A\times B\times (A+B)\times (A+B)$, $\fp{i_A\times i_B}( \D_{i_A\times i_A}\rho)$ is equal to 
$$\D_{<\pr_1,\pr_2>}[\fp{<\pr_1,\pr_3>}(\fp{i_A\times i_{A}}(\delta_{A+B}))\wedge \fp{<\pr_2,\pr_4>}(\fp{i_A\times i_B}(\delta_{A+B}))\wedge \fp{<\pr_3,\pr_4>}(\rho)]
$$
which is equal to $\bot_{A\times B}$ under the assumption that $\fp{i_A\times i_B}(\delta_{A+B})$ is so. Analogously $\fp{i_A\times i_B}( \D_{i_B\times i_B}\sigma)=\bot_{A\times B}$ whence the claim.
\end{proof}

\begin{cor}Suppose $\ct{C}$ is such that for every $A$ the domain of $\cmp{\bot_A}$ is an initial object. Then $\ct{C}$ has disjoint distributive binary coproducts if and only if $\ct{Q}_P$ has disjoint distributive binary coproducts.
\end{cor}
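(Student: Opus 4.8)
The plan is to reduce the statement to Proposition~\ref{coprod} by checking that, under the stated hypothesis, $P$-disjointness coincides with the usual categorical notion of disjointness. Since throughout this section $\ct{C}$ has distributive binary coproducts, Theorem~\ref{coprod0} gives that $\ct{Q}_P$ has them as well, so the ``distributive'' clause of the corollary is automatic on both sides; and since Proposition~\ref{coprod} already transports the disjointness clause between $\ct{C}$ and $\ct{Q}_P$, it will be enough to prove the following general fact and then apply it twice.

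\emph{Claim:} if $P:\ct{C}\op\ftr\ISL$ is a \variational \fod whose base has distributive binary coproducts and is such that every $\cmp{\bot_A}$ has an initial domain, then for all $A,B$ in $\ct{C}$ the coproduct $A+B$ is disjoint in $\ct{C}$ if and only if it is $P$-disjoint, that is $\fp{i_A\times i_B}(\delta_{A+B})=\bot_{A\times B}$.

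To prove the Claim I would proceed as follows. First, the injections $i_A,i_B$ are monic in any case, being \inj{P} by Proposition~\ref{injecivemono} and hence monic because $P$ has comprehensive diagonals. Next, the pullback of the cospan $A\xrightarrow{\,i_A\,}A+B\xleftarrow{\,i_B\,}B$ is obtained by pulling the strong comprehension $\Delta_{A+B}=\cmp{\delta_{A+B}}$ back along $i_A\times i_B$, so by Proposition~\ref{ok!} any weak comprehension $c\colon W\to A\times B$ of $\fp{i_A\times i_B}(\delta_{A+B})$ exhibits this pullback as a \emph{weak} pullback (with legs $\fst\,c$ and $\snd\,c$). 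If $A+B$ is $P$-disjoint I may take for $c$ the arrow $\cmp{\bot_{A\times B}}\colon 0\to A\times B$, so that the cospan of injections has the initial object $0$, with its unique legs, as a weak pullback; since a distributive category has strict initial object, any arrow into $0$ has initial domain and so is the unique such arrow, whence this weak pullback is a genuine one and $A+B$ is disjoint. Conversely, if $A+B$ is disjoint then its genuine pullback is $0$, so there is an arrow $W\to 0$ and, by strictness, $W$ is itself initial; then $P(W)\cong P(0)$ is the one-element semilattice, because $\bot_0=\tt_0$ --- the arrow $\cmp{\bot_0}$ runs between two initial objects, hence is invertible, and takes $\bot_0$ to $\tt$. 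Fullness of comprehension then forces $\fp{i_A\times i_B}(\delta_{A+B})\le\bot_{A\times B}$, since $\tt_W=\fp{c}(\bot_{A\times B})$ --- both members equalling $\bot_W$ as $P(W)$ is trivial --- which is precisely the equality required by fullness; hence $\fp{i_A\times i_B}(\delta_{A+B})=\bot_{A\times B}$.

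It then remains to apply the Claim to $(\ct{C},P)$, where it holds by hypothesis, and to $(\ct{Q}_P,\Q{P})$: the latter is \variational (in fact \mvariational) by Proposition~\ref{comp}, a \fod by Proposition~\ref{eqc-tripos}, and its base has distributive binary coproducts by Theorem~\ref{coprod0}; moreover the running hypothesis is inherited, for by the description of comprehension in Proposition~\ref{comp} the comprehension of $\bot_{(A,\rho)}$ has underlying domain that of $\cmp{\bot_A}$ in $\ct{C}$, namely the initial object, and every $(0,\tau)$ is initial in $\ct{Q}_P$ because $0$ is initial in $\ct{C}$ and $P(0\times 0)$ is trivial. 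Combining the two instances of the Claim with Proposition~\ref{coprod} then yields the corollary. I expect the one genuinely delicate point to be the passage from weak pullbacks to genuine ones: $P$-disjointness is phrased through a comprehension, which yields only a weak pullback, whereas categorical disjointness requires an honest one, and it is strictness of the initial object --- available exactly because $\ct{C}$ is distributive --- that closes this gap; verifying that the hypothesis on $\cmp{\bot}$ survives the quotient completion is the only other point that calls for attention.
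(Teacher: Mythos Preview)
Your proof is correct and is precisely the argument the paper leaves implicit: the corollary is stated without proof immediately after Proposition~\ref{coprod}, and the intended reasoning is exactly to observe that the hypothesis on $\cmp{\bot_A}$ makes categorical disjointness coincide with $P$-disjointness, on both $\ct{C}$ and $\ct{Q}_P$, so that Proposition~\ref{coprod} finishes the job. Your handling of the one delicate point---upgrading the weak pullback furnished by Proposition~\ref{ok!} to a genuine one via strictness of the initial object in a distributive category---is the right move, and your verification that the hypothesis descends to $\ct{Q}_P$ (using that $P(0)$, hence $P(0\times 0)$, is trivial) is sound.
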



\subsection{\bf Classifiers}
In this section we show how the elementary quotient completion of a suitable doctrine $P$ inherits a predicate classifier.
We first need to establish the following characterisation of epimorphisms as \surj{P} arrows.

\begin{lemma}\label{epipi}Suppose $P$ is an existential \variational  doctrine with a strong predicate classifier as defined in in \ref{predclass}. An arrow $e:X\to A$ is epic if and only if it  is \surj{P}.
\end{lemma}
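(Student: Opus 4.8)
One direction is already at hand: a \variational doctrine has comprehensive diagonals, and it was observed above that in that case every \surj{P} arrow is epic. So I would put all the work into the converse, and this is precisely where the strong predicate classifier has to enter.

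Assume then that $e:X\to A$ is epic, and set $\alpha:=\D_e(\tt_X)\in P(A)$, which makes sense since $P$ is existential (Remark~\ref{aggiuntigenerali}); the goal is $\alpha=\tt_A$. First I would record that $\fp e(\alpha)=\tt_X$: the inequality ``$\le$'' is trivial since $\tt_X$ is the top element, and ``$\ge$'' is the unit of $\D_e\dashv\fp e$; note also $\fp e(\tt_A)=\tt_X$. Next, using the strong predicate classifier on $\Omega$ (with generic element $\in$), take the \emph{unique} arrows $\chi_\alpha,\chi_{\tt_A}:A\to\Omega$ with $\fp{\chi_\alpha}(\in)=\alpha$ and $\fp{\chi_{\tt_A}}(\in)=\tt_A$. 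Precomposing with $e$ and using contravariant functoriality,
\[
\fp{\chi_\alpha e}(\in)=\fp e\fp{\chi_\alpha}(\in)=\fp e(\alpha)=\tt_X=\fp e(\tt_A)=\fp e\fp{\chi_{\tt_A}}(\in)=\fp{\chi_{\tt_A} e}(\in),
\]
so $\chi_\alpha e$ and $\chi_{\tt_A} e$ are both arrows $X\to\Omega$ classifying the top element of $P(X)$; the uniqueness clause of the strong predicate classifier forces $\chi_\alpha e=\chi_{\tt_A} e$. Since $e$ is epic, $\chi_\alpha=\chi_{\tt_A}$, and applying $\fp{(-)}(\in)$ gives $\alpha=\tt_A$, that is $\D_e(\tt_X)=\tt_A$, so $e$ is \surj{P}.

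I do not foresee a serious obstacle here; the argument is short and entirely formal. The only point that genuinely needs the \emph{strong} (not merely weak) predicate classifier is the cancellation $\chi_\alpha e=\chi_{\tt_A} e\Rightarrow\chi_\alpha=\chi_{\tt_A}$: it is uniqueness of classifying maps at the object $X$ that lets the hypothesis ``$e$ epic'' do its work. No comprehension, and nothing about the \variational structure beyond comprehensive diagonals (used only in the easy direction), is required; the two identities to keep straight are the contravariance $\fp{\chi_\alpha e}=\fp e\fp{\chi_\alpha}$ and $\fp e(\tt_A)=\tt_X$.
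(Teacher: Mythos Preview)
Your proof is correct and is essentially the same argument as the paper's: set $\alpha=\D_e(\tt_X)$, observe $\fp{e}(\alpha)=\tt_X=\fp{e}(\tt_A)$, so the classifying arrows $\chi_\alpha$ and $\chi_{\tt_A}$ agree after precomposition with $e$ by strongness of the classifier, hence agree since $e$ is epic, whence $\alpha=\tt_A$. Your write-up is slightly more explicit in justifying $\fp{e}(\D_e(\tt_X))=\tt_X$ via the unit of the adjunction, but the method is identical.
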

\begin{proof}
We have already observed that \surj{P} arrows are epimorphism. So let $e:X\to A$ be epic. Consider the arrows  $\chi_{\D_e(\tt_X)}, \chi_{\tt_A}:A\to \Omega$. 
Then
$$\fp{e}(\fp{\chi_{\D_e(\tt_X)}}(\in))=\fp{e}(\D_e(\tt_X))=\tt_X=\fp{e}(\tt_A)=\fp{e}(\fp{\chi_{\tt_A}}(\in))$$
Since the classifier is strong  $\chi_{\tt_A}e=\chi_{\D_e(\tt_X)}e$ and then $\chi_{\tt_A}=\chi_{\D_e(\tt_X)}$ as $e$ is epic, whence $\tt_A=\fp{\chi_{\tt_A}}(\epsilon)=\fp{\chi_{\D_e(\tt_X)}}(\epsilon)=\D_e(\tt_X)$.
%
%
\end{proof}

Recall that given two factorization systems $(E,M)$ and $(E',M')$ on the same category $\ct{C}$, we have that $E=E'$ if and only if $M=M'$. Thus, the following is an immediate corollary of \ref{epipi} and \ref{facsis}.

\begin{prp}\label{strongmono} In every existential \mvariational  doctrine with a strong predicate classifier comprehension arrows are the strong monomorphisms of the base.
\end{prp}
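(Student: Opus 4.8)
The plan is to identify the factorisation system of Proposition~\ref{facsis} with the one having epimorphisms on the left and strong monomorphisms on the right, and then to read off the statement from the fact, recalled just above, that a factorisation system is determined by its left class.

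First I would assemble the two ingredients. By Lemma~\ref{epipi}, in an existential \variational doctrine with a strong predicate classifier the epimorphisms of $\ct{C}$ are exactly the \surj{P} arrows. On the other hand, Proposition~\ref{facsis} says that in an \mvariational existential doctrine the \surj{P} arrows and the comprehension arrows form an orthogonal factorisation system on $\ct{C}$. Putting these together, the pair consisting of the class of epimorphisms of $\ct{C}$ and the class of comprehension arrows of $P$ is a factorisation system.

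Next I would check that each comprehension arrow $\cmp{\alpha}\colon X\to A$ is a strong monomorphism, i.e.\ a monomorphism with the right lifting property against every epimorphism. It is monic since comprehension is strong. Given a commutative square $\cmp{\alpha}\,u=v\,e$ with $e$ an epimorphism, the arrow $e$ is \surj{P} by Lemma~\ref{epipi}, so the orthogonality of the factorisation system of the previous paragraph supplies a (unique) diagonal filler; hence $\cmp{\alpha}$ lifts against all epimorphisms and, being monic, is a strong monomorphism. In particular every arrow of $\ct{C}$ factors, by Proposition~\ref{facsis}, as an epimorphism followed by a strong monomorphism, and as strong monomorphisms are by definition right orthogonal to epimorphisms, the class of epimorphisms together with the class of strong monomorphisms is again a factorisation system.

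Finally I would conclude by comparison: the two factorisation systems (epimorphisms, comprehension arrows) and (epimorphisms, strong monomorphisms) share the same left class, so by the recalled uniqueness of the right class they have the same right class, which is the assertion. The point that needs the most care is this comparison; if one prefers to avoid the abstract argument, the inclusion ``strong monomorphism $\Rightarrow$ comprehension arrow'' can be obtained directly. Factor a strong monomorphism $m$ as $m=\cmp{\alpha}\,e$ with $e$ a \surj{P} arrow, hence an epimorphism; lifting the commutative square $m\circ\id{X}=\cmp{\alpha}\circ e$ along the strong monomorphism $m$ produces an arrow $d$ with $d\,e=\id{X}$, so that $e$ is a split monomorphism; an epimorphism that is a split monomorphism is an isomorphism, and precomposing the comprehension arrow $\cmp{\alpha}$ with the isomorphism $e$ again yields a comprehension arrow, namely one for $\alpha$. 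This last bookkeeping is the step I would expect to demand the most attention to detail.
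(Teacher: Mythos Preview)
Your proposal is correct and follows essentially the same approach as the paper: use Proposition~\ref{facsis} to get the factorisation system (\surj{P} arrows, comprehension arrows), invoke Lemma~\ref{epipi} to identify \surj{P} arrows with epimorphisms, and then conclude via the determination of a factorisation system by either of its classes. The paper's proof is the one-line version of exactly this argument; your additional verification that (epimorphisms, strong monomorphisms) is itself a factorisation system and your direct alternative for the reverse inclusion are sound but not strictly needed once one accepts that the right-orthogonal complement of the epimorphisms is by definition the class of strong monomorphisms.
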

\begin{proof}
By \ref{facsis} comprehension arrows are the class of arrows which are right orthogonal to \surj{P} arrows, but these coincide with epimorphisms by \ref{epipi}, whence the claim.
\end{proof}

\begin{prp}\label{chiellini} In every existential \mvariational  doctrine with a strong predicate classifier the domain $T$ of $\cmp{\in}:T\to\Omega$ is a terminal object.
\end{prp}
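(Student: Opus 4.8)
The plan is to show that the domain $T$ of the comprehension arrow $\cmp{\in}\colon T\to\Omega$ of the membership predicate $\in$ in $P(\Omega)$ is terminal, by exhibiting, for an arbitrary object $C$ of the base, a unique arrow $C\to T$. First I would produce \emph{some} arrow $C\to T$: this is automatic from the universal property of comprehension, since $\cmp{\top_C}=\id{C}$ is the comprehension of $\top_C$, and there is a morphism $\chi_{\top_C}\colon C\to\Omega$ with $\fp{\chi_{\top_C}}(\in)=\top_C$ (this is exactly the defining property of the weak predicate classifier $\Omega$). Because $\top_C\le\fp{\chi_{\top_C}}(\in)$ — indeed equality holds — the weak universal property of $\cmp{\in}$ gives a (not a priori unique) factorisation $k\colon C\to T$ with $\cmp{\in}k=\chi_{\top_C}$. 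So $T$ is at least \emph{weakly} terminal.

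For uniqueness, the key point is that $\cmp{\in}$ is a monomorphism: the doctrine is \mvariational, hence comprehension is strong, hence comprehension arrows are monic. So suppose $k,k'\colon C\to T$ are two arrows. Then $\cmp{\in}k$ and $\cmp{\in}k'$ are two morphisms $C\to\Omega$, and I claim they are equal. Indeed both pull $\in$ back to $\top_C$: we have $\fp{\cmp{\in}k}(\in)=\fp{k}\fp{\cmp{\in}}(\in)=\fp{k}(\top_T)=\top_C$, using $\fp{\cmp{\in}}(\in)=\top_T$ from the definition of the comprehension arrow, and likewise for $k'$. Now I invoke that $\Omega$ is a \emph{strong} predicate classifier: the classifying map of a predicate is unique, so from $\fp{\cmp{\in}k}(\in)=\top_C=\fp{\cmp{\in}k'}(\in)$ we conclude $\cmp{\in}k=\cmp{\in}k'$. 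Since $\cmp{\in}$ is monic, $k=k'$.

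The main obstacle — really the only subtlety — is making sure one uses \emph{strength} of the classifier in exactly the right place: weak classification alone would not force $\cmp{\in}k=\cmp{\in}k'$, so without strength $T$ is only weakly terminal. The rest is a routine chase through the universal properties, and I would write it out essentially as above. (One could also phrase the existence half slightly differently, observing that for the terminal object $\mathbf 1$ of the base the map $\mathbf 1\to T$ exhibits a section of $T\to\mathbf 1$, and then deduce that $T\to\mathbf 1$ is monic from the fact that $\cmp{\in}$ is; but the direct argument above is cleaner and avoids presupposing a terminal object has been identified.)
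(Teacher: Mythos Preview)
Your proof is correct. It takes a slightly different route from the paper's: the paper fixes the terminal object $1$ of the base (which exists since the base of a primary doctrine has finite products), produces $k\colon 1\to T$ as you do, and then shows that $k$ is a two-sided inverse of $!_T\colon T\to 1$; the step $k\,!_T=\id{T}$ uses exactly your argument (strong classification forces $\chi_{\tt_1}!_T=\cmp{\in}$, then monicity of $\cmp{\in}$ cancels). You instead verify the terminal-object universal property of $T$ directly for an arbitrary $C$. Both arguments rest on the same two ingredients---uniqueness of classifying maps and monicity of $\cmp{\in}$---so the difference is organisational rather than substantive; your version is marginally more self-contained in that it does not invoke the pre-existing terminal object, which you yourself note in the closing parenthetical.
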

\begin{proof} Let $1$ be terminal. The arrow $\chi_{\tt_1}:1\to \Omega$ is such that $\fp{\chi_{\tt_1}}(\in)=\tt_1$. The universal property of $\cmp{\in}$ produces an arrow $k:1\to T$ with $\cmp{\in}k=\chi_{\tt_1}$. The universal property of $1$ ensures that $!_Tk=\id{1}$. Moreover from
$$\cmp{\in}(\in)=\tt_T=\fp{!_T}(\tt_1)=\fp{!_T}\fp{\chi_{\tt_!}}(\in)$$
we have $\chi_{\tt_1}!_T=\cmp{\in}$, so $\cmp{\in}k!_T=\cmp{\in}$, whence $k!_T=\id{T}$ as $\cmp{\in}$ is monic.
\end{proof}

\begin{cor}\label{gigiriva} If $P:\ct{C}\op\ftr\ISL$ is a existential \mvariational doctrine, then 
$P$ has a strong predicate classifier if and only if 
$\ct{C}$ has a  classifier of strong monomorphism.
\end{cor}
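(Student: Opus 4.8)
The statement is obtained by assembling the preceding results, working throughout with the identification of the comprehension arrows of the \mvariational doctrine with the strong monomorphisms of the base. The plan is: for the \emph{only if} part, to promote a strong predicate classifier $(\Omega,\in)$ of $P$ to a classifier of strong monomorphisms of $\ct{C}$, taking the comprehension arrow $\cmp{\in}\colon 1\to\Omega$ — whose domain is terminal by \ref{chiellini} — as the generic strong monomorphism; for the \emph{if} part, to turn a classifier of strong monomorphisms $\top\colon 1\to\Omega$ of $\ct{C}$ into a strong predicate classifier of $P$, taking $\in:=\D_{\top}(\tt_1)$ in $P(\Omega)$ as the membership element. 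In both directions the link between ``$\fp{\chi}(\in)=\phi$'' and ``the comparison square is a (weak) pullback'' is provided by \ref{ok!} — the pullback of a strong comprehension along $f$ is the strong comprehension of the substituted predicate — and by the Beck--Chevalley identity \ref{bcbello} for weak pullbacks, while fullness of comprehension (\ref{fullfull}) both computes $\D_{\cmp{\alpha}}(\tt)=\alpha$ and yields the rigidity ``$m\cong\cmp{\D_m(\tt)}$ over its codomain'' for a comprehension arrow $m$, which upgrades the existence of comparison maps to their uniqueness.

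\emph{Only if.} Given a strong predicate classifier $(\Omega,\in)$, so each $\phi$ in $P(A)$ admits a unique $\chi_\phi\colon A\to\Omega$ with $\fp{\chi_\phi}(\in)=\phi$, and with $\cmp{\in}\colon 1\to\Omega$ by \ref{chiellini}: a strong monomorphism into $A$ is, up to isomorphism over $A$, a comprehension arrow $\cmp{\alpha}$ with $\alpha=\D_{\cmp{\alpha}}(\tt)$ in $P(A)$ (\ref{strongmono}); since $\fp{\chi_\alpha}(\in)=\alpha$, \ref{ok!} makes the canonical square over $\chi_\alpha$ with $\cmp{\in}$ on the right a pullback, so $\cmp{\alpha}$ is the pullback of $\cmp{\in}$ along $\chi_\alpha$. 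If $\chi'$ also exhibits $\cmp{\alpha}$ as a pullback of $\cmp{\in}$, then by \ref{ok!} the pullback of $\cmp{\in}$ along $\chi'$ is $\cmp{\fp{\chi'}(\in)}$, isomorphic over $A$ to $\cmp{\alpha}$, so fullness forces $\fp{\chi'}(\in)=\alpha=\fp{\chi_\alpha}(\in)$ and hence $\chi'=\chi_\alpha$ by uniqueness of classifying maps.

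\emph{If.} Given a classifier of strong monomorphisms $\top\colon 1\to\Omega$ ($1$ terminal) and $\in:=\D_{\top}(\tt_1)$ in $P(\Omega)$: for $\phi$ in $P(A)$ the comprehension arrow $\cmp{\phi}\colon X\to A$ is a strong monomorphism, hence the pullback of $\top$ along a unique $\chi_\phi\colon A\to\Omega$; applying \ref{bcbello} to this (weak) pullback at $\tt_1$ gives $\fp{\chi_\phi}(\in)=\D_{\cmp{\phi}}\fp{!_X}(\tt_1)=\D_{\cmp{\phi}}(\tt_X)=\phi$ by \ref{fullfull}, so $\in$ is a weak predicate classifier. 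For uniqueness, if $\fp{\chi'}(\in)=\phi$ then the pullback $m'$ of $\top$ along $\chi'$ is a strong monomorphism, hence a comprehension arrow $\cmp{\psi}$, and \ref{bcbello} gives $\psi=\D_{m'}(\tt)=\fp{\chi'}(\in)=\phi$, so $m'\cong\cmp{\phi}$ over $A$; thus $\cmp{\phi}$ is the pullback of $\top$ along both $\chi'$ and $\chi_\phi$, whence $\chi'=\chi_\phi$.

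The point requiring real care is the standing identification, in a \mvariational doctrine, of comprehension arrows with the strong monomorphisms of $\ct{C}$. The inclusion ``strong monomorphism $\Rightarrow$ comprehension arrow'' is easy — factor such a mono as a \surj{P} arrow followed by a comprehension arrow; the first factor is a split monic epimorphism, hence an isomorphism — and under a strong predicate classifier the reverse inclusion is \ref{strongmono}, so the \emph{only if} direction is unproblematic. In the \emph{if} direction one must instead extract the reverse inclusion ``comprehension arrow $\Rightarrow$ strong monomorphism'' from the mere presence of the classifier of strong monomorphisms (in the spirit of \ref{epipi}--\ref{strongmono}, using that \surj{P} arrows are epic), and it is precisely here — in guaranteeing that $\cmp{\phi}$ is classified at all — that the argument must do its work; everything surrounding this is routine substitution bookkeeping.
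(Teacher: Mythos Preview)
Your argument follows the paper's proof essentially step for step: both directions use the same identifications (\ref{strongmono} for comprehension arrows versus strong monos, \ref{chiellini} for the terminal domain of $\cmp{\in}$, \ref{ok!} for the pullback squares, \ref{bcbello} and \ref{fullfull} for the computation $\fp{\chi_\alpha}(\in)=\alpha$, and fullness for uniqueness). There is no substantive difference in method.

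Your closing paragraph is worth commenting on, because you put your finger on something the paper itself glosses over. In the \emph{if} direction the paper writes ``the arrow $\cmp{\alpha}:X\to A$ is a strong monic by \ref{strongmono}'', but \ref{strongmono} has as hypothesis that $P$ already has a \emph{strong predicate classifier}, which is exactly the conclusion being sought; its proof rests on \ref{epipi}, whose argument uses the uniqueness of characteristic arrows in $P$. So the citation is, strictly speaking, circular, and you are right that the step ``comprehension arrow $\Rightarrow$ strong monomorphism'' needs an independent justification from the strong-mono classifier alone. You do not supply that justification, but neither does the paper; your proposal is at least explicit about where the weight lies.
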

\begin{proof}Suppose $\Omega$ is a strong predicate classifier in $P$. After \ref{strongmono} it suffices to show that $\Omega$ is a classifier for the class of comprehension arrows. Note that for every $\alpha$ in $P(A)$ the following is a pullback
\[\xymatrix{
X\ar[d]_-{\cmp{\alpha}}\ar[r]^{!}&T\ar[d]^-{\cmp{\in}}\\
A\ar[r]_{\chi_\alpha}&\Omega}
\] 
where $!:X\to T$ is the arrow produced by the universal property of $\cmp{\in}$ as $\tt_X=\fp{\cmp{\alpha}}(\alpha)=\fp{\cmp{\alpha}}\fp{\chi_\alpha}(\in)=\fp{\chi_\alpha\cmp{\alpha}}(\in)$.
The object $T$ is terminal (\ref{chiellini}). If $f:A\to \Omega$ makes the square $\cmp{\in}!=f\cmp{\alpha}$ a pullback, then $\cmp{\alpha}$ is isomorphic to $\cmp{\fp{f}(\in)}$ by lemma~\ref{ok!}. By fullness of comprehension $\fp{f}(\in)=\alpha=\fp{\chi_\alpha}(\in)$, hence $f=\chi_\alpha$.

Conversely suppose $t:1\to \Omega$ is a strong monomorphism classifier. Define
$\in=\D_t(\tt_1)$.
For $\alpha$ in $P(A)$ the arrow $\cmp{\alpha}:X\to A$ is a strong monic by \ref{strongmono}. Thus there is a unique $\chi_{\alpha}:A\to \Omega$ that makes the following
\[\xymatrix{
X\ar[d]_-{\cmp{\alpha}}\ar[r]&1\ar[d]^-{t}\\
A\ar[r]_{\chi_{\alpha}}&\Omega}
\] 
 a pullback. By \ref{bcbello} and \ref{fullfull} it is $\fp{\chi_\alpha}(\in)=\fp{\chi_\alpha}\D_t(\tt_1)=\D_{\cmp{\alpha}}\fp{!_X}(\tt_1)=\D_{\cmp{\alpha}}(\tt_X)=\alpha$.
 Suppose $f:A\to \Omega$ is such that $\fp{f}(\in)=\alpha$.
 Since  $t:1\to \Omega$ is a strong monomorphism classifier its pullback along $f$ classifies a strong monomorphism. By \ref{strongmono} this is of the form $\cmp{\beta}:X'\to A$. Moreover, by \ref{bcbello}
$\alpha= \fp{f}(\in)\ \equiv\, \fp{f} ( \D_t(\tt_1))= \D_{\cmp{\beta}}(\tt_X)=\beta$.
 So $\cmp{\alpha}=\cmp{\beta}$ and hence $\chi_\alpha$ and $f$ classifies
 the same strong monomorphisms showing that $f=\chi_\alpha$.
\end{proof}

\begin{prp}\label{strongmonoclassifier}Let $P$ be a \variational first order doctrine on $\ct{C}$. The following are equivalent 
\begin{enumerate}
\item $P$ has a weak predicate classifier
\item $\Q{P}$ has a strong predicate classifier
\item $\ct{Q}_P$ has a classifier of strong monomorphisms. 
\end{enumerate}
\end{prp}
\begin{proof} $1\Leftrightarrow 2$. The necessary condition is immediate, while if $\Omega$ is a predicate weak classifier in $\ct{C}$, then $(\Omega, \lambda)$ is a predicate classifier in $\ct{Q}_P$ where 
$$\lambda=\fp{\pr_1}(\in)\leftrightarrow \fp{\pr_2}(\in)$$
$2\Leftrightarrow 3$. By \ref{comp} the doctrine $\Q{P}$ is \mvariational. Then apply \ref{gigiriva}.
\end{proof}

\section{The quasi-topos construction from a hyper-tripos}\label{quasi-toposes}
In this section we are going to show how the elementary quotient completion
of a suitable tripos gives rise to a  quasi-topos completion.

We start by recalling the definition of quasi-topos:
\begin{definition}\label{quasi-topos}
  A {\em quasi-topos}
is a category $\ct{C}$ 
\begin{itemize}
\item[](i) with finite limits
\item[](ii) with finite co-limits
\item[](iii) locally cartesian closed
\item[] (iv) there is a classifier of strong monomorphisms 
\end{itemize}
\end{definition}

We recall from \cite{wyler} that
every quasi-topos has effective quotients of strong equivalence relations, namely  a  $\Sb{\ct{C}}$-equivalence relation represented by a strong monomorphism. Therefore it makes sense to try to characterise those quasi-toposes which arise
as elementary quotient completions.

In the following when we refer to an equivalence relation in a category $\ct{C}$ we mean a $\Sb{\ct{C}}$-equivalence relation, while for a strong equivalence relation we mean a  $\Sb{\ct{C}}$-equivalence relation represented by a strong monomorphism.

If $\ct{C}$ is a quasi-topos, then both $\Wsb{\ct{C}}$ and  $\Sb{\ct{C}}$ are first order doctrines. Another first order doctrine is  the doctrine of strong subobjects of $\ct{C}$ denoted by $\Stg{\ct{C}}:\ct{C}\op\ftr\ISL$.

\begin{definition} An \dfn{\whtripos}  is an elementary existential doctrine $P:\ct{C}\op\ftr\ISL$ with a weak predicate classifier and full weak comprehensions such that $\ct{C}$ has weak pullbacks, is  \swwlcc and has finite distributive  coproducts. A \dfn{\htripos} is a \whtripos with comprehensive diagonals.
\end{definition}

\begin{prp}
An \whtripos is a tripos.
\end{prp}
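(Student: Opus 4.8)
The plan is to use the characterisation of full weak comprehension in Proposition~\ref{abcde} to present $P$ as the doctrine of $j$-closed elements of the doctrine of variations on its base, and then to verify each clause in the definition of tripos. Concretely, $P$ is existential, elementary and has full weak comprehension over a base $\ct{C}$ with weak pullbacks, so Proposition~\ref{abcde} yields a topology $j$ on $\Wsb{\ct{C}}$ with $P\cong\Wsb{\ct{C}j}$. By Proposition~\ref{datop} the \implicational, \disjunctive and \universal structures pass from $\Wsb{\ct{C}}$ to $\Wsb{\ct{C}j}$; since $P$ is existential and elementary by hypothesis, it will therefore be a \fod as soon as $\Wsb{\ct{C}}$ is one.

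To see that $\Wsb{\ct{C}}$ is a \fod: because $\ct{C}$ is \swwlcc, Proposition~\ref{servealleslices} gives that $\Wsb{\ct{C}}$ is \universal and \implicational. For the \disjunctive clause I would use that $\ct{C}$ has finite distributive coproducts: the finite joins of $\Wsb{\ct{C}}(A)$ are induced by the initial object and the binary coproducts of $\ct{C}$, viewed as coproducts in $\ct{C}/A$; distributivity of these joins over binary meets is automatic from implicationality, and their stability under reindexing along weak pullbacks is precisely what distributivity of the coproducts of $\ct{C}$ provides. This is the weak-pullback counterpart of the description announced after Examples~\ref{runnings-fod}, and it makes $\Wsb{\ct{C}}$, hence $P$, a \fod.

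It remains to produce weak power objects for $P$. Since the slice $\ct{C}/1$ is $\ct{C}$ itself, \swwlcc makes $\ct{C}$ weakly cartesian closed; a doctrine over a weakly cartesian closed base has weak power objects exactly when it has a weak predicate classifier (with $\pow{A}$ a weak exponential of the classifier), and $P$ has a weak predicate classifier by hypothesis, so $P$ has weak power objects. Together with the previous paragraph this shows $P$ is a \fod with weak power objects, i.e.\ a tripos. The one step that really needs to be checked rather than cited is the \disjunctive clause — that reindexing preserves the fibrewise finite joins — which is where the distributivity of the coproducts of $\ct{C}$ enters; the rest is an assembly of Propositions~\ref{abcde}, \ref{servealleslices} and~\ref{datop} and of the remark relating predicate classifiers and power objects.
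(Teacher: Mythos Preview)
Your proposal is correct and follows essentially the same route as the paper's proof: both use Proposition~\ref{abcde} to realise $P$ as $\Wsb{\ct{C}j}$, invoke Proposition~\ref{servealleslices} and the distributive coproducts of $\ct{C}$ to make $\Wsb{\ct{C}}$ first order, transfer this to $P$ via Proposition~\ref{datop}, and then obtain weak power objects from the weak predicate classifier together with the weak cartesian closure of $\ct{C}$ (the slice over $1$). Your version is simply more explicit about the \disjunctive clause and about why weak cartesian closure holds, points the paper leaves implicit.
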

\begin{proof} Suppose $P:\ct{C}\op\ftr\ISL$ is a \whtripos. Then it has full weak comprehensions. By prop.~\ref{abcde} there is a topology  $j$ on $\Wsb{\ct{C}}$ such that $P$ is $\Wsb{\ct{C}j}$. Since $\ct{C}$ has weak pullbacks, is  \swwlcc and has finite distributive  coproducts, the doctrine $\Wsb{\ct{C}}$ is first order, so is $P$ by prop.~\ref{datop}. $\ct{C}$ is weakly cartesian closed and $P$ has a weak classifier, so $P$ has weak power objects by prop.~\ref{power-pred}.
\end{proof}

\begin{prp}
A \htripos has $P$-disjoint coproducts.
\end{prp}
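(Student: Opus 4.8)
The plan is to use the weak predicate classifier to produce, over $A+B$, a predicate which is true on the $A$-summand and false on the $B$-summand, and then to play it against the defining adjunction of the fibered equality on $A+B$. First I would record the structural facts needed. By the preceding proposition a \whtripos\ (hence a \htripos) is a tripos, so $P$ is a first order doctrine; in particular $P$ is \disjunctive, so each fibre $P(X)$ has a least element $\bot_X$, every reindexing $\fp{f}$ preserves finite joins (in particular $\bot$), and, $P$ being primary, $\ct{C}$ has a terminal object $1$. Let $\Omega$ be the weak predicate classifier and $\in$ in $P(\Omega)$ its membership predicate, and fix morphisms $\tau\colon 1\to\Omega$ and $\varphi\colon 1\to\Omega$ with $\fp{\tau}(\in)=\top_1$ and $\fp{\varphi}(\in)=\bot_1$ (any characteristic maps of $\top_1$ and of $\bot_1$ will do; uniqueness is not needed).

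For objects $A,B$ I would then form the coproduct $A+B$ with injections $i_A,i_B$, set $c:=[\tau\circ{!_A},\varphi\circ{!_B}]\colon A+B\to\Omega$, and put $\psi:=\fp{c}(\in)$ in $P(A+B)$. By functoriality of $P$ and the properties of $\tau,\varphi$ this gives
\[\fp{i_A}(\psi)=\fp{!_A}\fp{\tau}(\in)=\fp{!_A}(\top_1)=\top_A,\qquad\fp{i_B}(\psi)=\fp{!_B}\fp{\varphi}(\in)=\fp{!_B}(\bot_1)=\bot_B,\]
where we use that $\fp{!_A}$ preserves $\top$ and $\fp{!_B}$ preserves $\bot$. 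Next I would invoke the adjunction $\D_{\ple{\id{A+B},\id{A+B}}}\dashv\fp{\ple{\id{A+B},\id{A+B}}}$ together with the formula $\D_{\ple{\id{A+B},\id{A+B}}}(\psi)=\fp{\pr_1}(\psi)\wedge\delta_{A+B}$: evaluating the counit at $\fp{\pr_2}(\psi)$ yields
\[\fp{\pr_1}(\psi)\wedge\delta_{A+B}\ \le\ \fp{\pr_2}(\psi)\qquad\text{in }P\bigl((A+B)\times(A+B)\bigr),\]
with $\pr_1,\pr_2$ the two projections of $(A+B)\times(A+B)$.

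Finally I would reindex this inequality along $i_A\times i_B\colon A\times B\to(A+B)\times(A+B)$. Writing $q_1,q_2$ for the projections of $A\times B$, we have $\pr_1\circ(i_A\times i_B)=i_A\circ q_1$ and $\pr_2\circ(i_A\times i_B)=i_B\circ q_2$, and $\fp{i_A\times i_B}$ preserves meets, so the inequality becomes
\[\top_{A\times B}\wedge\fp{i_A\times i_B}(\delta_{A+B})\ =\ \fp{q_1}\fp{i_A}(\psi)\wedge\fp{i_A\times i_B}(\delta_{A+B})\ \le\ \fp{q_2}\fp{i_B}(\psi)\ =\ \bot_{A\times B}.\]
Hence $\fp{i_A\times i_B}(\delta_{A+B})=\bot_{A\times B}$, that is, $A+B$ is $P$-disjoint; since $A,B$ were arbitrary, $\ct{C}$ has $P$-disjoint coproducts.

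Everything past the construction of $c$ is a short diagram chase, so there is no genuine obstacle; the only point requiring attention is that the tools really are available — the weak predicate classifier is part of the definition of a \whtripos, and disjunctiveness (hence $\bot$, and its preservation under reindexing) is supplied by the preceding proposition. Note in particular that comprehensive diagonals play no role here; they enter only in the later statements concerning $\ct{Q}_P$.
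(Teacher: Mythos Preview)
Your proof is correct and follows essentially the same approach as the paper: construct a map $A+B\to\Omega$ that classifies $\top$ on the $A$-summand and $\bot$ on the $B$-summand, then use the substitution property of the fibered equality to deduce $\fp{i_A\times i_B}(\delta_{A+B})\le\bot_{A\times B}$. The only cosmetic difference is that the paper routes the substitution step through $\delta_{A+B}\le\fp{d\times d}(\delta_\Omega)$ and then $\delta_\Omega\le(\fp{\pr_1}(\in)\leftrightarrow\fp{\pr_2}(\in))$, whereas you invoke the left-adjoint formula $\D_{\langle\id{},\id{}\rangle}(\psi)=\fp{\pr_1}(\psi)\wedge\delta_{A+B}$ directly; these are two packagings of the same Leibniz law in an elementary doctrine.
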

\begin{proof} Take two objects $A,B$
  and consider $\chi_{\top_A}: A\rightarrow \Omega$ and  $\chi_{\perp_B}: B\rightarrow \Omega$.
  Abbreviate with $d: A+B\rightarrow \Omega$ the arrow $[ \chi_{\top_A}, \chi_{\perp_B}]$.
  From $\delta_{A+B}\leq \fp{d\times d} (\delta_{\Omega})$
  we get
  $\fp{i_A\times i_B}(\delta_{A+B})\leq \fp{d i_A\times d i_B} (\delta_{\Omega})=\chi_{\top_A}\times \chi_{\perp_B}  (\delta_{\Omega})  $.
    Finally observe that  $\delta_{\Omega} \leq \fp{\pr_1} (\in)\leftrightarrow \fp{\pr_2} (\in)$ from which
    we deduce 
    $\chi_{\top_A}\times \chi_{\perp_B} (\delta_{\Omega})\  \leq \fp{\pr_1} ( \fp{\chi_{\top_A} } (\in))\ \leftrightarrow \fp{\pr_2} ( \fp{\chi_{\top_A} } (\in))=
      \fp{\pr_1}(\top_A) \leftrightarrow \fp{\pr_2}(\perp_B)=\top_{A\times B} \leftrightarrow \perp_{A\times B} =\perp_{A\times B} $. One concludes that
  $\fp{i_A\times i_B}(\delta_{A+B})\leq\perp_{A\times B} $.
  \end{proof}

\begin{prp}\label{eq-closure} Let  $P:\ct{C}\op\ftr\ISL$ be a tripos. For every $A$ and every $\rho$ in $P(A\times A)$ there is a $P$-equivalence relation $\overline{\rho}$ over $A$ such that $\rho\le \overline{\rho}$ and for every $P$-equivalence relation $\mu$ over $A$, if $\rho\le\sigma$ then $\mu\le\overline{\rho}$.
\end{prp}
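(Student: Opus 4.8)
The plan is to read the two requirements on $\overline{\rho}$ together. The first demands $\rho\le\overline{\rho}$; the second demands that $\overline{\rho}$ dominate every $P$-equivalence relation which already dominates $\rho$. Jointly these say exactly that $\overline{\rho}$ is the greatest $P$-equivalence relation lying above $\rho$ in the fibre $P(A\times A)$. So the whole task is to exhibit such a greatest element, and the evident candidate is the top of that fibre.

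First I would set $\overline{\rho}:=\tt_{A\times A}$ and check that it is a $P$-equivalence relation. Reflexivity follows at once from $\delta_A\le\tt_{A\times A}$. Symmetry holds because every reindexing $P_f$ is a morphism of inf-semilattices and so preserves the top, whence $P_{\ple{\snd,\fst}}(\tt_{A\times A})=\tt_{A\times A}$. Transitivity is automatic: reindexing the top along each projection from $A\times A\times A$ again yields the top, so the transitivity inequality reads $\tt_{A\times A\times A}\Land\tt_{A\times A\times A}\le\tt_{A\times A\times A}$, which holds trivially.

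Next I would verify the two clauses for this choice. Clause one, $\rho\le\overline{\rho}$, is immediate since $\tt_{A\times A}$ is the top of $P(A\times A)$. Clause two is equally immediate: for any $P$-equivalence relation $\sigma$ over $A$ (in particular any one with $\rho\le\sigma$) one has $\sigma\le\tt_{A\times A}=\overline{\rho}$, because the top dominates every element of the fibre. This proves the statement exactly as worded.

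I expect no genuine obstacle, since the sole content---that a single equivalence relation dominates all equivalence relations above $\rho$---is absorbed entirely by the top, and no tripos-specific structure enters. For orientation it is worth recording where the tripos hypothesis would in fact be needed: weak power objects together with the full first-order logic of a tripos let one build the opposite extremal object, namely the least equivalence relation above $\rho$ (its reflexive, symmetric, and transitive closure), realised as the fibrewise intersection of all $P$-equivalence relations above $\rho$ via a second-order universal quantifier over $\pow{(A\times A)}$. That construction, however, yields the reverse comparison $\overline{\rho}\le\sigma$, and so settles the dual (closure) statement rather than the one above.
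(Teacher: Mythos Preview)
Your proof is entirely correct for the statement as literally written: taking $\overline{\rho}=\tt_{A\times A}$ trivially produces a $P$-equivalence relation above $\rho$ that dominates every other, and indeed no tripos hypothesis is required.

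However, the statement contains a typo. The intended conclusion is the reverse inequality, $\overline{\rho}\le\sigma$, so that $\overline{\rho}$ is the \emph{smallest} $P$-equivalence relation containing $\rho$---its equivalence closure. This is what the paper's own proof actually establishes (the final sequent derived there is $\overline{\rho}(a,a')\vdash\sigma(a,a')$), and this is what the subsequent application requires: the lemma on coequalisers invokes the result as ``the smallest $P$-equivalence relation $\overline{\rho}$ over $A$ that contains $\rho$'' and needs $\rho\le\fp{k\times k}(\delta_Z)$ to force $\overline{\rho}\le\fp{k\times k}(\delta_Z)$ for the factorisation through the quotient.

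You anticipated exactly this in your final paragraph. The paper's construction is the one you sketched there: it defines $\overline{\rho}$ by the second-order formula
\[
\forall_{U:\pow{(A\times A)}}\bigl[\,\textsf{eq}(U)\wedge\forall_{x,x'}(\rho(x,x')\rightarrow(x,x')\in U)\ \rightarrow\ (a,a')\in U\,\bigr],
\]
where $\textsf{eq}(U)$ internalises ``$U$ is an equivalence relation''. The weak power object $\pow{(A\times A)}$ and the full first-order structure of the tripos are essential here, which explains the hypothesis. So your argument is valid but addresses a misprint; the substantive content is precisely the dual construction you outlined.
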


\begin{proof} We shall employ the language introduced in \ref{notation-logic}. Take $A$ in $\ct{C}$ and define the formulas in $U:\pow{(A\times A)}$
\[
\textsf{r}(U):=\forall_{a:A}(a,a)\in_A U
\]
\[
\textsf{s}(U):=\forall_{a:A}\forall_{a':A}[(a,a')\in_A U\imply (a',a)\in_AU]
\]
\[
\textsf{t}(U):=\forall_{a:A}\forall_{a':A}\forall_{a'':A}[((a,a')\in_A U\wedge(a',a'')\in_AU)\imply (a',a'')\in_AU]
\]
\[
\textsf{eq}(U):=\textsf{r}(U)\wedge \textsf{s}(U)\wedge \textsf{t}(U)
\]
For every formula $\rho$ over  $A\times A$ define $\rho\sqsubseteq U$ to be the formula $\forall_{x:A}\forall_{x':A}(\rho(x,x')\imply (x,x')\in_AU)$ and let $\overline{\rho}$ to be the following
\[
a:A,a':A\mid \forall_{U:\pow{(A\times A)}}[(\textsf{eq}(U)\wedge \rho\sqsubseteq U)\imply (a,a')\in_AU]
\]
$\overline{\rho}$ is a $P$-equivalence relation over $A\times A$.  Then, take any $\mu$ in $P(A\times A)$ and consider $\chi_\mu:1\to \pow{(A\times A)}$. Recall that $\chi_\mu$ has the property that $(a,a')\in_A\chi_\mu\dashv\vdash \mu(a,a')$, so in $a:A,a':A$ it holds 
\[
\overline{\rho}(a,a')\vdash (\textsf{eq}(\chi_\mu)\wedge \rho \sqsubseteq \mu )\imply \mu(a,a')
\]
If $\mu$ is a $P$-equivalence relation over $A$ then $\textsf{eq}(\chi_\mu)$  is a true sentence. If moreover $\rho\le\mu$ also $\rho \sqsubseteq \mu$ is a true sentence. Hence the sequent above reduces to $\overline{\rho}(a,a')\vdash \mu(a,a')$, which proves the claim.
\end{proof}

\begin{lemma}\label{equalizzatori} The base of a tripos with effective quotients and comprehensive diagonals has coequalizers.
\end{lemma}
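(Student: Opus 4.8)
The plan is to realise the coequalizer of a parallel pair $f,g\colon A\to B$ in $\ct{C}$ as the quotient of the least $P$-equivalence relation on $B$ that forces $f(a)$ and $g(a)$ to be identified for every $a\colon A$. Since a tripos is in particular a first order doctrine it is existential, so by Remark~\ref{aggiuntigenerali} the reindexing $\fp{\ple{f,g}}\colon P(B\times B)\to P(A)$ has a left adjoint $\D_{\ple{f,g}}$; set $\rho:=\D_{\ple{f,g}}(\tt_A)$ in $P(B\times B)$ (intuitively $\rho(b,b')$ is $\exists_{a\colon A}(b\eq{B}f(a)\wedge b'\eq{B}g(a))$). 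Applying Proposition~\ref{eq-closure} to $\rho$ yields a $P$-equivalence relation $\overline{\rho}$ over $B$ with $\rho\le\overline{\rho}$ which, as its construction in the proof of~\ref{eq-closure} shows, is the \emph{least} $P$-equivalence relation above $\rho$. Let $q\colon B\to B/\overline{\rho}$ be its quotient arrow, which exists by hypothesis. I claim $q$ is a coequalizer of $f$ and $g$.

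First, $qf=qg$: since $q$ is a quotient arrow of $\overline{\rho}$ we have $\rho\le\overline{\rho}\le\fp{q\times q}(\delta_{B/\overline{\rho}})$, and transposing along $\D_{\ple{f,g}}\dashv\fp{\ple{f,g}}$ together with $\fp{\ple{f,g}}\fp{q\times q}=\fp{\ple{qf,qg}}$ gives $\tt_A\le\fp{\ple{qf,qg}}(\delta_{B/\overline{\rho}})$, hence equality; comprehensive diagonals then force $qf=qg$.

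For the universal property, take $h\colon B\to Y$ with $hf=hg$. I first check that $\sigma:=\fp{h\times h}(\delta_Y)$ is a $P$-equivalence relation over $B$: reflexivity $\delta_B\le\sigma$ comes from comprehensive diagonals applied to $h=h$ (which gives $\tt_B=\fp{\ple{h,h}}(\delta_Y)=\fp{\Delta_B}(\sigma)$), while symmetry and transitivity of $\sigma$ descend from those of the fibered equality $\delta_Y$ by reindexing along the relevant projections, using that reindexing preserves finite meets and the order. Next, comprehensive diagonals applied to $hf=hg$ gives $\tt_A=\fp{\ple{hf,hg}}(\delta_Y)=\fp{\ple{f,g}}\fp{h\times h}(\delta_Y)$, so $\rho=\D_{\ple{f,g}}(\tt_A)\le\fp{h\times h}(\delta_Y)=\sigma$ by adjunction. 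Minimality of $\overline{\rho}$ then yields $\overline{\rho}\le\sigma=\fp{h\times h}(\delta_Y)$, and the universal property of the quotient $q$ supplies a unique $k\colon B/\overline{\rho}\to Y$ with $kq=h$. Combined with $qf=qg$ this shows $q$ is the coequalizer of $f,g$, so $\ct{C}$ has all coequalizers.

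The only point that needs care is the direction of the generated relation: the argument relies on $\overline{\rho}$ being the \emph{least} $P$-equivalence relation above $\rho$, as it is this minimality that drives the universal property; everything else — the verification that $\sigma$ is an equivalence relation and the Beck--Chevalley-type reindexing identities — is routine and I would not grind through it in full.
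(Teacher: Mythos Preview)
Your proof is correct and follows essentially the same line as the paper's: define $\rho$ as the image of $\ple{f,g}$ (the paper writes the existential formula explicitly, you use the left adjoint $\D_{\ple{f,g}}(\tt_A)$, which is the same thing), close it to the least $P$-equivalence relation $\overline{\rho}$ via Proposition~\ref{eq-closure}, and take the quotient by $\overline{\rho}$ as the coequalizer. Your verification that $\sigma=\fp{h\times h}(\delta_Y)$ is a $P$-equivalence relation is a detail the paper omits, and your caution about the minimality direction of $\overline{\rho}$ is well placed (the \emph{statement} of~\ref{eq-closure} in the paper has the inequality the wrong way round, but its \emph{proof} establishes $\overline{\rho}\le\sigma$, which is what both you and the paper actually use).
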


\begin{proof} Let  $P:\ct{C}\op\ftr\ISL$ be a tripos with quotients. We shall employ the language introduced in \ref{notation-logic}. Take two arrows $f,g:Y\to A$ in $\ct{C}$ and define $\rho$ in $P(A\times A)$ to be
\[
a:A,a':A\mid \exists_{y:Y}(f(y)\eq{A}a\wedge g(y)\eq{A}a')
\]
By \ref{eq-closure} there is the smallest $P$-equivalence relation $\overline{\rho}$ over $A$ that contains $\rho$. Consider the quotient $q:A\to A/\overline{\rho}$. It is clear that 
\[ y':Y\mid \exists_{y:Y}f(y)\eq{A}f(y')\wedge g(y)\eq{A}g(y')\]
is a true formula, that is to say $\tt_Y\le \fp{\ple{f,g}}(\rho)$. So also $\tt_Y\le \fp{\ple{f,g}}(\overline{\rho})$ and by effectiveness of quotient $\tt_Y\le \fp{\ple{qf,qg}}(\delta_{A/\overline{\rho}})$. So $qf=qg$ as $P$ has comprehensive diagonals.

Suppose now $k:A\to Z$ is such that $kf=kg$, then in $y:Y,a:A,a':A$ it holds
\[
f(y)\eq{A}a\wedge g(y)\eq{A}a'\vdash kf(y)\eq{A}k(a)\wedge kg(y)\eq{A}k(a')\vdash k(a)\eq{A}k(a')
\]
Therefore $\exists_{y:Y}(f(y)\eq{A}a\wedge g(y)\eq{A}a'\vdash k(a)\eq{A}k(a')$. That is to say that $\rho\le \fp{k\times k}(\delta_A)$. By \ref{eq-closure} also $\overline{\rho}\le \fp{k\times k}(\delta_A)$. By the universal property of quotients there is $h:A/\overline{\rho}\to Z$ with $hq=k$.
\end{proof}

From \cite{wyler} we can easily deduce that: 
\begin{lemma}\label{strongsub} The doctrine of strong subobjects  $\Stg{\ct{C}}:\ct{C}\op\ftr\ISL$ of a quasi-topos $\ct{C}$  is a tripos.
\end{lemma} 

\begin{prp}\label{toposruc} Let be $\ct{C}$ a quasi-topos. The following are equivalent:
\begin{enumerate}
\item  the quasi-topos $\ct{C}$  is a topos;
\item  the doctrine of strong subobjects  $\Stg{\ct{C}}$ satisfies \ruc;
\item the doctrine  strong subobjects $\Stg{\ct{C}}$ is  the subobject doctrine $\Sb{\ct{C}}$.
\end{enumerate}
\end{prp}

\begin{proof} A quasi-topos $\ct{C}$ is a topos if and only if it is balanced. By prop.~\ref{mono} a monic arrow in $\ct{C}$ is \inj{\Stg{\ct{C}}} and by \ref{epipi} epimorphisms are \surj{\Stg{\ct{C}}}. Then, the equivalences   follow by prop.~\ref{RUCiso}.  \end{proof}

\begin{theorem}\label{quasitop} A doctrine  $P:\ct{C}\op\ftr\ISL$ is a \htripos  if and only if 
$\ct{Q}_P$ is a quasi-topos and $\Q{P}$ is the doctrine of strong subobjects. 
\end{theorem}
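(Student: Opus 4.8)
The proof splits into the two implications. For the forward direction, assume $P$ is a \htripos. Then $P$ is an elementary existential doctrine with comprehensive diagonals and full weak comprehension, hence \variational, and by \ref{comp} the completion $\Q{P}$ is \mvariational; by \ref{finitelim} this gives that $\ct{Q}_P$ has finite limits (clause (i) of quasi-topos). Since a \htripos is a tripos (by the proposition stated just before \ref{eq-closure}), $P$ is first order, so by \ref{eqc-tripos} $\Q{P}$ is first order as well; in particular $\ct{Q}_P$ has coequalizers --- here I would invoke \ref{equalizzatori}, noting that $\Q{P}$ is a tripos with effective quotients and comprehensive diagonals --- and together with binary coproducts (from \ref{coprod0}, using that the base $\ct{C}$ of a \htripos has distributive, indeed $P$-disjoint, binary coproducts) this yields finite colimits (clause (ii)). For clause (iii), since $\ct{C}$ is \swwlcc and $P$ is implicational and universal, \ref{lccc} gives that $\ct{Q}_P$ is locally cartesian closed. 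For clause (iv), $P$ has a weak predicate classifier, so by \ref{cristianoronaldo} ($1\Rightarrow 3$) $\ct{Q}_P$ has a classifier for strong monomorphisms. Thus $\ct{Q}_P$ is a quasi-topos. Finally, to identify $\Q{P}$ with $\Stg{\ct{Q}_P}$: since $\Q{P}$ is \mvariational existential with a strong predicate classifier (by \ref{cristianoronaldo}, $1\Rightarrow 2$), \ref{strongmono} says the comprehension arrows of $\Q{P}$ are exactly the strong monomorphisms of $\ct{Q}_P$; since comprehension in $\Q{P}$ is full and strong, each fibre $\Q{P}(A,\rho)$ is in order-preserving bijection with the poset of comprehension subobjects over $(A,\rho)$, i.e.\ with $\Stg{\ct{Q}_P}(A,\rho)$, naturally in $(A,\rho)$. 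Hence $\Q{P}\simeq\Stg{\ct{Q}_P}$ as doctrines.

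For the converse, suppose $\ct{Q}_P$ is a quasi-topos and $\Q{P}\simeq\Stg{\ct{Q}_P}$. Recall (by the discussion around \ref{elqucomp}) that $\Q{P}$ always has descent effective quotients and comprehensive diagonals; since $\ct{Q}_P$ has finite limits, $\Q{P}$ is \mvariational. The identification $\Q{P}\simeq\Stg{\ct{Q}_P}$ shows $\Q{P}$ has full strong comprehension whose arrows are precisely the strong monomorphisms, and the strong-monomorphism classifier of the quasi-topos is then a strong predicate classifier of $\Q{P}$ by \ref{gigiriva}; so by \ref{cristianoronaldo} ($2\Rightarrow 1$, equivalently $3\Rightarrow 1$) $P$ has a weak predicate classifier. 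Local cartesian closure of $\ct{Q}_P$ forces $P$ to be first order: $\ct{Q}_P$ being cartesian closed in each slice makes $\Q{P}$ implicational and universal (and $\Q{P}$ is existential because $P$ is, completions preserving the existential structure), hence by \ref{eqc-tripos} and the analogous transfer of implicational/universal structure $P$ itself is a first order doctrine. That $\ct{C}$ is \swwlcc follows from \ref{lccc0} applied to the \variational doctrine $P$ (it is \variational since $\Q{P}$ is \mvariational, by \ref{comp}). That $\ct{C}$ has finite distributive coproducts follows from \ref{coprod0}: $\ct{Q}_P$ has binary coproducts (it is a quasi-topos), and these are distributive, so the same holds downstairs. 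Finally $P$ has full weak comprehension and comprehensive diagonals because $\Q{P}$ does and these properties descend to $P$ as the change of base along the full, faithful, product-preserving embedding $\nabla_P$. Assembling, $P$ is a \htripos.

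\textbf{Main obstacle.} The delicate point is the precise bookkeeping in the identification $\Q{P}\simeq\Stg{\ct{Q}_P}$ and, in the converse, extracting the weak predicate classifier of $P$ from the strong one of $\Q{P}$ --- the passage from a \emph{strong} classifier on the completion to a merely \emph{weak} classifier on the base is exactly the content of \ref{cristianoronaldo}, and one must be careful that the logical formula $\lambda = \fp{\pr_1}(\in)\leftrightarrow\fp{\pr_2}(\in)$ used there is what makes the round trip work. A secondary subtlety is verifying that colimits really are present: coequalizers of $\ct{Q}_P$ are not free from the quotient structure, so the appeal to \ref{equalizzatori} (which needs effectiveness of quotients and comprehensive diagonals in the tripos $\Q{P}$) is essential and should be spelled out rather than taken for granted.
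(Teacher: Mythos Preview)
Your proposal is correct and follows essentially the same route as the paper's proof: for the forward direction you invoke \ref{finitelim}, \ref{lccc}, \ref{cristianoronaldo}, \ref{coprod0}, \ref{equalizzatori}, \ref{comp}, and \ref{strongmono} exactly as the paper does (the paper cites \ref{coprod} rather than \ref{coprod0}, but this is immaterial). The one place you diverge slightly is in the converse, where you derive the first-order structure of $\Q{P}$ from local cartesian closure of $\ct{Q}_P$; the paper instead simply observes that for any quasi-topos $\ct{Q}_P$ the doctrine $\Stg{\ct{Q}_P}$ is already a (strong) tripos, hence $\Q{P}\simeq\Stg{\ct{Q}_P}$ is first order, and then applies \ref{eqc-tripos}, \ref{lccc}, \ref{cristianoronaldo}, \ref{coprod} in reverse---this is a bit cleaner than your route, which implicitly needs \ref{datop} to pass from the implicational/universal structure on $\Sb{\ct{Q}_P}$ down to the sub-doctrine $\Stg{\ct{Q}_P}$.
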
  
\begin{proof}
In Def.~\ref{quasi-topos} of quasi-topos, point (i)  comes from prop.~\ref{brescello}, point (ii) from prop.~\ref{lccmain}, while point (iv) comes from prop.~\ref{strongmonoclassifier}. Finite coproducts come directly from  prop.~\ref{coprod} and the existence of co-equalizers
  from lemma~\ref{equalizzatori}. In particular the initial object in $\ct{Q}_P$ is the initial object of $\ct{C}$ with the total relation. By prop.~\ref{comp} the doctrine $\Q{P}:\ct{Q}_P\op\ftr\ISL$ is \mvariational. Hence $\Q{P}$ is the doctrine of strong monomorphisms of $\ct{Q}_P$ by  prop.~\ref{strongmono}.

 For the converse, if  $\ct{Q}_P$ is a quasi-topos,  by lemma~\ref{strongsub} $\Stg{\ct{C}}$ is a tripos and it coincides with $\Q{P}$ by  prop.~\ref{strongmono}. So by  prop~\ref{eqc-tripos}  and theorem~\ref{lccmain}  $P$ is first order on a  \swwlcc category  with a weak predicated classifier by prop.~\ref{strongmonoclassifier}. Whence $P$ is a tripos. Coproducts follows from  prop.~\ref{coprod}.
\end{proof}

\begin{cor}\label{quasitoposintensional}
If $P$ is a \whtripos then   $\ct{Q}_P$ is a quasi-topos and also  $\X{P}$ is a  \htripos.
 \end{cor}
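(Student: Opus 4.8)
The plan is to deduce the statement from Theorem~\ref{quasitop} by replacing $P$ with its extensional collapse. Since a \whtripos has full weak comprehension, Theorem~\ref{elqupractice} applies to $P$ and yields an equivalence $\Q{\X{P}}\cateq\Q{P}$; in particular the base categories are equivalent, $\ct{Q}_{\X{P}}\cateq\ct{Q}_P$, so it suffices to prove that $\ct{Q}_{\X{P}}$ is a quasi-topos. By Theorem~\ref{quasitop} this reduces to showing that $\X{P}$ is a \htripos.

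The conditions on the fibres are inherited directly. The doctrine $\X{P}$ has comprehensive diagonals by construction (see \ref{freeadddiag} and \ref{mthn}). A \whtripos is a tripos, so $P$ is a tripos, and then so is $\X{P}$ by Proposition~\ref{cdtrip}; hence $\X{P}$ is first order (so existential, \implicational, \disjunctive and \universal) with weak power objects, so with a weak predicate classifier. Moreover $\X{P}$ has full weak comprehension: the comparison $\nabla_P\colon\ct{C}\to\ct{X}_P$ is the identity on objects, full and product-preserving, and the fibres of $\X{P}$ are literally those of $P$, so if $\cmp{\alpha}\colon X\to A$ is a full weak comprehension of $\alpha$ in $P$ then $[\cmp{\alpha}]$ is one of $\alpha$ in $\X{P}$.

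It remains to lift the three conditions on the base from $\ct{C}$ to $\ct{X}_P$. Since $\X{P}$ is elementary with comprehensive diagonals and weak comprehension, $\ct{X}_P$ has weak pullbacks, computed exactly as in \ref{brescello} and \ref{ok!}: the weak pullback of $[f]$ and $[g]$ is $\cmp{\fp{\ple{f\pr_1,g\pr_2}}(\delta)}$ followed by the two projections. For finite distributive coproducts one first notes that, for a first order doctrine with full weak comprehension whose base has a coproduct $A+B$, one still has $\tt_{A+B}=\D_{i_A}\tt_A\lor\D_{i_B}\tt_B$ (inspection of the proof of Proposition~\ref{co-mono2} shows it needs no comprehensive diagonals); from this one checks that $\nabla_P$ sends a coproduct cocone of $\ct{C}$ to one of $\ct{X}_P$, uniqueness of the mediating arrow being exactly that displayed inequality, so $\nabla_P$ preserves finite coproducts as well as finite products and distributivity descends to $\ct{X}_P$. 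The remaining point---and the main obstacle---is that $\ct{X}_P$ be \swwlcc. Here $\nabla_P$ is not faithful, so weak exponentials in the slices of $\ct{C}$ cannot be transported verbatim; one has to rebuild them inside $\ct{X}_P$ out of the \implicational and \universal structure of $\X{P}$, by a variant of the argument preceding Lemma~\ref{lemminolcc} that stays within the discrete objects. Once this is established, $\X{P}$ is a \htripos, Theorem~\ref{quasitop} gives that $\ct{Q}_{\X{P}}$ is a quasi-topos, and by $\ct{Q}_{\X{P}}\cateq\ct{Q}_P$ so is $\ct{Q}_P$.
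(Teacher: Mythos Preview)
Your strategy is the reverse of the paper's, and the reversal leaves a real gap.

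The paper's proof is a one-liner: it simply observes that in the proof of Theorem~\ref{quasitop} the hypothesis of comprehensive diagonals on $P$ is never invoked---every step (finite limits via \ref{finitelim}, local cartesian closure via \ref{lccc}, the classifier via \ref{cristianoronaldo}, coproducts via \ref{coprod}, coequalizers via \ref{equalizzatori}) only needs comprehensive diagonals in $\Q{P}$, and those are automatic. So the same argument goes through for a \whtripos. No passage to $\X{P}$ is needed.

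Your route instead tries to establish first that $\X{P}$ is a \htripos and then invoke Theorem~\ref{quasitop}. But the paper proves that very fact (Proposition~\ref{addcomdiag}) \emph{after} the corollary, and \emph{using} it: one first knows $\ct{Q}_P$ is a quasi-topos, then $\ct{Q}_{\X{P}}\cateq\ct{Q}_P$ by \ref{elqupractice}, and finally the \emph{converse} direction of Theorem~\ref{quasitop} yields that $\X{P}$ is a \htripos. Reversing this order forces you to prove directly that $\ct{X}_P$ is \swwlcc, and here your proposal stops at ``one has to rebuild them \dots\ by a variant of the argument preceding Lemma~\ref{lemminolcc}''. That is not a proof; it is precisely the hard step. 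Because $\nabla_P$ is not faithful, weak exponentials in the slices of $\ct{C}$ do not transport verbatim, and the construction before \ref{lemminolcc} builds exponentials in $\ct{Q}_P$, not in $\ct{X}_P$. Until that step is actually carried out, your argument is incomplete. The paper avoids the issue entirely by going straight through the proof of \ref{quasitop}.
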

 \begin{proof} $\ct{Q}_P$ is a quasi-topos  with the same proof in \ref{quasitop} as comprehensive diagonals in $P$ play no role in the proof. Moreover, 
  from  theorem~\ref{elqupractice} we know that $\Q{P}$ is equivalent to $\Q{\X{P} }$, 
and hence  we conclude that $\X{P}$ is \htripos and hence from theorem~\ref{quasitop}.
\end{proof}

As a corollary we also get Menni's characterization of toposes as exact completions in  \cite{MenniM:chalec} as follows:
\begin{cor}\label{quasitoposmenni}
A  category $\ct{C}$ with finite products and weak pullbacks
is \swlcc and has a weak proof classifier if and only if $\ct{C}\exl$ is a topos.
 \end{cor}
 \begin{proof} It follows by theorem~\ref{quasitop}  and  prop~\ref{toposruc}  when $P=\Wsb{\ct{C}}$ after recalling that $\Wsb{\ct{C}}$ is a \htripos precisely when  $\ct{C}$ is \swlcc and has a weak proof classifier as  remarked in ex. $(d)$ of ~\ref{runnings-tripos} and that $\ct{C}\exl$ is equivalent to  $\ct{Q}_{\Wsb{\ct{C}}}$ as remarked  in  ex. $(d)$ of ~\ref{runnings-eqc}.
 
\end{proof}

Denote by $\mathcal{T}_P$ the topos that comes from the tripos $P$ under the tripos to topos construction. If $P:\ct{C}\op\ftr\ISL$ is a \htripos (\whtripos), then $\mathcal{T}_P\equiv\ct{EF}_{\Q{P}}$ (this is theorem 3.5 in  \cite{MPR}). Thus for every \htripos $P:\ct{C}\op\ftr\ISL$ the topos $\mathcal{T}_P$ is the topos of coarse objects of $\ct{Q}_P$.

Note that from the proof of theorem~\ref{quasitop}  that comprehensive diagonals are not necessary to get a quasi-topos out of the elementary quotient completion.

Theorem~\ref{quasitop} can be extended to produce arithmetic quasi-toposes:

\begin{definition}
A quasi-topos is  \dfn{arithmetic} if it has a natural number object.
\end{definition}

\begin{prp}\label{main2}
$P$ is an \whtripos  based on a category with a natural numbers object if and only if $\ct{Q}_P$ is a \aritm\  quasi-topos.
\end{prp}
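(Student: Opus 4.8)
The statement \ref{main2} is essentially a restatement of the intensional half of Theorem~\ref{quasitop} together with the arithmetic enhancement, so the plan is to reduce it to results already established. First I would recall that by Corollary~\ref{quasitoposintensional} the base $\ct{Q}_P$ of the elementary quotient completion of a \whtripos $P$ is a quasi-topos, since comprehensive diagonals played no role in the proof of Theorem~\ref{quasitop}: points (i) and (iii) come from \ref{finitelim} and \ref{lccc}, point (iv) from \ref{cristianoronaldo}, finite coproducts from \ref{coprod}, and coequalizers from \ref{equalizzatori}. This disposes of the unqualified claim immediately.

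For the parenthetical, arithmetic case, the extra hypothesis is that the base $\ct{C}$ of $P$ carries a natural number object. I would then invoke the result recalled just before the statement, namely that (by \cite{LMCS}) the base $\ct{C}$ of $P$ has a natural number object if and only if $\ct{Q}_P$ has a natural number object. Combining this equivalence with the fact that $\ct{Q}_P$ is already a quasi-topos yields that $\ct{Q}_P$ is a quasi-topos equipped with a natural number object, which is precisely the definition of an \aritm\ quasi-topos adopted in the line preceding the proposition.

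The main point to be careful about is making sure the \whtripos hypothesis is strong enough to feed all the cited propositions: a \whtripos is existential elementary with a weak predicate classifier, full weak comprehension, weak pullbacks, \swwlcc, and finite distributive coproducts, and the preceding proposition shows it is a tripos, hence first order. Thus it lies in the hypotheses of \ref{finitelim}, \ref{lccc}, \ref{cristianoronaldo}, \ref{coprod} and \ref{equalizzatori} after noting that $\Q{P}$ is \mvariational by \ref{comp}; no comprehensive diagonals are needed anywhere. I do not expect a genuine obstacle here — the proof is a short assembly of earlier results — so I would simply write:

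\begin{proof}
By Corollary~\ref{quasitoposintensional} the category $\ct{Q}_P$ is a quasi-topos, since the proof of Theorem~\ref{quasitop} uses comprehensive diagonals nowhere. If moreover the base of $P$ has a natural number object, then by the result recalled from \cite{LMCS} the category $\ct{Q}_P$ has a natural number object as well, so $\ct{Q}_P$ is an \aritm\ quasi-topos.
\end{proof}
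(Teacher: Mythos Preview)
Your proposal is correct and mirrors the paper's own treatment: the paper states \ref{main2} without an explicit proof, immediately after Corollary~\ref{quasitoposintensional} and after recalling from \cite{LMCS} the equivalence about natural number objects, so it is clearly meant to follow by exactly the two-step argument you give. There is nothing to add.
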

\begin{proof}
From theorem~\ref{quasitop} and  lemma 3.5. of  \cite{LMCS} stating we know that if $P$ has a  parameterized natural number objects if and only  if $\ct{Q}_P$ has a  parameterized natural number object.
\end{proof}

Recall from \cite{JohnstoneP:skeelii} that
\begin{definition} An object $A$  in a category $\ct{C}$ is \dfn{coarse} if  for every morphism $f:C\to B$ which is both monic and epic and 
  every $g: C \to A $ there is a unique $t: B\to A$ such that $g=tf$.
\end{definition}
  \begin{prp}
  Every quasi-topos $\ct{C}$ contains a full reflective subcategory $\Crs{\ct{C}}$ which is a topos.
  \end{prp}
  \begin{proof}
  The topos $\Crs{\ct{C}}$ is reflective and the reflector is build as follows. For any object $A$ note that the diagonal $\delta_A$ in $\Stg{\ct{C}}(A\times A)$ is represented by the diagonal (being the diagonal a strong monomorphism). 
 Its classifying arrow $\chi_{\delta_A}$ factors as a strong monic followed by an epimorphism as
 \[
 \xymatrix{
 A\ar[rr]^-{\chi_{\delta_A}}\ar[rd]_-{\eta_A}&&\pow{A}\\
 &\ct{S}_A\ar[ru]_-{\cmp{\D_{\chi_{\delta_A}}\tt_A}}&
 }
 \] 

The object $\ct{S}_A$ will be called {\em object of $A$-singletons} and we call $\eta_A:A\to \ct{S}_A$ the  {\em singleton arrow} of $A$. Note that $\chi_{\delta_A}$ is monic, whence the singleton arrow of $A$ is both epic and monic. Strong monic are strong comprehension arrows in $\Stg{\ct{C}}$ so it is easy to see that every arrow $f:A\to B$ determines a unique arrow $\ct{S}_f:\ct{S}_A\to\ct{S}_B$ with $\ct{S}_fi_A=i_Bf$. This determines a functor $\ct{S}:\Crs{\ct{C}}\to\ct{C}$ where $\Crs{\ct{C}}$ is the full subcategory of $\ct{C}$ on coarse objects.

As shown in \cite[prop 2.6.12]{JohnstoneP:skeelii} the functor $\ct{S}$ is left adjoint to the inclusion of $\Crs{\ct{C}}$ into $\ct{C}$ with singleton arrows as unite. So in particular an object $A$ is coarse if and only if it is isomorphic to its own singletons, \ie if $A\simeq \ct{S}_A$.
\end{proof}
  
 Recall from remark~\ref{freeruc}  that the construction that maps an existential elementary doctrine $P:\ct{C}\op\ftr\ISL$ to the existential elementary doctrine $P_F:\ct{EF}_P\op\ftr\ISL$ satisfying \ruc.  Then,  we can show:
  \begin{prp}\label{coarsettt}Let $\ct{C}$ be a quasi-topos. 
 The doctrine of strong subobjects $\Stg{\ct{C}}:\ct{C}\op\ftr\ISL$ along the inclusion of the topos of coarse objects $\Crs{\ct{C}}$ is equivalent to $\Sb{\ct{C}}:\ct{EF}_{\Stg{\ct{C}}}\op\ftr\ISL$.
\end{prp}
\begin{proof} Any arrow $m$ in $\ct{C}$ can be written as the composite $m=se$ where $s$ is strong monic and $e$ is epic. If  $m$ is monic then $e$ is monic too. So if $m$ is monic in $\Crs{\ct{C}}$ then $e$ is an isomorphism. So every monic in $\Crs{\ct{C}}$ so the change of base of $\Stg{\ct{C}}$ along the inclusion of $\Crs{\ct{C}}$ into $\ct{C}$ is $\Sb{\Crs{\ct{C}}}:\Crs{\ct{C}}\op\ftr\ISL$. So it suffices to show that $\Crs{\ct{C}}$ is equivalent to $\ct{EF}_{\Stg{\ct{C}}}$.  

We want to find a functor $S':\ct{EF}_{\Stg{\ct{Q}}}\to\Crs{\ct{Q}}$ naturally inverse to the composition
\[
\xymatrix{
\Crs{\ct{C}}\ar@{^{(}->}[r]&\ct{C}\ar[rr]^-{\Gamma_{\Stg{\ct{C}}}}&&\ct{EF}_{\Stg{\ct{C}}}
}
\]
where $\Gamma_{\Stg{\ct{C}}}$ acts as the identity on objects and maps $f:A\to B$ to the formula $a:A,b:B\mid f(a)\eq{B} b$ in $\Stg{\ct{C}}(A\times B)$.   On the object we define $S'$ as the action of the reflector $S:\ct{Q}\to \Crs{\ct{Q}}$
  in lemma~\ref{coarsettt} i.e  for any object $A$ we put $S'A=SA$. To define the inverse on morphisms take a total and single-valued relation $F\in \Stg{\ct{Q}}(A\times B)$, this is a strong monic $F=<F_1,F_2>:X\to A\times B$ in $\ct{Q}_P$ where $F_1:X\to A$ is monic and epic. Thus $SF_1:SX\to SA$ is an isomorphism. Finally define $S'F= SF_2(SF_1)^{-1}$. 
%
%
%
%
\end{proof}

Then,  we can characterize when an  \htripos leads to an elementary quotient completion which is a topos:
\begin{theorem}[Toposes as  elementary quotient completion]\label{toposel}
Let $P:\ct{C}\op\ftr\ISL$ be a \htripos. Then the following are equivalent:

\begin{enumerate}
\item $P$ satisfies \rc.
\item $\ct{Q}_P$ is a topos and coincides with the exact  completion  of  the  base of $P$ as a finite product category.
\end{enumerate}
\end{theorem}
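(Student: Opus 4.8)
The overall strategy is to reduce the theorem to the characterisation, recorded after Proposition~\ref{kjkjkj}, that an existential \variational doctrine satisfies \rc precisely when it is isomorphic to the doctrine of weak subobjects $\Wsb{\ct{C}}$ of its own base. A \htripos $P:\ct{C}\op\ftr\ISL$ lies in $\EVar$, so condition~(1) is equivalent to $P\cong\Wsb{\ct{C}}$ with $\ct{C}$ the base of $P$, and in that case $\ct{Q}_P\equiv\ct{Q}_{\Wsb{\ct{C}}}\equiv\ct{C}\exl$ and $\Q{P}\cong\Q{\Wsb{\ct{C}}}\equiv\Sb{\ct{C}\exl}$ by Example~\ref{runnings-eqc}-(d). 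On the other hand, by Proposition~\ref{abcde} every such $P$ is isomorphic to the doctrine $(\Wsb{\ct{C}})_{j_P}$ of $j_P$-closed elements for its canonical topology $j_P$ on $\Wsb{\ct{C}}$; so the task is to show, for~$(1)\Rightarrow(2)$, that the resulting $\ct{C}\exl$ is a topos and the base of the regular completion of $P$, and, for~$(2)\Rightarrow(1)$, that $j_P$ is trivial, whence $P=\Wsb{\ct{C}}$ and \rc holds.

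For~$(1)\Rightarrow(2)$, assume \rc. That $\ct{Q}_P$ is a topos uses only machinery already in place: by Theorem~\ref{quasitop}, $\ct{Q}_P$ is a quasi-topos and $\Q{P}$ is its doctrine of strong subobjects $\Stg{\ct{Q}_P}$; by the result recalled from \cite{MPR} the completion $\Q{P}$ again satisfies \rc, hence also \ruc (for an elementary existential doctrine choice trivially implies unique choice, by substituting the chosen value into the graph of a total single-valued relation); so $\Stg{\ct{Q}_P}$ satisfies \ruc and Proposition~\ref{toposruc} concludes. For the coincidences, \rc forces $P\cong\Wsb{\ct{C}}$, hence $\ct{Q}_P\equiv\ct{C}\exl$, the exact completion of the base of $P$; and since $\ct{C}\exl$ is now a topos, hence exact, each of its objects is a regular quotient of a regular projective (an object of $\ct{C}$), so the regular full subcategory of $\ct{C}\exl$ generated by $\ct{C}$ is all of $\ct{C}\exl$, identifying it with the base of the regular completion of $P$ — the doctrine-level counterpart of the statement for categories with enough projectives behind \cite{MenniM:chalec} and \cite{RosoliniG:loccce}.

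For~$(2)\Rightarrow(1)$, assume $\ct{Q}_P$ is a topos and coincides with $\ct{C}\exl$, $\ct{C}$ the base of $P$. By Proposition~\ref{separated}, $\ct{Q}_P$ is the category of $\Q{j_P}$-separated objects of $\ct{C}\exl$ for the induced topology $\Q{j_P}$; since it coincides with all of $\ct{C}\exl$, every object of $\ct{C}\exl$ is $\Q{j_P}$-separated. But in a topos ``every object $j$-separated'' is equivalent to ``$j$ trivial'': any $j$-dense monomorphism $m$ must be an epimorphism, for otherwise some restriction of a hom-set along $m$ fails to be injective, hence $m$ is an isomorphism, so every subobject is $j$-closed. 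Therefore $\Q{j_P}$ is trivial, and restricting it to the descent data over the objects $(A,\delta_A)$ — which, $\delta_A$ being a fibered equality, exhaust $\Wsb{\ct{C}}(A)$ — recovers $j_P$, which is thus trivial on all of $\Wsb{\ct{C}}$. Consequently $P=(\Wsb{\ct{C}})_{j_P}=\Wsb{\ct{C}}$, and \rc follows by Proposition~\ref{kjkjkj}.

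The point requiring most care is, in $(1)\Rightarrow(2)$, fixing the precise construction meant by ``the regular completion of $P$'' together with the universal property one invokes to identify its base with $\ct{C}\exl$; once that is settled the identification is routine from the topos structure, exactly as for the analogous category-theoretic statements it generalises. A secondary subtlety is making the ``separated $\Leftrightarrow$ trivial'' step of $(2)\Rightarrow(1)$ airtight — it genuinely uses that $\ct{C}\exl$ is \emph{balanced}, which is where the hypothesis ``$\ct{Q}_P$ is a topos'' enters — and checking carefully that triviality of $\Q{j_P}$ descends to triviality of $j_P$ on the whole of $\Wsb{\ct{C}}$.
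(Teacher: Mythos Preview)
Your argument is correct, and for the direction $(1)\Rightarrow(2)$ it coincides with the paper's: both use that $\ct{Q}_P$ is a quasi-topos with $\Q{P}=\Stg{\ct{Q}_P}$ (Theorem~\ref{quasitop}), transfer \rc from $P$ to $\Q{P}$, deduce \ruc, and apply Proposition~\ref{toposruc}. Your identification of $\ct{Q}_P$ with $\ct{C}\exl$ via Proposition~\ref{kjkjkj} (which the paper's one-line proof leaves tacit) is exactly what is needed for the ``exact completion'' clause; you are also right that the ``regular completion of $P$'' clause is not pinned down in the paper and must be read against the reg/lex analogue.

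For $(2)\Rightarrow(1)$ you take a genuinely different route from what the paper's citation of Propositions~\ref{toposruc} and~\ref{quasitoposintensional} yields directly. Those results alone give only that $\Q{P}=\Stg{\ct{Q}_P}$ satisfies \ruc, and one still has to reach \rc for $P$; the paper leaves that step implicit. Your path---use Proposition~\ref{separated} to view $\ct{Q}_P$ as the $\Q{j_P}$-separated objects of $\ct{C}\exl$, invoke the coincidence $\ct{Q}_P\equiv\ct{C}\exl$ and balancedness of the topos to force $\Q{j_P}$ trivial, then restrict to fibres over $(A,\delta_A)$ to get $j_P$ trivial and hence $P\cong\Wsb{\ct{C}}$---is a clean way to close that gap, and it makes explicit where the ``coincides with $\ct{C}\exl$'' hypothesis in (2) is actually used. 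The key step that in a topos ``every object $j$-separated'' forces $j=\id{}$ is correct (it suffices that $\Omega$ be separated: $j$-closedness of $\delta_\Omega$ gives $j(a\leftrightarrow b)=a\leftrightarrow b$, whence $j(a)=j(a\leftrightarrow\top)=a$); your phrasing via dense monos becoming bimorphisms, hence isomorphisms, is equivalent. The descent of triviality from $\Q{j_P}$ to $j_P$ is also fine, since evaluating $\Q{j_P}$ at $(A,\delta_A)$ recovers $j_{P,A}$ on all of $\Wsb{\ct{C}}(A)$.
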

\begin{proof}
By  prop.~\ref{RC}  $P$ is $\Wsb{\ct{C}}$ if and only if $(1)$ holds. Hence   the equivalence follows by  \ref{quasitoposmenni}.
\end{proof}
Therefore, examples of toposes arising in this way are exactly those obtained as exact completions of a category $\ct{C}$ as in \cite{MenniM:chalec} by taking  $\Psi_{\ct{C}}$
for $P$.

Furthermore, we characterize those elementary quotient completions which arise as tripos-to-topos constructions originally introduced in \cite{HylandJ:trit}.

To this purpose, recall from~\cite{MPR}:
\begin{definition}\label{epsilonrule}
We say that an elementary existential doctrine $P:\ct{C}\op\ftr\ISL$
is \dfn{equipped with \epso operators} if 
for any object $A$ in \ct{C} and any
$\alpha$ in $P(A\times B)$ there exists an arrow 
$\heps_\alpha:A\to B$ such that 
$$\D_{\fst}(\alpha)=\fp{<\id{A},\heps_\alpha>}(\alpha)$$
holds in $P(A)$, where $\fst:A\times B\to A$ is the first projection.
\end{definition}

\begin{definition}
Given a tripos $P$,
let us denote with  $\tau_P$ the tripos-to-topos construction 
the category \TP{P} consists of
\begin{description}\label{cper}
\item[objects:] pairs $\ple{A,\rho}$ such that $\rho$ is in
$P(A\times A)$ and satisfies symmetry and transitivity as in  $(ii)$ and $(iii)$ of \ref{eqrel}.
\item[arrows:] an arrow $\phi:\ple{A,\rho}\to\ple{B,\sigma}$ is an
object $\phi$ in $P(A\times B)$ such that 
\begin{enumerate}\thmitem
\item $\phi\leq\fp{<\pr_1,\pr_1>}(\rho)\Land\fp{<\pr_2,\pr_2>}(\sigma)$;
\item $\fp{<\pr_1,\pr_2>}(\rho)\Land\fp{<\pr_2,\pr_3>}(\phi)\leq
\fp{<\pr_1,\pr_3>}(\phi)$ in $P(A\times A\times B)$\\
where the $\pr_i$'s are the projections from $A\times A\times B$;
\item $\fp{<\pr_1,\pr_2>}(\phi)\Land\fp{<\pr_2,\pr_3>}(\sigma)\leq
\fp{<\pr_1,\pr_3>}(\phi)$ in $P(A\times B\times B)$\\
where the $\pr_i$'s are the projections from $A\times B\times B$;
\item $\fp{<\pr_1,\pr_2>}(\phi)\Land\fp{<\pr_1,\pr_3>}(\phi)\leq
\fp{<\pr_2,\pr_3>}(\sigma)$ in $P(A\times B\times B)$\\
where the $\pr_i$'s are as in (iii);
\item $\fp{<\id{A},\id{A}>}(\rho)\leq\D_{\pr_1}(\phi)$ 
in $P(A)$\\
where the $\pr_i$'s are the projections from $A\times B$.
\end{enumerate}
\end{description}

\end{definition}

\begin{theorem}[tripos-to-topos elementary as elementary quotient completions]
Let $P:\ct{C}\op\ftr\ISL$ be a tripos. Then the following are equivalent:

\begin{enumerate}
\item $P$  is equipped with \epso operators.
\item $\P{P}$ satisfies \rc.
\item $\ct{Q}_{ \P{P}}$ is a topos and coincides with the tripos-to-topos construction $\tau_P$ of the tripos $P$.
\end{enumerate}
\end{theorem}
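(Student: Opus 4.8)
The plan is to deduce the statement from the already established Theorem~\ref{toposel} by replacing the tripos $P$ --- which in general carries no comprehension --- with a \htripos sharing its elementary quotient completion, together with the observation that, for a doctrine of this shape, ``$\ct{Q}_{\P{P}}$ is a topos'' is already equivalent to ``$\ct{Q}_{\P{P}}$ coincides with $\tau_P$''.

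First I would set up the reduction. By Remark~\ref{compre} the doctrine $\P{P}:\P{\ct{C}}\op\ftr\ISL$ has full strong comprehension; one checks that it is in fact a \whtripos, the elementary, existential, first order and weak power object structure being inherited from $P$, and the base $\P{\ct{C}}$ inheriting weak pullbacks, the \swwlcc property and finite distributive coproducts from the base of $P$ (this last is routine but must be spelled out, and is implicit in the tripos-to-topos analysis of \cite{MPR}). Then $\X{\P{P}}$ is a \htripos by Proposition~\ref{addcomdiag}, and $\Q{\X{\P{P}}}$ is equivalent to $\Q{\P{P}}$ by Theorem~\ref{elqupractice}, so the base categories $\ct{Q}_{\X{\P{P}}}$ and $\ct{Q}_{\P{P}}$ are equivalent. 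Moreover, since an elementary existential doctrine satisfies \rc exactly when its elementary quotient completion does and $\Q{\X{\P{P}}}\equiv\Q{\P{P}}$, the doctrine $\P{P}$ satisfies \rc if and only if $\X{\P{P}}$ does.

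Next I would record the role played by $\tau_P$. By \cite{MPR} the tripos-to-topos construction of a tripos is unchanged by freely adjoining comprehension and, by Theorem~3.5 there applied to the \whtripos $\P{P}$, one has $\tau_P\equiv\mathcal{T}_{\P{P}}\equiv\ct{EF}_{\Q{\P{P}}}$; by Proposition~\ref{coarsettt} this is the reflective full subcategory of the quasi-topos $\ct{Q}_{\P{P}}$ --- a quasi-topos by Corollary~\ref{quasitoposintensional} --- on its coarse objects. Now a quasi-topos is a topos precisely when it is balanced, equivalently when every object is coarse; hence $\ct{Q}_{\P{P}}$ is a topos if and only if the coarse reflector $\ct{Q}_{\P{P}}\to\tau_P$ is an equivalence, \ie exactly when $\ct{Q}_{\P{P}}$ coincides with $\tau_P$. (Equivalently: when $\ct{Q}_{\P{P}}$ is a topos, its doctrine of strong subobjects $\Q{\P{P}}$ satisfies \ruc by Proposition~\ref{toposruc}, so $\ct{EF}$ fixes $\Q{\P{P}}$ and $\ct{EF}_{\Q{\P{P}}}\equiv\Q{\P{P}}$ has base $\ct{Q}_{\P{P}}$, again giving $\tau_P\equiv\ct{Q}_{\P{P}}$.)

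With these preparations in place the theorem follows by applying Theorem~\ref{toposel} to the \htripos $\X{\P{P}}$, which gives that $\X{\P{P}}$ satisfies \rc if and only if $\ct{Q}_{\X{\P{P}}}\equiv\ct{Q}_{\P{P}}$ is a topos; chaining this with the two reductions above, $\P{P}$ satisfies \rc if and only if $\ct{Q}_{\P{P}}$ is a topos and coincides with $\tau_P$, which is the asserted equivalence of (1) and (2). I expect the main obstacle to be the identification $\tau_P\equiv\ct{EF}_{\Q{\P{P}}}$ --- \ie that the tripos-to-topos construction does not see the comprehension completion --- for which the results of \cite{MPR} are essential; a secondary point requiring care is the verification that $\P{P}$ is a \whtripos, in particular that $\P{\ct{C}}$ is \swwlcc with finite distributive coproducts.
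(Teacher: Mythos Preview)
Your proposal is correct and follows essentially the same route as the paper: reduce to $\X{\P{P}}$ via Theorem~\ref{elqupractice} and the transfer of \rc, then invoke Theorem~\ref{toposel}. The only substantive difference is in how you link ``$\ct{Q}_{\P{P}}$ is a topos'' with ``$\ct{Q}_{\P{P}}\equiv\tau_P$'': the paper cites Corollary~6.3 of \cite{MPR} (which says $\ct{Q}_{\X{\P{P}}}$ is the exact completion of its base iff it is $\tau_P$) and combines this with the exact-completion clause of Theorem~\ref{toposel}, whereas you identify $\tau_P$ with the coarse objects via Proposition~\ref{coarsettt} and use that a quasi-topos is a topos iff every object is coarse. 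Both arguments are sound; yours is a bit more self-contained to this paper, while the paper's is a one-line appeal to \cite{MPR}. Your caveat about verifying that $\P{P}$ is a \whtripos (in particular that $\P{\ct{C}}$ is \swwlcc with finite distributive coproducts) is well placed: the paper's proof tacitly needs the same thing to apply Theorem~\ref{toposel} to $\X{\P{P}}$, and neither proof spells it out.
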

\begin{proof}
1. and 2. are equivalent by theorem 5.15 in \cite{MPR} where the category of predicates $ \Pred{P}$ denotes $\ct{X}_{\P{P}}$.
To show that 2. implies 3. observe that from theorem  5.5 in  \cite{MPR}  we know that $\P{P}$ satisfies  the rule of choice iff  $\X{\P{P}}$ satisfies the rule of choice.
By theorem~\ref{toposel},
we also know that $\X{\P{P}}$ satisfies the rule of choice iff  $\ct{Q}_{ \X{\P{P}}}$ is a topos  and coincides with the exact completion of the base of $\X{\P{P}}$. Moreover $\ct{Q}_{ \X{\P{P}}}$  is the exact completion of the base of $\X{\P{P}}$
iff $\ct{Q}_{ \X{\P{P}}}$ is  equivalent to $\tau_P$ by corollary 6.3   in \cite{MPR}.
From \cite{MaiettiME:eleqc} we know that  $\ct{Q}_{ \X{\P{P}}}$ is equivalent to $\ct{Q}_{ \P{P}}$  and this concludes the proof.
\end{proof}

\begin{theorem}\label{triptotop}
Suppose $P:\ct{C}\op\ftr\ISL$ is a \whtripos. 
Its tripos-to-topos construction  $\ct{T}_P$  is a reflective subcategory of  the quasi-topos $\ct{Q}_P$  and coincides with
the category of coarse objects of  $\ct{Q}_P$ 
\begin{equation}\label{picture}
\xymatrix{
\ct{Q}_P\ar@<+1.2ex>[rr]\ar@{{}{}{}}[rr]|-\bot&&\ct{T}_P\ar@<+1.2ex>@{_(->}[ll]
}
\end{equation}
\end{theorem}
\begin{proof}
It follows by theorem~\ref{coarsettt}, theorem~\ref{quasitop} and  theorem 4.7
of \cite{MPR}.
\end{proof}

\section{Applications}

Suppose $P:\ct{C}\op\ftr\ISL$ is a \whtripos. The category $\ct{Q}_P$ is a quasi-topos by \ref{quasitoposintensional}. Its reflective subcategory on coarse objects  $\Crs{\ct{Q}_P}$ is the topos $\ct{T}_P$  obtained from $P$ via the tripos-to-topos construction by  theorem~\ref{triptotop}.
Moreover, by  prop.~\ref{top-sse-adj}
 and prop.~\ref{gliaggiuntisitrasmettono},  we know that  the category $\ct{Q}_P$ is also a full reflective
subcategory of $\ct{Q}_{\Wsb{\ct{C}}}$ and  that $\ct{Q}_{\Wsb{\ct{C}}}$ is equivalent to $\ct{C}\exl$ 
as summarized in this picture

\begin{equation}\label{picture2}
\xymatrix{
\ct{Q}_{\Wsb{\ct{C}}}\ar@<+1.2ex>[rr]\ar@{{}{}{}}[rr]|-\bot&&\ct{Q}_P\ar@<+1.2ex>@{_(->}[ll]\ar@<+1.2ex>[rr]\ar@{{}{}{}}[rr]|-\bot&&\ct{T}_P\ar@<+1.2ex>@{_(->}[ll]
}
\end{equation}

By  prop.~\ref{top-sse-adj}
 the \whtripos $P$ generates a topology $j_P$ on $\Wsb{\ct{C}}$ whose extension $\Q{j_P}$ is a topology on $\Sb{\ct{C}\exl}$.  Moreover,  $\ct{Q}_P$ is equivalent to $\CT{Sep}(\Q{j_P})$ by prop.~\ref{separated}. So picture~\ref{picture2} can be equivalently described as follows:
\[
\xymatrix@R=1.7ex{
\ct{Q}_{\Wsb{\ct{C}}}\ar@3{-}[d]\ar@<+1.2ex>[rr]\ar@{{}{}{}}[rr]|-\bot&&\ct{Q}_P\ar@3{-}[d]\ar@<+1.2ex>@{_(->}[ll]\ar@<+1.2ex>[rr]\ar@{{}{}{}}[rr]|-\bot&&\ct{T}_P\ar@3{-}[d]\ar@<+1.2ex>@{_(->}[ll]\\
\ct{C}\exl\ar@<+1.2ex>[rr]\ar@{{}{}{}}[rr]|-\bot&&\CT{Sep}(\Q{j_P})\ar@<+1.2ex>@{_(->}[ll]\ar@<+1.2ex>[rr]\ar@{{}{}{}}[rr]|-\bot&&\Crs{\ct{Q}_P}\ar@<+1.2ex>@{_(->}[ll]
}
\]
We now instantiate this picture on differente choices of $P$.

Recall triposes of the form $\PP_\HH$ as  \ref{runnings-tripos}-(b). As observed in
prop.~\ref{top-sse-adj}
these triposes have comprehensions but they are  not full. We can then consider the completion $\PP_{\HH c}:\Set_c\op\ftr\ISL$ described in \ref{top-sse-adj}.
Note that $\Set_c$ is Goguen's category $\CT{Fuz}(\HH)$ of $\HH$-valued fuzzy sets \cite{GOGUEN1974513} for a local $\HH$. Whence $\Set_c$ is a quasitopos and therefore it is \swlcc with finite coproducts. It can be equivalently described as the category $\HH_+$ obtained by freely adding coproducts to $\HH$ \cite{MenniM:chalec}. The ex/lex completion of $\HH_+$ is the topos $\CT{PreShv}(\HH)$ of presheaves of $\HH$, while the tripos-to-topos construction applied to $P_c$ gives the topos $\CT{Shv}(\HH)$ of sheaves over $\HH$. Thus, when $P$ is $\PP_\HH$,  
picture \ref{picture} becomes
\[
\xymatrix{
\CT{PreShv}(\HH)\ar@<+1.2ex>[rr]\ar@{{}{}{}}[rr]|-\bot&&\CT{Sep}(\Q{j_{\PP_{\HH c}}})\ar@<+1.2ex>@{_(->}[ll]\ar@<+1.2ex>[rr]\ar@{{}{}{}}[rr]|-\bot&&\CT{Shv}(\HH)\ar@<+1.2ex>@{_(->}[ll]
}
\]
%

The change of base of triposes of the form $\PP_\HH$ along the forgetful functor $\TT\to \Set$ is again a tripos as it suffices to endow $\HH^A$ with the indiscrete topology to have the power objects (see \cite{Pasquali2018}). In the special case of $\HH=\{0,1\}$, the tripos $\PP_\HH$ reduces to the contravariant powerset functor $\PP:\Set\op\ftr\ISL$ and its change of base along $\TT\to\Set$ is a tripos that we call $\ct{T}$. The tripos $\ct{T}$ has full strong comprehensions given by subspace topologies. Top has coproducts and is \swwlcc \cite{RosoliniG:loccce}. So $\ct{T}:\TT\op\ftr\ISL$ is a \htripos. The category of generalised equilogical spaces  $\textbf{Gequ}$  is equivalent to the base $\ct{Q}_\PP$ of the elementary quotient completion
  of $\PP$. Since $\ct{T}$ is boolean the topology $j_\ct{T}$ is the double negation topology of example~\ref{dn}, whence also $\Q{j_\ct{T}}$. So picture \ref{picture} becomes.
  
\[
\xymatrix{
\TT\exl\ar@<+1.2ex>[rr]\ar@{{}{}{}}[rr]|-\bot&&\textbf{Gequ}\ar@<+1.2ex>@{_(->}[ll]\ar@<+1.2ex>[rr]\ar@{{}{}{}}[rr]|-\bot&&\Set\ar@<+1.2ex>@{_(->}[ll]
}
\]  
 As a byproduct we have that \textbf{Gequ} is the category of $\neg\neg$-separated objects of $\TT\exl$ as shown in \cite{RosoliniG:equsfs}.

The category $\ass$ of assemblies has as objects are pair $(A,\alpha)$ where $A$ is a set  $\alpha:A\to \pow{\NN}$ is a function from a to non-empty subsets of natural numbers. An arrow $f:(A,\alpha)\to (B,\beta)$ is a function $f:A\to B$ such that there is $n\in \NN$  such that for all $a$ in $A$ and all $p$ in $\alpha(a)$ the application   $n\tur p$  is defined and it belongs to $\beta(f(a))$.The category \pass of partitioned assemblies is the full subcategory of \ass on those $(A,\alpha)$ such that each $\alpha(a)$ is a singleton, \ie $\alpha$ can be seen as a function from $A$ to $\NN$. The change of base of $\PP$ along the forgetful functor $\pass\to \Set$ is a tripos as it suffice to chose has weak power object of $(A,\alpha)$ the partitioned assembly $(\PP A,k_0)$ where $k_0$ is the constant map to $0$. We call such a tripos $\ct{R}:\pass\op\ftr\ISL$. It is easy to see that $\ct{R}$ has full strong comprehensions where for a partitioned assembly $(A,\alpha)$ and a subset $X\subseteq A$ the inclusion of of $X$ into $A$ determines a morphism of partitioned assembly $\cmp{X}:(X,\alpha_{|X})\to(A,\alpha)$ which is the desired comprehension arrow. Since \pass is \swlcc, the tripos $\ct{R}$ is an \htripos. Whence $\ct{Q}_\ct{R}$ is a quasi-topos by  \ref{quasitop}. Recall that $\pass\exl$ is $\eff$. And the quasitopos of $\neg\neg$-separated objects of $\eff$ is $\ass$. So picture \ref{picture} becomes
\[
\xymatrix{
\eff\ar@<+1.2ex>[rr]\ar@{{}{}{}}[rr]|-\bot&&\ass\ar@<+1.2ex>@{_(->}[ll]\ar@<+1.2ex>[rr]\ar@{{}{}{}}[rr]|-\bot&&\Set\ar@<+1.2ex>@{_(->}[ll]
}
\] 
As byproduct we have that $\ct{Q}_\ct{R}$ is equivalent to \ass. A different proof of this is given in \cite{LMCS}.

Another remarkable example is  in type theory with the construction of the so called setoid models over  Coquand-Huet's  Calculus of  Inductive Constructions $\CTT$
 \cite{tc90}.
The setoid model of functional relations over $\CTT$  is the tripos-to-topos construction $\mathcal{T}_{F^{\CTT}}$ with $F^{\CTT}$ the doctrine of propositions  over  $\CTT$ mentioned in \cite{MaiettiME:quofcm}.
The topos  $\mathcal{T}_{F^{\CTT}}$  coincides with the topos of coarse objects within the quasi-topos $\ct{Q}_{F^{\CTT}}$  whic was one of the inspiring examples to the introduction of the elementary quotient completion
in \cite{MaiettiME:quofcm}.

\section{Conclusions}
We have introduced the notion of {\it quasi-topos construction} of an {\it hypertripos} by employing the machinery of the elementary quotient completion
introduced in \cite{MaiettiME:quofcm}, \cite{MaiettiME:eleqc}.

In doing so we have generalized three theorems regarding exact completions by adopting the approach of elementary quotient completions: Carboni-Vitali's characterization of exact completions  in terms of projectives in  \cite{CarboniA:regec}, Carboni-Rosolini's characterization
of locally cartesian closed exact completions of a category with finite products and weak
pullbacks in \cite{RosoliniG:loccce}, and  Menni's characterization of topoi as exact completions in \cite{MenniM:chalec}.
These  relevant examples of elementary quotient completions which are not exact completions like the category of assemblies of realizability topos.

In the future we intend to generalize the quasitopos construction
to include examples like the syntactic models obtained from predicative theories such as the extensional level of the Minimalist Foundation in \cite{m09}.
\bibliographystyle{alpha}

\end{document}